\documentclass[12pt]{article}
\usepackage{amssymb, amsmath, amsthm}
\usepackage{mathrsfs}

\newtheorem{remark}{Remark}[section]
\newtheorem{corollary}{Corollary}[section]
\newtheorem{lemma}{Lemma}[section]
\newtheorem{definition}{Definition}[section]
\newtheorem{proposition}{Proposition}[section]
\newtheorem{theorem}{Theorem}[section]
\newtheorem{example}{Example}[section]

\begin{document}

\title{Measures on General Codimensional Surfaces in Infinite Dimensions and Stokes-Type Theorems}
\author{Zhouzhe Wang\footnote{School of Mathematics,
Sichuan University, Chengdu 610064, Sichuan Province, China. E-mail
address: wangzhouzhe@stu.scu.edu.cn.}, Jiayang Yu\footnote{School of Mathematics,
Sichuan University, Chengdu 610064, Sichuan Province, China. E-mail
address: jiayangyu@scu.edu.cn.} and Xu Zhang\footnote{School of Mathematics,
Sichuan University, Chengdu 610064, Sichuan Province, China. E-mail
address: zhang\_xu@scu.edu.cn.}}
\date{}
\maketitle
\def\cc{\mathbb{C}}
\def\zz{\mathbb{Z}}
\def\nn{\mathbb{N}}
\def\rr{\mathbb{R}}
\def\qq{\mathbb{Q}}
\def\dd{\mathbb{D}}
\def\tt{\mathbb{T}}
\def\bb{\mathbb{B}}
\def\ff{\mathbb{F}}
\def\ll{\mathbb{L}}

\def\A{\mathcal{A}}
\def\B{\mathcal{B}}
\def\D{\mathcal{D}}
\def\L{\mathcal{L}}
\def\M{\mathcal{M}}

\def\N{\mathcal{N}}
\def\K{\mathcal{K}}
\def\E{\mathcal{E}}
\def\F{\mathcal{F}}
\def\G{\mathcal{G}}
\def\R{\mathcal{R}}
\def\s{\mathcal{S}}
\def\p{\mathcal{P}}
\def\P{\mathcal{P}}
\def\T{\mathcal{T}}
\def\O{\mathcal{O}}
\def\Z{\mathcal{Z}}
\def\C{\mathcal{C}}
\def\SS{(S, \mathcal{S})}

\def\al{\alpha}
\def\la{\lambda}
\def\ep{\epsilon}
\def\sig{\sigma}
\def\Sig{\Sigma}
\def\cd{\mathbb{C}^d}
\def\bm{\mathcal{M}}
\def\bn{\mathcal{N}}
\def\bc{\mathcal{C}}
\def\hN{H^2\otimes \mathbb{C}^N}
\def\ba{\mathcal{A}}
\def\supp{\hbox{$\,$\rm supp$\,$}}
\def\card{\hbox{$\,$\rm card$\,$}}
\def\det{\hbox{$\,$\rm det$\,$}}
\def\span{\hbox{$\,$\rm span$\,$}}

\def\bigpa#1{\biggl( #1 \biggr)}
\def\bigbracket#1{\biggl[ #1 \biggr]}
\def\bigbrace#1{\biggl\lbrace #1 \biggr\rbrace}

\def\papa#1#2{\frac{\partial #1}{\partial #2}}
\def\dbar{\bar{\partial}}

\def\oneover#1{\frac{1}{#1}}

\def\meihua{\bigskip \noindent $\clubsuit \ $}
\def\blue#1{\textcolor[rgb]{0.00,0.00,1.00}{#1}}
\def\red#1{\textcolor[rgb]{1.00,0.00,0.00}{#1}}
\def\xing{\heartsuit}
\def\tao{\spadesuit}
\def\lingxing{\blacklozenge}

\def\norm#1{||#1||}
\def\inner#1#2{\langle #1, \ #2 \rangle}

\def\divide{\bigskip \hrule \bigskip}

\def\bigno{\bigskip \noindent}
\def\medno{\medskip \noindent}
\def\smallno{\smallskip \noindent}
\def\bignobf#1{\bigskip \noindent \textbf{#1}}
\def\mednobf#1{\medskip \noindent \textbf{#1}}
\def\smallnobf#1{\smallskip \noindent \textbf{#1}}
\def\nobf#1{\noindent \textbf{#1}}
\def\nobfblue#1{\noindent \textbf{\textcolor[rgb]{0.00,0.00,1.00}{#1}}}
\def\purple#1{\textcolor[rgb]{1.00,0.00,0.50}{#1}}
\def\green#1{\textcolor[rgb]{0.00,1.00,0.00}{#1}}

\def\vector#1#2{\begin{pmatrix}  #1  \\  #2 \end{pmatrix}}

\begin{abstract}
In this paper, we give an explicit construction of surface
measures on a class of surfaces with arbitrary, possibly infinite,
codimension in \(\ell^2\). These measures are constructed from local representations associated with a fixed Gaussian product measure. We then
establish local and global Gauss--Green-type formulas, introduce
a notion of top-degree differential form, and
derive an associated Stokes-type identity. We also determine the orientation of the boundary induced by the orientation of the surface. Moreover, therelationship between \(\mathcal F\)-continuity and Borel measurability is examined,which reveals a phenomenon specific to the infinite-dimensionalsetting.
\end{abstract}

\tableofcontents

\section{Introduction}

In calculus, the integration-by-parts formula is of fundamental importance. Many important results in finite-dimensional analysis are based on this formula and its variants, including the Gauss--Green formula, the divergence theorem, and Stokes' formula. It is therefore natural to seek infinite-dimensional analogues of these fundamental identities. Such extensions, however, are far from straightforward.

One of the principal motivations for the development of infinite-dimensional mathematics comes from the mathematical foundations of quantum physics and, more broadly, from the analysis of problems involving integration over infinite-dimensional spaces. A prominent example is the longstanding problem of giving a rigorous mathematical formulation of the Feynman integral (see, e.g., \cite{GY}). The significance of infinite-dimensional mathematics was emphasized by M. Atiyah, who remarked in \cite[p.~14]{Atiyah}:
``I have said the 21st century might be the era of quantum mathematics or, if you like, of infinite-dimensional mathematics. What could this mean? Quantum mathematics could mean, if we get that far, understanding properly the analysis, geometry, topology, algebra of various non-linear function spaces and by ``understanding properly" I mean understanding it in such a way as to get quite rigorous proofs of all the beautiful things the physicists have been speculating about".

The classical integration-by-parts formula, the Gauss--Green formula, the divergence theorem, and Stokes' formula are all formulated with respect to Lebesgue measure on finite-dimensional Euclidean spaces. In infinite dimensions, however, there is no direct analogue of Lebesgue measure. Indeed, every nonempty open subset of an infinite-dimensional normed space contains countably many pairwise disjoint nonempty open balls of the same radius, a geometric feature with no finite-dimensional counterpart. Consequently, an infinite-dimensional separable normed space admits no nontrivial locally finite translation-invariant Borel measure.

The above mentioned fundamental obstruction makes the construction of meaningful measures in infinite-dimensional spaces substantially more delicate. In particular, any attempt to develop analogues of the Gauss--Green and Stokes formulas must begin with a suitable replacement for the ambient Lebesgue measure and, more importantly, with an appropriate notion of surface measure on infinite-dimensional surface. This naturally leads to the question of how such surface measures can be constructed and how the corresponding integration-by-parts identities should be formulated in the absence of a canonical translation-invariant reference measure.

A fundamental example of a measure on an infinite-dimensional function space is the Wiener measure. Wiener \cite{Wie} established its existence on \(C_0[0,1]\), the Banach space of real-valued continuous functions on \([0,1]\) vanishing at \(0\), endowed with the supremum norm. Building on Wiener's work, Cameron \cite{Cameron} derived an integration-by-parts formula for the Wiener measure on \(C_0[0,1]\), while Donsker \cite{Don}, by different methods, obtained related formulas in a more general setting.

A major subsequent development was Gross's theory of abstract Wiener spaces, which provided a general framework for Gaussian measures on infinite-dimensional Banach spaces. In this setting, Kuo \cite{Kuo1} extended the integration-by-parts formulas of Cameron and Donsker to abstract Wiener measures. Integration-by-parts identities on path spaces have also played an important role in the study of stochastic partial differential equations. In particular, Zambotti established such a formula on convex subsets of Wiener space and used it in the analysis of certain stochastic partial differential equations. Related results were subsequently developed by Funaki--Ishitani \cite{FI}, Hariya \cite{Har}, Otobe \cite{Oto}, and Bonaccorsi--Da Prato--Tubaro \cite{BDT}. In a different direction, motivated by an infinite-dimensional Hodge--Kodaira vanishing theorem, Shigekawa \cite{Shi03} also established an integration-by-parts formula in an infinite-dimensional setting.

These results demonstrate that integration by parts can be meaningfully formulated for a variety of infinite-dimensional Gaussian measures. They do not, however, by themselves provide a general theory of surface measure on infinite-dimensional surfaces. In particular, the corresponding formulas are not formulated as Gauss--Green or Stokes identities involving an independently constructed measure on the boundary or on surfaces of positive, possibly infinite, codimension. Thus, in order to develop genuine infinite-dimensional analogues of the Gauss--Green and Stokes formulas, one is naturally led to the problem of constructing suitable surface measures and identifying the geometric objects, including normal directions and orientations, that enter the associated boundary terms.

The notions of a surface and of the corresponding surface measure are indispensable in the classical Gauss--Green and Stokes formulas. Thus, any attempt to develop genuine infinite-dimensional counterparts of these formulas requires, as a first step, suitable notions of infinite-dimensional surfaces and surface measures. A number of approaches have been developed in this direction.

\begin{itemize}

\item
To the best of our knowledge, an early contribution appears to be due to Skorohod \cite{Sko70, Sko74}, who constructed surface measures in infinite-dimensional Hilbert spaces and established Gauss--Green-type formulas for a class of quasi-invariant measures, not necessarily Gaussian. His work was further developed by Uglanov \cite{Ugl1,Ugl2} and Yakhlakov \cite{Yak}. In particular, Uglanov \cite{Ugl1} constructed surface measures on codimension-one surfaces in Banach spaces, while Yakhlakov extended this construction to surfaces of arbitrary finite codimension. These approaches proceed by constructing surface measures locally on sufficiently small neighborhoods and then attempting to assemble the local objects into a global measure. A difficulty inherent in this strategy is that the existence of local surface measures does not, by itself, guarantee the existence of a globally defined surface measure; see, for instance, \cite[Introduction, p.~467]{CCKO}.

\item
Another early systematic treatment was developed by Goodman \cite{Goo} in the framework of Gross's abstract Wiener spaces (\cite{Gro65}). Goodman's construction is, in spirit, close to the classical finite-dimensional theory. He introduced \(H\)-\(C^1\) surfaces and their local surface measures, established the compatibility of these local measures, and thereby obtained a global surface measure on an \(H\)-\(C^1\) surface. He further developed an \(H\)-\(C^1\) partition-of-unity argument and proved a local divergence theorem, which was then globalized by means of a suitable exhaustion and approximation procedure.

Although conceptually close to the classical theory, Goodman's construction involves a substantial amount of abstract Wiener-space machinery and is not particularly explicit. As observed by Chaari--Cipriano--Kuo--Ouerdiane \cite[Introduction, p.~467]{CCKO}, the construction is rather complicated and difficult to compute with. In particular, some of the approximation arguments rely on properties of measurable seminorms and related tools from the theory of abstract Wiener spaces, and several steps in the construction are presented only briefly. We also note that the proof of the existence of an \(H\)-\(C^1\) partition of unity given in \cite[Lemma~2.1, p.~419]{Goo} contains a gap\footnote{This issue can be circumvented by a different argument, and we shall provide such a proof in this work.}.

\item
A different and highly influential approach was introduced by Airault and Malliavin \cite{AM}. They considered level sets of functions
\[
G\in \bigcap_{k\in\mathbb N, p\geqslant 1} W^{k,p}(C[0,1],\mu)
\]
satisfying an appropriate nondegeneracy condition, and constructed surface measures on finite-codimensional level sets by means of duality and image measures. In contrast with approaches based purely on local constructions, their method directly produces a measure on the entire surface. They also established an infinite-dimensional coarea formula and defined the integration of differential forms of degree \(n\) over surfaces of codimension \(n\), leading to a Stokes-type theorem. This approach was subsequently refined by Bogachev and Malofeev \cite{Bog90,BM}, extended to certain non-Gaussian settings by Pugach\"{e}v \cite{Pug99}, and further simplified by Da Prato--Lunardi--Tubaro \cite{DLT14,DLT18}. Related treatments can also be found in \cite{Bog98,Mal}, as well as in \cite{CCKO} for the space of tempered distributions over \(\mathbb R\).

\item
A substantially different construction was proposed by Feyel and de La Pradelle \cite{FL}, who introduced a Hausdorff--Gauss measure through approximation by finite-dimensional Hausdorff--Gauss measures. This construction exploits in an essential way the structure of Gaussian measures and therefore does not immediately extend to general non-Gaussian measures. The Feyel--de La Pradelle measure has played an important role in a number of subsequent works; see, for example, Celada--Lunardi \cite{CL} and Hino \cite{Hin}.

\item
Surface measures in infinite dimensions also arise naturally from geometric measure theory and the theory of functions of bounded variation. BV functions with respect to Gaussian measures on Banach spaces have been studied, for instance, in \cite{AMMP,Fuk,FH}. A Borel set \(E\) is said to have finite perimeter if its characteristic function is a BV function, in which case one obtains an associated perimeter measure concentrated, in an appropriate measure-theoretic sense, on the boundary of \(E\). Under suitable regularity assumptions, this perimeter measure agrees with the restriction to the boundary of the Feyel--de La Pradelle surface measure. Building on \cite{FH}, Hino \cite{Hin} introduced a measure-theoretic boundary for Borel sets in abstract Wiener spaces and established a Gauss--Green formula for sets of finite perimeter.

\item
Yet another approach is based on defining an integral associated with a given measure and a vector field. Bogdanskii \cite{Bogd13} first treated hypersurfaces in the setting of Banach manifolds. Bogdanskii and Moravetskaya \cite{BM18} subsequently extended the construction to surfaces of arbitrary finite codimension, and Bogdanskii \cite{Bogd21} obtained a divergence form of the Stokes formula for finite-codimensional surfaces embedded in Banach manifolds.

\end{itemize}

These approaches reveal several different mechanisms by which surface measures and boundary terms can arise in infinite-dimensional analysis. Their precise relationships, however, are subtle. As Hino observed in \cite[p.~1659]{Hin}, ``Although these apparently different expressions should be closely related to one another, it does not seem evident to derive one formula from another one directly. It would be an interesting problem to clarify such an involved situation.'' We refer to \cite{Bog17} for a survey of related developments.

The preceding results also leave open a number of structural questions that are particularly relevant to the present work. Most existing constructions are designed for hypersurfaces or, more generally, for surfaces of finite codimension, and many of them rely essentially on the Gaussian or abstract Wiener-space structure. Moreover, the resulting surface measures are often obtained through indirect procedures, such as duality, finite-dimensional approximation, or measure-theoretic boundary constructions. For the purposes of developing Gauss--Green and Stokes formulas on surfaces of arbitrary, possibly infinite, codimension, it is desirable to have a more direct construction that is both local and explicit, while at the same time admitting a consistent globalization.


In the present paper, we build on several ideas from Goodman's work \cite{Goo}, but work in a setting in which the relevant constructions and estimates can be carried out more explicitly than in the abstract Wiener-space framework. Whereas the existing theories discussed above are exclusively concerned with surfaces of finite codimension, we consider a class of surfaces in \(\ell^2\) of arbitrary, possibly infinite, codimension. Our first objective is to construct surface measures on such surfaces. We begin with a family of local surface measures, prove their compatibility on overlapping coordinate neighborhoods, and then patch them together to obtain a globally defined surface measure.

With these surface measures at hand, we establish both local and global Gauss--Green-type formulas. An important ingredient in the globalization argument is a simple construction of compact subsets adapted to the underlying Gaussian product measure. In contrast with Goodman's construction, which relies on measurable seminorms in abstract Wiener spaces, our argument uses only elementary tools from real analysis and is particularly convenient for explicit estimates. Even in the codimension-one setting treated by Goodman \cite{Goo}, our Gauss--Green theorem requires substantially weaker regularity assumptions than those imposed in his divergence theorem. In our formulation, we assume only that the function and its first-order partial derivatives are \(\mathcal F\)-continuous and Borel measurable.

The measurability assumption deserves separate attention. In finite-dimensional analysis, the continuity assumptions entering such formulas automatically imply Borel measurability. This implication becomes more delicate in the present infinite-dimensional setting. In Corollary~\ref{230720cor1}, we show that \(\mathcal G\)-continuity does not, in general, imply Borel measurability. It remains unclear whether Borel measurability follows from the remaining regularity assumptions of our main theorem. We therefore impose it as a separate hypothesis. This provides another indication that seemingly routine relations between topology and measurability may behave differently in infinite-dimensional spaces.

The question of orientation is one of the main geometric issues in passing from local constructions to global identities. In the classical theory, orientations determine the signs of boundary terms in the Gauss--Green and Stokes formulas. They also play a fundamental role in infinite-dimensional Fredholm theories, notably in the polyfold Fredholm theory of Hofer--Wysocki--Zehnder \cite{HWZ}. Thus, in an infinite-dimensional Gauss--Green or Stokes theory, it is necessary not only to construct surface measures, but also to identify the boundary orientation induced by the orientation of the underlying surface.

In finite codimension, this orientation is usually described through an ordered defining map. More precisely, if a surface is defined by \(n\) real-valued functions \(f_1,\ldots,f_n\), then the order of these functions determines the associated normal orientation; see, for instance, \cite{AM,FL}. Such a description is intrinsically finite-dimensional in the normal direction and breaks down for surfaces of infinite codimension. One of the advantages of the present approach is that it gives a direct definition of the boundary orientation induced by the orientation of the underlying surface of arbitrary, possibly infinite, codimension. This makes it possible to formulate a Stokes-type identity in a form that remains consistent with the finite-dimensional convention while extending beyond the finite-codimensional setting.

The present paper is also motivated by several problems arising in our earlier work on infinite-dimensional analysis. Variants of Gauss--Green-type formulas were used in our study of the infinite-dimensional \(\overline{\partial}\)-equation \cite{WYZ,YZ20}. In those applications, different versions of the formula were required under different regularity assumptions. The results developed here provide a unified framework for these identities and are intended to serve as basic tools for further problems in infinite-dimensional analysis, including those considered in \cite{YZ-b,YZ-c}.

A further objective is to develop an appropriate differential-form formalism and a corresponding Stokes theorem. For every positive integer \(n\), we define the integral of a differential form of codimension \(n\) over a surface of codimension \(n\), in accordance with the finite-dimensional convention; see, for example, \cite[Definition~3.4, p.~91]{CCL}. This differs from the formulation of Airault and Malliavin \cite{AM}, where differential forms of degree \(n\) are integrated over surfaces of codimension \(n\). Within our framework, the codimension-based formulation leads naturally to a notion of top-degree differential form and to a Stokes-type theorem, including the orientation on the boundary induced by the orientation of the surface.

We restrict the present paper to the regularity framework described above. Gauss--Green formulas for Sobolev functions and the corresponding trace theory have been studied in other infinite-dimensional settings; see, for example, \cite{CL}. The analogous questions in the present framework will be addressed separately.

The paper is organized as follows. Section~2 collects the notation and preliminary results. In Section~3, we introduce the class of surfaces in \(\ell^2\) of arbitrary, possibly infinite, codimension. Section~4 is devoted to the construction and globalization of the corresponding surface measures. In Section~5, we establish a local Gauss--Green-type formula, which is globalized in Section~6. Finally, Section~7 develops the differential-form formalism and proves the Stokes-type theorem for the surfaces considered in this paper.

\section{Preliminaries}

\subsection{Some notions and notations}
Denote by $\mathbb{N}$ and $\mathbb{N}_0$ respectively the sets of positive and nonnegative integers, by $\mathbb{Q}$ the set of all rational numbers, and by $\mathbb{R}$ the
$1$-dimensional (real) Euclidean space. Suppose that $\mathbb{X}$ is a nonempty set and $(\mathbb{X},\cal J)$ is a topological space. The smallest $\sigma$-algebra
generated by $\cal J$ is called the
Borel $\sigma$-algebra of
$\mathbb{X}$, denoted by $\mathscr{B}(\mathbb{X})$. For any nonempty open subset $\mathcal{O}$ of $\mathbb{X}$, we denote by $B_{\mathbb{X}}(\mathcal{O})$ and $B_{\mathbb{X}}^b(\mathcal{O})$ respectively the set of all real-valued Borel measurable functions on $\mathcal{O}$ and the set of all bounded, real-valued Borel measurable functions on $\mathcal{O}$. Recall that $\mathbb{X}$ is called a Lindel\"of space if every open cover of $\mathbb{X}$ has a countable subcover (See \cite[p. 50]{Kel}). From \cite[Theorem 15, p. 49]{Kel}, it follows that every separable metric space is a Lindel\"of space.

Suppose that $X$, $Y$, $Z$ and $W$ are four normed spaces such that $Z\subset X$ and $W\subset Y$, $U$ is a nonempty open subset of $X$, and $g$ is a mapping from $U$ into $Y$. Denote by ${\cal L}(X;Y)$ the class of all bounded linear operators from $X$ into $Y$ (which is a normed space with the usual operator norm), and by $C_X(U;Y)$ the set of all continuous mapping from $U$ to $Y$ (We simply denote $C_X(U;\mathbb{R})$ by $C_X(U)$). For $\textbf{x}\in U$, we recall that $g$ is called Fr\'echet differentiable (from $U$ into $Y$) at $\textbf{x}$, if there exists an element in ${\cal L}(X;Y)$, denoted by  $Dg(\textbf{x})$ (and called the Fr\'echet derivative of $g$ at $\textbf{x}$), such that
$$
\lim\limits_{\Delta \textbf{x}\to \textbf{0}}\frac{||g(\textbf{x}+\Delta \textbf{x})-g(\textbf{x})-Dg(\textbf{x})(\Delta \textbf{x})||_Y}{||\Delta \textbf{x}||_X}=0.
$$
Furthermore,  $g$ is called (continuously) Fr\'{e}chet differentiable from $U$ into $Y$ if $g$ is Fr\'{e}chet differentiable from $U$ into $Y$ at every point of $U$ (and the mapping $\textbf{x}\mapsto Dg(\textbf{x})$ from $U$ into ${\cal L}(X;Y)$ is continuous). Denote by $C_X^1(U;Y)$ the class of all continuously Fr\'{e}chet differentiable mappings from $U$ into $Y$. Inductively, people define $C_X^k(U;Y)$ for each $k\in\mathbb{N}$ and $C_X^{\infty}(U;Y)\triangleq \bigcap\limits_{k=1}^{\infty}C_X^k(U;Y)$. For each $r\in \mathbb{N}\cup\{\infty\}$, we simply denote $C_X^r(U;\mathbb{R})$ by $C_X^r(U)$. The following notions are variants of the classic Fr\'{e}chet derivatives, which will be used in the rest of this paper.

\begin{definition}\label{20241115def1}
For an $\mathbf{x}_0\in U$, if there is an open neighborhood $V_{\mathbf{x}_0}$ of $\mathbf{0}$ in $Z$ so that $\mathbf{x}_0+V_{\mathbf{x}_0}\subset U$ and the mapping $f(\cdot)\triangleq g(\mathbf{x}_0+\cdot)$ defined on $V_{\mathbf{x}_0}$
is Fr\'{e}chet differentiable from $V_{\mathbf{x}_0}$ into $Y$ at $\mathbf{0}$, we say that $g$ is Fr\'{e}chet differentiable along the $Z$-direction at $\mathbf{x}_0$, with derivative
\[
D_{Z}g(\mathbf{x}_0)
\triangleq Df(\mathbf{0}).
\]
Furthermore, for $r\in\mathbb{N}\cup\{\infty\}$, we denote by $C_Z^r(U;Y)$ the set of all mappings $g$ for which $g$ is Fr\'{e}chet differentiable along the $Z$-direction at every $\mathbf{x}_0\in U$, and $g(\mathbf{x}_0+\cdot)\in C_Z^r(V_{\mathbf{x}_0};Y)$ for the above $\mathbf{x}_0$ and $V_{\mathbf{x}_0}$. We shall simply denote $C_Z^r(U;\mathbb{R})$ by $C_Z^r(U)$.
\end{definition}

\begin{definition}\label{20250115def1}
Let \(x_0\in U\). We say that \(g\) is \(W\)-Fr\'echet
differentiable at \(x_0\) if there exists an open neighborhood
\(V\subset U\) of \(x_0\) such that
\[
g(x)-g(x_0)\in W,\qquad x\in V,
\]
and the mapping
\[
V\ni x\longmapsto g(x)-g(x_0)\in W
\]
is Fr\'{e}chet differentiable at \(x_0\).
\end{definition}

For any nonempty set $I\subset \mathbb{N}$ and $p\in[1,+\infty)$, write
 $$
 \mathbb{R}^{I}\triangleq \left\{(x_i)_{i\in I}:\;x_i\in \mathbb{R}\hbox{ for each }i\in I\right\},\quad\ell^p(I)\triangleq \left\{(x_i)_{i\in I}\in \mathbb{R}^{I}:\;\sum_{i\in I}|x_i|^p<\infty\right\}.$$
Then, $\ell^p(I)$ is a separable Banach space with the norm:
$$||\textbf{x}||_{\ell^p(I)}\triangleq  \left(\sum_{i\in I}|x_i|^p\right)^{1/p} ,\quad\forall\;\textbf{x}=(x_i)_{i\in I}\in\ell^p(I).
$$
Particularly, $\ell^2(I)$ is a Hilbert space. For $\textbf{x}\in\ell^2(I)$ and $r\in(0,+\infty)$,
put
 $$
 B_r^I(\textbf{x})\triangleq \left\{\textbf{y}\in\ell^2(I):\;||\textbf{y}-\textbf{x}||_{\ell^2(I)}<r \right\}.
 $$
For the special case that $I= \mathbb{N}$, we write respectively $\mathbb{R}^{\infty},\ell^p$ and $B_r(\textbf{x})$  instead
of $\mathbb{R}^{\mathbb{N}},\ell^p(\mathbb{N})$ and $B_r^{\mathbb{N}}(\textbf{x})$. For abbreviation, we write the inner product in $\ell^2$ as $\left\langle \cdot,\cdot\right\rangle$ (and its norm as $\left| \left|\;\cdot\;\right|  \right|$).

\subsection{Gaussian measures on $\ell^2$}

For any given $a>0$, we define a probability measure $\bn_a$ in $(\mathbb{R},\mathscr{B}(\mathbb{R}))$ by
$$\bn_a(B)\triangleq \frac{1}{\sqrt{2\pi a^2}}\int_{B}e^{-\frac{x^2}{2a^2}}\,\mathrm{d}x,\quad\,\forall\;B\in \mathscr{B}(\mathbb{R}),$$
where $\mathrm{d}x$ is the Lebesgue measure in $\mathbb{R}$.

In the rest of this paper, we fix a sequence $\{a_i\}_{i=1}^{\infty}$ of positive numbers such that
\begin{eqnarray}\label{20250130for1}
\sum_{i=1}^{\infty}a_i <+\infty.
\end{eqnarray}
Now we define a product measure on the space $\mathbb{R}^{\infty}$, endowed with the usual product topology $\mathscr{T}$.  By the discussion in \cite[p. 9]{Da}, $\mathscr{B}(\mathbb{R}^{\infty})$ is precisely the product $\sigma$-algebra generated by all sets in the following forms:
\begin{eqnarray*}
 B_1\times B_2\times \cdots \times B_k\times \mathbb{R}^{\mathbb{N}\setminus\{1,2,\cdots,k\}},
\end{eqnarray*}
where $k\in\mathbb{N}$, $B_i\in \mathscr{B}(\mathbb{R}),\,1\leqslant i\leqslant k$. Also, we fix any $s,t\in(0,+\infty)$, and let
$$\bn^t\triangleq\prod\limits_{i=1}^{\infty}\bn_{ta_i}$$
be the product measure on $(\mathbb{R}^{\infty},\mathscr{B}(\mathbb{R}^{\infty}))$. By the same arguments as in \cite[p. 524]{YZ20}, one can obtain the following two elementary results (and therefore we omit their proofs).
\begin{lemma}\label{230407lem1}
$\mathscr{B}(\ell^2)=\{B\cap\ell^2:\; B\in \mathscr{B}(\mathbb{R}^{\infty})\}$.
\end{lemma}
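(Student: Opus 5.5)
The plan is to prove the two inclusions separately. Throughout, $\ell^2$ carries its norm topology and $\mathbb{R}^{\infty}$ the product topology $\mathscr{T}$.

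For the inclusion ``$\supseteq$'', consider the collection
\[
\mathcal{A}\triangleq\{B\in\mathscr{B}(\mathbb{R}^{\infty}):\; B\cap\ell^2\in\mathscr{B}(\ell^2)\}.
\]
It is a $\sigma$-algebra, since intersecting with $\ell^2$ commutes with complements relative to $\ell^2$ and with countable unions. It therefore suffices to show that $\mathcal{A}$ contains the generating cylinders $B_1\times\cdots\times B_k\times\mathbb{R}^{\mathbb{N}\setminus\{1,\ldots,k\}}$. The coordinate map $\pi_i:\ell^2\to\mathbb{R}$, $\mathbf{x}\mapsto x_i$, satisfies $|x_i-y_i|\leqslant\|\mathbf{x}-\mathbf{y}\|$, so it is continuous and hence Borel. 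The trace of such a cylinder on $\ell^2$ is $\bigcap_{i=1}^k\pi_i^{-1}(B_i)$, which lies in $\mathscr{B}(\ell^2)$. This settles the inclusion.

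For the inclusion ``$\subseteq$'', first observe that $\{B\cap\ell^2:\;B\in\mathscr{B}(\mathbb{R}^{\infty})\}$ is a $\sigma$-algebra on $\ell^2$, the trace $\sigma$-algebra. So it is enough to show that it contains every open subset of $\ell^2$. Since $\ell^2$ is separable metric, hence Lindel\"of (as recalled in the preliminaries), every open set is a countable union of open balls $B_r(\mathbf{x})$. One can take the centres in a countable dense set and the radii rational, though Lindel\"of alone already suffices. It therefore suffices to show $B_r(\mathbf{x})=B\cap\ell^2$ for some $B\in\mathscr{B}(\mathbb{R}^{\infty})$.

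The key step is to exhibit $B$ explicitly. Define $\Phi:\mathbb{R}^{\infty}\to[0,+\infty]$ by
\[
\Phi(\mathbf{y})\triangleq\sum_{i=1}^{\infty}(y_i-x_i)^2=\sup_{n}\sum_{i=1}^{n}(y_i-x_i)^2 .
\]
Each partial sum depends continuously on finitely many coordinates, so it is $\mathscr{T}$-continuous. Hence $\Phi$, as a countable supremum of such sums, is Borel measurable. Set $B\triangleq\Phi^{-1}([0,r^2))\in\mathscr{B}(\mathbb{R}^{\infty})$. Because $\mathbf{x}\in\ell^2$, we have $\Phi(\mathbf{y})<\infty$ if and only if $\mathbf{y}\in\ell^2$. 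Therefore $B\subset\ell^2$, and $B=B\cap\ell^2=B_r(\mathbf{x})$. This also shows incidentally that $\ell^2=\Phi^{-1}([0,\infty))$ is itself a Borel subset of $\mathbb{R}^{\infty}$.

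The only genuinely delicate point, and the one I expect to be the main obstacle, is this middle step. Norm-open sets in $\ell^2$ are not traces of $\mathscr{T}$-open sets, so one cannot argue topologically. Instead one must use the lower semicontinuity (equivalently, the countable-supremum representation) of the $\ell^2$-norm with respect to the product topology, together with Lindel\"of reducibility to countably many balls. Once these are in place, the rest is routine $\sigma$-algebra bookkeeping.
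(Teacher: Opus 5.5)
The paper gives no proof of this lemma; it defers to \cite[p.~524]{YZ20}. Your argument is correct and complete, and it is the standard one. For one inclusion you use continuity of the coordinate maps (equivalently, of the inclusion $\ell^2\hookrightarrow\mathbb{R}^{\infty}$). For the other you write $\|\mathbf{y}-\mathbf{x}\|^2$ as a countable supremum of finite-dimensional cylinder functions and combine this with the Lindel\"of property of $\ell^2$.

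One byproduct is worth stating explicitly. Your construction shows that each ball $B_r(\mathbf{x})$, and $\ell^2$ itself, already lies in $\mathscr{B}(\mathbb{R}^{\infty})$, so $\mathscr{B}(\ell^2)\subset\mathscr{B}(\mathbb{R}^{\infty})$. This is exactly what is needed for the definition $P_t(B)\triangleq\mathcal{N}^t(B)$, $B\in\mathscr{B}(\ell^2)$, in \eqref{220817e1} to make sense.
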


\begin{proposition}\label{230407prop1}
$\bn^t(\ell^2)=1$.
\end{proposition}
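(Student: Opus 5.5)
Since $\mathcal{N}^t$ is a probability measure, it suffices to show that $\mathbb{R}^\infty\setminus\ell^2$ is $\mathcal{N}^t$-null. Equivalently, we show that $\sum_{i=1}^\infty x_i^2<\infty$ for $\mathcal{N}^t$-almost every $\mathbf{x}=(x_i)_{i\in\mathbb{N}}\in\mathbb{R}^\infty$.

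First, $\ell^2$ is a measurable set. The function $\Phi(\mathbf{x})\triangleq\sum_{i=1}^\infty x_i^2\in[0,+\infty]$ is the monotone limit of the partial sums $\Phi_n(\mathbf{x})=\sum_{i=1}^n x_i^2$. Each $\Phi_n$ depends only on finitely many coordinates and is continuous for the product topology $\mathscr{T}$, so it is $\mathscr{B}(\mathbb{R}^\infty)$-measurable. Hence $\Phi$ is measurable, and
$$
\ell^2=\{\Phi<\infty\}\in\mathscr{B}(\mathbb{R}^\infty),
$$
which makes $\mathcal{N}^t(\ell^2)$ meaningful.

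Next, compute the expectation of $\Phi$. By the monotone convergence theorem, and because the $i$-th coordinate has law $\mathcal{N}_{ta_i}$ under the product measure,
$$
\int_{\mathbb{R}^\infty}\Phi\,\mathrm{d}\mathcal{N}^t=\sum_{i=1}^\infty\int_{\mathbb{R}}x^2\,\mathrm{d}\mathcal{N}_{ta_i}(x)=\sum_{i=1}^\infty t^2a_i^2 .
$$
By (\ref{20250130for1}) we have $a_i\to0$, so $a_i\le 1$ for large $i$. Then $a_i^2\le a_i$ eventually, and $\sum_i a_i^2<\infty$. Therefore $\int\Phi\,\mathrm{d}\mathcal{N}^t<\infty$, which forces $\Phi<\infty$ $\mathcal{N}^t$-almost everywhere. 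That is, $\mathcal{N}^t(\ell^2)=1$.

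The only point that needs care is the identification of the marginals. We need that $\int x_i^2\,\mathrm{d}\mathcal{N}^t=\int x^2\,\mathrm{d}\mathcal{N}_{ta_i}$. This holds because the push-forward of $\mathcal{N}^t$ under the $i$-th coordinate projection is $\mathcal{N}_{ta_i}$. To see this, evaluate $\mathcal{N}^t$ on a cylinder set $\mathbb{R}\times\cdots\times B\times\mathbb{R}^{\mathbb{N}\setminus\{1,\dots,i\}}$ with $B$ in the $i$-th slot; the product structure gives $\mathcal{N}^t$ of this set as $\mathcal{N}_{ta_i}(B)$. No further obstacle is expected: the assumption $\sum_i a_i<\infty$ is stronger than the $\sum_i a_i^2<\infty$ actually needed here.
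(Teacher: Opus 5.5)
Your proof is correct and is essentially the argument the paper relies on. The paper omits the proof and refers to \cite[p.~524]{YZ20}, and the same monotone-convergence computation, $\int\sum_i x_i^2\,\mathrm{d}\mathcal{N}^t=t^2\sum_i a_i^2<\infty$ (there with weights $1/c_i$), is what the paper itself uses later to prove $P_t(K_n)\to 1$.
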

Thanks to Lemma \ref{230407lem1} and Proposition \ref{230407prop1}, we obtain a Borel probability measure $P_t$ on $(\ell^2,\mathscr{B}(\ell^2))$ by setting
\begin{eqnarray}\label{220817e1}
P_t(B)\triangleq \bn^t(B),\quad\forall\;B\in \mathscr{B}(\ell^2).
\end{eqnarray}
Since there does not exist a non-trivial translation-invariant measure on $\ell^2$, naturally, we shall consider the dilation and translation properties of $P_t$. The following two results (in Lemma \ref{dilation of Gaussian measure} and Proposition \ref{230421prop1} below) are essentially from \cite[Equation (3), p. 128]{Gro67} and \cite[Proposition 1, p. 127]{Gro67}. Nevertheless, for the reader's convenience, we shall give their proofs.
\begin{lemma}\label{dilation of Gaussian measure}
It holds that
\begin{eqnarray*}
P_{st}(B)=P_t(s^{-1}B),\quad \forall\, B\in \mathscr{B}(\ell^2),
\end{eqnarray*}
where $s^{-1}B\triangleq\{s^{-1}\mathbf{b}:\;\mathbf{b}\in B\}$.
\end{lemma}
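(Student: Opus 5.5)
The plan is to reduce the statement to a computation on cylinder sets and then extend it by the uniqueness of measures agreeing on a $\pi$-system. Define the dilation $\delta_s:\mathbb{R}^\infty\to\mathbb{R}^\infty$, $\delta_s(\mathbf{x})=s\mathbf{x}$. It is a homeomorphism for the product topology $\mathscr{T}$, so it maps $\mathscr{B}(\mathbb{R}^\infty)$ onto itself. It also maps $\ell^2$ onto $\ell^2$. Hence, by Lemma \ref{230407lem1}, for $B\in\mathscr{B}(\ell^2)$ we have $s^{-1}B\in\mathscr{B}(\ell^2)$, and both sides of the claimed identity make sense. By the definition (\ref{220817e1}) and Proposition \ref{230407prop1}, it suffices to prove the identity at the level of $\mathbb{R}^\infty$:
$$
\bn^{st}(B)=\bn^t(s^{-1}B),\qquad \forall\,B\in\mathscr{B}(\mathbb{R}^\infty).
$$
Indeed, for $B\in\mathscr{B}(\ell^2)$, write $B=\tilde B\cap\ell^2$ with $\tilde B\in\mathscr{B}(\mathbb{R}^\infty)$. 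Then $s^{-1}B=(s^{-1}\tilde B)\cap\ell^2$, and since $\ell^2$ has full measure for both $\bn^t$ and $\bn^{st}$, the identity for $\tilde B$ gives the identity for $B$.

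First, I verify the identity in one dimension. For $B_i\in\mathscr{B}(\mathbb{R})$, the substitution $x=sy$ gives
$$
\bn_{sta_i}(B_i)=\frac{1}{\sqrt{2\pi s^2t^2a_i^2}}\int_{B_i}e^{-\frac{x^2}{2s^2t^2a_i^2}}\,\mathrm{d}x=\frac{1}{\sqrt{2\pi t^2a_i^2}}\int_{s^{-1}B_i}e^{-\frac{y^2}{2t^2a_i^2}}\,\mathrm{d}y=\bn_{ta_i}(s^{-1}B_i).
$$
Next, consider the cylinder sets $C=B_1\times\cdots\times B_k\times\mathbb{R}^{\mathbb{N}\setminus\{1,\dots,k\}}$. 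For these, $s^{-1}C=s^{-1}B_1\times\cdots\times s^{-1}B_k\times\mathbb{R}^{\mathbb{N}\setminus\{1,\dots,k\}}$. The defining property of the product measure then gives
$$
\bn^{st}(C)=\prod_{i=1}^k\bn_{sta_i}(B_i)=\prod_{i=1}^k\bn_{ta_i}(s^{-1}B_i)=\bn^t(s^{-1}C).
$$

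Finally, the map $B\mapsto\bn^t(s^{-1}B)$ is the image measure $\bn^t\circ\delta_s^{-1}$, so it is a probability measure on $\mathscr{B}(\mathbb{R}^\infty)$. The cylinder sets form a $\pi$-system that generates $\mathscr{B}(\mathbb{R}^\infty)$ (as recalled from \cite[p.~9]{Da}). By Dynkin's $\pi$-$\lambda$ theorem, the two probability measures $\bn^{st}$ and $\bn^t\circ\delta_s^{-1}$ coincide on all of $\mathscr{B}(\mathbb{R}^\infty)$.

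There is no real obstacle in this argument. The only point needing care is the measurability bookkeeping in the first paragraph: checking that $s^{-1}B$ is Borel in $\ell^2$, and that intersecting with $\ell^2$ commutes with the dilation. Both follow because $\delta_s$ is a bijection that preserves $\ell^2$ and is a homeomorphism of $(\mathbb{R}^\infty,\mathscr{T})$.
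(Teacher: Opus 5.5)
Your proposal is correct and follows essentially the same route as the paper: you verify the identity on cylinder sets by the one-dimensional substitution $x=sy$, extend it with Dynkin's $\pi$-$\lambda$ theorem, and pass to $\ell^2$ through Lemma~\ref{230407lem1}. The only difference is that you run the uniqueness argument for the image measure $\bn^t\circ\delta_s^{-1}$ on $\mathscr{B}(\mathbb{R}^\infty)$ and then restrict using $\bn^t(\ell^2)=\bn^{st}(\ell^2)=1$, whereas the paper applies the good-sets argument directly to the trace $\sigma$-algebra on $\ell^2$; your bookkeeping is, if anything, the more careful of the two.
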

\begin{proof}
Let $\mathscr{A}\triangleq \{E\in \mathscr{B}(\ell^2):\;P_{ts}(E)=P_t(s^{-1}E)\}$. By the $\pi$-$\lambda$ theorem, $\mathscr{A}$ is a $\sigma$-algebra. Denote by $\mathscr{C}$ the family of sets in the following form:
\begin{eqnarray*}
\{(x_i)_{i\in\mathbb{N}}\in\ell^2:b_i<x_i<d_i,\,1\leqslant i\leqslant n\},
\end{eqnarray*}
where $n\in \mathbb{N}$ and $b_i,d_i\in\mathbb{R}$ with $b_i<d_i$ for $1\leqslant i\leqslant n$.
For each set $E$ in the above form, it follows that
\begin{eqnarray*}
P_t(s^{-1}E)=\prod_{i=1}^{n}\left(\frac{1}{\sqrt{2\pi t^2a_i^2}}\int_{s^{-1}b_i}^{s^{-1}d_i}e^{-\frac{x_i^2}{2t^2a_i^2}}\,\mathrm{d}x_i\right)
=\prod_{i=1}^{n}\left(\frac{1}{\sqrt{2\pi s^2t^2a_i^2}}\int_{ b_i}^{ d_i}e^{-\frac{y_i^2}{2s^2t^2a_i^2}}\,\mathrm{d}y_i\right)=P_{st}(E).
\end{eqnarray*}
Thus $\mathscr{C}\subset \mathscr{A}$ and $\{\ell^2\cap \left(B_1\times B_2\times \cdots \times B_k\times  \mathbb{R}^{\mathbb{N}\setminus\{1,2,\cdots,k\}}\right):\;B_i\in \mathscr{B}(\mathbb{R}),\,1\leqslant i\leqslant k\}\subset \mathscr{A}$ for each $k\in\mathbb{N}$. Hence $\{B\cap\ell^2:\; B\in \mathscr{B}(\mathbb{R}^{\infty})\}\subset\mathscr{A}$. By Lemma \ref{230407lem1},
$\mathscr{B}(\ell^2)=\{B\cap\ell^2:\; B\in \mathscr{B}(\mathbb{R}^{\infty})\}$, we have $\mathscr{B}(\ell^2)\subset \mathscr{A}$. Therefore, $\mathscr{B}(\ell^2)= \mathscr{A}$ and this completes the proof of Lemma \ref{dilation of Gaussian measure}.
\end{proof}
For each $\textbf{x}=(x_i)_{i\in\mathbb{N}}\in\ell^2$ and $A\subset \mathbb{R}^{\infty}$, define $A+\textbf{x}\triangleq\{\textbf{a}+\textbf{x}:\;\textbf{a}\in A\}$ and
\begin{eqnarray*}
P_t(B,\textbf{x})&\triangleq &P_t(B+\textbf{x}),\quad\forall\, B\in \mathscr{B}(\ell^2), \\
\bn^t(E,\textbf{x})&\triangleq &\bn^t(E+\textbf{x}),\quad\forall\, E\in \mathscr{B}(\mathbb{R}^{\infty}).
\end{eqnarray*}
Then, by \eqref{220817e1}, $P_t(\cdot,\textbf{x})$ ({\it resp. } $\bn^t(\cdot,\textbf{x})$) is a measure on $(\ell^2,\mathscr{B}(\ell^2))$ ({\it resp. } $(\mathbb{R}^{\infty},\mathscr{B}(\mathbb{R}^{\infty}))$), and
\begin{eqnarray}\label{250210e1}
P_t(B,\textbf{x})\triangleq \bn^t(B,\textbf{x}),\quad\forall\;B\in \mathscr{B}(\ell^2).
\end{eqnarray}

\begin{proposition}\label{230421prop1}
Suppose that $\mathbf{x}_1=(x_{i,1})_{i\in\mathbb{N}},\mathbf{x}_2=(x_{i,2})_{i\in\mathbb{N}}\in\ell^2$.
If $t\not=s$ or $\sum\limits_{i=1}^{\infty}\frac{(x_{i,1}-x_{i,2})^2}{ a_i^2} =\infty$, then the measures $P_t(\cdot,\mathbf{x}_1)$ and $P_s(\cdot,\mathbf{x}_2)$ are mutually singular. Otherwise, the measures $P_t(\cdot,\mathbf{x}_1)$ and $P_s(\cdot,\mathbf{x}_2)$ are equivalent, and their
Radon-Nikod\'ym derivative satisfies that
\begin{eqnarray}\label{230702e1}
\frac{\mathrm{d} P_t(\cdot,s_1\mathbf{e}_i)}{\mathrm{d}P_t(\cdot)}(\mathbf{y})=e^{-\frac{2s_1 y_i +s_1^2}{2t^2a_i^2}},\quad\forall\,i\in\mathbb{N},\, s_1\in\mathbb{R},
\end{eqnarray}
and
\begin{eqnarray}\label{230702e2}
\frac{\mathrm{d} P_t(\cdot,s_1\mathbf{e}_1+\cdots+s_n\mathbf{e}_n)}{\mathrm{d}P_t(\cdot)}(\mathbf{y})=\prod_{k=1}^{n}e^{-\frac{2s_k y_k +s_k^2}{2t^2a_k^2}},\quad\forall\,n\in\mathbb{N},\, s_1,\cdots,s_n\in\mathbb{R},
\end{eqnarray}
for a.e. $\mathbf{y}=(y_i)_{i\in\mathbb{N}}\in \ell^2$ (with respect to $P_t$).
\end{proposition}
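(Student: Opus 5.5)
The plan is to reduce everything to statements about the product measure $\bn^t$ on $\mathbb{R}^\infty$ via \eqref{250210e1} and Lemma \ref{230407lem1}, and then to use Kakutani's dichotomy for infinite products of equivalent probability measures. Write $P_t(\cdot,\mathbf{x}_1)$ as the image of $\bn^t$ under translation by $-\mathbf{x}_1$; this is again a product measure $\prod_i \bn_{ta_i}(\cdot + x_{i,1})$, i.e. the product of the one-dimensional Gaussians $N(-x_{i,1},t^2a_i^2)$. The same holds for $P_s(\cdot,\mathbf{x}_2)$. Since both measures are concentrated on $\ell^2$ (Proposition \ref{230407prop1} together with $\mathbf{x}_j\in\ell^2$), equivalence or singularity on $\mathscr{B}(\mathbb{R}^\infty)$ transfers to $\mathscr{B}(\ell^2)$.

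Next I apply Kakutani's theorem. The factors are pairwise equivalent Gaussians, so the products are equivalent when $\prod_i H_i>0$ and mutually singular when $\prod_i H_i=0$, where $H_i$ is the Hellinger affinity of $N(m_1,\sigma_1^2)$ and $N(m_2,\sigma_2^2)$:
\[
H_i=\sqrt{\tfrac{2\sigma_1\sigma_2}{\sigma_1^2+\sigma_2^2}}\;e^{-\frac{(m_1-m_2)^2}{4(\sigma_1^2+\sigma_2^2)}} .
\]
With $\sigma_1=ta_i$ and $\sigma_2=sa_i$, the first factor equals the constant $\sqrt{2ts/(t^2+s^2)}$. This constant is $<1$ whenever $t\neq s$, so the product vanishes and the measures are singular. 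If $t=s$, then $H_i=\exp(-(x_{i,1}-x_{i,2})^2/(8t^2a_i^2))$, and $\prod_i H_i>0$ exactly when $\sum_i (x_{i,1}-x_{i,2})^2/a_i^2<\infty$. This gives the stated dichotomy.

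If one prefers to avoid quoting Kakutani, the singular case for $t\ne s$ can be done directly. By the strong law of large numbers, $\frac1n\sum_{i\le n}(y_i+x_{i,1})^2/a_i^2\to t^2$ holds $P_t(\cdot,\mathbf{x}_1)$-a.s. Since $\mathbf{x}_1\in\ell^2$ while $a_i\to 0$, I will need a slightly more careful version: use the set of $\mathbf{y}$ for which $\frac1n\sum_{i\le n}y_i^2/a_i^2\to t^2$. The shift contributes $x_{i,1}^2/a_i^2$, which may be unbounded, so this is the delicate point. This is exactly why I would rely on Kakutani, which handles means and variances simultaneously. I expect the main obstacle to be justifying Kakutani's theorem on $\mathbb{R}^\infty$ if it cannot simply be cited. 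If needed, I would prove it through the martingale $L_n=\prod_{i\le n}\frac{d\mu_i}{d\nu_i}$: $\mathbb{E}\sqrt{L_n}=\prod_{i\le n}H_i$, and the martingale converges in $L^1$ or to $0$ according as this product stays positive or not.

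Finally, formulas \eqref{230702e1} and \eqref{230702e2} are direct one-dimensional computations. For the shift $s_1\mathbf{e}_i$, the measure $P_t(\cdot,s_1\mathbf{e}_i)$ differs from $P_t$ only in the $i$-th factor, which becomes $N(-s_1,t^2a_i^2)$. The ratio of the densities is $e^{-((y_i+s_1)^2-y_i^2)/(2t^2a_i^2)}=e^{-(2s_1y_i+s_1^2)/(2t^2a_i^2)}$. To verify that this is the Radon--Nikod\'ym derivative, I check the identity on the cylinder sets $\mathscr{C}$ used in the proof of Lemma \ref{dilation of Gaussian measure}, using Fubini over the finitely many coordinates involved. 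I then extend to all of $\mathscr{B}(\ell^2)$ by the $\pi$-$\lambda$ theorem, exactly as in that proof. The finite-shift formula \eqref{230702e2} follows in the same way, since only coordinates $1,\dots,n$ are affected and the one-dimensional factors multiply.
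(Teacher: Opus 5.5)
Your proposal is correct and follows the paper's proof essentially step by step. You reduce to the product measure $\bn^t$ on $\mathbb{R}^\infty$ via \eqref{250210e1}, apply Kakutani's theorem with the one-dimensional Hellinger integral $\sqrt{2ts/(t^2+s^2)}\,e^{-(x_{i,1}-x_{i,2})^2/(4(t^2+s^2)a_i^2)}$, and read off \eqref{230702e1}--\eqref{230702e2} from the density ratio in the shifted coordinate; the only cosmetic difference is that you verify this density identity on cylinder sets and extend by the $\pi$-$\lambda$ theorem, whereas the paper applies Fubini directly to the splitting $P_t=\bn_{ta_i}\times P_t^{\widehat{i}}$ for an arbitrary Borel set.
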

\begin{proof}
By \cite[Theorem, p. 218]{Kak},
\begin{eqnarray*}
H(\bn^t(\cdot,\textbf{x}_1),\bn^s(\cdot,\textbf{x}_2))=\prod_{i=1}^{\infty}H(\bn_{ta_i}(\cdot,x_{i,1}),\bn_{sa_i}(\cdot,x_{i,2})),
\end{eqnarray*}
where $\bn_a(E,x)\triangleq \bn_a(E+x)$ for all $E\in \mathscr{B}(\mathbb{R})$ and $x\in\mathbb{R}$, and $H(\cdot,\cdot)$ stands for the Hellinger integral of two measures. Note that for each $i\in\mathbb{N}$, we have
\begin{eqnarray*}
&&H(\bn_{ta_i}(\cdot,x_{i,1}),\bn_{sa_i}(\cdot,x_{i,2}))=\int_{\mathbb{R}}\frac{1}{\sqrt{2\pi sta_i^2}}e^{-\frac{(x+x_{i,1})^2}{4t^2a_i^2}-\frac{(x+x_{i,2})^2}{4s^2a_i^2}}\,\mathrm{d}x\\
&&=\int_{\mathbb{R}}\frac{1}{\sqrt{2\pi sta_i^2}}e^{-\frac{x^2}{4t^2a_i^2}-\frac{(x-x_{i,1}+x_{i,2})^2}{4s^2a_i^2}}\,\mathrm{d}x
=\int_{\mathbb{R}}\frac{1}{\sqrt{2\pi sta_i^2}}e^{-\frac{(t^2+s^2)x^2-2t^2(x_{i,1}-x_{i,2})x+t^2(x_{i,1}-x_{i,2})^2}{4t^2s^2a_i^2}}\,\mathrm{d}x\\
&&=\int_{\mathbb{R}}\frac{1}{\sqrt{2\pi sta_i^2}}e^{-\frac{ x^2-\frac{2t^2(x_{i,1}-x_{i,2})x}{t^2+s^2}+\frac{t^2(x_{i,1}-x_{i,2})^2}{t^2+s^2}}{\frac{4t^2s^2a_i^2}{t^2+s^2}}}\,\mathrm{d}x=\frac{\sqrt{2\pi \frac{2t^2s^2a_i^2}{t^2+s^2}}}{\sqrt{2\pi sta_i^2}}\cdot e^{-\frac{ -\left(\frac{t^2(x_{i,1}-x_{i,2})}{t^2+s^2}\right)^2+\frac{t^2(x_{i,1}-x_{i,2})^2}{t^2+s^2}}{\frac{4t^2s^2a_i^2}{t^2+s^2}}}\\
&&=\sqrt{\frac{2ts}{t^2+s^2}}\cdot e^{-\frac{(x_{i,1}-x_{i,2})^2}{4(t^2+s^2)a_i^2} }.
\end{eqnarray*}
If $t\neq s$, then $H(\bn_{ta_i}(\cdot,x_{i,1}),\bn_{sa_i}(\cdot,x_{i,2}))\leqslant\sqrt{\frac{2ts}{t^2+s^2}}<1$ for each $i\in\mathbb{N}$, and hence $$H(\bn^t(\cdot,\textbf{x}_1),\bn^s(\cdot,\textbf{x}_2))=0.$$ By \cite[Theorem, p. 218]{Kak}, $\bn^t(\cdot,\textbf{x}_1)$ and $\bn^s(\cdot,\textbf{x}_2)$ are mutually singular, and hence, by (\ref{250210e1}), so are $P_t(\cdot,\textbf{x}_1)$ and $P_s(\cdot,\textbf{x}_2)$.

If $t= s$, then $H(\bn_{ta_i}(\cdot,x_{i,1}),\bn_{ta_i}(\cdot,x_{i,2}))=e^{-\frac{(x_{i,1}-x_{i,2})^2}{8t^2a_i^2} }$ for each $i\in\mathbb{N}$ and hence $$
H(\bn^t(\cdot,\textbf{x}_1),\bn^t(\cdot,\textbf{x}_2))>0
$$
 if and only if
\begin{eqnarray*}
\sum_{i=1}^{\infty}\frac{(x_{i,1}-x_{i,2})^2}{ a_i^2} <\infty.
\end{eqnarray*}
In this case, by \cite[Theorem, p. 218]{Kak} again, $\bn^t(\cdot,\textbf{x}_1)$ and $\bn^s(\cdot,\textbf{x}_2)$ are equivalent, and hence, by (\ref{250210e1}) again, so are $P_t(\cdot,\textbf{x}_1)$ and $P_s(\cdot,\textbf{x}_2)$. It remains to prove \eqref{230702e1} and \eqref{230702e2}.
For simplicity, we only give the proof of \eqref{230702e1}. Note that for any $E\in \mathscr{B}(\ell^2)$, $i\in\mathbb{N}$ and $s_1\in\mathbb{R}$, we have
\begin{eqnarray}\label{230702e3}
P_t(E,s_1\textbf{e}_i)
&=&P_t(E+s_1\textbf{e}_i)
=\int_{\ell^2}\chi_{E}(\textbf{y})\,\mathrm{d}P_t(\textbf{y}+s_1\textbf{e}_i)
=\int_{\mathbb{R}}\int_{ \ell^{2}(\mathbb{N}\setminus\{i\})}\chi_{E}(\textbf{y})\,\mathrm{d}P_t^{\widehat{i}}(\textbf{y}^i)\mathrm{d}\bn_{ta_i}(y_i+s_1)\nonumber\\
&=&\int_{\mathbb{R}}\int_{\ell^{2}(\mathbb{N}\setminus\{i\})}\chi_{E}(\textbf{y})\,\mathrm{d}P_t^{\widehat{i}}(\textbf{y}^i)\frac{1}{\sqrt{2\pi t^2 a_i^2}}e^{-\frac{(y_i+s_1)^2}{2t^2a_i^2}}\mathrm{d}y_i\\
&=&\int_{\mathbb{R}}\int_{ \ell^{2}(\mathbb{N}\setminus\{i\})}\chi_{E}(\textbf{y})\cdot e^{-\frac{2s_1 y_i \nonumber +s_1^2}{2t^2a_i^2}}\,\mathrm{d}P_t^{\widehat{i}}(\textbf{y}^i)\mathrm{d}\bn_{ta_i}(y_i)
=\int_{E} e^{-\frac{2s_1 y_i +s_1^2}{2t^2a_i^2}}\,\mathrm{d}P_t(\textbf{y}),\nonumber
\end{eqnarray}
where $\textbf{y}^i=(y_j)_{j\in \mathbb{N}\setminus \{i\}}\in \ell^{2}(\mathbb{N}\setminus\{i\})$ and $P^{\widehat{i}}_t$ is the product measure without the $i$-th component, i.e., it is the restriction of the product measure $\Pi_{j\in\mathbb{N}\setminus\{i\}}\bn_{ta_j}$ on $\left(\ell^{2}(\mathbb{N}\setminus\{i\}),\mathscr{B}\big(\ell^{2}(\mathbb{N}\setminus\{i\})\big)\right)$ (as that in (\ref{220817e1})), $a_{i}$ is the $i$-th element in the sequence $\{a_j\}_{j=1}^\infty$ given in the beginning of this section. Since we can view $\ell^2$ as $\mathbb{R}\times \ell^{2}(\mathbb{N}\setminus\{i\})$, we have $P_t =\bn_{ta_i}\times P_t^{\widehat{i}}$ which implies the third equality in \eqref{230702e3}. The fourth equality follows from the one dimensional translation formula $\mathrm{d}\bn_{ta_i}(y_i+s_1)=\frac{1}{\sqrt{2\pi t^2 a_i^2}}e^{-\frac{(y_i+s_1)^2}{2t^2a_i^2}}\mathrm{d}y_i$.
The conclusion \eqref{230702e1} follows from \eqref{230702e3}. This completes the proof of Proposition \ref{230421prop1}.
\end{proof}


\subsection{Partial derivatives, some notions of continuous functions, and convolution on $\ell^2$}
For each $k\in\mathbb{N}$, set $\textbf{e}_k\triangleq(\delta_{k,i})_{i\in\mathbb{N}}$ where $\delta_{k,i}\triangleq0$ if $k\neq i$ and $\delta_{k,i}\triangleq 1$ if $k=i$. Then $\{\textbf{e}_k\}_{k=1}^{\infty}$ is an orthonormal basis of $\ell^2$. Suppose that $O$ is a nonempty open subset of $\ell^2$ and $h$ is a real-valued function defined on $O$. As in \cite[p. 528]{YZ20}, for each $\textbf{x}=\sum\limits_{j=1}^{\infty}x_j\textbf{e}_j\in O$ (here and henceforth $x_j\in \mathbb{R}$ for each $j\in \mathbb{N}$), we define the first order partial derivative of $h(\cdot)$ (at $\textbf{x}$) as follows:
$$
\begin{array}{ll}
\displaystyle\frac{\partial h(\textbf{x})}{\partial x_j}\triangleq\lim_{\mathbb{R}\ni \tau\to 0}\frac{h(\textbf{x}+\tau\textbf{e}_j )-h(\textbf{x})}{\tau},\quad \forall\,\,j\in\mathbb{N}.
\end{array}
$$
By induction, we can define the higher order partial derivatives of $h(\cdot)$ (at $\textbf{x}$). For each $\textbf{x}\in O$, $n\in\mathbb{N}$ and nonzero finite-dimensional subspace $L$ of $\ell^2$, write
\begin{eqnarray*}
&&O_{\textbf{x}}^n\triangleq\{ s\in\mathbb{R}:\textbf{x}+s\textbf{e}_n\in O\},\qquad O_{\textbf{x},n}\triangleq\{(s_1,\cdots,s_n)\in\mathbb{R}^n:\textbf{x}+s_1\textbf{e}_1+\cdots+s_n\textbf{e}_n\in O\},\\[2mm]
&&O_{\textbf{x}}(L)\triangleq\{\textbf{y}\in L:\textbf{x}+\textbf{y} \in O\}.
\end{eqnarray*}
Then $O_{\textbf{x}}^n$ is a nonempty open subset of $\mathbb{R}$, $O_{\textbf{x},n}$ is a nonempty open subset of $\mathbb{R}^n$ and $O_{\textbf{x}}(L)$ is a nonempty open subset of $L$.
\begin{definition}\label{230407def1}
We say that $h$ is $\mathcal{S}$-continuous, if for any $\mathbf{x}\in O$ and $n\in\mathbb{N}$ the following function (on $O_{\textbf{x}}^n$):
\begin{eqnarray*}
 s\mapsto h(\mathbf{x}+ s\mathbf{e}_n),\quad\forall\, s\in O_{\mathbf{x}}^n,
\end{eqnarray*}
is continuous. We say that $h$ is $\mathcal{F}$-continuous, if for any $\mathbf{x}\in O$ and $n\in\mathbb{N}$ the following function  (on $O_{\mathbf{x},n}$):
\begin{eqnarray*}
(s_1,\cdots,s_n)\mapsto h(\mathbf{x}+s_1\mathbf{e}_1+\cdots+s_n\mathbf{e}_n),\quad\forall\,(s_1,\cdots,s_n)\in O_{\mathbf{x},n},
\end{eqnarray*}
is continuous. We say that $h$ is $\mathcal{G}$-continuous, if for any $\mathbf{x}\in O$ and nonzero finite-dimensional subspace $L$ of $\ell^2$ the following function (on $O_{\mathbf{x}}(L)$):
\begin{eqnarray*}
\mathbf{y}\mapsto h(\mathbf{x}+\mathbf{y}),\quad\forall\,\mathbf{y}\in O_{\mathbf{x}}(L),
\end{eqnarray*}
is continuous.
\end{definition}

In what follows, we shall denote by $C_\mathcal{S}(O)$, $C_\mathcal{F}(O)$ and $C_\mathcal{G}(O)$ respectively  the collection of all $\mathcal{S}$-continuous, $\mathcal{F}$-continuous and $\mathcal{G}$-continuous functions on $O$.
Furthermore, for each $k\in\mathbb{N}$, we denote by $C_\mathcal{F}^k(O)$ all functions in $C_\mathcal{F}(O)$ whose partial derivatives up to order $k$ exist and belong to $C_\mathcal{F}(O)$, and write $C_\mathcal{F}^{\infty}(O)\triangleq \bigcap\limits_{k=1}^{\infty}C_\mathcal{F}^k(O)$. Similarly, we can define $C_\mathcal{S}^r(O)$ and $C_\mathcal{G}^r(O)$ for $r\in\mathbb{N}\cup\{\infty\}$.

\begin{remark}
In the above notations, the letter ``$\mathcal{S}$" stands for ``separately" and the $\mathcal{S}$-continuous functions are also called separately continuous functions (See \cite{CSV}); while the letter ``$\mathcal{F}$" ({\it resp. } ``$\mathcal{G}$") stands for ``special finite-dimensional" ({\it resp. }general finite-dimensional"), because we can view $\mathcal{F}$-continuous ({\it resp. }``$\mathcal{G}$"-continuous) functions as functions that are continuous on special ({\it resp. }general) finite-dimensional subspaces.
\end{remark}

For each $n\in\mathbb{N}$, let \(\mathscr S_n\) be the family of all sets of the form
\[
\{\mathbf{x}+s_1\mathbf{e}_1+\cdots+s_n\mathbf{e}_n:(s_1,\cdots,s_n)\in V\},
\]
where \(\mathbf{x}\in O\) and \(V\) is an open subset of \(O_{\mathbf{x},n}\).
By \cite[Theorem 11, p. 47]{Kel}, $\mathscr{S}_n$ is a base for some topology on $O$, denoted by $\mathscr{T}_n$. The usual topology (on $O$) generated by the $\ell^2$-norm will be denoted by $\mathscr{T}_{O}$. Let $ \mathscr{T}\triangleq \bigcap\limits_{n=1}^{\infty}\mathscr{T}_{n}$. Then $\mathscr{T}$ is also a topology space on $O$. The relationships between these topologies will be described in Proposition \ref{230703prop1} later.

We have the following simple result.

\begin{lemma}\label{230703lem1}
A real-valued function $f$ on $O$ is $\mathcal{F}$-continuous if and only if it is continuous with respect to $\mathscr{T}$.
\end{lemma}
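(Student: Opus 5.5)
The plan is to reduce the statement to a comparison at each fixed level $n$, and then pass to the intersection $\mathscr{T}=\bigcap_{n}\mathscr{T}_n$. For a function $f:O\to\mathbb{R}$ and a fixed $n$, I will show that $f$ is continuous with respect to $\mathscr{T}_n$ if and only if, for every $\mathbf{x}\in O$, the map
\[
\varphi_{\mathbf{x},n}(s_1,\cdots,s_n)\triangleq f(\mathbf{x}+s_1\mathbf{e}_1+\cdots+s_n\mathbf{e}_n)
\]
is continuous on $O_{\mathbf{x},n}$.

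\emph{The level-$n$ equivalence.} The basic observation is that the affine map $\iota_{\mathbf{x}}:O_{\mathbf{x},n}\to O$, $(s_1,\cdots,s_n)\mapsto\mathbf{x}+\sum_{k\le n}s_k\mathbf{e}_k$, is a homeomorphism of $O_{\mathbf{x},n}$ onto the slice $\mathbf{x}+\span\{\mathbf{e}_1,\cdots,\mathbf{e}_n\}$ (intersected with $O$), where the slice carries the subspace topology from $\mathscr{T}_n$.
\begin{itemize}
\item Each slice is $\mathscr{T}_n$-open, since it is itself a member of $\mathscr{S}_n$.
\item Two slices are either equal or disjoint.
\item A basic set $\{\mathbf{y}+\sum s_k\mathbf{e}_k:(s_k)\in V\}$ with $\mathbf{y}$ in the slice of $\mathbf{x}$ corresponds under $\iota_{\mathbf{x}}$ to a translate of $V$, which is open in $O_{\mathbf{x},n}$.
\end{itemize}
Hence $(O,\mathscr{T}_n)$ is the disjoint union of these open slices, and each slice is homeomorphic to an open subset of $\mathbb{R}^n$. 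It follows that $f$ is $\mathscr{T}_n$-continuous if and only if its restriction to every slice is continuous, that is, if and only if every $\varphi_{\mathbf{x},n}$ is continuous.

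\emph{From the levels to $\mathscr{T}$.} Next I check that the topologies are nested: $\mathscr{T}_{n+1}\subset\mathscr{T}_n$. Every $\mathscr{S}_{n+1}$-basic set $W$ meets each $n$-slice in a set whose image in $\mathbb{R}^n$ is a coordinate section of an open set of $\mathbb{R}^{n+1}$, hence open. So $W\in\mathscr{T}_n$.

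\emph{The direction ``$\mathscr{T}$-continuous $\Rightarrow$ $\mathcal{F}$-continuous''.} Since $\mathscr{T}\subset\mathscr{T}_n$ for every $n$, a $\mathscr{T}$-continuous $f$ is $\mathscr{T}_n$-continuous for all $n$. By the level-$n$ equivalence, every $\varphi_{\mathbf{x},n}$ is continuous, which is exactly $\mathcal{F}$-continuity.

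\emph{The direction ``$\mathcal{F}$-continuous $\Rightarrow$ $\mathscr{T}$-continuous''.} This is the step I expect to be the main obstacle. If $f$ is $\mathcal{F}$-continuous, the level-$n$ equivalence shows that $f$ is $\mathscr{T}_n$-continuous for every $n$. So for an open $G\subset\mathbb{R}$, the preimage $f^{-1}(G)$ lies in $\mathscr{T}_n$ for every $n$, and therefore in $\bigcap_n\mathscr{T}_n=\mathscr{T}$.

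The delicate point here is that continuity with respect to an intersection of topologies is not automatic in general. It works in this case only because the target topology on $\mathbb{R}$ is fixed, so the preimage condition can be verified one topology at a time. Consequently, the only genuinely technical work in the proof is the slice-homeomorphism description of $\mathscr{T}_n$.
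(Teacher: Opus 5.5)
Your proof is correct and follows the paper's argument: for $\mathcal{F}$-continuous $f$, preimages of open sets lie in every $\mathscr{T}_n$ and hence in $\mathscr{T}=\bigcap_n\mathscr{T}_n$, while conversely $\mathscr{T}\subset\mathscr{T}_n$ gives continuity on each $n$-slice. Your slice-homeomorphism description of $\mathscr{T}_n$ simply spells out the level-$n$ equivalence that the paper uses implicitly, and your nestedness step $\mathscr{T}_{n+1}\subset\mathscr{T}_n$ is not needed, since $\mathscr{T}\subset\mathscr{T}_n$ follows directly from the definition of $\mathscr{T}$ as an intersection.
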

\begin{proof}
Suppose that $f$ is an $\mathcal{F}$-continuous function on $O$. Then, for each open subset $U$ of $\mathbb{R}$, by Definition \ref{230407def1}, $f^{-1}(U)\in \mathscr{T}_{n}$ for each $n\in\mathbb{N}$, and hence $f^{-1}(U)\in \mathscr{T}$. Therefore, $f$ is continuous with respect to $\mathscr{T}$.

Conversely, suppose that $f$ is continuous with respect to $\mathscr{T}$. Then for each $n\in\mathbb{N}$, $\textbf{x}\in O$ and open subset $U$ of $\mathbb{R}$, we have $f^{-1}(U)\in \mathscr{T}\subset\mathscr{T}_{n}$. This implies that the following mapping
\begin{eqnarray*}
(s_1,\cdots,s_n)\mapsto f(\textbf{x}+s_1\textbf{e}_1+\cdots+s_n\textbf{e}_n),\quad\forall\,(s_1,\cdots,s_n)\in O_{\textbf{x},n},
\end{eqnarray*}
is continuous. Therefore, $f$ is $\mathcal{F}$-continuous. This completes the proof of Lemma \ref{230703lem1}.
\end{proof}

Further, we have the following result.

\begin{proposition}\label{230714lem1}
It holds that
$
C_{\ell^2}(O)\subsetneqq C_{\mathcal{G}}(O)\subsetneqq C_{\mathcal{F}}(O)\subsetneqq C_{\mathcal{S}}(O).
$
\end{proposition}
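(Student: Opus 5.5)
The plan is to prove the three inclusions first and then give one counterexample for each strictness claim. For the inclusions: if $h$ is norm-continuous on $O$, then its restriction to any affine slice $\mathbf{x}+O_{\mathbf{x}}(L)$ is continuous. This is because the map $\mathbf{y}\mapsto\mathbf{x}+\mathbf{y}$ from $L$ (with any norm, all being equivalent) into $\ell^2$ is continuous. Hence $C_{\ell^2}(O)\subset C_{\mathcal G}(O)$. Taking $L=\span\{\mathbf{e}_1,\dots,\mathbf{e}_n\}$ gives $C_{\mathcal G}(O)\subset C_{\mathcal F}(O)$. For $C_{\mathcal F}(O)\subset C_{\mathcal S}(O)$, observe that for fixed $n$ the map $s\mapsto h(\mathbf{x}+s\mathbf{e}_n)$ is the restriction of the continuous function $(s_1,\dots,s_n)\mapsto h(\mathbf{x}+s_1\mathbf{e}_1+\cdots+s_n\mathbf{e}_n)$ to the line $s_1=\cdots=s_{n-1}=0$, so it is continuous.

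For strictness, all counterexamples are localized in a small ball $B_r(\mathbf{x}_0)\subset O$, so after translating we may assume $\mathbf{0}\in O$ and $B_{2r}(\mathbf{0})\subset O$.

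\emph{$C_{\mathcal F}\neq C_{\mathcal S}$.} I would use the classical separately continuous function $h(\mathbf{x})=\frac{x_1x_2}{x_1^2+x_2^2}$ (with $h=0$ when $x_1=x_2=0$), multiplied by a cutoff if needed. It is continuous in each single coordinate direction, but its restriction to $(s_1,s_2)$ is discontinuous at the origin.

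\emph{$C_{\mathcal G}\neq C_{\mathcal F}$.} Here I need a function that is continuous on every coordinate slice but not on some non-coordinate finite-dimensional slice, for instance along the line through $\mathbf{0}$ in the direction $\mathbf{v}=\sum_j 2^{-j}\mathbf{e}_j$. A candidate is $h(\mathbf{x})=\varphi(\mathbf{x})$, where $\varphi$ is $1$ on the ray $\{s\mathbf{v}:s>0\}$ and vanishes off a thin ``tube'' around it that shrinks near $\mathbf{0}$. The key point is that every coordinate slice $\mathbf{x}+\span\{\mathbf{e}_1,\ldots,\mathbf{e}_n\}$ meets the ray $\mathbb{R}\mathbf{v}$ in at most one point, because $\mathbf{v}$ has infinitely many nonzero coordinates. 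So I will choose the function to depend on a tail-distance: set
\[
h(\mathbf{x})=\psi\Big(\frac{\operatorname{dist}(\mathbf{x},\mathbb{R}_+\mathbf{v})}{\langle\mathbf{x},\mathbf{v}\rangle^{2}}\Big),\qquad \mathbf{x}\notin\span\{\mathbf{v}\}^{\perp}\cup\{\mathbf 0\},
\]
with $\psi$ a continuous bump equal to $1$ at $0$ and vanishing on $[1,\infty)$, and $h=0$ elsewhere. Along the line $s\mapsto s\mathbf{v}$ the function equals $1$ for $s>0$ and $0$ at $s=0$, so $h\notin C_{\mathcal G}$. The hard part is checking $\mathcal F$-continuity at points of $\mathbf{0}+\span\{\mathbf{e}_1,\dots,\mathbf{e}_n\}$ near $\mathbf 0$. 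For $\mathbf{y}$ in such a slice, $\operatorname{dist}(\mathbf{y},\mathbb{R}\mathbf{v})$ is at least $c_n|\langle\mathbf{y},\mathbf{v}\rangle|$ with $c_n>0$, since the tail of $\mathbf{v}$ is orthogonal to the slice. Therefore the ratio $\ge c_n/|\langle \mathbf{y},\mathbf{v}\rangle|\to\infty$, so $h=0$ on a neighborhood of $\mathbf{0}$ within the slice. Slices through points other than $\mathbf 0$ need a similar but uniform estimate, and I expect this verification to be the main obstacle.

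\emph{$C_{\ell^2}\neq C_{\mathcal G}$.} Use $h(\mathbf{x})=\sum_{k}\psi\big(k\,\|\mathbf{x}-r_k\mathbf{e}_k\|/r_k\big)$ with $r_k= r/k$ and disjoint supports. On each finite-dimensional slice only finitely many bumps are met near any point, so $h$ is $\mathcal G$-continuous (in fact locally a finite sum of continuous functions away from $\mathbf{0}$, and near $\mathbf{0}$ within a fixed finite-dimensional $L$ the points $r_k\mathbf{e}_k$ are at distance $\gtrsim r_k$ from $L$ for large $k$, since $\operatorname{dist}(\mathbf{e}_k,L)\to1$). However $h(r_k\mathbf{e}_k)=1$ while $h(\mathbf{0})=0$ and $r_k\mathbf{e}_k\to\mathbf 0$ in norm, so $h$ is not norm-continuous. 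Nonemptiness of $O$ and the translation reduction complete the argument.
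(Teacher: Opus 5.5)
Your inclusions and all three counterexamples are correct, but for two of the three strictness claims you take a different route from the paper.

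For $C_{\ell^2}(O)\subsetneqq C_{\mathcal G}(O)$, the paper uses a discontinuous linear functional defined on a Hamel basis containing $\{\mathbf e_i\}$, sending $\mathbf e_i\mapsto i$ and the other basis vectors to $0$. For $C_{\mathcal G}(O)\subsetneqq C_{\mathcal F}(O)$, it uses $g(\mathbf x)=\sum_i x_i$ when the series converges and $g(\mathbf x)=0$ otherwise. Here $\mathcal F$-continuity is immediate, because moving finitely many coordinates does not affect convergence and shifts the sum by $s_1+\cdots+s_n$. On the other hand, along $\mathbf x_0+\lambda(1/i)_{i\in\mathbb N}$ the series diverges for every $\lambda\neq0$. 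Only your last step matches the paper: a two-variable separately continuous cylinder function (the paper uses the Genocchi--Peano function).

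The paper's $\mathcal G$-versus-$\mathcal F$ example is considerably shorter than your tube construction. Its Hamel-basis functional is also deliberately reused later (Corollary~\ref{230720cor1} and the remark after it) as a $\mathcal G$-continuous function that is not Borel measurable. Your bump and tube examples need more geometric checking. In exchange, they are explicit (no axiom of choice), Borel measurable, and norm-continuous off a single point, which shows that each strict inclusion is a purely local phenomenon.

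The step you flagged as the main obstacle is not actually an obstacle. Your tube function $h$ is norm-continuous on $\ell^2\setminus\{\mathbf 0\}$:
\begin{itemize}
\item where $\langle\mathbf x,\mathbf v\rangle\neq0$, the ratio is continuous;
\item if $\mathbf y\perp\mathbf v$ and $\mathbf y\neq\mathbf 0$, then $\operatorname{dist}(\mathbf y,\mathbb R_+\mathbf v)=\|\mathbf y\|$. So for $\mathbf x$ near $\mathbf y$ the numerator is at least $\|\mathbf y\|/2$, while $\langle\mathbf x,\mathbf v\rangle^2\leqslant\|\mathbf x-\mathbf y\|^2\|\mathbf v\|^2$ is tiny. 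This forces $h\equiv0$ near $\mathbf y$.
\end{itemize}
A coordinate slice $\mathbf x+\operatorname{span}\{\mathbf e_1,\ldots,\mathbf e_n\}$ contains $\mathbf 0$ only when it equals the subspace itself. Every other slice is therefore handled automatically. On the subspace itself, your bound $\operatorname{dist}(\mathbf y,\mathbb R\mathbf v)\geqslant c_n|\langle\mathbf y,\mathbf v\rangle|$ already gives $h\equiv0$ near $\mathbf 0$, so no uniform estimate is needed. This is exactly the reduction you already used for the bump example.

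One small correction in the bump example: the $k=1$ bump, of radius $r$ about $r\mathbf e_1$, meets every other bump, so the supports are not disjoint. This changes nothing, since $r_k\mathbf e_k$ lies outside that bump for $k\geqslant2$; alternatively, start the sum at $k=2$.
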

\begin{proof}
Clearly, $C_{\ell^2}(O)\subset C_{\mathcal{G}}(O)\subset C_{\mathcal{F}}(O)\subset C_{\mathcal{S}}(O)$. It suffices to prove the proper inclusions by giving three examples.

First of all, we choose a Hamel basis $\{\textbf{v}_{\alpha}:\alpha\in A\}$ for $\ell^2$ such that $\{\textbf{e}_i\}_{i=1}^{\infty}\subset \{\textbf{v}_{\alpha}:\alpha\in A\}$, where $A$ is an index set. For $\alpha\in A$, let
$$
c_{\alpha}  \triangleq
\begin{cases}
i, &\text{ if }\textbf{v}_{\alpha}=\textbf{e}_i\text{ for some }i\in\mathbb{N}, \\ 0,&\text{ otherwise}.
\end{cases}
$$
By the definition of Hamel basis, for each non-zero $\textbf{x}\in \ell^2$, there exists a unique finite subset $A_0\subset A$ and unique non-zero real numbers $\{a_{\alpha}\}_{\alpha\in A_0}$ such that $\textbf{x}=\sum\limits_{\alpha\in A_0}a_{\alpha}\textbf{v}_{\alpha}$, then let
$$
f(\textbf{x})\triangleq  \sum\limits_{\alpha\in A_0}a_{\alpha}c_{\alpha},
$$
and $f(\textbf{0})\triangleq 0.$ Clearly, $f$ is an $\mathbb{R}$-linear mapping from $\ell^2$ into $\mathbb{R}$ and hence $f\in C_\mathcal{G}(\ell^2)$. For each $\textbf{x}\in\ell^2$, we have
$$
\lim_{i\to\infty}\left(\textbf{x}+\frac{1}{\sqrt{i}}\textbf{e}_i\right)=\textbf{x}\,\,\text{ in }\,\ell^2,
$$
but the limit $\lim\limits_{i\to\infty}f\left(\textbf{x}+\frac{1}{\sqrt{i}}\textbf{e}_i\right)$ does not exist. This implies that $f$ is not continuous with respect to the usual $\ell^2$-norm topology at every point of $\ell^2$. Let $f|_O(\textbf{x})\triangleq f(\textbf{x}),\,\forall\, \textbf{x}\in O$. Since every finite-dimensional restriction of a linear map is automatically continuous, we obtain $f|_O\in C_\mathcal{G}(O)$ and $f|_O\notin C_{\ell^2}(O)$.

Next, motivated by the proof of \cite[Theorem 1.4, p. 13]{CSV}, for any $\textbf{x}=(x_i)_{i\in\mathbb{N}}\in O$, we put
$$
g\left( \textbf{x}\right)  \triangleq
\begin{cases}
\sum\limits^{\infty}_{i=1}x_i, &\text{ if }\sum\limits_{i=1}^{\infty}x_i\text{ converges}, \\ 0,&\text{ if }\sum\limits_{i=1}^{\infty}x_i\text{ does not converge}.
\end{cases}
$$
From the definition of $\mathcal{F}$-continuous functions in Definition \ref{230407def1}, it is easy to see that $g\in C_\mathcal{F}(O)$. Since $O$ is a nonempty open subset of $\ell^2$, it is easy to show that there exists $\textbf{x}_0=(x_i^0)_{i\in\mathbb{N}}\in O$ such that $\sum\limits_{i=1}^{\infty}x_i^0\text{ converges}$ and $\sum\limits_{i=1}^{\infty}x_i^0\neq 0$. Clearly, $\textbf{x}_1\triangleq\left(\frac{1}{i}\right)_{i\in\mathbb{N}}\in\ell^2$. Hence, we can find $\delta>0$ such that $\textbf{x}_0+\lambda\textbf{x}_1\in O$ for any $\lambda\in(-\delta,\delta)$. Thus, noting that $\sum\limits_{i=1}^{\infty}\frac{1}{i}\text{ does not converge}$, we have
$$
g(\textbf{x}_0+\lambda\textbf{x}_1)=0,\quad \forall \,\lambda\in (-\delta,0)\cup(0,\delta),\text{ and } g(\textbf{x}_0)=\sum\limits_{i=1}^{\infty}x_i^0\neq 0,
$$
which implies that $g\notin C_\mathcal{G}(O)$.

Finally, recall that A.~Genocchi and G.~Peano constructed  the following function on $\mathbb{R}^2$ which is discontinuous at $(0,0)$ even if it is continuous along every straight line through $(0,0)$ (See \cite[p. 31]{Ros}):
$$
h(x,y)  \triangleq
\begin{cases}
\frac{xy^2}{x^2+y^4}, &\text{ except for }x=y=0,\\
0,&\text{ for }x=y=0,
\end{cases}
$$
where $(x,y)\in \mathbb{R}^2$. Clearly, $h(x,y)$ is continuous everywhere except for $x=y=0.$ We can view $h$ as a cylinder function $\tilde{h}$ on $\ell^2$. Then, one can obtain a function
 $\tilde{h}|_O\in C_{\mathcal{S}}(O)\setminus C_{\mathcal{F}}(O)$.
This completes the proof of Proposition \ref{230714lem1}.
\end{proof}

\medskip

Write
\begin{equation}\label{20260725for2}
H\triangleq \left\{\sum\limits_{i=1}^{\infty}x_i\textbf{e}_i\in\ell^2:\;\sum\limits_{i=1}^{\infty}\frac{x_i^2}{a_i^2}<\infty\right\}.
\end{equation}
Then $H$ is a Hilbert space with the canonic inner product, and $H\subset\ell^2$. $H$ is the so-called Cameron-Martin space of the measure $P_t$ (See \eqref{220817e1}). Motivated by the definition in \cite[p. 133]{Gro67}, for any $f\in B_{\ell^2}^b(\ell^2)$, we introduce the following function (which can be viewed as the convolution of the function $f$ and the measure $P_t$):
\begin{eqnarray*}
(P_tf)(\textbf{x})\triangleq \int_{\ell^2}f(\textbf{x}-\textbf{y})\,\mathrm{d}P_t(\textbf{y}),\quad\textbf{x}\in\ell^2.
\end{eqnarray*}
Some basic properties of the function $P_tf$ will be given as follows:

\begin{proposition}\label{partial derivative of Ptf}
All order of partial derivatives of $P_tf$ exist at every point of $\ell^2$, and
\begin{itemize}
\item[$(1)$]$\frac{\partial (P_tf)(\mathbf{x})}{\partial x_i}=-\int_{\ell^2}f(\mathbf{x}-\mathbf{y})\cdot\frac{y_i}{t^2a_i^2}\,\mathrm{d}P_t(\mathbf{y})$ for all $\mathbf{x}\in\ell^2$ and $i\in\mathbb{N}$;
\item[$(2)$] $\left|\frac{\partial (P_tf)(\mathbf{x})}{\partial x_i}\right|\leqslant \frac{1}{ta_i}\sup\limits_{\mathbf{y}\in\ell^2}|f(\mathbf{y})|$  for all $\mathbf{x}\in\ell^2$ and $i\in\mathbb{N}$ and
\begin{eqnarray}\label{20250820for1}
 \sum_{i=1}^{\infty}a_{i}^2\left|\frac{\partial (P_tf)(\mathbf{x})}{\partial x_i}\right|^2&< &\frac{1}{t^4}\int |f(\mathbf{x}-\mathbf{y})|^2\,\mathrm{d}P_t(\mathbf{y}),\quad\forall\; \mathbf{x}\in\ell^2;
\end{eqnarray}
\item[$(3)$] $P_tf\in C_\mathcal{F}^{\infty}(\ell^2)$;
\item[$(4)$] $P_tf$ and all order of its partial derivatives belong to $B_{\ell^2}^b(\ell^2)$;
\item[$(5)$] $P_tf\in C_H^{\infty}(\ell^2)$.
\end{itemize}
\end{proposition}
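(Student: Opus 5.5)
The whole argument will rest on the change of variables supplied by Proposition \ref{230421prop1}, which lets us move translations onto the Gaussian density. For $\mathbf{x}\in\ell^2$, $i\in\mathbb{N}$ and $\tau\in\mathbb{R}$, the substitution $\mathbf{y}\mapsto \mathbf{y}+\tau\mathbf{e}_i$ together with \eqref{230702e1} gives
\begin{eqnarray*}
(P_tf)(\mathbf{x}+\tau\mathbf{e}_i)=\int_{\ell^2}f(\mathbf{x}-\mathbf{y})\,e^{-\frac{-2\tau y_i+\tau^2}{2t^2a_i^2}}\,\mathrm{d}P_t(\mathbf{y}).
\end{eqnarray*}
The same identity holds for finitely many directions using \eqref{230702e2}, and for a general $\mathbf{h}\in H$ using the Cameron--Martin formula. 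The last case is obtained as the $L^1(P_t)$-limit of the finite products, since $\sum_k h_k^2/a_k^2<\infty$.

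\emph{Item (1).} Differentiate in $\tau$ under the integral at $\tau=0$. Domination for $|\tau|\le 1$ comes from $\sup|f|\cdot C(1+|y_i|)e^{|y_i|/(t^2a_i^2)}$, which is $P_t$-integrable because $y_i$ is Gaussian. This yields the formula in (1). Higher-order partial derivatives are handled in the same way: the integrand is $f(\mathbf{x}-\mathbf{y})$ multiplied by a polynomial in finitely many $y_j$ (Hermite-type), and integrability is again clear. So all partial derivatives exist and are of the form $\int f(\mathbf{x}-\mathbf{y})Q(\mathbf{y})\,\mathrm{d}P_t(\mathbf{y})$, with $Q$ a polynomial in finitely many coordinates.

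\emph{Item (2).} Cauchy--Schwarz gives
\begin{eqnarray*}
\left|\partial_i P_tf\right|\leqslant \sup|f|\cdot\frac{\|y_i\|_{L^2(P_t)}}{t^2a_i^2}=\frac{\sup|f|}{ta_i}.
\end{eqnarray*}
For \eqref{20250820for1}, note that $\{y_i/(ta_i)\}_i$ is an orthonormal system in $L^2(P_t)$. Hence
\begin{eqnarray*}
\sum_i a_i^2|\partial_iP_tf|^2=\frac1{t^2}\sum_i\left|\left\langle f(\mathbf{x}-\cdot),\frac{y_i}{ta_i}\right\rangle_{L^2(P_t)}\right|^2,
\end{eqnarray*}
which is bounded by Bessel's inequality. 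The strict inequality and the exact power of $t$ will need care. Strictness should follow because the constant function $1$ is orthogonal to all the $y_i$, so Bessel loses at least $|\int f(\mathbf{x}-\mathbf{y})\,\mathrm{d}P_t|^2$ unless equality cases degenerate; if $f(\mathbf{x}-\cdot)=0$ a.e., both sides vanish. I would check the constant and the degenerate case carefully here.

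\emph{Items (3) and (4).} Measurability of $\mathbf{x}\mapsto\int f(\mathbf{x}-\mathbf{y})Q(\mathbf{y})\,\mathrm{d}P_t$ follows from Fubini, since $(\mathbf{x},\mathbf{y})\mapsto f(\mathbf{x}-\mathbf{y})$ is Borel on $\ell^2\times\ell^2$. Boundedness comes from the bounds in (1) and (2) and their higher-order analogues. For $\mathcal{F}$-continuity, fix $\mathbf{x}$ and $n$, and write
\begin{eqnarray*}
\partial^\alpha P_tf(\mathbf{x}+\textstyle\sum_{k\le n}s_k\mathbf{e}_k)=\int f(\mathbf{x}-\mathbf{y})\,Q(\mathbf{y}+\mathbf{s})\,\rho_n(\mathbf{s},\mathbf{y})\,\mathrm{d}P_t(\mathbf{y})
\end{eqnarray*}
via \eqref{230702e2}, where $\rho_n$ is the explicit exponential density. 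This expression is continuous in $\mathbf{s}\in\mathbb{R}^n$ by dominated convergence, with a local uniform Gaussian-integrable bound. Thus $P_tf\in C^\infty_{\mathcal{F}}(\ell^2)$.

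\emph{Item (5), the main obstacle.} Fréchet differentiability along $H$ is where the real work lies. I would use the Cameron--Martin representation
\begin{eqnarray*}
P_tf(\mathbf{x}+\mathbf{h})=\int f(\mathbf{x}-\mathbf{y})\exp\!\Big(\sum_k\frac{h_ky_k}{t^2a_k^2}-\frac{\|\mathbf{h}\|_H^2}{2t^2}\Big)\mathrm{d}P_t.
\end{eqnarray*}
Here $\ell(\mathbf{h})(\mathbf{y})=\sum_k h_ky_k/(t^2a_k^2)$ is an isometry (up to the factor $1/t$) from $H$ into $L^2(P_t)$, with Gaussian law. The candidate derivative is $\mathbf{h}\mapsto\int f(\mathbf{x}-\mathbf{y})\,\ell(\mathbf{h})(\mathbf{y})\,\mathrm{d}P_t$. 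The remainder is bounded by $\sup|f|\cdot\|e^{\ell(\mathbf{h})-\|\mathbf{h}\|_H^2/2t^2}-1-\ell(\mathbf{h})\|_{L^1}$, which is $O(\|\mathbf{h}\|_H^2)$ by an explicit one-dimensional Gaussian computation, since $\ell(\mathbf{h})\sim N(0,\|\mathbf{h}\|_H^2/t^2)$. Continuity of $\mathbf{x}\mapsto D_HP_tf(\mathbf{x})$ in operator norm along $H$-translations, and the higher derivatives (multilinear forms given by Hermite polynomials of the $\ell(\mathbf{h}_j)$), follow by the same translation trick and $L^2$-continuity of $\mathbf{h}\mapsto e^{\ell(\mathbf{h})-\|\mathbf{h}\|_H^2/2t^2}$.
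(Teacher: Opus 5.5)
Your route matches the paper's for (1) and (3): you move the translation onto the Gaussian density via Proposition~\ref{230421prop1}, then differentiate or pass to the limit under the integral. For (4), your Fubini argument (joint Borel measurability of $(\mathbf{x},\mathbf{y})\mapsto f(\mathbf{x}-\mathbf{y})Q(\mathbf{y})$) is a cleaner substitute for the paper's monotone-class argument, and it covers all orders at once. For \eqref{20250820for1} and (5) you give actual arguments, where the paper only refers to Gross.

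There is a sign slip, however. By \eqref{230702e1}, the density of $P_t(\cdot+\tau\mathbf{e}_i)$ with respect to $P_t$ is $e^{-(2\tau y_i+\tau^2)/(2t^2a_i^2)}$, not $e^{(2\tau y_i-\tau^2)/(2t^2a_i^2)}$. Differentiating your displayed formula at $\tau=0$ gives $+\int f(\mathbf{x}-\mathbf{y})\,y_i/(t^2a_i^2)\,\mathrm{d}P_t(\mathbf{y})$, which is the negative of (1). The same slip recurs in (5). The Cameron--Martin density is $\exp(-\ell(\mathbf{h})-\|\mathbf{h}\|_H^2/(2t^2))$, so the derivative is $\mathbf{h}\mapsto-\int f(\mathbf{x}-\mathbf{y})\,\ell(\mathbf{h})(\mathbf{y})\,\mathrm{d}P_t(\mathbf{y})$. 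Your remainder estimates survive unchanged, because $\ell(\mathbf{h})$ and $-\ell(\mathbf{h})$ have the same law.

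The genuine gap is \eqref{20250820for1}, which you leave open; it cannot be closed as stated. Bessel's inequality gives exactly $\sum_i a_i^2|\partial_iP_tf(\mathbf{x})|^2\leqslant t^{-2}\int|f(\mathbf{x}-\mathbf{y})|^2\,\mathrm{d}P_t(\mathbf{y})$, and $t^{-2}$ is the correct power. To see that $t^{-4}$ fails, take $f(\mathbf{z})=\operatorname{sign}(z_1)$ and $\mathbf{x}=\mathbf{0}$: the left side is $2/(\pi t^2)$, while the stated right side is $t^{-4}$. Truncations of the linear function $-z_1/(ta_1)$ push the ratio of the two sides up to $t^2$, so the $t^{-4}$ form fails for every $t>1$. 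Strictness also fails whenever $f(\mathbf{x}-\cdot)=0$ $P_t$-a.e., for instance when $f\equiv0$.

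What you can prove is the $t^{-2}$ bound, with strict inequality if and only if $f(\mathbf{x}-\cdot)\neq0$ in $L^2(P_t)$. This yields the stated form for $t\leqslant1$ whenever $f(\mathbf{x}-\cdot)\neq0$, and the non-strict form at $t=1$ is the only case used later, in Theorem~\ref{230215Th1}.

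For strictness, your constant-function argument only works when $\int f(\mathbf{x}-\mathbf{y})\,\mathrm{d}P_t(\mathbf{y})\neq0$. In general, equality in Bessel means $f(\mathbf{x}-\cdot)=\sum_ic_iy_i/(ta_i)$ in $L^2(P_t)$ for some $(c_i)\in\ell^2$. Such a sum is a centered Gaussian variable with variance $\sum_ic_i^2$, so it is unbounded unless all $c_i=0$. Since $f$ is bounded, equality forces $f(\mathbf{x}-\cdot)=0$ a.e.
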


\begin{proof}
The proofs of the results in Proposition \ref{partial derivative of Ptf} are almost the same as that in \cite[pp. 152--156]{Gro67}. For the reader's convenience, we shall provide below the key steps.

\smallskip

(1) For any $\textbf{x}\in\ell^2$, $i\in\mathbb{N}$ and $s\in\mathbb{R}$, by \eqref{230702e1} of Proposition \ref{230421prop1}, it holds that
\begin{eqnarray*}
(P_tf)(\textbf{x}+s\textbf{e}_i)
&=& \int_{\ell^2}f(\textbf{x}+s\textbf{e}_i-\textbf{y})\,\mathrm{d}P_t(\textbf{y})\\
&=& \int_{\ell^2}f(\textbf{x}-\textbf{y})\,\mathrm{d}P_t(\textbf{y}+s\textbf{e}_i)
= \int_{\ell^2}f(\textbf{x}-\textbf{y})J(s,y_i)\,\mathrm{d}P_t(\textbf{y}),
\end{eqnarray*}
where $J(s,y_i)\triangleq e^{-\frac{2s y_i +s^2}{2t^2a_i^2}}$. By
$\frac{\partial J(rs,y_i)}{\partial r}= -\frac{s y_i +rs^2}{t^2a_i^2}\cdot J(rs,y_i)$ for all $r\in\mathbb{R}$ and
\begin{eqnarray*}
(P_tf)(\textbf{x}+s\textbf{e}_i)-(P_tf)(\textbf{x})=-\int_{\ell^2}\int_0^1f(\textbf{x}-\textbf{y})\cdot\frac{s y_i +rs^2}{t^2a_i^2}\cdot J(rs,y_i)\,\mathrm{d}r \mathrm{d}P_t(\textbf{y}),
\end{eqnarray*}
we obtain that
\begin{eqnarray*}
&&\left|(P_tf)(\textbf{x}+s\textbf{e}_i)-(P_tf)(\textbf{x})+s\cdot\int_{\ell^2}f(\textbf{x}-\textbf{y})\cdot\frac{y_i}{t^2a_i^2}\,\mathrm{d}P_t(\textbf{y})\right|\\
&&\leqslant\left|\int_{\ell^2}\int_0^1f(\textbf{x}-\textbf{y})\cdot\frac{s y_i }{t^2a_i^2}\cdot (J(rs,y_i)-1)\,\mathrm{d}r \mathrm{d}P_t(\textbf{y})\right|\\
&&\quad+\left|\int_{\ell^2}\int_0^1f(\textbf{x}-\textbf{y})\cdot\frac{-rs^2}{t^2a_i^2}\cdot J(rs,y_i)\,\mathrm{d}r \mathrm{d}P_t(\textbf{y})\right|\\
&&\leqslant \frac{ \sup\limits_{\textbf{y}\in\ell^2}|f(\textbf{y})|}{t^2a_i^2}\cdot\left|\int_{\ell^2}\int_0^1|y_i\cdot s\cdot (J(rs,y_i)-1)|\,\mathrm{d}r \mathrm{d}P_t(\textbf{y})\right|\\
&&\quad+s^2\cdot\frac{\sup\limits_{\textbf{y}\in\ell^2}|f(\textbf{y})|}{t^2a_i^2}\left|\int_{\ell^2}\int_0^1  r\cdot J(rs,y_i)\,\mathrm{d}r \mathrm{d}P_t(\textbf{y})\right|\\
&&= \frac{ \sup\limits_{\textbf{y}\in\ell^2}|f(\textbf{y})|}{t^2a_i^2}\cdot\left|\int_0^1\int_{\ell^2}|y_i\cdot s\cdot (J(rs,y_i)-1)|\,\mathrm{d}P_t(\textbf{y})\mathrm{d}r \right|\\
&&\quad+s^2\cdot\frac{\sup\limits_{\textbf{y}\in\ell^2}|f(\textbf{y})|}{t^2a_i^2}\left|\int_0^1\int_{\ell^2}  r\cdot J(rs,y_i)\,\mathrm{d}P_t(\textbf{y})\mathrm{d}r \right|\\
&&\leqslant |s|\cdot \frac{ \sup\limits_{\textbf{y}\in\ell^2}|f(\textbf{y})|}{t^2a_i^2}\cdot \int_0^1\left(\int_{\ell^2}|y_i|^2\,\mathrm{d}P_t(\textbf{y})\right)^{\frac{1}{2}}\cdot \left(\int_{\ell^2}(J(rs,y_i)-1)^2\,\mathrm{d}P_t(\textbf{y})\right)^{\frac{1}{2}}\mathrm{d}r \\
&&\quad+s^2\cdot\frac{\sup\limits_{\textbf{y}\in\ell^2}|f(\textbf{y})|}{t^2a_i^2} \int_0^1 r\cdot\int_{\ell^2}  J(rs,y_i)\,\mathrm{d}P_t(\textbf{y})\mathrm{d}r.
\end{eqnarray*}
Note that for each $r\in[0,1]$, we have
\begin{eqnarray*}
\int_{\ell^2}(J(rs,y_i)-1)^2\,\mathrm{d}P_t(\textbf{y})=e^{\frac{r^2s^2}{t^2a_i^2}}-1,\qquad\int_{\ell^2}  J(rs,y_i)\,\mathrm{d}P_t(\textbf{y})=1.
\end{eqnarray*}
Hence,
\begin{eqnarray*}
&&\left|(P_tf)(\textbf{x}+s\textbf{e}_i)-(P_tf)(\textbf{x})+s\cdot\int_{\ell^2}f(\textbf{x}-\textbf{y})\cdot\frac{y_i}{t^2a_i^2}\,\mathrm{d}P_t(\textbf{y})\right|\\
&&\leqslant |s|\cdot \frac{ \sup\limits_{\textbf{y}\in\ell^2}|f(\textbf{y})|}{t^2a_i^2}\cdot \left(\int_{\ell^2}|y_i|^2\,\mathrm{d}P_t(\textbf{y})\right)^{\frac{1}{2}}\cdot \int_0^1 \left(e^{\frac{r^2s^2}{t^2a_i^2}}-1\right)^{\frac{1}{2}}\mathrm{d}r +s^2\cdot\frac{\sup\limits_{\textbf{y}\in\ell^2}|f(\textbf{y})|}{t^2a_i^2}\cdot \int_0^1 r\mathrm{d}r\\
&&\leqslant |s|\cdot \frac{ \sup\limits_{\textbf{y}\in\ell^2}|f(\textbf{y})|}{ta_i}\cdot\left(e^{\frac{s^2}{t^2a_i^2}}-1\right)^{\frac{1}{2}} +s^2\cdot\frac{\sup\limits_{\textbf{y}\in\ell^2}|f(\textbf{y})|}{t^2a_i^2}\cdot \frac{1}{2},
\end{eqnarray*}
which implies that
\begin{eqnarray*}
\left|(P_tf)(\textbf{x}+s\textbf{e}_i)-(P_tf)(\textbf{x})+s\cdot\int_{\ell^2}f(\textbf{x}-\textbf{y})\cdot\frac{y_i}{t^2a_i^2}\,\mathrm{d}P_t(\textbf{y})\right|=o(|s|),
\end{eqnarray*}
as $s\to 0$. Therefore, $\frac{\partial (P_tf)(\textbf{x})}{\partial x_i}=-\int_{\ell^2}f(\textbf{x}-\textbf{y})\cdot\frac{y_i}{t^2a_i^2}\,\mathrm{d}P_t(\textbf{y})$.

\smallskip

(2) By the conclusion (1) in this proposition, it follows that
\begin{eqnarray*}
 \left|\frac{\partial (P_tf)(\textbf{x})}{\partial x_i}\right|&\leqslant & \left|\int_{\ell^2}f(\textbf{x}-\textbf{y})\cdot\frac{y_i}{t^2a_i^2}\,\mathrm{d}P_t(\textbf{y})\right|\\
 &\leqslant&\frac{ \sup\limits_{\textbf{y}\in\ell^2}|f(\textbf{y})|}{t^2a_i^2}\cdot\left(\int_{\ell^2}y_i^2\,\mathrm{d}P_t(\textbf{y})\right)^{\frac{1}{2}}=\frac{ \sup\limits_{\textbf{y}\in\ell^2}|f(\textbf{y})|}{ta_i}.
 \end{eqnarray*}
Similar to the proof of the inequality (11) in \cite[Proposition 9, p. 152]{Gro67}, one can obtain \eqref{20250820for1}.

\smallskip

(3) For simplicity, we only prove that $P_tf\in C_\mathcal{F}(\ell^2)$. Note that for each $\textbf{x}\in \ell^2$, $n\in\mathbb{N}$ and $(s_1,\cdots,s_n)\in\mathbb{R}^n$, by \eqref{230702e2} of Proposition \ref{230421prop1}, we have
\begin{eqnarray*}
&&(P_tf)(\textbf{x}+s_1\textbf{e}_1+\cdots+s_n\textbf{e}_n)=\int_{\ell^2}f(\textbf{x}+s_1\textbf{e}_1+\cdots+s_n\textbf{e}_n-\textbf{y})\,\mathrm{d}P_t(\textbf{y})\\
&&=\int_{\ell^2}f(\textbf{x}-\textbf{y})\,\mathrm{d}P_t(\textbf{y}+s_1\textbf{e}_1+\cdots+s_n\textbf{e}_n)=\int_{\ell^2}f(\textbf{x}-\textbf{y})e^{-\frac{2s_1 y_1 +s_1^2}{2t^2a_1^2}}\cdots e^{-\frac{2s_n y_n +s_n^2}{2t^2a_n^2}}\,\mathrm{d}P_t(\textbf{y})\\
&&=e^{\frac{-s_1^2}{2t^2a_1^2}}\cdots e^{\frac{-s_n^2}{2t^2a_n^2}} \int_{\ell^2}f(\textbf{x}-\textbf{y})e^{-\frac{2s_1 y_1}{2t^2a_1^2}}\cdots e^{-\frac{2s_n y_n }{2t^2a_n^2}}\,\mathrm{d}P_t(\textbf{y}),
\end{eqnarray*}
from which we deduce that the mapping
$
(s_1,\cdots,s_n)\mapsto (P_tf)(\textbf{x}+s_1\textbf{e}_1+\cdots+s_n\textbf{e}_n)
$
is continuous from $\mathbb{R}^n$ into $\mathbb{R}$. This implies that $P_tf$ is $\mathcal{F}$-continuous.

\smallskip

(4) For simplicity, we only prove that $P_tf\in B_{\ell^2}(\ell^2)$. By the classic procedure in real analysis, there exists a sequence of uniformly bounded simple functions $\{f_n\}_{n=1}^{\infty}$ on $\ell^2$ such that $\lim\limits_{n\to\infty}f_n(\textbf{x})=f(\textbf{x})$ for any $\textbf{x}\in\ell^2.$ By the bounded convergence theorem, we have $\lim\limits_{n\to\infty}(P_tf_n)(\textbf{x})=(P_tf)(\textbf{x})$ for any $\textbf{x}\in\ell^2$. Therefore, it suffices to prove that if $E$ is a Borel subset of $\ell^2$, then $P_t\chi_E$ is Borel measurable. Let
$
\mathscr{M}\triangleq \left\{E\in\mathscr{B}(\ell^2):\; P_t\chi_E\text{ is Borel measurable}\right\}.
$
Then $\mathscr{M}$ is closed under monotone limits and complements. Denote by $\mathscr{C}$ the family of subsets of $\ell^2$:
$
\left\{(x_i)_{i\in\mathbb{N}}\in\ell^2:(x_1,\cdots,x_n)\in B\right\},
$
where $n\in \mathbb{N}$ and $B\in \mathscr{B}(\mathbb{R}^n)$.
It is easy to see that $P_t\chi_E$ is continuous for any $E\in \mathscr{C}$ and hence $\mathscr{C}\subset\mathscr{M}$. Therefore, $\mathscr{M}=\mathscr{B}(\ell^2)$.

\smallskip

(5) This conclusion is just a consequence of \cite[Proposition 9, p. 152]{Gro67}. This completes the proof of Proposition \ref{partial derivative of Ptf}.
\end{proof}

As an immediate application of Proposition \ref{partial derivative of Ptf}, we shall construct below a real-valued function on $\ell^2$ for which all of its partial derivatives exist, but it is not continuous with respect to the usual $\ell^2$-norm topology. Let $A\triangleq \{(x_i)_{i\in\mathbb{N}}\in\ell^2:x_i\in (0,+\infty)\text{ for each }i\in\mathbb{N}\}$, $\textbf{x}_0\triangleq(\sqrt{a_i})_{i\in\mathbb{N}}\in \ell^2$ and
\begin{eqnarray*}
\psi(\textbf{x})\triangleq (P_1\chi_A)(\textbf{x}),\,\quad \forall\,\textbf{x}\in\ell^2(\mathbb{N}).
\end{eqnarray*}
Since $A$ is a Borel set of $\ell^2$, by Proposition \ref{partial derivative of Ptf}, $\psi\in C_\mathcal{F}^{\infty}(\ell^2)$. However, as indicated by the following result, the above $\psi$ is not continuous with respect to the usual $\ell^2$-norm topology.
\begin{lemma}\label{not continuous}
 $\psi\in C_\mathcal{F}^{\infty}(\ell^2)\setminus C_{\ell^2}(\ell^2)$.
\end{lemma}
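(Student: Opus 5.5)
The membership $\psi\in C_\mathcal{F}^{\infty}(\ell^2)$ is already given by Proposition \ref{partial derivative of Ptf}(3), since $\chi_A$ is bounded and Borel. So the whole task is to show that $\psi$ is not $\ell^2$-continuous, and I plan to do this at the point $\mathbf{x}_0=(\sqrt{a_i})_{i\in\mathbb{N}}$. The idea is to compute $\psi$ explicitly as an infinite product, show $\psi(\mathbf{x}_0)>0$, and then find a sequence converging to $\mathbf{x}_0$ in $\ell^2$ along which $\psi$ tends to $0$.

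\emph{Step 1: product formula.} For $\mathbf{x}\in\ell^2$ we have $\chi_A(\mathbf{x}-\mathbf{y})=1$ if and only if $y_i<x_i$ for every $i$. Hence $\psi(\mathbf{x})=P_1(\{\mathbf{y}\in\ell^2: y_i<x_i\ \forall i\})$. By Proposition \ref{230407prop1} and the definition of $\bn^1$ as a product measure, this equals
$$
\lim_{n\to\infty}\prod_{i=1}^n\bn_{a_i}((-\infty,x_i))=\prod_{i=1}^\infty \Phi(x_i/a_i),
$$
where $\Phi$ is the standard normal distribution function. The limit holds by continuity from above applied to the decreasing cylinder sets.

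\emph{Step 2: $\psi(\mathbf{x}_0)>0$.} At $\mathbf{x}_0$ the arguments are $x_i/a_i=a_i^{-1/2}$. By \eqref{20250130for1}, $a_i\to0$, so $a_i^{-1/2}\to\infty$. The standard Gaussian tail bound gives
$$
1-\Phi(a_i^{-1/2})\le e^{-1/(2a_i)}\le C a_i
$$
for a suitable constant $C$. These tails are therefore summable, and every factor $\Phi(a_i^{-1/2})$ is positive. The infinite product thus converges to a strictly positive number. This is the only step needing care, and it is exactly where the summability \eqref{20250130for1} is used.

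\emph{Step 3: the discontinuity.} Let $\mathbf{x}_n\triangleq\mathbf{x}_0-2\sqrt{a_n}\,\mathbf{e}_n$. Then $\|\mathbf{x}_n-\mathbf{x}_0\|=2\sqrt{a_n}\to0$. The $n$-th coordinate of $\mathbf{x}_n$ is $-\sqrt{a_n}$, and since every factor in the product is at most $1$,
$$
0\le\psi(\mathbf{x}_n)\le\Phi(-a_n^{-1/2})\to0\neq\psi(\mathbf{x}_0).
$$
Hence $\psi$ is not continuous at $\mathbf{x}_0$ for the $\ell^2$-norm topology, which proves $\psi\notin C_{\ell^2}(\ell^2)$.
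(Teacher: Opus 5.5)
Your proof is correct and follows the paper's strategy. It uses the same point $\mathbf{x}_0=(\sqrt{a_i})_{i\in\mathbb{N}}$, the same product formula for $\psi$, and the same summability of the tails $1-\Phi(a_i^{-1/2})$ to get $\psi(\mathbf{x}_0)>0$. The only difference is the witness sequence. The paper flips the sign of every even coordinate beyond $n$, so infinitely many factors are at most $\tfrac12$ and $\psi(\mathbf{x}_n)=0$ exactly. Your one-coordinate perturbation $\mathbf{x}_0-2\sqrt{a_n}\,\mathbf{e}_n$ only gives $\psi(\mathbf{x}_n)\le\Phi(-a_n^{-1/2})\to0$, which suffices and needs only $a_n\to0$ rather than a tail-sum estimate for the convergence $\mathbf{x}_n\to\mathbf{x}_0$.
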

\begin{proof}
For each $n\in\mathbb{N}$, let
$x_{n,m}\triangleq\left\{\begin{array}{ll}
\sqrt{a_m},\quad&1\leqslant m\leqslant n,\\
(-1)^{m+1}\sqrt{a_m},&m>n.\end{array}\right.$
Set $\textbf{x}_n\triangleq (x_{n,m})_{m\in\mathbb{N}},\,\forall\,n\in\mathbb{N}$. Then, $\{\textbf{x}_n\}_{n=1}^{\infty}\subset \ell^2$ and $\lim\limits_{n\to\infty}||\textbf{x}_n-\textbf{x}_0||_{\ell^2}=0$. Note that for each $n\in\mathbb{N}$, we have
\begin{eqnarray*}
\psi(\textbf{x}_n)=\int_{\ell^2} \chi_A(\textbf{x}_n-\textbf{y})\,\mathrm{d}P_1(\textbf{y})=\prod_{i=1}^{\infty}\frac{1}{\sqrt{2\pi a_i^2}}\int_{-\infty}^{x_{n,i}}e^{-\frac{y_i^2}{2a_i^2}}\,\mathrm{d}y_i=0,
\end{eqnarray*}
where the last equality follows from the fact that $\frac{1}{\sqrt{2\pi a_i^2}}\int_{-\infty}^{x_{n,i}}e^{-\frac{y_i^2}{2a_i^2}}\,\mathrm{d}y_i\leqslant \frac{1}{\sqrt{2\pi a_i^2}}\int_{-\infty}^{0}e^{-\frac{y_i^2}{2a_i^2}}\,\mathrm{d}y_i=\frac{1}{2}$ for all $i=2(n+k)$ and $k\in\mathbb{N}$. We also note that
\begin{eqnarray*}
\psi(\textbf{x}_0)=\int_{\ell^2} \chi_A(\textbf{x}_0-\textbf{y})\,\mathrm{d}P_1(\textbf{y})=\prod_{i=1}^{\infty}\frac{1}{\sqrt{2\pi a_i^2}}\int_{-\infty}^{\sqrt{a_i}}e^{-\frac{y_i^2}{2a_i^2}}\,\mathrm{d}y_i= \prod_{i=1}^{\infty}\left(1-\frac{1}{\sqrt{2\pi a_i^2}}\int_{\sqrt{a_i}}^{+\infty} e^{-\frac{y_i^2}{2a_i^2}}\,\mathrm{d}y_i\right),
\end{eqnarray*}
and
\begin{eqnarray*}
&&\sum_{i=1}^{\infty} \frac{1}{\sqrt{2\pi a_i^2}}\int_{\sqrt{a_i}}^{+\infty} e^{-\frac{y_i^2}{2a_i^2}}\,\mathrm{d}y_i
=\frac{1}{\sqrt{\pi }}\sum_{i=1}^{\infty} \int_{\frac{1}{\sqrt{2a_i}}}^{+\infty} e^{- y_i^2 }\,\mathrm{d}y_i\\
&&\leqslant \frac{1}{\sqrt{\pi }}\sum_{i=1}^{\infty} \int_{\frac{1}{\sqrt{2a_i}}}^{+\infty} e^{- \frac{y_i}{\sqrt{2a_i}}  }\,\mathrm{d}y_i
=\frac{1}{\sqrt{\pi }}\sum_{i=1}^{\infty}\sqrt{2a_i}e^{-\frac{1}{2a_i}}
\leqslant\frac{1}{\sqrt{\pi }}\sum_{i=1}^{\infty}2\sqrt{2a_i} a_i <\infty.
\end{eqnarray*}
By \cite[Theorem 15.4, p. 299]{Rud87}, we have $\psi(\textbf{x}_0)>0$.  Therefore, $\psi\notin C_{\ell^2}(\ell^2)$. This completes the proof of Lemma \ref{not continuous}.
\end{proof}

Finally, we have the following result.

\begin{proposition}\label{230703prop1}
It holds that $\mathscr{T}_{O}\subsetneqq \mathscr{T}\subsetneqq \mathscr{T}_{n+1}\subsetneqq \mathscr{T}_n$ for all $n\in\mathbb{N}$.
\end{proposition}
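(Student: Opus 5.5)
We establish the three inclusions separately and then give a counterexample for each properness claim.

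\textbf{Inclusions.} First, $\mathscr{T}_{n+1}\subset\mathscr{T}_n$. Take a basic set $\{\mathbf{x}+s_1\mathbf{e}_1+\cdots+s_{n+1}\mathbf{e}_{n+1}:(s_1,\cdots,s_{n+1})\in V\}$ of $\mathscr{S}_{n+1}$, and let $\mathbf{z}$ be any of its points. Slicing $V$ at the coordinates of $\mathbf{z}$ in the last direction produces a set of the form $\{\mathbf{z}+s_1\mathbf{e}_1+\cdots+s_n\mathbf{e}_n:(s_1,\cdots,s_n)\in V'\}$. Here $V'$ is open in $\mathbb{R}^n$ because it is a slice of an open set. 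This set contains $\mathbf{z}$ and lies inside the original set, so the original set is $\mathscr{T}_n$-open.

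Next, $\mathscr{T}\subset\mathscr{T}_{n+1}$ holds by definition.

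Finally, $\mathscr{T}_O\subset\mathscr{T}$. A norm-open set $W\subset O$ meets every affine slice $\mathbf{x}+\mathrm{span}\{\mathbf{e}_1,\cdots,\mathbf{e}_n\}$ in a relatively open set, because $(s_1,\cdots,s_n)\mapsto \mathbf{x}+\sum s_i\mathbf{e}_i$ is continuous into $\ell^2$. Hence $W\in\mathscr{T}_n$ for every $n$.

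\textbf{Properness of $\mathscr{T}_O\subsetneqq\mathscr{T}$.} We apply Lemma \ref{230703lem1}. Every $\mathcal F$-continuous function is $\mathscr{T}$-continuous. If the two topologies coincided, every $\mathcal F$-continuous function on $O$ would therefore be norm continuous, which contradicts Proposition \ref{230714lem1}: $C_{\ell^2}(O)\subsetneqq C_{\mathcal F}(O)$. Concretely, the function $g$ from that proof works. The only point needing a remark is that $g|_O$, or $\psi$ restricted to $O$ when $O=\ell^2$, is $\mathscr T$-continuous but not $\mathscr T_O$-continuous; so some preimage of an open set lies in $\mathscr T\setminus\mathscr T_O$.

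\textbf{Properness of $\mathscr{T}\subsetneqq\mathscr{T}_{n+1}$ and of $\mathscr{T}_{n+1}\subsetneqq\mathscr{T}_n$.} These are the real content, and we handle both with explicit sets. Fix $\mathbf{x}_0\in O$ and $r>0$ with $B_r(\mathbf{x}_0)\subset O$.

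For $\mathscr{T}_{n+1}\subsetneqq\mathscr T_n$, take the set
\[
E\triangleq\{\mathbf{x}_0+s\mathbf{e}_1:\ |s|<r\}.
\]
Its only points are on the $\mathbf e_1$-line through $\mathbf x_0$. The slice of $E$ in the $\mathbf e_1,\dots,\mathbf e_n$ directions through any of its points is this same segment, and that segment is not open in $\mathbb R^n$ when $n\ge2$; so $E$ is not $\mathscr T_n$-open for $n\ge 2$. This is therefore not the right example, and we instead use a set that is open along the first $n$ directions but thin in $\mathbf e_{n+1}$:
\[
E_n\triangleq\Big\{\mathbf{x}_0+\sum_{i=1}^n s_i\mathbf{e}_i:\ \sum_{i=1}^n s_i^2<r^2\Big\}.
\]
$E_n$ belongs to $\mathscr S_n$, so it is $\mathscr T_n$-open. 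Its slice along $\mathbf e_1,\dots,\mathbf e_{n+1}$ through $\mathbf x_0$ is $\{(s,0):|s|<r\}\subset\mathbb R^{n+1}$, which has empty interior. Since every point of a $\mathscr T_{n+1}$-open set must have a slice in the first $n+1$ directions containing a neighbourhood, $E_n\notin\mathscr T_{n+1}$.

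For $\mathscr T\subsetneqq\mathscr T_{n+1}$, the same reasoning applied to $E_{n+1}$ shows $E_{n+1}\in\mathscr T_{n+1}\setminus\mathscr T_{n+2}\supset\mathscr T$, so $E_{n+1}\notin\mathscr T$.

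\textbf{Main obstacle.} The delicate point is the characterization of $\mathscr T_{m}$-open sets used above. We need that $G\in\mathscr T_m$ if and only if, for every $\mathbf z\in G$, the slice $\{(s_1,\cdots,s_m): \mathbf z+\sum_{i\le m} s_i\mathbf e_i\in G\}$ contains a neighbourhood of $0$. The forward direction follows because a basic set containing $\mathbf z$ can be re-based at $\mathbf z$: $\mathbf z$ lies in the same $m$-affine slice, and a translated open set remains open. The converse follows by writing $G$ as the union of its re-based slices. We would verify this first, and then the separations above become immediate.
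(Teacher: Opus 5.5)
Your proof is correct and takes essentially the paper's route. As in the paper, the properness of $\mathscr{T}_O\subsetneqq\mathscr{T}$ comes from Lemma~\ref{230703lem1} combined with an $\mathcal F$-continuous function that is not norm continuous, and your slice characterization and the sets $E_n$ supply the strict inclusions the paper dismisses as ``easy to see'' (no witnesses given) together with the easy inclusions; using the function $g$ from Proposition~\ref{230714lem1} instead of $\psi$ additionally lets you treat arbitrary $O$ without the paper's reduction to $O=\ell^2$, and you should delete the abandoned segment example $E$ from the write-up.
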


\begin{proof}
For simplicity, we only consider the case that $O=\ell^2$. It is easy to see that $\mathscr{T}\subsetneqq \mathscr{T}_{n+1}\subsetneqq \mathscr{T}_n$ for all $n\in\mathbb{N}$. Note that $\mathscr{T}_{\ell^2}\subset \mathscr{T}$ is obvious and the proper inclusion follows from Lemmas \ref{230703lem1} and \ref{not continuous}. The proof of Proposition \ref{230703prop1} is completed.
\end{proof}

\subsection{Non-Borel measurable functions on $\ell^2$}

In Proposition \ref{partial derivative of Ptf}, we see that $P_tf\in C_\mathcal{F}^{\infty}(\ell^2)\cap  B_{\ell^2}^b(\ell^2)$ for each $f\in B_{\ell^2}^b(\ell^2)$, while Lemma \ref{not continuous} implies that $C_\mathcal{F}^{\infty}(\ell^2) \nsubseteqq C_{\ell^2}(\ell^2)$. Obviously, $C_{\ell^2}(O)\subsetneqq B_{\ell^2}(O)$ for each nonempty open subset $O$ of $\ell^2$, thus it is natural to ask, does $C_\mathcal{F}^{\infty}(O)\subset B_{\ell^2}(O)$ or $C_\mathcal{G}^{\infty}(O)\subset B_{\ell^2}(O)$? Since we will focus on integrable functions with respect to some Borel measures in this paper, these functions must be Borel measurable. Note that, for the functions under consideration in \cite[Theorem 2, p. 421]{Goo} both the $H$-continuity and the Borel measurability are assumed, while in \cite[Abstract, p. 279]{Zaj} both the G\^{a}teaux differentiability and the Borel measurability are assumed. In what follows, we shall use the tool of cardinality to prove that, many linear functions on $\ell^2$ are not Borel measurable!

For any set $E$, we denote by $\card E$ the cardinality of $E$. In particular, denote by $\aleph_0$ and $\mathfrak{c}$  respectively the cardinality of $\mathbb{N}$ and $\mathbb{R}$, and by $2^{\mathfrak{c}}$ the cardinality of all subsets of $\mathbb{R}$.
Motivated by the proof of Proposition \ref{230714lem1}, we shall prove that the cardinality of all linear maps from $\ell^2$ into $\mathbb{R}$ is $2^{\mathfrak{c}}$ but $\card B_{\ell^2}(O)=\mathfrak{c}$.

First, we have the following result.
\begin{lemma}\label{230717lem1}
For any nonempty open subset $O$ of $\ell^2$, it holds that $\card\mathscr{B}(O)=\mathfrak{c}$.
\end{lemma}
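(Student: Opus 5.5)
We prove the two inequalities $\card\mathscr{B}(O)\geqslant\mathfrak{c}$ and $\card\mathscr{B}(O)\leqslant\mathfrak{c}$ separately and then conclude by the Schr\"oder--Bernstein theorem.

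\emph{Lower bound.} Fix $\mathbf{x}_0\in O$. The singletons $\{\mathbf{x}_0+s\mathbf{e}_1\}$, for $s$ in a small interval around $0$, are pairwise distinct closed subsets of $O$. Hence they belong to $\mathscr{B}(O)$, and there are $\mathfrak{c}$ of them.

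\emph{Upper bound.} Since $\ell^2$ is a separable metric space, so is $O$ with the induced norm topology. Take a countable dense subset $D\subset O$, and let $\mathscr{U}_0$ be the family of balls $B_r(\mathbf{d})\cap O$ with $\mathbf{d}\in D$ and $r\in\mathbb{Q}\cap(0,+\infty)$. Every open subset of $O$ is a union of members of $\mathscr{U}_0$, so $\mathscr{U}_0$ is a countable base and $\mathscr{B}(O)=\sigma(\mathscr{U}_0)$.

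It then suffices to use the classical fact that a $\sigma$-algebra generated by a countable family has cardinality at most $\mathfrak{c}$. We would prove this by transfinite induction up to $\omega_1$:
\begin{itemize}
\item set $\mathscr{E}_0\triangleq\mathscr{U}_0\cup\{O\setminus U:\,U\in\mathscr{U}_0\}$;
\item for successor ordinals $\alpha+1$, let $\mathscr{E}_{\alpha+1}$ consist of all countable unions of sets from $\mathscr{E}_\alpha$ together with their complements;
\item for limit ordinals $\alpha<\omega_1$, take unions $\mathscr{E}_\alpha\triangleq\bigcup_{\beta<\alpha}\mathscr{E}_\beta$.
\end{itemize}

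Inductively, $\card\mathscr{E}_\alpha\leqslant\mathfrak{c}$ for every $\alpha<\omega_1$. At successor steps, the number of countable sequences from a set of size at most $\mathfrak{c}$ is at most $\mathfrak{c}^{\aleph_0}=2^{\aleph_0\cdot\aleph_0}=\mathfrak{c}$. At limit steps, a union of countably many sets of size at most $\mathfrak{c}$ has size at most $\mathfrak{c}$.

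Let $\mathscr{E}\triangleq\bigcup_{\alpha<\omega_1}\mathscr{E}_\alpha$. It is closed under complements by construction. It is closed under countable unions because any countable subfamily of $\mathscr{E}$ lies in some single $\mathscr{E}_\alpha$, as $\omega_1$ has uncountable cofinality. Hence $\mathscr{E}$ is a $\sigma$-algebra containing $\mathscr{U}_0$, and it is contained in $\sigma(\mathscr{U}_0)$, so $\mathscr{E}=\mathscr{B}(O)$. Its cardinality is at most $\aleph_1\cdot\mathfrak{c}=\mathfrak{c}$.

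The main obstacle is this last cardinality bookkeeping, specifically the cofinality argument showing that the $\omega_1$-indexed union is already closed under countable unions. It uses the axiom of choice, which the paper already assumes via Hamel bases. Everything else is routine.
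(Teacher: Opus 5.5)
Your proof is correct and follows the same route as the paper. Both arguments exhibit a countable base of rational balls in $O$, conclude that $\mathscr{B}(O)$ is generated by a countable family, and then apply the bound $\mathfrak{c}$ for countably generated $\sigma$-algebras. The paper simply cites that bound from the literature and leaves the lower bound implicit. You instead prove the bound via the $\omega_1$-stage hierarchy, and you make the lower bound explicit using the $\mathfrak{c}$ many singletons $\{\mathbf{x}_0+s\mathbf{e}_1\}$.
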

\begin{proof}
Let
$$
E_1\triangleq \{(x_i)_{i\in\mathbb{N}}\in O:x_i\in \mathbb{Q},\,\forall\,i\in\mathbb{N},x_j\neq 0\text{ only for finitely many
}j\in\mathbb{N}\}.
$$
Then $\card E_1=\aleph_0$ and $E_1$ is dense in $O$. Let
\begin{eqnarray*}
\mathscr{Q}&\triangleq &\{B_r(\textbf{x}): \textbf{x}\in E_1,\,r\in (0,+\infty)\cap \mathbb{Q},\,B_r(\textbf{x})\subset O\},\\
\mathscr{O}&\triangleq &\{O_1: O_1\text{ is an open subset of }O\}.
\end{eqnarray*}
Obviously, $\card \mathscr{Q}=\aleph_0$, and each element of $\mathscr{O}$ is a countable union of elements in $\mathscr{Q}$. Hence, $\mathscr{B}(O)$ is the $\sigma$-algebra generated by $\mathscr{Q}$, by \cite[Theorem 4, p. 129]{Kan}, it follows that $\card\mathscr{B}(O)=\mathfrak{c}$. This completes the proof of Lemma \ref{230717lem1}.
\end{proof}

\begin{lemma}\label{230717lem2}
For any nonempty open subset $O$ of $\ell^2$, it holds that $\card B_{\ell^2}(O)=\mathfrak{c}$.
\end{lemma}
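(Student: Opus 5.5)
We prove the two inequalities $\card B_{\ell^2}(O)\geqslant \mathfrak{c}$ and $\card B_{\ell^2}(O)\leqslant\mathfrak{c}$ separately and then invoke the Cantor--Schr\"oder--Bernstein theorem.

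The lower bound is immediate. For each $c\in\mathbb{R}$, the constant function $\mathbf{x}\mapsto c$ on $O$ is Borel measurable. Hence $c\mapsto(\text{constant } c)$ is an injection of $\mathbb{R}$ into $B_{\ell^2}(O)$.

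For the upper bound, the key observation is that a real-valued Borel function is determined by countably many Borel sets. For $f\in B_{\ell^2}(O)$, we associate the sequence
\[
\Phi(f)\triangleq \bigl(f^{-1}((-\infty,q))\bigr)_{q\in\mathbb{Q}}\in \mathscr{B}(O)^{\mathbb{Q}}.
\]
$\Phi$ is injective. Indeed, for any $\mathbf{x}\in O$ we can recover
\[
f(\mathbf{x})=\inf\{q\in\mathbb{Q}:\ \mathbf{x}\in f^{-1}((-\infty,q))\},
\]
since $f(\mathbf{x})<q$ if and only if $\mathbf{x}$ lies in the $q$-th set, and $\mathbb{Q}$ is dense in $\mathbb{R}$. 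Therefore
\[
\card B_{\ell^2}(O)\leqslant \card\bigl(\mathscr{B}(O)^{\mathbb{Q}}\bigr)=\mathfrak{c}^{\aleph_0}.
\]
By Lemma \ref{230717lem1}, $\card\mathscr{B}(O)=\mathfrak{c}$. The standard cardinal identity $\mathfrak{c}^{\aleph_0}=(2^{\aleph_0})^{\aleph_0}=2^{\aleph_0\cdot\aleph_0}=2^{\aleph_0}=\mathfrak{c}$ then yields $\card B_{\ell^2}(O)\leqslant\mathfrak{c}$.

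Combining both inequalities, Cantor--Schr\"oder--Bernstein gives $\card B_{\ell^2}(O)=\mathfrak{c}$. The only substantive input is Lemma \ref{230717lem1}, which is already established. The one step that needs care is the injectivity of $\Phi$, specifically the infimum reconstruction of $f(\mathbf{x})$ from the sublevel sets; this is elementary, so we expect no real obstacle.
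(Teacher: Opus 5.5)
Your proof is correct and follows essentially the same route as the paper. For the upper bound, both arguments inject $B_{\ell^2}(O)$ into $\mathscr{B}(O)^{\mathbb{Q}}$ via rational level sets (the paper uses $\{f>r\}$, you use $\{f<q\}$) and conclude with $\card\mathscr{B}(O)=\mathfrak{c}$ and $\mathfrak{c}^{\aleph_0}=\mathfrak{c}$. The only difference is the lower bound, where you inject $\mathbb{R}$ via constant functions while the paper injects $\mathscr{B}(O)$ via indicator functions $\chi_E$; both are immediate.
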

\begin{proof}
Note that
$$
\{\chi_E:E\in \mathscr{B}(O)\}\subset B_{\ell^2}(O).
$$
Hence, we have $\card B_{\ell^2}(O)\geqslant\card \mathscr{B}(O)=\mathfrak{c}$.

On the other side, if $f,g\in B_{\ell^2}(O)$ and $f\neq g$, then it is easy to see that there exists $r\in \mathbb{Q}$ such that
$$
\{\textbf{x}\in O: f(\textbf{x})>r\}\neq \{\textbf{x}\in O: g(\textbf{x})>r\}.
$$
Let
$$
Tf\triangleq (\{\textbf{x}\in O: f(\textbf{x})>r\})_{r\in \mathbb{Q}},\quad\,\forall\, f\in B_{\ell^2}(O).
$$
Thus, $T$ is an injective mapping from $B_{\ell^2}(O)$ into $\prod\limits_{r\in \mathbb{Q}}\mathscr{B}(O)$, which implies that
$$
\card B_{\ell^2}(O)\leqslant \card\left(\prod\limits_{r\in \mathbb{Q}}\mathscr{B}(O)\right)=\mathfrak{c}.
$$
Therefore, $\card B_{\ell^2}(O)=\mathfrak{c}$, which completes the proof of Lemma \ref{230717lem2}.
\end{proof}

On the other hand, we shall prove that the cardinality of all linear maps from $\ell^2$ into $\mathbb{R}$ is $2^{\mathfrak{c}}$.
\begin{lemma}\label{230717lem3}
The cardinality of any Hamel basis of $\ell^2$ is $\mathfrak{c}$.
\end{lemma}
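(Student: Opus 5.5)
We prove the two inequalities $\card \mathcal{H}\leqslant \mathfrak{c}$ and $\card \mathcal{H}\geqslant \mathfrak{c}$ for an arbitrary Hamel basis $\mathcal{H}=\{\mathbf{v}_\alpha:\alpha\in A\}$ of $\ell^2$. The Cantor--Schr\"oder--Bernstein theorem then gives $\card\mathcal{H}=\mathfrak{c}$.

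\emph{Upper bound.} Since $\mathcal{H}\subset \ell^2\subset\mathbb{R}^{\infty}$, we have $\card \mathcal{H}\leqslant \card\mathbb{R}^{\mathbb{N}}=\mathfrak{c}^{\aleph_0}=2^{\aleph_0\cdot\aleph_0}=\mathfrak{c}$.

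\emph{Lower bound.} First, $\mathcal{H}$ is infinite, since $\ell^2$ contains the linearly independent family $\{\mathbf{e}_i\}_{i\in\mathbb{N}}$. Every $\mathbf{x}\in\ell^2$ is determined by a finite subset $A_0\subset A$ together with a coefficient vector in $\mathbb{R}^{A_0}$. Hence
$$
\mathfrak{c}=\card \ell^2\leqslant \card\Bigl(\bigcup_{n\in\mathbb{N}_0}\bigl(A^n\times\mathbb{R}^n\bigr)\Bigr)\leqslant \aleph_0\cdot\max(\card A,\mathfrak{c}).
$$
This bound only yields $\card A\geqslant\aleph_0$, so a counting argument alone does not suffice. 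Instead we exhibit an explicit linearly independent family of cardinality $\mathfrak{c}$ in $\ell^2$: take $\mathbf{u}_\lambda\triangleq(\lambda^i)_{i\in\mathbb{N}}$ for $\lambda\in(0,1)$. Each $\mathbf{u}_\lambda$ lies in $\ell^2$. For distinct $\lambda_1,\dots,\lambda_m$, the first $m$ coordinates form the Vandermonde-type matrix $(\lambda_j^{i})_{1\leqslant i,j\leqslant m}$. Its determinant equals $\prod_j\lambda_j\prod_{j<k}(\lambda_k-\lambda_j)\neq0$, so the vectors $\mathbf{u}_{\lambda_1},\dots,\mathbf{u}_{\lambda_m}$ are linearly independent.

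It remains to transfer this independence to the cardinality of $\mathcal{H}$. Each $\mathbf{u}_\lambda$ is expressed using a finite set $F_\lambda\subset A$, and the map $\lambda\mapsto F_\lambda$ sends $(0,1)$ into the set $[A]^{<\omega}$ of finite subsets of $A$. Fix a finite set $F\subset A$. All $\mathbf{u}_\lambda$ with $F_\lambda=F$ lie in $\span\{\mathbf{v}_\alpha:\alpha\in F\}$, a space of dimension $\card F$. By independence of the $\mathbf{u}_\lambda$, at most $\card F$ values of $\lambda$ can share the same $F$, so every fiber of $\lambda\mapsto F_\lambda$ is finite.

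Suppose, for contradiction, that $\card A<\mathfrak{c}$. Since $A$ is infinite, $\card[A]^{<\omega}=\card A$. The set $(0,1)$ would then be a union of $\card A$ finite fibers, so $\mathfrak{c}\leqslant\aleph_0\cdot\card A=\card A<\mathfrak{c}$, which is absurd. Therefore $\card A\geqslant\mathfrak{c}$.

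The main obstacle is this lower bound. Na\"ive counting of $\ell^2$ gives nothing beyond $\card A\geqslant\aleph_0$, and the argument relies on two ingredients: the Vandermonde family supplies $\mathfrak{c}$ independent vectors, and the finite-fiber counting converts that independence into a lower bound on $\card A$. The counting step uses the standard cardinal arithmetic facts $\card[A]^{<\omega}=\card A$ for infinite $A$ and $\aleph_0\cdot\kappa=\kappa$ for infinite $\kappa$. The latter depends on the axiom of choice, which is already used to produce Hamel bases.
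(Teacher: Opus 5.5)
Your proof is correct and follows essentially the same route as the paper: the upper bound comes from $\ell^2\subset\mathbb{R}^{\mathbb N}$, and the lower bound comes from a Vandermonde-independent family of continuum many vectors (you use $(\lambda^i)_{i\in\mathbb N}$, the paper uses $(t^n/n!)_{n\in\mathbb N}$). The only differences are that you prove explicitly, via the finite-fiber count, the standard fact the paper invokes implicitly (a linearly independent set has cardinality at most that of any Hamel basis), and you omit the paper's Baire-category argument ruling out a countable basis, which is redundant once a continuum-sized independent family is available.
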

\begin{proof}
Suppose that the cardinality of some Hamel basis of $\ell^2$ is $\aleph_0$. Noting $\dim \ell^2=\infty$, we may choose this Hamel basis of $\ell^2$ as $\{\textbf{v}_i\}_{i=1}^{\infty}\subset \ell^2$. Then
$$
\ell^2=\bigcup_{n=1}^{\infty}\left\{\sum_{i=1}^{n}c_i\textbf{v}_i:c_i\in\mathbb{R},\,i=1,\cdots,n\right\},
$$
which implies that $\ell^2$ is a countable union of nowhere dense subsets. This contradicts the Baire category theorem (See \cite[2.2, p. 43]{Rud87}).

For every \(t\in(0,1)\), set
\[
v_t\triangleq\left(\frac{t^n}{n!}\right)_{n\in\mathbb N}.
\]
Then \(v_t\in \ell^2\). Indeed,
$
\sum_{n=1}^{\infty}\left(\frac{t^n}{n!}\right)^2
\leqslant
\sum_{n=1}^{\infty}\frac{1}{(n!)^2}
<\infty .
$

We claim that the family
$
\{v_t:t\in(0,1)\}
$
is linearly independent. Let \(m\in\mathbb N\), let
$
0<t_1<\cdots<t_m<1,
$
and suppose that, for some \(c_1,\ldots,c_m\in\mathbb R\),
\[
\sum_{j=1}^{m}c_jv_{t_j}=\mathbf{0}
\,\text{in }\ell^2.
\]
Comparing the \(n\)-th coordinates gives
$
\sum_{j=1}^{m}c_j\frac{t_j^n}{n!}=0,
\, n\in\mathbb N.
$
Equivalently,
$
\sum_{j=1}^{m}c_jt_j^n=0,
\, n\in\mathbb N.
$
In particular, for \(n=1,\ldots,m\), we obtain
\[
\begin{pmatrix}
t_1 & t_2 & \cdots & t_m\\
t_1^2 & t_2^2 & \cdots & t_m^2\\
\vdots & \vdots & \ddots & \vdots\\
t_1^m & t_2^m & \cdots & t_m^m
\end{pmatrix}
\begin{pmatrix}
c_1\\
c_2\\
\vdots\\
c_m
\end{pmatrix}
=
\begin{pmatrix}
0\\
0\\
\vdots\\
0
\end{pmatrix}.
\]
The determinant of the matrix on the left-hand side is
\[
\det(t_j^i)_{1\leqslant i,j\leqslant m}
=
\left(\prod_{j=1}^{m}t_j\right)
\prod_{1\leqslant j<k\leqslant m}(t_k-t_j).
\]
Since \(0<t_1<\cdots<t_m<1\), this determinant is nonzero. Hence
$
c_1=c_2=\cdots=c_m=0.
$
Therefore \(\{v_t:t\in(0,1)\}\) is a linearly independent subset of \(\ell^2\).

Moreover, the map \(t\mapsto v_t\) is injective, since the coordinate corresponding to \(n=1\) is \(t\). Hence
\[
\card \{v_t:t\in(0,1)\}
=
\card(0,1)
=
\mathfrak c.
\]
Consequently,
$
\dim_{\mathrm{Hamel}}\ell^2\geqslant \mathfrak c.
$
Equivalently, every Hamel basis of \(\ell^2\) has cardinality at least \(\mathfrak c\).

On the other hand,
$
\ell^2\subset \mathbb R^{\mathbb N},
$
and therefore
\[
\card\ell^2 \leqslant \card\mathbb R^{\mathbb N} =\mathfrak c^{\aleph_0}
=
(2^{\aleph_0})^{\aleph_0}
=
2^{\aleph_0}
=
\mathfrak c.
\]
Since every Hamel basis of \(\ell^2\) is a subset of \(\ell^2\), we have
$
\dim_{\mathrm{Hamel}}\ell^2\leqslant \mathfrak c.
$
Combining this with the previous inequality gives
$
\dim_{\mathrm{Hamel}}\ell^2=\mathfrak c.
$ This completes the proof of Lemma \ref{230717lem3}.
\end{proof}
\begin{lemma}\label{230717prop1}
The cardinality of all linear maps from $\ell^2$ into $\mathbb{R}$ is $2^{\mathfrak{c}}$.
\end{lemma}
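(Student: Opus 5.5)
The plan is to use a Hamel basis and Lemma \ref{230717lem3}. Fix a Hamel basis $\{\mathbf{v}_\alpha:\alpha\in A\}$ of $\ell^2$; by Lemma \ref{230717lem3}, $\card A=\mathfrak{c}$. A linear map $\ell^2\to\mathbb{R}$ is uniquely determined by its values on the basis. Conversely, every function $c:A\to\mathbb{R}$ extends uniquely to a linear map: send $\mathbf{x}=\sum_{\alpha\in A_0}a_\alpha\mathbf{v}_\alpha$ to $\sum_{\alpha\in A_0}a_\alpha c(\alpha)$, exactly as in the proof of Proposition \ref{230714lem1}. This gives a bijection between the set of all linear maps from $\ell^2$ into $\mathbb{R}$ and $\mathbb{R}^A$. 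Hence the desired cardinality equals $\card\mathbb{R}^A=\mathfrak{c}^{\mathfrak{c}}$.

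It then remains to carry out the cardinal arithmetic:
\[
2^{\mathfrak{c}}\leqslant\mathfrak{c}^{\mathfrak{c}}=(2^{\aleph_0})^{\mathfrak{c}}=2^{\aleph_0\cdot\mathfrak{c}}=2^{\mathfrak{c}},
\]
using $\aleph_0\cdot\mathfrak{c}=\mathfrak{c}$. For readers who prefer to avoid exponent rules, I would make the two inequalities explicit. For the lower bound, the functions $c:A\to\{0,1\}$ give $2^{\mathfrak{c}}$ distinct linear maps. For the upper bound, identify each $c\in\mathbb{R}^A$ with its graph, a subset of $A\times\mathbb{R}$; this set has cardinality $\mathfrak{c}\cdot\mathfrak{c}=\mathfrak{c}$, so $\mathbb{R}^A$ injects into its power set, which has cardinality $2^{\mathfrak{c}}$.

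There is no real obstacle here. The only substantive input is that the Hamel dimension is exactly $\mathfrak{c}$, and that is already Lemma \ref{230717lem3}. The only care needed is in stating the existence and uniqueness of the linear extension correctly, that is, in using the uniqueness of finite Hamel expansions.
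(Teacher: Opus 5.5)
Your proof is correct and essentially the paper's argument: the lower bound is the same injection sending each subset $E$ of the Hamel index set (of cardinality $\mathfrak{c}$ by Lemma \ref{230717lem3}) to the linear extension of $\chi_E$. The only minor difference is the upper bound, which the paper obtains from the set of all maps $\ell^2\to\mathbb{R}$ (cardinality $\mathfrak{c}^{\mathfrak{c}}=2^{\mathfrak{c}}$), whereas you use the exact bijection with $\mathbb{R}^A$; both give the same count.
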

\begin{proof}
Since $\card\ell^2=\mathfrak{c}$, the cardinality of all maps from $\ell^2$ into $\mathbb{R}$ is $2^{\mathfrak{c}}$. By Lemma \ref{230717lem3}, there exists a Hamel basis $\{\textbf{v}_{\alpha}:\alpha\in \mathbb{R}\}$ for $\ell^2$. Let
$$
\Phi(E)\triangleq \phi_E,\,\quad \forall\, E\in 2^\mathbb{R},
$$
where $2^\mathbb{R}$ denotes the collection of all subsets of $\mathbb{R}$ and $\phi_E$ is the linear mapping from $\ell^2$ into $\mathbb{R}$ defined by
$$
\phi_E\left(\sum_{\alpha\in\mathbb{R}}c_{\alpha}\textbf{v}_{\alpha}\right)\triangleq\sum_{\alpha\in\mathbb{R}}c_{\alpha}\chi_E(\alpha),
$$
where $c_{\alpha}\in \mathbb{R}$ for each $\alpha\in \mathbb{R}$ and $c_{\alpha}\neq 0$ only for finitely many $\alpha\in\mathbb{R}$. It is easy to see that $\Phi$ is an injective mapping. Since $\card 2^\mathbb{R}=2^{\mathfrak{c}}$, the cardinality of all linear maps from $\ell^2$ into $\mathbb{R}$ is $2^{\mathfrak{c}}$. The proof of Lemma \ref{230717prop1} is completed.
\end{proof}
Since all linear maps from $\ell^2$ into $\mathbb{R}$ belong to $C_\mathcal{G}^\infty(O)$, as a consequence of Lemmas \ref{230717lem2} and \ref{230717prop1}, we have the following result:
\begin{corollary}\label{230720cor1}
For any nonempty open subset $O$ of $\ell^2$, it holds that $C_\mathcal{G}^\infty(O)\nsubseteq B_{\ell^2}(O)$.
\end{corollary}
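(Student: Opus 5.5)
The plan is a cardinality comparison, exactly as the preceding remark suggests. The key point is that a linear map from $\ell^2$ into $\mathbb{R}$ can be restricted to $O$, and distinct linear maps stay distinct after restriction. There are $2^{\mathfrak{c}}$ linear maps but only $\mathfrak{c}$ Borel functions on $O$, so some restriction fails to be Borel measurable. We also need such restrictions to lie in $C_\mathcal{G}^\infty(O)$.

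First, I would check that for every linear $\phi:\ell^2\to\mathbb{R}$, the restriction $\phi|_O$ belongs to $C_\mathcal{G}^\infty(O)$.
\begin{itemize}
\item For $\mathbf{x}\in O$ and a nonzero finite-dimensional subspace $L$, the map $\mathbf{y}\mapsto\phi(\mathbf{x}+\mathbf{y})=\phi(\mathbf{x})+\phi(\mathbf{y})$ on $O_{\mathbf{x}}(L)$ is affine on a finite-dimensional space, hence continuous. So $\phi|_O\in C_\mathcal{G}(O)$.
\item The partial derivatives are constants: $\partial\phi/\partial x_j=\phi(\mathbf{e}_j)$.
\item All higher partial derivatives vanish. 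Constants are $\mathcal{G}$-continuous, so $\phi|_O\in C_\mathcal{G}^\infty(O)$.
\end{itemize}

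Second, I would show that the restriction map $\phi\mapsto\phi|_O$ is injective on linear maps. Suppose $\phi|_O=\psi|_O$ with $\phi,\psi$ linear. Then $\chi=\phi-\psi$ is linear and vanishes on $O$. Pick $\mathbf{x}_0\in O$ and $r>0$ with $B_r(\mathbf{x}_0)\subset O$. Then $\chi(\mathbf{x}_0)=0$. For any $\mathbf{z}\in\ell^2$, write $\mathbf{z}=\lambda^{-1}\big((\mathbf{x}_0+\lambda\mathbf{z})-\mathbf{x}_0\big)$ with $\lambda>0$ small enough that $\mathbf{x}_0+\lambda\mathbf{z}\in O$. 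This gives $\chi(\mathbf{z})=0$, so $\chi\equiv0$.

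Combining this with Lemma \ref{230717prop1}, the set $\{\phi|_O:\phi \text{ linear}\}\subset C_\mathcal{G}^\infty(O)$ has cardinality $2^{\mathfrak{c}}$. By Lemma \ref{230717lem2}, $\card B_{\ell^2}(O)=\mathfrak{c}$. Since $2^{\mathfrak{c}}>\mathfrak{c}$ by Cantor's theorem, this set cannot be contained in $B_{\ell^2}(O)$. Hence some $\phi|_O$ lies in $C_\mathcal{G}^\infty(O)\setminus B_{\ell^2}(O)$.

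The only step needing any care is the injectivity of restriction, which the absorbing argument above settles. Everything else is direct from the earlier lemmas.
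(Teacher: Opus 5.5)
Your proof is correct and follows the paper's argument: a cardinality comparison between the $2^{\mathfrak{c}}$ linear functionals on $\ell^2$ (Lemma \ref{230717prop1}), whose restrictions all lie in $C_\mathcal{G}^\infty(O)$, and the $\mathfrak{c}$ Borel functions on $O$ (Lemma \ref{230717lem2}). You also check explicitly that restriction to $O$ is injective on linear maps, which the paper's one-line derivation leaves implicit, and your absorbing argument for this is right.
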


\begin{remark}
By the Banach--Pettis automatic continuity theorem, every Borel measurable
linear mapping between Polish normed spaces is continuous. In particular, every
Borel measurable linear mapping from \(\ell^2\) into \(\mathbb R\) is continuous;
see, for instance, Kechris \cite[Theorem 9.10, p. 61]{Kec}.

Recall that the \(\mathbb R\)-linear mapping
\[
f:\ell^2\longrightarrow \mathbb R
\]
constructed in the proof of Proposition \ref{230714lem1} is not continuous, while
\(f|_O\in C_{\mathcal G}^{\infty}(O)\). Hence \(f\) is not Borel measurable on
\(\ell^2\).

We further claim that \(f|_O\) is not Borel measurable on \(O\). Indeed, suppose
that \(f|_O\in B_{\ell^2}(O)\). Since \(O\) is nonempty and open, there exist
\(\mathbf{x}_0\in O\) and \(r>0\) such that
$
B_{r}(\mathbf{x}_0)\subset O.
$
Then \(f\) is Borel measurable on \(B_{r}(\mathbf{0} )\), because
$
f(\mathbf{x})=f(\mathbf{x}+\mathbf{x}_0)-f(\mathbf{x}_0),\, \mathbf{x}\in B_{r}(\mathbf{0} ).
$
By linearity, for every \(n\in\mathbb N\), \(f\) is Borel measurable on
\(B_{nr}(\mathbf{0} )\), since
$
f(\mathbf{x})=n f(\mathbf{x}/n),\,\mathbf{x}\in B_{nr}(\mathbf{0}).
$
As
\[
\ell^2=\bigcup_{n=1}^{\infty}B_{nr}(\mathbf{0}),
\]
it follows that \(f\) is Borel measurable on \(\ell^2\), a contradiction.
Therefore,
$
f|_O\notin B_{\ell^2}(O).
$
Consequently,
\[
f|_O\in C_{\mathcal G}^{\infty}(O)\setminus B_{\ell^2}(O).
\]
This gives another proof of Corollary \ref{230720cor1}.
\end{remark}

\subsection{A sequence of compactly supported functions on $\ell^2$}

For any real-valued function  $f$ on $\ell^2$, we denote by $\supp f$ the closure of the set $\{\textbf{x}\in\ell^2:f(\textbf{x})\neq 0\}$ in $\ell^2$.


By the assumption \eqref{20250130for1}, we have
\begin{eqnarray}\label{20250130for2}
\sum_{i=1}^{\infty}a_i^2\cdot \left|\ln \frac{1}{\sqrt{2\pi} a_i}\right|<\infty.
\end{eqnarray}
Choose a sequence of positive numbers $\{c_i\}_{i=1}^{\infty}$ such that $\lim\limits_{i\to\infty}c_i=0$,
 and
\begin{eqnarray}\label{20240920for1}
\sum_{i=1}^{\infty}\frac{a_i^2}{c_i}\cdot \left|\ln \frac{1}{\sqrt{2\pi} a_i}\right|<\infty,\qquad \sum_{i=1}^{\infty}\frac{a_i^2}{c_i}<\infty.
\end{eqnarray}
The following fact will be used in the proof of Proposition \ref{20241013thm1}.
\begin{lemma}\label{20250130lem1}
For each nonempty subset $I$ of $\mathbb{N}$, if $(x_i)_{i\in I}\in\ell^2(I)$ satisfies
\begin{eqnarray}\label{20250130for3}
\sum_{i\in I}\left|\ln \frac{1}{\sqrt{2\pi} a_i}-\frac{x_i^2}{2a_i^2}\right|<\infty,
\end{eqnarray}
then, $\sum\limits_{i\in I}\frac{x_i^2}{c_i}<\infty$.
\end{lemma}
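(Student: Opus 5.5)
Write $L_i\triangleq\ln\frac{1}{\sqrt{2\pi}a_i}$, so that the hypothesis \eqref{20250130for3} reads $\sum_{i\in I}\bigl|L_i-\frac{x_i^2}{2a_i^2}\bigr|<\infty$. The plan is to write $x_i^2=2a_i^2L_i-2a_i^2\bigl(L_i-\frac{x_i^2}{2a_i^2}\bigr)$ and divide by $c_i$:
\[
\frac{x_i^2}{c_i}\leqslant \frac{2a_i^2}{c_i}\,|L_i|+\frac{2a_i^2}{c_i}\Bigl|L_i-\frac{x_i^2}{2a_i^2}\Bigr|,\qquad i\in I.
\]
The first term is summable by the first condition in \eqref{20240920for1}.

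For the second term, we need a uniform bound on $\frac{a_i^2}{c_i}$. Since $\sum_i\frac{a_i^2}{c_i}<\infty$ by the second condition in \eqref{20240920for1}, we have $\frac{a_i^2}{c_i}\to0$. Hence $M\triangleq\sup_i\frac{a_i^2}{c_i}<\infty$, and the second term is bounded by $2M\bigl|L_i-\frac{x_i^2}{2a_i^2}\bigr|$, which is summable over $I$ by \eqref{20250130for3}.

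Adding the two bounds gives $\sum_{i\in I}\frac{x_i^2}{c_i}<\infty$. The assumption $(x_i)\in\ell^2(I)$ is not even needed.

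I see no real obstacle here. The only point worth flagging is that the bound must use $|L_i|$ rather than $L_i$ itself, since $L_i$ may be negative for finitely many $i$ where $a_i$ is not small. This is exactly why \eqref{20240920for1} is stated with absolute values.
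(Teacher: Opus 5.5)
Your proof is correct and is essentially the paper's argument. You use the same identity $x_i^2=2a_i^2\bigl(L_i-d_i\bigr)$ with $d_i=L_i-\frac{x_i^2}{2a_i^2}$, then the triangle inequality, and control the $|L_i|$-term by the first condition in \eqref{20240920for1}. You also make explicit the bound $\sup_i a_i^2/c_i<\infty$ (from the second condition in \eqref{20240920for1}) needed for the $|d_i|$-term, which the paper uses without comment.
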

\begin{proof}
Write $d_i=\ln \frac{1}{\sqrt{2\pi} a_i}-\frac{x_i^2}{2a_i^2}$ for $i\in I$, then combining \eqref{20250130for2},\eqref{20240920for1} and \eqref{20250130for3}, we have
\begin{eqnarray*}
\sum_{i\in I}\frac{x_i^2}{c_i}=\sum_{i\in I}\frac{2a_i^2}{c_i}\left(\ln \frac{1}{\sqrt{2\pi} a_i}-d_i\right)
\leq \sum_{i\in I}\frac{2a_i^2}{c_i}\left|\ln \frac{1}{\sqrt{2\pi} a_i} \right|+\sum_{i\in I}\frac{2a_i^2}{c_i}\cdot |d_i|<\infty,
\end{eqnarray*}
which completes the proof of Lemma \ref{20250130lem1}.
\end{proof}
Write
\begin{eqnarray}\label{230708e1}
K_n\triangleq \left\{(x_i)_{i\in\mathbb{N}}\in \ell^2:\sum\limits_{i=1}^{\infty}\frac{x_i^2}{c_i}\leqslant n^2\right\},\quad \forall\,n\in\mathbb{N},\quad K\triangleq \bigcup\limits_{n=1}^{\infty}K_n.
\end{eqnarray}
\begin{proposition}\label{basic propeties of Knm}
For any $n\in\mathbb{N}$, $K_n$ is a compact subset of $\ell^2$ and $\lim\limits_{n\to\infty}P_t(K_n)=1$ for any $t\in(0,+\infty)$.
\end{proposition}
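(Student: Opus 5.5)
Compactness of $K_n$ and the limit $P_t(K_n)\to 1$ are handled separately.

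\emph{Compactness.} Since $c_i\to0$, the set $\{c_i\}$ is bounded, say $c_i\leqslant M$. Then $K_n$ is bounded in $\ell^2$, because $\|\mathbf{x}\|^2\leqslant M\sum_i x_i^2/c_i\leqslant Mn^2$. It is closed in $\ell^2$. Indeed, if $\mathbf{x}^{(k)}\to\mathbf{x}$ in $\ell^2$, then each coordinate converges, so for every $N$ we get $\sum_{i\leqslant N}x_i^2/c_i=\lim_k\sum_{i\leqslant N}(x_i^{(k)})^2/c_i\leqslant n^2$; letting $N\to\infty$ gives $\mathbf{x}\in K_n$.

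Total boundedness follows from the uniform tail estimate
\[
\sum_{i>N}x_i^2\leqslant \Big(\sup_{i>N}c_i\Big)\sum_{i>N}\frac{x_i^2}{c_i}\leqslant n^2\sup_{i>N}c_i\longrightarrow0\quad (N\to\infty),\qquad \mathbf{x}\in K_n .
\]
Together with boundedness of the first $N$ coordinates, this gives total boundedness by the standard criterion for relatively compact subsets of $\ell^2$: approximate by the bounded, hence totally bounded, projection onto the first $N$ coordinates. A closed, totally bounded subset of the complete space $\ell^2$ is compact. Alternatively, one can take a coordinatewise convergent subsequence by a diagonal argument and use the uniform tail bound to upgrade coordinatewise convergence to $\ell^2$ convergence.

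\emph{Measure.} The $K_n$ increase to $K$, so $P_t(K_n)\to P_t(K)$. It therefore suffices to show $P_t(K)=1$, i.e.\ that $\sum_i y_i^2/c_i<\infty$ for $P_t$-a.e.\ $\mathbf{y}$. Each $K_n$ is closed, hence Borel, so this is a legitimate measurability statement. By monotone convergence (Tonelli) under the product measure,
\[
\int_{\ell^2}\sum_{i=1}^\infty\frac{y_i^2}{c_i}\,\mathrm{d}P_t(\mathbf{y})=\sum_{i=1}^\infty\frac{t^2a_i^2}{c_i}<\infty
\]
by \eqref{20240920for1}. Hence the series is finite almost surely, and $P_t(K)=1$.

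A quantitative version is also available. By Chebyshev, $P_t(\ell^2\setminus K_n)\leqslant n^{-2}t^2\sum_i a_i^2/c_i$, which gives the limit directly.

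The only mildly delicate step is the compactness argument, specifically the uniform tail control. This is exactly where $c_i\to0$ is used. The measure statement is a direct computation.
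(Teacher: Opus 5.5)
Your proof is correct and follows essentially the same route as the paper. Compactness rests on the uniform tail bound $\sum_{i>N}x_i^2\leqslant n^2\sup_{i>N}c_i$ coming from $c_i\to0$: the paper phrases this through a diagonal subsequence, which is your stated alternative, and you additionally supply the closedness argument the paper omits. The limit $P_t(K_n)\to1$ comes, as in the paper, from integrating $\sum_i x_i^2/c_i$ against $P_t$ to get $t^2\sum_i a_i^2/c_i<\infty$, with your Chebyshev bound a harmless quantitative bonus.
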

\begin{proof}
First, it is easy to prove that $K_n$ is a closed subset of $\ell^2$ and hence we omit the details.

Second, let us prove that $K_n$ is a compact subset of $\ell^2$. Fix arbitrarily a sequence
$\{\textbf{y}_m\}_{m=1}^{\infty}=\{(y_{i,m})_{i\in\mathbb{N}}\}_{m=1}^{\infty}\subset K_n$.
For any $i,m\in\mathbb{N}$, it holds that $y_{i,m}^2\leqslant c_i \sum\limits_{j=1}^{\infty}\frac{y_{j,m}^2}{c_j}\leqslant c_i n^2\leqslant cn^2$, where $c\triangleq \sup\limits_{i\in\mathbb{N}}c_i$. By the diagonal process, there exists a subsequence $\{\textbf{y}_{m_k}\}_{k=1}^{\infty}$ of $\{\textbf{y}_m\}_{m=1}^{\infty}$ such that $\lim\limits_{k\to\infty}y_{i,m_k}$ exists for each $i\in\mathbb{N}$. Denote $y_i\triangleq\lim\limits_{k\to\infty}y_{i,m_k}$ for each $i\in\mathbb{N}$ and $\textbf{y}\triangleq (y_i)_{i\in\mathbb{N}}$. For each fixed $N_1\in\mathbb{N}$, we have
\begin{eqnarray*}
\sum_{i=1}^{N_1}\frac{y_i^2}{c_i}=\lim_{k\to\infty}\sum_{i=1}^{N_1}\frac{y_{i,m_k}^2}{c_i}\leqslant n^2.
\end{eqnarray*}
Letting $N_1\to\infty$ in the above, we obtain that $\sum\limits_{i=1}^{\infty}\frac{y_i^2}{c_i}\leqslant n^2$. Since $\sum\limits_{i=1}^{\infty} y_i^2\leqslant c\cdot\sum\limits_{i=1}^{\infty}\frac{y_i^2}{c_i}\leqslant cn^2<\infty,$ we have $\textbf{y}\in\ell^2$. Hence, $\textbf{y}\in K_n$. Since $\lim\limits_{i\to\infty}c_i=0$, for any $\varepsilon>0$, there exists $N_2\in\mathbb{N}$ such that $4c_i n^2<\frac{\varepsilon}{2}$ for all $i\geqslant N_2.$ Noting that $\lim\limits_{k\to\infty}\sum\limits_{i=1}^{N_2}|y_{i,m_k}-y_i|^2=0$, we can find $N_3\in\mathbb{N}$ such that $\sum\limits_{i=1}^{N_2}|y_{i,m_k}-y_i|^2<\frac{\varepsilon}{2}$ for all $k\geqslant N_3.$ Therefore, for any $k\geqslant N_3$, it holds that
\begin{eqnarray*}
\sum_{i=1}^{\infty}|y_{i,m_k}-y_i|^2&=&\sum_{i=1}^{N_2}|y_{i,m_k}-y_i|^2+\sum_{i=N_2 +1}^{\infty}|y_{i,m_k}-y_i|^2\\
&<&\frac{\varepsilon}{2}+\sum_{i=N_2 +1}^{\infty}2(|y_{i,m_k}|^2+|y_i|^2)
=\frac{\varepsilon}{2}+\sum_{i=N_2 +1}^{\infty}c_i\cdot\frac{2(|y_{i,m_k}|^2+|y_i|^2)}{c_i}\\
&\leqslant&\frac{\varepsilon}{2}+4\left(\sup_{i>N_2}c_{i}\right)\cdot n^2
\leqslant\varepsilon,
\end{eqnarray*}
which implies that $\lim\limits_{k\to\infty}\textbf{y}_{m_k}=\textbf{y}$ in $\ell^2$. Thus $K_n$ is a compact subset of $\ell^2$.

Finally, we prove that $\lim\limits_{n\to\infty}P_t(K_n)=1$. Note that
 \begin{eqnarray*}
\int_{\ell^2}\sum_{i=1}^{\infty}\frac{x_i^2}{c_i}\,\mathrm{d}P_t(\textbf{x})
=\sum_{i=1}^{\infty}\int_{\ell^2}\frac{x_i^2}{c_i}\,\mathrm{d}P_t(\textbf{x})
=t^2\sum_{i=1}^{\infty}\frac{a_i^2}{c_i}<\infty,
\end{eqnarray*}
where the last inequality follows from \eqref{20240920for1}. Hence,
\begin{eqnarray*}
P_t\left(\left\{(x_i)_{i\in\mathbb{N}}\in\ell^2:\sum_{i=1}^{\infty}\frac{x_i^2}{c_i}<\infty\right\}\right)=1.
\end{eqnarray*}
Since
\begin{eqnarray*}
\bigcup_{n=1}^{\infty}K_n=\left\{(x_i)_{i\in\mathbb{N}}\in\ell^2:\sum_{i=1}^{\infty}\frac{x_i^2}{c_i}<\infty\right\},
\end{eqnarray*}
we have $\lim\limits_{n\to\infty}P_t(K_n)=1$. This completes the proof of Proposition \ref{basic propeties of Knm}.
\end{proof}

The proof of the following result is motivated by \cite[pp. 419-421]{Goo} (but our approach is simpler and more readable).
\begin{theorem}\label{230215Th1}
There exists $\{X_n\}_{n=1}^{\infty}\subset C_\mathcal{F}^{\infty}(\ell^2)\cap  B_{\ell^2}^b(\ell^2)$ with the following properties:
\begin{itemize}
\item[$(1)$] For each $n\in\mathbb{N}$, $X_n\notin C_{\ell^2}(\ell^2)$ and $\supp X_n$ is a compact subset of $\ell^2$;
\item[$(2)$] There is a positive constant $C$ so that $\sum\limits_{i=1}^{\infty}a_i^2\left|\frac{\partial X_n(\mathbf{x})}{\partial x_i}\right|^2\leqslant C^2$ holds for every $\mathbf{x}\in\ell^2$ and $n\in\mathbb{N}$.
\end{itemize}
\end{theorem}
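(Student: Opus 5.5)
The plan is to define each $X_n$ directly as a smooth cutoff of the quadratic functional that defines the compact sets $K_n$ of Proposition \ref{basic propeties of Knm}. No convolution with $P_t$ is needed. Set
$$
Q(\mathbf{x})\triangleq \sum_{i=1}^{\infty}\frac{x_i^2}{c_i}\in[0,+\infty],\qquad \mathbf{x}=(x_i)_{i\in\mathbb{N}}\in\ell^2 .
$$
For each $n$, choose $\beta_n\in C^\infty(\mathbb{R})$ with $0\leqslant\beta_n\leqslant 1$, $\beta_n=1$ on $(-\infty,n^2]$ and $\beta_n=0$ on $[4n^2,+\infty)$. The transition should be spread over an interval of length $3n^2$, so that $\sup|\beta_n^{(k)}|\leqslant C_k n^{-2k}$ with $C_k$ independent of $n$. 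Then define $X_n(\mathbf{x})\triangleq\beta_n(Q(\mathbf{x}))$, with the convention $\beta_n(+\infty)=0$.

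First I will check membership in $C_\mathcal{F}^{\infty}(\ell^2)\cap B^b_{\ell^2}(\ell^2)$. Boundedness is clear. For Borel measurability, $Q$ is the increasing limit of the continuous partial sums $\sum_{i\leqslant N}x_i^2/c_i$, so $Q$ is Borel (indeed lower semicontinuous) as an extended-real function, and $X_n$ is Borel. For $\mathcal{F}$-regularity, fix $\mathbf{x}$ and $m$. Along $\mathbf{x}+s_1\mathbf{e}_1+\cdots+s_m\mathbf{e}_m$ there are two cases.
\begin{itemize}
\item If $Q(\mathbf{x})=+\infty$, then $Q$ stays $+\infty$ on the whole slice, so $X_n\equiv 0$ there.
\item If $Q(\mathbf{x})<\infty$, then $Q$ equals a finite constant plus the polynomial $\sum_{i\leqslant m}(x_i+s_i)^2/c_i$.
\end{itemize}
In both cases the slice map is $C^\infty$, and partial derivatives of every order are again functions of the same type. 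Explicitly, $\partial X_n/\partial x_i=\beta_n'(Q)\,2x_i/c_i$, and higher derivatives are polynomials in the $x_j/c_j$ times $\beta_n^{(k)}(Q)$. These are all $\mathcal{F}$-continuous, Borel and bounded on the support. For the support, $\{X_n\neq 0\}\subset\{Q<4n^2\}\subset K_{2n}$, which is closed and compact by Proposition \ref{basic propeties of Knm}; hence $\supp X_n$ is compact.

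For discontinuity, I will show $X_n$ is discontinuous at $\mathbf{0}$, where $X_n(\mathbf{0})=1$. Since $c_i\to 0$, pick indices $i_1<i_2<\cdots$ with $c_{i_k}\leqslant 2^{-k}$, and set $\mathbf{z}\triangleq\sum_k\sqrt{c_{i_k}}\,\mathbf{e}_{i_k}$. Then $\mathbf{z}\in\ell^2$ but $Q(\mathbf{z})=\sum_k 1=\infty$. Hence $X_n(\delta\mathbf{z})=0$ for every $\delta>0$, while $\delta\mathbf{z}\to\mathbf{0}$ in $\ell^2$, so $X_n\notin C_{\ell^2}(\ell^2)$.

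The step that needs care is the uniform gradient bound (2). It holds because the transition of $\beta_n$ was scaled to match the size of $K_{2n}$. Let $M\triangleq\sup_i a_i^2/c_i$, which is finite by \eqref{20240920for1}. On the support, where $Q(\mathbf{x})\leqslant 4n^2$,
$$
\sum_{i=1}^\infty a_i^2\Bigl|\frac{\partial X_n(\mathbf{x})}{\partial x_i}\Bigr|^2=4|\beta_n'(Q(\mathbf{x}))|^2\sum_{i=1}^\infty\frac{a_i^2}{c_i}\cdot\frac{x_i^2}{c_i}\leqslant 4\,\frac{C_1^2}{n^4}\,M\cdot 4n^2\leqslant 16C_1^2M .
$$
Off the support the derivative vanishes, so (2) holds with $C\triangleq 4C_1\sqrt{M}$, independent of $n$.
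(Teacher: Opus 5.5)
Your proof is correct, but it takes a different route from the paper. The paper never composes a cutoff with $Q$ directly. It smooths the indicator of $K_n$ by Gaussian convolution, $g_n=P_1\chi_{K_n}$. It then uses the triangle inequality for $\sqrt{Q}$ and the symmetry of $P_1$ to get $g_k>\frac45$ on $K_{k-N_1}$ and $g_k<\frac15$ off $K_{k+N_1}$, and sets $X_n=H\circ g_{n+N_1}$. The remaining properties are imported as follows:
\begin{itemize}
\item membership in $C_{\mathcal F}^\infty(\ell^2)\cap B^b_{\ell^2}(\ell^2)$ comes from Proposition~\ref{partial derivative of Ptf}(3)--(4);
\item the uniform bound (2) comes from the Bessel-type estimate \eqref{20250820for1}, taken from Gross, together with $\sup|H'|$;
\item discontinuity comes from the fact that compact subsets of $\ell^2$ have empty interior.
\end{itemize}

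Your $X_n=\beta_n\circ Q$ replaces this machinery with explicit formulas. The slice dichotomy ($Q\equiv+\infty$ on the slice, or $Q$ is a constant plus a polynomial) gives $\mathcal F$-smoothness immediately. Lower semicontinuity of $Q$ gives Borel measurability. The bound (2) reduces to $\sup_i a_i^2/c_i<\infty$ plus the $n^{-2}$ scaling of $\beta_n'$, which you rightly identified as essential; you even get $\sum_i a_i^2|\partial X_n/\partial x_i|^2\leqslant 16C_1^2M/n^2\to0$.

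Your cutoffs also keep the extra properties the paper later uses for its particular $X_n$. First, $X_n=1$ and $\partial X_n/\partial x_i=0$ on $K_n$, as in \eqref{230409eq1}. Second, $X_n\in C_H^\infty(\ell^2)$, as needed in Lemma~\ref{20250113lem2}. This holds because $\sum_i h_i^2/c_i\leqslant M\sum_i h_i^2/a_i^2$, so translation by $H$ preserves finiteness of $Q$. Hence $X_n$ vanishes on $\mathbf{x}+H$ when $Q(\mathbf{x})=+\infty$, and is otherwise $\beta_n$ of a continuous quadratic polynomial on $H$.

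What the paper's route buys is generality of the smoothing step. Regularity and the gradient bound hold for $P_1f$ with any bounded Borel $f$, so the geometry of $K_n$ enters only through the inclusions $K_j+K_m\subset K_{j+m}$ and compactness. Yours is more elementary and self-contained, but it depends on $K_n$ being a sublevel set of an explicit quadratic functional.
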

\begin{proof}
Choosing a sequence of positive numbers $\{c_i\}_{i=1}^{\infty}$ (with $\lim\limits_{i\to \infty}c_i=0$) as in \eqref{20240920for1}. Let
\begin{eqnarray*}
g_n(\textbf{x})\triangleq \int_{\ell^2}\chi_{K_n}(\textbf{x}-\textbf{y})\,\mathrm{d}P_1(\textbf{y})=P_1(\textbf{x}-K_n),\quad \forall\,n\in\mathbb{N},\quad\textbf{x}\in\ell^2,
\end{eqnarray*}
where $K_n$ is defined at \eqref{230708e1}, and $\textbf{x}-K_n\triangleq\{\textbf{x}-\textbf{a}:\;\textbf{a}\in K_n\}$.

\medskip

(1) By Proposition \ref{basic propeties of Knm}, we have $\lim\limits_{n\to\infty}P_1(K_n)=1$. Choose $N_1\in\mathbb{N}$ such that $P_1(K_{N_1})>\frac{4}{5}$, and fix any $k>N_1$. If $\textbf{x}\in K_{k-N_1}$, then $K_{N_1}\subset K_k-\textbf{x}\triangleq\{\textbf{a}-\textbf{x}:\;\textbf{a}\in K_k\}$ and hence
\begin{eqnarray*}
g_k(\textbf{x})=P_1(\textbf{x}-K_k)=P_1(K_k-\textbf{x})\geqslant P_1(K_{N_1})>\frac{4}{5}.
\end{eqnarray*}
Similarly, if $\textbf{x}\notin K_{k+N_1}$, then $K_k-\textbf{x}\subset \ell^2\setminus K_{N_1}$ and hence
\begin{eqnarray*}
g_k(\textbf{x})=P_1(\textbf{x}-K_k)=P_1(K_k-\textbf{x})\leqslant 1-P_1(K_{N_1})<\frac{1}{5}.
\end{eqnarray*}

Let
$$h(t)\triangleq\left\{\begin{array}{ll}
 e^{\frac{1}{(t-\frac{1}{16})(t-\frac{9}{16})}},\quad&\frac{1}{16}<t<\frac{9}{16},\\
0,&t\geqslant\frac{9}{16}\text{ or }t\leqslant\frac{1}{16}.\end{array}\right.$$
Set $H(x)\triangleq \frac{\int_{-\infty}^{x^2}h(t)\,\mathrm{d}t}{\int_{-\infty}^{+\infty}h(t)\,\mathrm{d}t}$ for $x\in\mathbb{R}$. Then $H\in C^{\infty}(\mathbb{R};[0,1])$, $H(x)=0$ for $|x|<\frac{1}{4}$ and $H(x)=1$ for $|x|>\frac{3}{4}$.

Let $X_n\triangleq H\circ g_{n+N_1}$ for each $n\in\mathbb{N}$. By Proposition \ref{partial derivative of Ptf}, $\{X_n\}_{n=1}^{\infty}\subset C_F^{\infty}(\ell^2)\cap  B_{\ell^2}^b(\ell^2)$. Moreover, for any $n\in\mathbb{N}$,
\begin{eqnarray}\label{230409eq1}
X_n(\textbf{x})=1,\quad\frac{\partial X_n(\textbf{x})}{\partial x_i}=0,\quad \forall\,i\in\mathbb{N},\quad\textbf{x}\in K_{n},\quad\text{and}\quad\supp X_n\subset K_{n+2N_1},
\end{eqnarray}
and hence $\supp X_n$ is a compact subset of $\ell^2$.

Let us use the contradiction argument to prove that $X_n\notin C_{\ell^2}(\ell^2)$. Suppose otherwise that $X_n\in C_{\ell^2}(\ell^2)$, then $X_n^{-1}(\mathbb{R}\setminus \{0\})$ would be a nonempty open subset of $\ell^2$ and $X_n^{-1}(\mathbb{R}\setminus \{0\})\subset\supp X_n\subset K_{n+2 N_1}$. Since every nonempty open subset of $\ell^2$ contains countably pairwise disjoint nonempty open balls with the same radius, the interior of any compact subset of $\ell^2$ is empty, we have $X_n^{-1}(\mathbb{R}\setminus \{0\})=\emptyset$, which is a contradiction. Therefore, $X_n\notin C_{\ell^2}(\ell^2)$.

\medskip

(2) Let $C\triangleq \sup\limits_{x\in\mathbb{R}}|H'(x)|<\infty$. For any $i,n\in\mathbb{N}$, we have
\begin{eqnarray*}
\left|\frac{\partial X_n(\textbf{x})}{\partial x_i}\right|=|H'(g_{n+N_1}(\textbf{x}))|\cdot\left|\frac{\partial g_{n+N_1}(\textbf{x})}{\partial x_i}\right|,\quad \forall\,\textbf{x}\in\ell^2.
\end{eqnarray*}
Hence, by the inequality \eqref{20250820for1} in the conclusion (2) of Proposition \ref{partial derivative of Ptf}, we have
\begin{eqnarray*}
\sum\limits_{i=1}^{\infty}a_i^2\cdot\left|\frac{\partial X_n(\textbf{x})}{\partial x_i}\right|^2\leqslant C^2\sum\limits_{i=1}^{\infty}a_i^2\cdot \left|\frac{\partial g_{n+N_1}(\textbf{x})}{\partial x_i}\right|^2= C^2\sum\limits_{i=1}^{\infty}a_i^2\cdot \left|\frac{\partial }{\partial x_i}P_1(\chi_{K_{n+N_1}})(\textbf{x})\right|^2\leq C^2,
\end{eqnarray*}
which completes the proof of Theorem \ref{230215Th1}.
\end{proof}

\section{General Codimensional Surfaces}
Suppose that $\ell^2=M_1+ M_2$, where $M_1,M_2$ are two closed linear subspaces of $\ell^2$ which are orthogonal to each other (and hence both $M_1$ and $M_2$ are Hilbert spaces). Suppose that $S$ is a nonempty subset of $\ell^2$ and $\textbf{x}=\textbf{x}_1+\textbf{x}_2\in S$, where $\textbf{x}_1\in M_1$ and $\textbf{x}_2\in M_2$. If there exists an open neighborhood $U_0$ of $\textbf{x}$ in $\ell^2$, an open neighborhood $U$ of $\textbf{x}_2$ in $M_2$ and $f\in C^1_{M_2}(U;M_1)$ such that $f(\textbf{x}_2)=\textbf{x}_1$ and
$$
S\cap U_0=\{\textbf{x}_2'+f(\textbf{x}_2'):\textbf{x}_2'\in U\},
$$
then for $\textbf{x}'=\textbf{x}_2'+f(\textbf{x}_2')\in S\cap U_0$ with $\textbf{x}_2'\in U$,
\begin{equation}\label{240323for1}
\begin{array}{ll}
\textbf{x}'-\textbf{x}= \textbf{x}_2'-\textbf{x}_2+f(\textbf{x}_2')-f(\textbf{x}_2)\\[2mm]
=\textbf{x}_2'-\textbf{x}_2+(Df(\textbf{x}_2))(\textbf{x}_2'-\textbf{x}_2)+o(||\textbf{x}_2'-\textbf{x}_2||),\hbox{ as }\textbf{x}_2'\to\textbf{x}_2\hbox{ in }U.
\end{array}
\end{equation}
We will use the notations $P$ and $Q$ to denote respectively the orthogonal normal projections in $\ell^2$ such that their ranges are given by
\begin{eqnarray}\label{20240705for5}
P\ell^2=\{\textbf{x}\in\ell^2:\;\left\langle \textbf{x},\textbf{x}_2'+(Df(\textbf{x}_2))\textbf{x}_2'\right\rangle=0,\,\forall\;\textbf{x}_2'\in M_2\}
\end{eqnarray}
and
\begin{eqnarray}\label{20240705for4}
Q\ell^2=\overline{\{\textbf{x}_2'+(Df(\textbf{x}_2))\textbf{x}_2':\;\textbf{x}_2'\in M_2\}}.
\end{eqnarray}
Clearly, $P\ell^2$ and $Q\ell^2$ are two closed linear subspaces of $\ell^2$ which are orthogonal to each other.

The following simple result will play a key role in the sequel.
\begin{proposition}\label{240618prop1}
For any orthogonal normal projection $P'$ in $\ell^2$, if
\begin{eqnarray}\label{240326for1}
P'(\mathbf{x}'-\mathbf{x})=o(||\mathbf{x}'-\mathbf{x}||)\quad\,\text{ as }\mathbf{x}'\to\mathbf{x}\hbox{ in }S,
\end{eqnarray}
then $P'\ell^2\subset P\ell^2$.
\begin{proof}
We use the contradiction argument. Suppose that $P'\ell^2\nsubseteq P\ell^2$, then there would exist $\textbf{x}''\in P'\ell^2\setminus P\ell^2$ and $\textbf{x}''=\textbf{x}''_1+\textbf{x}''_2$, where $\textbf{x}''_1\in P\ell^2$, $\textbf{x}''_2\in Q\ell^2$ and $\textbf{x}''_2\not=0$. Note that there exists $\textbf{x}'=\textbf{x}_2'+(Df(\textbf{x}_2))\textbf{x}_2'\in \{\textbf{x}_2'+(Df(\textbf{x}_2))\textbf{x}_2':\textbf{x}_2'\in M_2\}(\subset Q\ell^2)$ such that $\langle \textbf{x}''_2,\textbf{x}'\rangle\neq 0$. Denote by $P''$ the orthogonal normal projection in $\ell^2$ such that its range is given by $\{r\textbf{x}'':\;r\in \mathbb{R}\}$. Then, $||P''\textbf{x}||\leqslant||P'\textbf{x}||$ for any $\textbf{x}\in \ell^2$. Since $||P''\textbf{x}||=\frac{1}{||\textbf{x}''||}\left|\langle \textbf{x}'',\textbf{x}\rangle\right|$, it follows that $||P'\textbf{x}'||
\geqslant\frac{1}{||\textbf{x}''||}\left|\langle\textbf{x}'',\textbf{x}'\rangle\right|$.
Thus for sufficiently small $t\in\mathbb{R}$, we have $\textbf{x}_2+t\cdot \textbf{x}_2'\in U$ and as $t\to 0$, it holds that
\begin{eqnarray*}
&&||P'(\textbf{x}_2+t\cdot \textbf{x}_2'+f(\textbf{x}_2+t\cdot \textbf{x}_2')-\textbf{x}_2-f(\textbf{x}_2))||\\
&&=||P'(t\cdot \textbf{x}_2'+f(\textbf{x}_2+t\cdot \textbf{x}_2')-f(\textbf{x}_2))||
=||P'(t\cdot \textbf{x}_2'+Df(\textbf{x}_2)(t\cdot \textbf{x}_2')+o(t)) ||\\
&&=||P'(t\cdot \textbf{x}_2'+Df(\textbf{x}_2)(t\cdot \textbf{x}_2'))+o(t) ||
=||P'(t\cdot \textbf{x}')+o(t) ||\\
&&\geqslant||P'(t\cdot \textbf{x}')||-o(t)
\geqslant\frac{1}{||\textbf{x}''||}\left|\langle\textbf{x}'',t\cdot \textbf{x}'\rangle\right|-o(t)
=\frac{|t|}{||\textbf{x}''||}\cdot|\langle\textbf{x}''_2,\textbf{x}'\rangle|-o(t),
\end{eqnarray*}
which contradicts \eqref{240326for1}. This completes the proof of Proposition \ref{240618prop1}.
\end{proof}
\end{proposition}

As a consequence of Proposition \ref{240618prop1}, one can deduce the following result.
\begin{corollary}\label{20240705cor1}
The spaces in \eqref{20240705for5}--\eqref{20240705for4} are independent of the choice of $M_1,M_2$ and $f$.
\begin{proof}
It suffices to prove the uniqueness of $P$. Suppose that $\widetilde P$ is the corresponding orthogonal normal projection in $\ell^2$ for another choice of $\widetilde M_1',\widetilde M_2'$ and $\tilde f'$ (instead of the above $M_1,M_2$ and $f$). Then, similar to (\ref{240323for1}), we deduce that $\widetilde P(\textbf{x}'-\textbf{x})=o(||\textbf{x}'-\textbf{x}||)$ as $\textbf{x}'\to\textbf{x}$ in $S$. Hence, in view of Proposition \ref{240618prop1}, it follows that $\widetilde P\ell^2\subset P\ell^2$. Similarly, we have $P\ell^2\subset \widetilde P\ell^2$. Hence, $P= \widetilde P$.
\end{proof}
\end{corollary}

\begin{remark}
Roughly speaking, we can view \eqref{20240705for4} and \eqref{20240705for5} as the tangent space and the normal space of $S$ at $\mathbf{x}\in S$, respectively. Corollary \ref{20240705cor1} says that these two spaces are independent of the local coordinates and the corresponding coordinate functions, and hence they are geometric invariants.
\end{remark}

Recall that $\{\textbf{e}_k \}_{k=1}^\infty$ is an orthonormal basis of $\ell^2$. For a subset $I$ of $\mathbb{N}$, if $I=\emptyset$ ({\it resp. }$I\neq\emptyset$), then we denote by $P_I$ the orthogonal normal projection (in $\ell^2$) whose range is the closed linear linear space spanned by the zero vector ({\it resp. } $\{\textbf{e}_i:i\in I\}$).

Suppose that $I_1$ and $I_2$ are two nonempty subsets of $\mathbb{N}$. The following notion will play an important role in the sequel.

\begin{definition}\label{def3.1}
We say that $I_2$ can be obtained from $I_1$ by changing only a finite number of elements (in $\mathbb{N}$), denoted by $I_1\sim I_2$, if there exists $k\in\mathbb{N}_0$, $K_1 \subset I_1$ and $K_2\subset \mathbb{N}\setminus I_1$ such that $\card K_1=\card K_2=k$ and $I_2=(I_1\setminus K_1)\sqcup K_2$.
\end{definition}

\begin{proposition}\label{20250808prop1}
It holds that $I_1\sim I_2$ is equivalent to $\card(I_1\setminus I_2)=\card(I_2\setminus I_1)\in \mathbb{N}_0$.
\end{proposition}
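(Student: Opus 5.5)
We prove both implications directly from Definition \ref{def3.1}, using only elementary set identities. The key observation is that whenever $I_2=(I_1\setminus K_1)\sqcup K_2$ with $K_1\subset I_1$ and $K_2\subset\mathbb{N}\setminus I_1$, one has
\[
I_1\setminus I_2=K_1,\qquad I_2\setminus I_1=K_2 .
\]

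\textbf{Necessity.} Suppose $I_1\sim I_2$, so there are $k\in\mathbb{N}_0$, $K_1\subset I_1$ and $K_2\subset\mathbb{N}\setminus I_1$ with $\card K_1=\card K_2=k$ and $I_2=(I_1\setminus K_1)\sqcup K_2$.

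First take $x\in I_1\setminus I_2$. Then $x\notin I_1\setminus K_1$, and since $x\in I_1$ this forces $x\in K_1$. Conversely, take $x\in K_1$. Then $x\notin I_1\setminus K_1$, and $x\notin K_2$ because $K_2\cap I_1=\emptyset$. Hence $x\notin I_2$, so $x\in I_1\setminus I_2$. This gives $I_1\setminus I_2=K_1$.

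Next take $x\in I_2\setminus I_1$. Then $x\notin I_1\setminus K_1$, so $x\in K_2$. Conversely, every $x\in K_2$ lies in $I_2$ and not in $I_1$. This gives $I_2\setminus I_1=K_2$.

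Therefore $\card(I_1\setminus I_2)=\card(I_2\setminus I_1)=k\in\mathbb{N}_0$.

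\textbf{Sufficiency.} Suppose $\card(I_1\setminus I_2)=\card(I_2\setminus I_1)=k\in\mathbb{N}_0$. Set
\[
K_1\triangleq I_1\setminus I_2,\qquad K_2\triangleq I_2\setminus I_1 .
\]
Clearly $K_1\subset I_1$, $K_2\subset\mathbb{N}\setminus I_1$, and $\card K_1=\card K_2=k$. It remains to verify $I_2=(I_1\setminus K_1)\sqcup K_2$. We have $I_1\setminus K_1=I_1\cap I_2$, and $I_2$ is the disjoint union of $I_1\cap I_2$ and $I_2\setminus I_1$. Hence $I_2=(I_1\setminus K_1)\sqcup K_2$, and so $I_1\sim I_2$.

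No step is a genuine obstacle; the only point requiring care is the identification $I_1\setminus I_2=K_1$ in the necessity part, which relies on the disjointness $K_2\cap I_1=\emptyset$.
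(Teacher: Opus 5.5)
Your proof is correct and follows the paper's argument exactly: for necessity you identify $I_1\setminus I_2=K_1$ and $I_2\setminus I_1=K_2$, and for sufficiency you take $K_1=I_1\setminus I_2$, $K_2=I_2\setminus I_1$. The only difference is that you write out the element-chasing behind the identities $I_1\setminus I_2=K_1$ and $I_2\setminus I_1=K_2$, which the paper simply asserts as clear.
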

\begin{proof}
If $I_1\sim I_2$, then there exists $k\in\mathbb{N}_0$, $K_1 \subset I_1$ and $K_2\subset \mathbb{N}\setminus I_1$ such that $\card K_1=\card K_2=k$ and $I_2=(I_1\setminus K_1)\sqcup K_2$. Clearly, $I_1\setminus I_2=K_1$ and $I_2\setminus I_1=K_2$, and hence, $\card(I_1\setminus I_2)=\card K_1=\card K_2=\card(I_2\setminus I_1)=k\in\mathbb{N}_0$.

Conversely, if $\card(I_1\setminus I_2)=\card(I_2\setminus I_1)=k\in \mathbb{N}_0$, then $K_1\triangleq I_1\setminus I_2\subset I_1$, $K_2\triangleq I_2\setminus I_1\subset \mathbb{N}\setminus I_1$, $\card K_1=\card K_2=k$ and $I_2=(I_1\setminus K_1)\sqcup K_2$, and hence $I_1\sim I_2$.
\end{proof}

\begin{remark}\label{20250505rem1}
Assume  that $I_1\sim I_2$.  Put
\begin{eqnarray}\label{20250505for3}
\hat I\triangleq\left\{
\begin{array}{ll}
\{i\in I_1\cap I_2: i\leq \max((I_1\setminus I_2)\sqcup(I_2\setminus I_1))\},\quad&\hbox{if }I_1\not=I_2,\\[2mm]
\emptyset,\quad&\hbox{if }I_1=I_2.
\end{array}
\right.
\end{eqnarray}
By Proposition \ref{20250808prop1}, $\card(I_1\setminus I_2)=\card(I_2\setminus I_1)\in \mathbb{N}_0$, and hence
$$
\card((I_1\setminus I_2)\sqcup \hat I)=\card(I_1\setminus I_2)+\card \hat I=\card(I_2\setminus I_1)+\card \hat I=\card ((I_2\setminus I_1)\sqcup \hat I)\triangleq t\in\mathbb{N}_0.
$$
Choose $i_1,\cdots,i_{t},j_1,\cdots,j_{t}\in\mathbb{N}$
such that $i_1<\cdots<i_{t}$, $j_1<\cdots<j_{t}$ and
$$
(I_1\setminus I_2)\sqcup \hat I=\{i_1,\cdots,i_{t}\},\qquad (I_2\setminus I_1)\sqcup \hat I=\{j_1,\cdots,j_{t}\}.
$$
By \eqref{20250505for3}, it follows that
$$
i>\max\{i_t,j_t\},\qquad \,\forall\,i\in I_1\setminus\{i_1,\cdots,i_{t}\}=(I_1\cap I_2)\setminus \hat I=I_2\setminus\{j_1,\cdots,j_{t}\}.
$$
Further, it is easy to see that $t=0$ if and only if $I_1=I_2$.
\end{remark}

\begin{lemma}\label{20241011lem2}
The above $\cdot\sim\cdot$ (in Definition \ref{def3.1}) is an equivalence relation.
\end{lemma}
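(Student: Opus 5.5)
The plan is to work entirely with the characterization in Proposition \ref{20250808prop1}: $I_1\sim I_2$ if and only if $\card(I_1\setminus I_2)=\card(I_2\setminus I_1)\in\mathbb{N}_0$. This removes the existential quantifiers over $K_1,K_2$ in Definition \ref{def3.1} and turns the statement into elementary counting with finite set differences. The three axioms are then checked in turn on this criterion.

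\emph{Reflexivity.} We have $I_1\setminus I_1=\emptyset$ on both sides, so both cardinalities are $0\in\mathbb{N}_0$. Equivalently, take $k=0$ and $K_1=K_2=\emptyset$ in Definition \ref{def3.1}.

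\emph{Symmetry.} The condition $\card(I_1\setminus I_2)=\card(I_2\setminus I_1)\in\mathbb{N}_0$ is unchanged when $I_1$ and $I_2$ are interchanged. So $I_1\sim I_2$ implies $I_2\sim I_1$ by Proposition \ref{20250808prop1}.

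\emph{Transitivity.} This is the only step needing an argument. Assume $I_1\sim I_2$ and $I_2\sim I_3$.
\begin{itemize}
\item First, $I_1\setminus I_3\subset (I_1\setminus I_2)\cup(I_2\setminus I_3)$, and symmetrically for $I_3\setminus I_1$. Hence both differences are finite.
\item Next, both sets $I_1\setminus I_3$ and $I_3\setminus I_1$ are contained in the finite set
\[
F\triangleq (I_1\triangle I_2)\cup(I_2\triangle I_3).
\]
\item Outside $F$, membership in $I_1$, $I_2$ and $I_3$ coincides. Write $A_j\triangleq I_j\cap F$, a finite set. Then
\[
I_1\setminus I_3=A_1\setminus A_3,\qquad I_3\setminus I_1=A_3\setminus A_1.
\]
\item For finite sets, $\card(A\setminus B)-\card(B\setminus A)=\card A-\card B$. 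Similarly, $I_1\setminus I_2=A_1\setminus A_2$ and $I_2\setminus I_1=A_2\setminus A_1$, so the hypothesis $I_1\sim I_2$ gives $\card A_1=\card A_2$.
\item In the same way, $I_2\sim I_3$ gives $\card A_2=\card A_3$. Hence $\card A_1=\card A_3$, which is exactly $\card(I_1\setminus I_3)=\card(I_3\setminus I_1)$.
\end{itemize}
Proposition \ref{20250808prop1} then yields $I_1\sim I_3$.

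The main point of care is the reduction to the finite set $F$. This localization is what lets cardinalities be subtracted legitimately, even though $I_1$, $I_2$ and $I_3$ themselves may be infinite. Once that step is in place, the rest is immediate.
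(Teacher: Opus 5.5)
Your proof is correct and is essentially the paper's argument. The paper also reduces everything to the criterion of Proposition~\ref{20250808prop1}. For transitivity it splits the finite set where the memberships of $I_1,I_2,I_3$ disagree into the six Venn regions $L_1,\dots,L_6$. Your sets $A_j=I_j\cap F$ are exactly its $L_1\sqcup L_4\sqcup L_5$, $L_2\sqcup L_4\sqcup L_6$ and $L_3\sqcup L_5\sqcup L_6$, and your equalities $\card A_1=\card A_2=\card A_3$ are the ones it derives.

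The differences are cosmetic. You use $\card(A\setminus B)-\card(B\setminus A)=\card A-\card B$ where the paper adds the common region explicitly, and you write out reflexivity and symmetry, which the paper leaves implicit.
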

\begin{proof}
We only need to prove that if $I_3$ is a nonempty subset of $\mathbb{N}$, $I_1\sim I_2$ and $I_2\sim I_3$, then $I_1\sim I_3$. Note that, by Proposition \ref{20250808prop1},  $I_1\sim I_3$ is equivalent to $\card(I_1\setminus I_3)=\card(I_3\setminus I_1)\in \mathbb{N}_0$. Put
$$
L_1\triangleq I_1\setminus(I_2\cup I_3),\quad L_2\triangleq I_2\setminus(I_1\cup I_3),\quad L_3\triangleq I_3\setminus(I_1\cup I_2).
$$
and
$$
L_4\triangleq(I_1\cap I_2)\setminus I_3,\quad L_5\triangleq(I_1\cap I_3)\setminus I_2,\quad L_6\triangleq (I_2\cap I_3)\setminus I_1.
$$
It is easy to see that $L_1,L_2,L_3,L_4,L_5$ and $L_6$ are disjoint finite subsets of $\mathbb{N}$, $I_1\setminus I_2=L_1\sqcup L_5$, $I_2\setminus I_1=L_2\sqcup L_6$,
$I_3\setminus I_1=L_3\sqcup L_6$, $I_1\setminus I_3=L_1\sqcup L_4$, $I_2\setminus I_3=L_2\sqcup L_4$ and $I_3\setminus I_2=L_3\sqcup L_5$.
By $\card(I_1\setminus I_2)=\card(I_2\setminus I_1)\in \mathbb{N}_0$,  it holds that
$\card(L_1\sqcup L_4\sqcup L_5)=\card(L_2\sqcup L_4\sqcup L_6)$. Similarly, $\card(L_2\sqcup L_4\sqcup L_6)=\card(L_3\sqcup L_5\sqcup L_6)$. Hence, $\card(L_1\sqcup L_4\sqcup L_5)=\card(L_3\sqcup L_5\sqcup L_6)$, which gives $\card(I_1\setminus I_3)=\card(I_3\setminus I_1)\in \mathbb{N}_0$. This completes the proof of Lemma \ref{20241011lem2}.
\end{proof}

In the rest of this paper, unless otherwise stated, we shall fix a nonempty subset $I_0$ of $\mathbb{N}$. Write
\begin{eqnarray}\label{20241231def1}
\Gamma_{I_0}\triangleq \{I:I\subset \mathbb{N}, \, I\sim I_0\}.
\end{eqnarray}
We introduce the following notion.

\begin{definition}\label{20241121def1000}
We say that $S$ is a surface of $\ell^2$ with codimension $\Gamma_{\mathbb{N}\setminus I_0}$, if for each $\mathbf{x}\in S$ there is an open neighborhood $U$ of $\mathbf{x}$ in $\ell^2$, $I\in\Gamma_{I_0}$,  an open neighborhood $U_{I}$ of $P_{I}\mathbf{x}$ in $P_{I}\ell^2$ and a mapping $f\in C_{P_{I}\ell^2}^1(U_{I};P_{\mathbb{N}\setminus I}\ell^2)$ such that $P_{I}|_{S\cap U}$ is a homeomorphism from $S\cap U$ onto $U_{I}$, and
$$
S\cap U=\left\{\mathbf{y}_{I}+f(\mathbf{y}_{I}):\;\mathbf{y}_{I}\in U_{I}\right\}.
$$
We call \((S\cap U,I,f)\) a local coordinate triple of \(S\),
and call \(f\) the corresponding local graph map.
\end{definition}

\begin{definition}\label{20241121def1}
A family of local coordinate triples $\mathcal {A}=\{(S\cap U_{\alpha}, I_{\alpha},f_{\alpha}):\alpha\in A\}$ (with an index set $A$) is called a $C^1$-differentiable structure on a surface $S$  of $\ell^2$ with codimension $\Gamma_{\mathbb{N}\setminus I_0}$, if $S=\bigcup\limits_{\alpha\in A}\left(S\cap U_{\alpha}\right)$.
If a $C^1$-differentiable structure is given on $S$, then $S$ is called a $C^1$-differentiable surface with codimension $\Gamma_{\mathbb{N}\setminus I_0}$.
\end{definition}
By Lemma \ref{20250105lem1} below, the transition maps between any two local coordinate triples are automatically
$C^1$. Thus the above family indeed defines an atlas in the usual sense.

In the rest of this paper, unless otherwise stated, we shall fix a $C^1$-differentiable surface $S$ with codimension $\Gamma_{\mathbb{N}\setminus I_0}$ and a given family of local coordinate triples $\mathcal {A}=\{(S\cap U_{\alpha}, I_{\alpha},f_{\alpha}):\alpha\in A\}$.

\section{Surface Measures}\label{20241113sec1}
In this section, we shall construct a natural surface measure on the $C^1$-differentiable surface $S$ by two steps: 1) Constructing a family of compatible local measures according to the given local coordinate triples; 2) Sticking together these local measures into a surface measure.

In what follows, unless otherwise stated, we suppose that $I_1$ and $I_2$ are two nonempty subsets of $\mathbb{N}$ such that $I_1\sim I_2$ (Recall Remark \ref{20250505rem1} for the corresponding $\hat I$,  $t$ and $i_1,\cdots,i_{t},j_1,$ $\cdots,j_{t}$, which will be used below). Denote by $F(P_{I_1}\ell^2,P_{I_2}\ell^2)$ the set of all bounded linear operators $T$ from $P_{I_1}\ell^2$ into $P_{I_2}\ell^2$ such that
\begin{eqnarray}\label{20241011for3}
P_{ I_1\cap I_2 }TP_{ I_1\cap I_2 }=P_{ I_1\cap I_2 },
\quad P_{I_1\cap I_2}TP_{I_1\setminus( I_1\cap I_2) }=0.
\end{eqnarray}
We need some determinant tools for this kind of operators.
\begin{definition}\label{20241113def1}
For any $T\in F(P_{I_1}\ell^2,P_{I_2}\ell^2)$, we define $\det T=1$ if $t=0$;  otherwise we define
\begin{eqnarray}
\det T\triangleq  \left|\begin{array}{cccc}
\langle T\mathbf{e}_{i_1},\mathbf{e}_{j_1}\rangle&\langle T\mathbf{e}_{i_2},\mathbf{e}_{j_1}\rangle&\cdots&\langle T\mathbf{e}_{i_{t}},\mathbf{e}_{j_1}\rangle\\
\langle T\mathbf{e}_{i_1},\mathbf{e}_{j_2}\rangle&\langle T\mathbf{e}_{i_2},\mathbf{e}_{j_2}\rangle&\cdots&\langle T\mathbf{e}_{i_{t}},\mathbf{e}_{j_2}\rangle\\
\vdots&\vdots&\ddots&\vdots\\
\langle T\textbf{e}_{i_1},\textbf{e}_{j_{t}}\rangle&\langle T\mathbf{e}_{i_2},\mathbf{e}_{j_{t}}\rangle &\cdots& \langle T\mathbf{e}_{i_{t}},\mathbf{e}_{j_{t}}\rangle
\end{array}\right|,
\end{eqnarray}
where the right side in the above equality is the determinant of the $t\times t$ matrix.
\end{definition}

\begin{remark}\label{20250505rem2}
If there exists $t'\in\mathbb{N}_0$
and $\widetilde{i_1},\cdots,\widetilde{i_{t'}},\widetilde{j_1},\cdots,\widetilde{j_{t'}}\in\mathbb{N},$
such that $\widetilde{i_1}<\cdots<\widetilde{i_{t'}},\widetilde{j_1}<\cdots<\widetilde{j_{t'}}$, $I_1\setminus I_2\subset\{\widetilde{i_1},\cdots,\widetilde{i_{t'}}\}\subset I_1$ and $I_2\setminus I_1\subset\{\widetilde{j_1},\cdots,\widetilde{j_{t'}}\}\subset I_2$, $I_1\setminus\{\widetilde{i_1},\cdots,\widetilde{i_{t'}}\}=I_2\setminus\{\widetilde{j_1},\cdots,\widetilde{j_{t'}}\}$ and $i>\max\{\widetilde{i_{t'}},\widetilde{j_{t'}}\}$ for any $i\in I_1\setminus\{\widetilde{i_1},\cdots,\widetilde{i_{t'}}\}$, then
$$
\hat I\subset\{\widetilde{i_1},\cdots,\widetilde{i_{t'}}\},\quad \hat I\subset\{\widetilde{j_1},\cdots,\widetilde{j_{t'}}\},
$$
where  $\hat I$ was given in \eqref{20250505for3}. Indeed, for any $j\in I_1\setminus \{\widetilde{i_1},\cdots,\widetilde{i_{t'}}\}(\subset I_1\cap I_2)$, one has $j>\max\{\widetilde{i_{t'}},\widetilde{j_{t'}}\}\geqslant \max((I_1\setminus I_2)\sqcup(I_2\setminus I_1))$, and therefore $j\not\in \hat I$. By the construction in Remark \ref{20250505rem1},  we see that
$\{i_1,\cdots,i_{t}\}=(I_1\setminus I_2)\sqcup\hat I\subset\{\widetilde{i_1}, \cdots,\widetilde{i_{t'}}\}$ and $\{j_1,\cdots,j_{t}\}= (I_2\setminus I_1)\sqcup \hat I\subset\{\widetilde{j_1},\cdots,\widetilde{j_{t'}}\}$.
Hence, there exists $s\in\mathbb{N}_0$ and $i_{t+1},\cdots,i_{t+s},j_{t+1},\cdots,j_{t+s}\in\mathbb{N}$
such that $i_t<i_{t+1}<\cdots<i_{t+s},j_t<j_{t+1}<\cdots<j_{t+s}$, $t'=t+s$, and
\begin{eqnarray}\label{202508101}
\{\widetilde{i_1},\cdots,\widetilde{i_{t'}}\}=\{i_1,\cdots,i_{t},i_{t+1},\cdots,i_{t+s}\},\quad
\{\widetilde{j_1},\cdots,\widetilde{j_{t'}}\}=\{j_1,\cdots,j_{t},j_{t+1},\cdots,j_{t+s}\}.
\end{eqnarray}

By \eqref{20241011for3}, it follows that
$$
\langle T\mathbf{e}_{i_{s_1}},\mathbf{e}_{j_{t_1}}\rangle=0,\quad \forall\,(s_1,t_1)\in\{1,\cdots,t\}\times\{t+1,\cdots,t+s\}.
$$
Indeed, for any $s_1\in\{1,\cdots,t\}$, one has $s_1\in I_1\setminus I_2$ or $s_1\in\hat I$. If $s_1\in I_1\setminus I_2$, then $P_{I_1\setminus( I_1\cap I_2)}\mathbf{e}_{i_{s_1}}=\mathbf{e}_{i_{s_1}}$. Hence, for any $t_1\in\{t+1,\cdots,t+s\}$, $j_{t_1}\in I_2\setminus\{j_1,\cdots,j_{t}\}=(I_1\cap I_2)\setminus \hat I\subset I_1\cap I_2$, and therefore,
$$
\langle T\mathbf{e}_{i_{s_1}},\mathbf{e}_{j_{t_1}}\rangle =\langle TP_{I_1\setminus( I_1\cap I_2)}\mathbf{e}_{i_{s_1}},P_{I_1\cap I_2}\mathbf{e}_{j_{t_1}}\rangle =\langle P_{I_1\cap I_2}TP_{I_1\setminus( I_1\cap I_2)}\mathbf{e}_{i_{s_1}},\mathbf{e}_{j_{t_1}}\rangle =0.
$$
If $s_1\in\hat I\subset I_1\cap I_2$, then for the above $j_{t_1}$, one has $s_1\not=j_{t_1}$, and hence
$$
\langle T\mathbf{e}_{i_{s_1}},\mathbf{e}_{j_{t_1}}\rangle =\langle TP_{I_1\cap I_2}\mathbf{e}_{i_{s_1}},P_{I_1\cap I_2}\mathbf{e}_{j_{t_1}}\rangle =\langle P_{I_1\cap I_2}TP_{I_1\cap I_2}\mathbf{e}_{i_{s_1}},\mathbf{e}_{j_{t_1}}\rangle =\langle \mathbf{e}_{i_{s_1}},\mathbf{e}_{j_{t_1}}\rangle =0.
$$

By \eqref{20241011for3} again, one has
\begin{eqnarray*}
\left(\begin{array}{cccc}
\langle T\mathbf{e}_{i_{t+1}},\mathbf{e}_{j_{t+1}}\rangle &\langle T\mathbf{e}_{i_{t+2}},\mathbf{e}_{j_{t+1}}\rangle &\cdots&\langle T\mathbf{e}_{i_{t+s}},\mathbf{e}_{j_{t+1}}\rangle \\
\langle T\mathbf{e}_{i_{t+1}},\mathbf{e}_{j_{t+2}}\rangle &\langle T\mathbf{e}_{i_{t+2}},\mathbf{e}_{j_{t+2}}\rangle &\cdots&\langle T\mathbf{e}_{i_{t+s}},\mathbf{e}_{j_{t+2}}\rangle \\
\vdots&\vdots&\ddots&\vdots\\
\langle T\mathbf{e}_{i_{t+1}},\mathbf{e}_{j_{t+s}}\rangle &\langle T\mathbf{e}_{i_{t+2}},\mathbf{e}_{j_{t+s}}\rangle  &\cdots&\langle T\mathbf{e}_{i_{t+s}},\mathbf{e}_{j_{t+s}}\rangle
\end{array}\right)=
\left(\begin{array}{cccc}
1&0&\cdots&0\\
0&1&\cdots&0\\
\vdots&\vdots&\ddots&\vdots\\
0&0&\cdots& 1
\end{array}\right).
\end{eqnarray*}
Indeed, for any $s_2,t_2\in\{t+1,\cdots,t+s\}$, one has $i_{s_2}\in I_1\setminus\{i_1,\cdots,i_t\}\subset (I_1\cap I_2)\setminus \hat I$ and $j_{t_2}\in I_2\setminus\{j_1,\cdots,j_t\}\subset (I_1\cap I_2)\setminus \hat I$, and therefore
$$
\langle T\mathbf{e}_{i_{s_2}},\mathbf{e}_{j_{t_2}}\rangle =\langle TP_{I_1\cap I_2}\mathbf{e}_{i_{s_2}},P_{I_1\cap I_2}\mathbf{e}_{j_{t_2}}\rangle =\langle P_{I_1\cap I_2}TP_{I_1\cap I_2}\mathbf{e}_{i_{s_2}},\mathbf{e}_{j_{t_2}}\rangle =\langle \mathbf{e}_{i_{s_2}},\mathbf{e}_{j_{t_2}}\rangle =\delta_{i_{s_2}}^{j_{t_2}}.
$$
On the other hand, by \eqref{202508101},
$$
I_1\setminus\{\widetilde{i_1},\cdots,\widetilde{i_{t'}}\}=\left(I_1\setminus\{i_1,\cdots,i_{t}\}\right)\setminus\{i_{t+1},\cdots,i_{t+s}\}=\left((I_1\cap I_2)\setminus \hat I\right)\setminus\{i_{t+1},\cdots,i_{t+s}\}.
$$
Similarly, $
I_2\setminus\{\widetilde{j_1},\cdots,\widetilde{j_{t'}}\}=\left((I_1\cap I_2)\setminus \hat I\right)\setminus\{j_{t+1},\cdots,j_{t+s}\}$. Hence $\{i_{t+1},\cdots,i_{t+s}\}=\{j_{t+1},\cdots,$ $j_{t+s}\}$, and the desired result follows.

Combining the above, we conclude that
\begin{eqnarray*}
&&\left|\begin{array}{cccc}
\langle T\mathbf{e}_{\widetilde{i_1}},\mathbf{e}_{\widetilde{j_1}}\rangle &\langle T\mathbf{e}_{\widetilde{i_2}},\mathbf{e}_{\widetilde{j_1}}\rangle &\cdots&\langle T\mathbf{e}_{\widetilde{i_{t'}}},\mathbf{e}_{\widetilde{j_1}}\rangle \\
\langle T\mathbf{e}_{\widetilde{i_1}},\mathbf{e}_{\widetilde{j_2}}\rangle &\langle T\mathbf{e}_{\widetilde{i_2}},\mathbf{e}_{\widetilde{j_2}}\rangle &\cdots&\langle T\mathbf{e}_{\widetilde{i_{t'}}},\mathbf{e}_{\widetilde{j_2}}\rangle \\
\vdots&\vdots&\ddots&\vdots\\
\langle T\mathbf{e}_{\widetilde{i_1}},\mathbf{e}_{\widetilde{j_{t'}}}\rangle &\langle T\mathbf{e}_{\widetilde{i_2}},\mathbf{e}_{\widetilde{j_{t'}}}\rangle  &\cdots& \langle T\mathbf{e}_{\widetilde{i_{t'}}},\mathbf{e}_{\widetilde{j_{t'}}}\rangle
\end{array}\right|\\
&=&\left|\begin{array}{cccc}
\langle T\mathbf{e}_{i_1},\mathbf{e}_{j_1}\rangle &\langle T\mathbf{e}_{i_2},\mathbf{e}_{j_1}\rangle &\cdots&\langle T\mathbf{e}_{i_{t}},\mathbf{e}_{j_1}\rangle \\
\langle T\mathbf{e}_{i_1},\mathbf{e}_{j_2}\rangle &\langle T\mathbf{e}_{i_2},\mathbf{e}_{j_2}\rangle &\cdots&\langle T\mathbf{e}_{i_{t}},\mathbf{e}_{j_2}\rangle \\
\vdots&\vdots&\ddots&\vdots\\
\langle T\mathbf{e}_{i_1},\mathbf{e}_{j_{t}}\rangle &\langle T\mathbf{e}_{i_2},\mathbf{e}_{j_{t}}\rangle &\cdots&\langle T\mathbf{e}_{i_{t}},\mathbf{e}_{j_{t}}\rangle
\end{array}\right|\cdot \left|\begin{array}{cccc}
1&0&\cdots&0\\
0&1&\cdots&0\\
\vdots&\vdots&\ddots&\vdots\\
0&0&\cdots& 1
\end{array}\right|=\det T.
\end{eqnarray*}
\end{remark}

\begin{proposition}\label{20241011prop1}
Suppose that $I_3\subset \mathbb{N}$ satisfy $I_2\sim I_3$, $T_1\in F(P_{I_1}\ell^2,P_{I_2}\ell^2)$, $T_2\in F(P_{I_2}\ell^2,P_{I_3}\ell^2)$ and $T_2T_1\in F(P_{I_1}\ell^2,P_{I_3}\ell^2)$. Then, $\det(T_2T_1)=\det(T_2)\cdot\det(T_1)$.
\end{proposition}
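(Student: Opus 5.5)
The plan is to reduce everything to a single finite block of indices, where $\det$ becomes an ordinary matrix determinant and multiplicativity is the classical Cauchy--Binet / product rule.

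\textbf{Choosing the index block.} I will pick one finite set of indices, large enough, and work in it throughout. Set
\[
m\triangleq \max\big((I_1\triangle I_2)\cup(I_2\triangle I_3)\cup(I_1\triangle I_3)\big),
\]
or $m=0$ if all three sets coincide. Then put $\widetilde{I}_k\triangleq\{i\in I_k: i\leqslant m\}$ for $k=1,2,3$. By Proposition \ref{20250808prop1}, these three finite sets have a common cardinality $t'$. Above $m$ the sets $I_1,I_2,I_3$ all agree; call this common tail $J$. List each $\widetilde I_k$ in increasing order.

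\textbf{Reducing each determinant to the block.} For each pair $(I_1,I_2)$, $(I_2,I_3)$, $(I_1,I_3)$, the hypotheses of Remark \ref{20250505rem2} hold with these enlarged lists. Every index of $I_k$ outside the list lies in $J$, exceeds $m$, and the complements coincide. So Remark \ref{20250505rem2} gives
\[
\det T_1=\det A_1,\quad A_1\triangleq\big(\langle T_1\mathbf e_{i},\mathbf e_{j}\rangle\big)_{j\in\widetilde I_2,\,i\in\widetilde I_1},
\]
and similarly $\det T_2=\det A_2$ with $A_2$ indexed by $\widetilde I_3\times\widetilde I_2$. Likewise $\det(T_2T_1)=\det A_{21}$, which is why the hypothesis $T_2T_1\in F(P_{I_1}\ell^2,P_{I_3}\ell^2)$ is needed.

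\textbf{The key identity and the main obstacle.} It remains to show that $A_{21}=A_2A_1$. For $i\in\widetilde I_1$ and $l\in\widetilde I_3$, expand in the basis of $P_{I_2}\ell^2$:
\[
\langle T_2T_1\mathbf e_i,\mathbf e_l\rangle=\sum_{j\in I_2}\langle T_1\mathbf e_i,\mathbf e_j\rangle\langle T_2\mathbf e_j,\mathbf e_l\rangle .
\]
The main obstacle is showing that the terms with $j\in J$ (so $j>m$) vanish. Here I use \eqref{20241011for3} for $T_2$. Such a $j$ lies in $I_2\cap I_3$, and $l\ne j$ since $l\leqslant m$. If $l\in I_2\cap I_3$, then $\langle T_2\mathbf e_j,\mathbf e_l\rangle=\langle P_{I_2\cap I_3}T_2P_{I_2\cap I_3}\mathbf e_j,\mathbf e_l\rangle=\delta_{jl}=0$.

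If instead $l\in I_3\setminus I_2$, the second condition in \eqref{20241011for3} does not directly apply. I will then switch to the first factor. Since $j\in I_1\cap I_2$ and $j\neq i$, it is enough to show $\langle T_1\mathbf e_i,\mathbf e_j\rangle=0$.
\begin{itemize}
\item If $i\in I_1\cap I_2$, then $\langle T_1\mathbf e_i,\mathbf e_j\rangle=\delta_{ij}=0$ by the first condition of \eqref{20241011for3} for $T_1$.
\item If $i\in I_1\setminus I_2$, then $\langle T_1\mathbf e_i,\mathbf e_j\rangle=0$ by the second condition of \eqref{20241011for3} for $T_1$.
\end{itemize}

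So in every case the product vanishes for $j>m$. The sum therefore runs only over $j\in\widetilde I_2$, which gives $A_{21}=A_2A_1$.

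\textbf{Conclusion.} The classical multiplicativity of determinants of $t'\times t'$ matrices then yields $\det(T_2T_1)=\det A_2\det A_1=\det T_2\cdot\det T_1$. Two degenerate cases need a separate check:
\begin{itemize}
\item If $t'=0$, every determinant equals $1$ and the identity is trivial.
\item If the $t$ of some pair is $0$ (for example $I_1=I_2$), Remark \ref{20250505rem2} still applies with $t=0$ and identity padding, so the reduction above goes through unchanged.
\end{itemize}
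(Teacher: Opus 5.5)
Your proposal is correct and follows essentially the same route as the paper: your cutoff \(m\) coincides with the paper's \(\max(L_1\sqcup\cdots\sqcup L_6)\), so your blocks \(\widetilde I_k\) are exactly the paper's index lists. Like the paper, you then invoke Remark~\ref{20250505rem2}, discard the tail terms of \(\sum_{j\in I_2}\), and apply the product rule for finite determinants. The only difference is minor: the paper discards the tail terms using \(T_1\) alone (via \(P_{I_1\cap I_2}T_1=P_{I_1\cap I_2}T_1P_{I_1\cap I_2}\)), which already works for every \(l\), so your case split through \(T_2\) is unnecessary.
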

\begin{proof}
If $I_1=I_2=I_3$, then by \eqref{20241011for3}, we obtain $T_1=T_2=T_3$ which is the identity operator on $P_{I_1}\ell^2$ and hence $\det(T_2T_1)=\det(T_2)\cdot\det(T_1)=1$.

Otherwise, since $I_1\sim I_2$ and $I_2\sim I_3$, by the proof of Lemma \ref{20241011lem2}, we have
\begin{eqnarray*}
&&
I_1\setminus (I_1\cap I_2\cap I_3)=(I_1\setminus I_2)\cup (I_1\setminus I_3)=(L_1\sqcup L_5)\cup (L_1\sqcup L_4)=(I_1\setminus I_2)\sqcup L_4,\\
&&I_2\setminus (I_1\cap I_2\cap I_3)=(I_2\setminus I_1)\cup (I_2\setminus I_3)=(L_2\sqcup L_6)\cup (L_2\sqcup L_4)=(I_2\setminus I_1)\sqcup L_4=L_6\sqcup (I_2\setminus I_3),\\
&&I_3\setminus (I_1\cap I_2\cap I_3)=(I_3\setminus I_1)\cup (I_3\setminus I_2)=(L_3\sqcup L_6)\cup (L_3\sqcup L_5)=L_6\sqcup (I_3\setminus I_2),
\end{eqnarray*}
which gives
$\card(I_1\setminus (I_1\cap I_2\cap I_3)) =\card(I_2\setminus (I_1\cap I_2\cap I_3)) =\card(I_3\setminus (I_1\cap I_2\cap I_3)) \in\mathbb{N}_0$.  Put
\begin{eqnarray*}
I'\triangleq \{i\in I_1\cap I_2\cap I_3: i\leqslant \max(L_1\sqcup L_2\sqcup L_3\sqcup L_4\sqcup L_5\sqcup L_6)\}.
\end{eqnarray*}
Write $s=\card(I_1\setminus (I_1\cap I_2\cap I_3))+\card I'$. Then, $s\in\mathbb{N}_0$ and there is $i_1,\cdots,i_s,i_1',\cdots,i_s',i_1'',\cdots,i_s''\in\mathbb{N}$ such that
\begin{eqnarray*}
&&i_1<\cdots<i_s,\quad i_1'<\cdots <i_s',\quad i_1''<\cdots <i_s'',\\
&&(I_1\setminus (I_1\cap I_2\cap I_3))\sqcup I'=\{i_1,\cdots,i_s\}\subset I_1,\\
&&(I_2\setminus (I_1\cap I_2\cap I_3))\sqcup I'=\{i_1',\cdots,i_s'\}\subset I_2,\\
&& (I_3\setminus (I_1\cap I_2\cap I_3))\sqcup I'=\{i_1'',\cdots,i_s''\}\subset I_3,\\
&&I_1\setminus\{i_1,\cdots,i_s\}=(I_1\cap I_2\cap I_3)\setminus I'=
I_2\setminus\{i_1',\cdots,i_s'\}=
I_3\setminus\{i_1'',\cdots,i_s''\},\\
&&i>\max\{i_s,i_s',i_s''\},\,\,\forall\,\,i\in I_1\setminus\{i_1,\cdots,i_s\}.
\end{eqnarray*}
Note that $$
P_{I_1\cap I_2}T_1=P_{I_1\cap I_2}T_1P_{I_1}
=P_{I_1\cap I_2}T_1(P_{I_1\cap I_2}+P_{I_1\setminus(I_1\cap I_2)})=P_{I_1\cap I_2}T_1P_{I_1\cap I_2}.
$$
Hence, for any $j\in I_2\setminus\{i_1',\cdots,i_s'\}$ and $k\in\{1,\cdots,s\}$, it holds that $P_{I_1\cap I_2}\textbf{e}_{j}=\textbf{e}_{j}$ and
$
\langle T_1\textbf{e}_{i_k},\textbf{e}_{j}\rangle =\langle T_1\textbf{e}_{i_k},P_{I_1\cap I_2}\textbf{e}_{j}\rangle =\langle P_{I_1\cap I_2}T_1\textbf{e}_{i_k},P_{I_1\cap I_2}\textbf{e}_{j}\rangle =\langle P_{I_1\cap I_2}T_1P_{I_1\cap I_2}\textbf{e}_{i_k},\textbf{e}_{j}\rangle =\langle P_{I_1\cap I_2}\textbf{e}_{i_k},\textbf{e}_{j}\rangle =0,
$
where the last equality follows from the fact that if $i_k\in I_1\cap I_2$, then
$\langle P_{I_1\cap I_2}\textbf{e}_{i_k},\textbf{e}_{j}\rangle =\langle \textbf{e}_{i_k},\textbf{e}_{j}\rangle =0$, and if $i_k\notin I_1\cap I_2$, then
$\langle P_{I_1\cap I_2}\textbf{e}_{i_k},\textbf{e}_{j}\rangle =\langle \textbf{0},\textbf{e}_{j}\rangle =0$.
By Remark \ref{20250505rem2}, we have
\begin{eqnarray*}
&&\det (T_2T_1)\\
&=& \left |\begin{array}{cccc}
\langle T_2T_1\textbf{e}_{i_1},\textbf{e}_{i_1''}\rangle &\langle T_2T_1\textbf{e}_{i_2},\textbf{e}_{i_1''}\rangle &\cdots&\langle T_2T_1\textbf{e}_{i_{s}},\textbf{e}_{i_1''}\rangle \\
\langle T_2T_1\textbf{e}_{i_1},\textbf{e}_{i_2''}\rangle &\langle T_2T_1\textbf{e}_{i_2},\textbf{e}_{i_2''}\rangle &\cdots&\langle T_2T_1\textbf{e}_{i_{s}},\textbf{e}_{i_2''}\rangle \\
\vdots&\vdots&\ddots&\vdots\\
\langle T_2T_1\textbf{e}_{i_1},\textbf{e}_{i_{s}''}\rangle &\langle T_2T_1\textbf{e}_{i_2},\textbf{e}_{i_{s}''}\rangle &\cdots&\langle T_2T_1\textbf{e}_{i_{s}},\textbf{e}_{i_{s}''}\rangle
\end{array}\right |\\
&=& \left |\begin{array}{cccc}
\langle T_1\textbf{e}_{i_1},T_2^*\textbf{e}_{i_1''}\rangle &\langle T_1\textbf{e}_{i_2},T_2^*\textbf{e}_{i_1''}\rangle &\cdots&\langle T_1\textbf{e}_{i_{s}},T_2^*\textbf{e}_{i_1''}\rangle \\
\langle T_1\textbf{e}_{i_1},T_2^*\textbf{e}_{i_2''}\rangle &\langle T_1\textbf{e}_{i_2},T_2^*\textbf{e}_{i_2''}\rangle &\cdots&\langle T_1\textbf{e}_{i_{s}},T_2^*\textbf{e}_{i_2''}\rangle \\
\vdots&\vdots&\ddots&\vdots\\
\langle T_1\textbf{e}_{i_1},T_2^*\textbf{e}_{i_{s}''}\rangle &\langle T_1\textbf{e}_{i_2},T_2^*\textbf{e}_{i_{s}''}\rangle  &\cdots& \langle T_1\textbf{e}_{i_{s}},T_2^*\textbf{e}_{i_{s}''}\rangle
\end{array}\right |\\
&=&  \left|\begin{array}{cccc}
\sum\limits_{j\in I_2} \langle \textbf{e}_{j} ,T_2^* \textbf{e}_{i_1''}\rangle \cdot\langle T_1\textbf{e}_{i_1},\textbf{e}_{j}\rangle &\sum\limits_{j\in I_2} \langle \textbf{e}_{j} ,T_2^* \textbf{e}_{i_1''} \rangle \cdot\langle T_1\textbf{e}_{i_2},\textbf{e}_{j}\rangle &\cdots&\sum\limits_{j\in I_2} \langle  \textbf{e}_{j} ,T_2^* \textbf{e}_{i_1''} \rangle \cdot\langle T_1\textbf{e}_{i_s},\textbf{e}_{j}\rangle \\
\sum\limits_{j\in I_2} \langle \textbf{e}_{j} ,T_2^* \textbf{e}_{i_2''} )\cdot\langle T_1\textbf{e}_{i_1},\textbf{e}_{j})&\sum\limits_{j\in I_2}\langle \textbf{e}_{j} ,T_2^* \textbf{e}_{i_2''} \rangle \cdot\langle T_1\textbf{e}_{i_2},\textbf{e}_{j}\rangle &\cdots&\sum\limits_{j\in I_2} \langle \textbf{e}_{j} ,T_2^* \textbf{e}_{i_2''} \rangle \cdot\langle T_1\textbf{e}_{i_s},\textbf{e}_{j}\rangle \\
\vdots&\vdots&\ddots&\vdots\\
\sum\limits_{j\in I_2} \langle \textbf{e}_{j} ,T_2^* \textbf{e}_{i_s''}\rangle \cdot\langle T_1\textbf{e}_{i_1},\textbf{e}_{j}\rangle &\sum\limits_{j\in I_2} \langle \textbf{e}_{j} ,T_2^* \textbf{e}_{i_s''}\rangle \cdot\langle T_1\textbf{e}_{i_2},\textbf{e}_{j}\rangle &\cdots& \sum\limits_{j\in I_2} \langle \textbf{e}_{j} ,T_2^* \textbf{e}_{i_s''}\rangle \cdot\langle T_1\textbf{e}_{i_s},\textbf{e}_{j}\rangle
\end{array}\right |\\
&=&  \left|\begin{array}{cccc}
\sum\limits_{j\in I_2} \langle T_2 \textbf{e}_{j} ,\textbf{e}_{i_1''}\rangle \cdot\langle T_1\textbf{e}_{i_1},\textbf{e}_{j}\rangle &\sum\limits_{j\in I_2} \langle T_2 \textbf{e}_{j} ,\textbf{e}_{i_1''}\rangle \cdot\langle T_1\textbf{e}_{i_2},\textbf{e}_{j}\rangle &\cdots&\sum\limits_{j\in I_2}\langle T_2 \textbf{e}_{j} ,\textbf{e}_{i_1''}\rangle \cdot\langle T_1\textbf{e}_{i_s},\textbf{e}_{j}\rangle \\
\sum\limits_{j\in I_2}\langle T_2 \textbf{e}_{j} ,\textbf{e}_{i_2''}\rangle \cdot\langle T_1\textbf{e}_{i_1},\textbf{e}_{j}\rangle &\sum\limits_{j\in I_2}\langle T_2 \textbf{e}_{j} ,\textbf{e}_{i_2''}\rangle \cdot\langle T_1\textbf{e}_{i_2},\textbf{e}_{j}\rangle &\cdots&\sum\limits_{j\in I_2}\langle T_2 \textbf{e}_{j} ,\textbf{e}_{i_2''} \rangle \cdot\langle T_1\textbf{e}_{i_s},\textbf{e}_{j}\rangle \\
\vdots&\vdots&\ddots&\vdots\\
\sum\limits_{j\in I_2}\langle T_2 \textbf{e}_{j} ,\textbf{e}_{i_s''}\rangle \cdot\langle T_1\textbf{e}_{i_1},\textbf{e}_{j}\rangle &\sum\limits_{j\in I_2} \langle T_2 \textbf{e}_{j} ,\textbf{e}_{i_s''} \rangle \cdot\langle T_1\textbf{e}_{i_2},\textbf{e}_{j}\rangle  &\cdots& \sum\limits_{j\in I_2} \langle T_2 \textbf{e}_{j} ,\textbf{e}_{i_s''}\rangle \cdot\langle T_1\textbf{e}_{i_s},\textbf{e}_{j}\rangle
\end{array}\right |\\
&=&  \left|\begin{array}{cccc}
\sum\limits_{j=1}^{s} \langle T_2 \textbf{e}_{i_j'} ,\textbf{e}_{i_1''}\rangle \cdot\langle T_1\textbf{e}_{i_1},\textbf{e}_{i_j'}\rangle &\sum\limits_{j=1}^{s}  \langle T_2 \textbf{e}_{i_j'} ,\textbf{e}_{i_1''}\rangle \cdot\langle T_1\textbf{e}_{i_2},\textbf{e}_{i_j'}\rangle &\cdots&\sum\limits_{j=1}^{s} \langle T_2 \textbf{e}_{i_j'} ,\textbf{e}_{i_1''}\rangle \cdot\langle T_1\textbf{e}_{i_s},\textbf{e}_{i_j'}\rangle \\
\sum\limits_{j=1}^{s}  \langle T_2 \textbf{e}_{i_j'} ,\textbf{e}_{i_2''}\rangle \cdot\langle T_1\textbf{e}_{i_1},\textbf{e}_{i_j'}\rangle &\sum\limits_{j=1}^{s} \langle T_2 \textbf{e}_{i_j'} ,\textbf{e}_{i_2''} \rangle \cdot\langle T_1\textbf{e}_{i_2},\textbf{e}_{i_j'})&\cdots&\sum\limits_{j=1}^{s} \langle T_2 \textbf{e}_{i_j'} ,\textbf{e}_{i_2''} \rangle \cdot\langle T_1\textbf{e}_{i_s},\textbf{e}_{i_j'}\rangle \\
\vdots&\vdots&\ddots&\vdots\\
\sum\limits_{j=1}^{s}  \langle T_2 \textbf{e}_{i_j'} ,\textbf{e}_{i_s''} \rangle \cdot\langle T_1\textbf{e}_{i_1},\textbf{e}_{i_j'}\rangle &\sum\limits_{j=1}^{s} \langle T_2 \textbf{e}_{i_j'} ,\textbf{e}_{i_s''} \rangle \cdot\langle T_1\textbf{e}_{i_2},\textbf{e}_{i_j'}\rangle  &\cdots& \sum\limits_{j=1}^{s} \langle T_2 \textbf{e}_{i_j'} ,\textbf{e}_{i_s''}\rangle \cdot\langle T_1\textbf{e}_{i_s},\textbf{e}_{i_j'}\rangle
\end{array}\right |\\
&=& \left|\begin{array}{cccc}
\langle T_2 \textbf{e}_{i_1'} ,\textbf{e}_{i_1''} \rangle & \langle T_2 \textbf{e}_{i_2'} ,\textbf{e}_{i_1''} \rangle &\cdots& \langle T_2 \textbf{e}_{i_s'} ,\textbf{e}_{i_1''} \rangle \\
\langle T_2 \textbf{e}_{i_1'} ,\textbf{e}_{i_2''} \rangle &\langle T_2 \textbf{e}_{i_2'} ,\textbf{e}_{i_2''} \rangle &\cdots& \langle T_2 \textbf{e}_{i_s'} ,\textbf{e}_{i_2''} \rangle \\
\vdots&\vdots&\ddots&\vdots\\
 \langle T_2 \textbf{e}_{i_1'} ,\textbf{e}_{i_s''} \rangle &\langle T_2 \textbf{e}_{i_2'} ,\textbf{e}_{i_s''} \rangle &\cdots& \langle T_2 \textbf{e}_{i_s'} ,\textbf{e}_{i_s''} \rangle
\end{array}\right |\cdot
\left|\begin{array}{cccc}
  \langle T_1 \textbf{e}_{i_1} ,\textbf{e}_{i_1'} \rangle &\langle T_1 \textbf{e}_{i_2} ,\textbf{e}_{i_1'} \rangle &\cdots& \langle T_1 \textbf{e}_{i_s} ,\textbf{e}_{i_1'} \rangle \\
  \langle T_1 \textbf{e}_{i_1} ,\textbf{e}_{i_2'} \rangle &\langle T_1 \textbf{e}_{i_2} ,\textbf{e}_{i_2'} \rangle &\cdots& \langle T_1 \textbf{e}_{i_s} ,\textbf{e}_{i_2'} \rangle \\
\vdots&\vdots&\ddots&\vdots\\
  \langle T_1 \textbf{e}_{i_1} ,\textbf{e}_{i_s'} \rangle &\langle T_1 \textbf{e}_{i_2} ,\textbf{e}_{i_s'}\rangle &\cdots& \langle T_1 \textbf{e}_{i_s} ,\textbf{e}_{i_s'}\rangle
\end{array}\right |\\
&=&\det (T_2)\cdot \det (T_1),
\end{eqnarray*}
where $T_2^*$ stands for the Hilbert dual of $T_2$. This completes the proof of Proposition \ref{20241011prop1}.
\end{proof}

\begin{lemma}\label{20250105lem1}
Suppose that $\mathbf{x}\in S$, $(S\cap U_1,I_1,f_1)$ is a local coordinate triple of $S$ such that $\mathbf{x}\in S\cap U_1$, and $I_2\sim I_1$. Then,
$$
D (P_{I_2}P_{I_1}^{-1})(\mathbf{x}_{I_1})\in F(P_{I_1}\ell^2,P_{I_2}\ell^2),
$$
where $\mathbf{x}_{I_1}=P_{I_1}\mathbf{x}$.
\end{lemma}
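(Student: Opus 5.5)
Unwinding the definitions shows that the map in question is explicit. Indeed, $P_{I_1}|_{S\cap U_1}$ is a homeomorphism onto $U_{I_1}$, and its inverse is $P_{I_1}^{-1}(\mathbf{y})=\mathbf{y}+f_1(\mathbf{y})$ for $\mathbf{y}\in U_{I_1}$. Hence
\[
\Phi(\mathbf{y})\triangleq P_{I_2}P_{I_1}^{-1}(\mathbf{y})=P_{I_2}\mathbf{y}+P_{I_2}f_1(\mathbf{y}),\qquad \mathbf{y}\in U_{I_1}.
\]
The map $\mathbf{y}\mapsto P_{I_2}\mathbf{y}$ is the restriction of a bounded linear operator, and $P_{I_2}\circ f_1$ is $C^1$ from $U_{I_1}\subset P_{I_1}\ell^2$ into $P_{I_2}\ell^2$ because $f_1\in C^1_{P_{I_1}\ell^2}(U_{I_1};P_{\mathbb{N}\setminus I_1}\ell^2)$. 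Therefore $\Phi$ is Fr\'echet differentiable, and
\[
T\triangleq D\Phi(\mathbf{x}_{I_1})=P_{I_2}|_{P_{I_1}\ell^2}+P_{I_2}Df_1(\mathbf{x}_{I_1}),
\]
which is a bounded linear operator from $P_{I_1}\ell^2$ into $P_{I_2}\ell^2$. This settles boundedness.

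It remains to check the two identities in \eqref{20241011for3}. The key observation is that $Df_1(\mathbf{x}_{I_1})$ takes values in $P_{\mathbb{N}\setminus I_1}\ell^2$. Since $P_{I_1\cap I_2}P_{\mathbb{N}\setminus I_1}=0$ (the index sets are disjoint), we get $P_{I_1\cap I_2}T=P_{I_1\cap I_2}P_{I_2}|_{P_{I_1}\ell^2}=P_{I_1\cap I_2}|_{P_{I_1}\ell^2}$.
\begin{itemize}
\item Composing on the right with $P_{I_1\cap I_2}$ gives $P_{I_1\cap I_2}TP_{I_1\cap I_2}=P_{I_1\cap I_2}$.
\item Composing on the right with $P_{I_1\setminus(I_1\cap I_2)}$ gives $P_{I_1\cap I_2}P_{I_1\setminus(I_1\cap I_2)}=0$, because these coordinate projections have disjoint index sets.
\end{itemize}
Both computations use only the fact that coordinate projections $P_J$ and $P_{J'}$ commute and satisfy $P_JP_{J'}=P_{J\cap J'}$.

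I expect the only delicate point to be interpretive: the statement must be read as concerning the map $P_{I_2}\circ(P_{I_1}|_{S\cap U_1})^{-1}$ on $U_{I_1}$, and it requires checking that differentiability along $P_{I_1}\ell^2$ in the sense of Definition \ref{20241115def1} coincides with ordinary Fr\'echet differentiability on the open set $U_{I_1}$ of the Hilbert space $P_{I_1}\ell^2$. Once this is clarified, the chain rule for the bounded linear map $P_{I_2}$ finishes the argument. The hypothesis $I_2\sim I_1$ is used only to place $T$ in the class $F(P_{I_1}\ell^2,P_{I_2}\ell^2)$, whose definition presupposes $I_1\sim I_2$ so that $\det T$ makes sense later.
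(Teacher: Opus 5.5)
Your proposal is correct and follows essentially the same route as the paper, which also writes $P_{I_2}P_{I_1}^{-1}(\mathbf{y})=P_{I_2}(\mathbf{y}+f_1(\mathbf{y}))$, differentiates, and observes that the $I_1\cap I_2$ block of the derivative is the identity. The only difference is bookkeeping: the paper works coordinatewise via $K_1,K_2$ with $I_2=(I_1\setminus K_1)\cup K_2$, while you use the projection identity $P_{I_1\cap I_2}P_{\mathbb{N}\setminus I_1}=0$ to get the same conclusion.
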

\begin{proof}
By Definition \ref{20241121def1000}, there is an open neighborhood $U_{I_1}$ of $\textbf{x}_{I_1}$ in $P_{I_1}\ell^2$, $f_1\in C^1_{P_{I_1}}(U_{I_1};P_{\mathbb{N}\setminus I_1}\ell^2)$, and
$$
S\cap U_1=\left\{\textbf{y}_{I_1}+f_1(\textbf{y}_{I_1}):\textbf{y}_{I_1}\in U_{I_1}\right\},
\qquad \textbf{x}=\textbf{x}_{I_1}+f_1(\textbf{x}_{I_1}).
$$
Since $I_1\sim I_2$, one can find $K_1\subset I_1$ and $K_2 \subset \mathbb{N}\setminus I_1$ such that $\card K_1=\card K_2\in\mathbb{N}_0$ and $I_2=(I_1\setminus K_1)\cup K_2$.
Hence,
$$
 P_{I_2}P_{I_1}^{-1}\textbf{y}_{I_1}=P_{I_2}(\textbf{y}_{I_1}+f_1(\textbf{y}_{I_1}))=\textbf{y}_{{I_1}\setminus K_1}+\sum_{j\in K_2}f_{1,j}(\textbf{y}_{I_1})\textbf{e}_j,\quad\forall\;\textbf{y}_{I_1}=\sum\limits_{i\in {I_1}}y_i\textbf{e}_i\in U_{I_1},
$$
where $\textbf{y}_{{I_1}\setminus K_1}=\sum\limits_{i\in {I_1}\setminus K_1}y_i\textbf{e}_i$ and $f_{1,j}(\textbf{y}_{I_1})\triangleq(f_1(\textbf{y}_{I_1}),\textbf{e}_j)$ for any $j\in K_2$. Thus, $D (P_{I_2}P_{{I_1}}^{-1})(\textbf{x}_{I_1})$ is a bounded linear operator from $P_{I_1}\ell^2$ into $P_{I_2}\ell^2$, and
$$
P_{I_1\cap I_2}D (P_{I_2}P_{{I_1}}^{-1})(\textbf{x}_{I_1}) P_{I_1\cap I_2} =id_{P_{I_1\cap I_2}},\quad
P_{I_1\cap I_2}D (P_{I_2}P_{{I_1}}^{-1})(\textbf{x}_{I_1}) P_{I_1\setminus(I_1\cap I_2) } =0,
$$
and therefore $D (P_{I_2}P_{I_1}^{-1})(\textbf{x}_{I_1})\in F(P_{I_1}\ell^2,P_{I_2}\ell^2)$, which completes the proof of Lemma \ref{20250105lem1}.
\end{proof}

\begin{corollary}\label{20241015cor1}
Under the assumptions of Lemma \ref{20250105lem1}, suppose that $(S\cap U_2,I_2,f_2)$ is another local coordinate triple of $S$ such that $\mathbf{x}\in S\cap U_1 \cap U_2$, $I_3\subset \mathbb{N}$ and $I_2\sim I_3$. Then
\begin{eqnarray}\label{20250105for1}
\det\left(D(P_{I_3}P_{I_1}^{-1})(\mathbf{x}_{I_1})\right)
=\det \left(D(P_{I_3}P_{I_2}^{-1})(\mathbf{x}_{I_2})\right)\cdot \det \left(D(P_{I_2}P_{I_1}^{-1})(\mathbf{x}_{I_1})\right),
\end{eqnarray}
where $\mathbf{x}_{I_1}=P_{I_1}\mathbf{x}$ and $\mathbf{x}_{I_2}=P_{I_2}\mathbf{x}$.
\end{corollary}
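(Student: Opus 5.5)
The plan is to reduce \eqref{20250105for1} to Proposition \ref{20241011prop1} together with the chain rule. Let $\mathbf{x}_{I_1}=P_{I_1}\mathbf{x}$ and $\mathbf{x}_{I_2}=P_{I_2}\mathbf{x}$, and consider the maps
\[
\Phi_{21}\triangleq P_{I_2}P_{I_1}^{-1},\qquad \Phi_{32}\triangleq P_{I_3}P_{I_2}^{-1},\qquad \Phi_{31}\triangleq P_{I_3}P_{I_1}^{-1}.
\]
Here $P_{I_1}^{-1}$ and $P_{I_2}^{-1}$ denote the inverses of the homeomorphisms $P_{I_1}|_{S\cap U_1}$ and $P_{I_2}|_{S\cap U_2}$. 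Near $\mathbf{x}_{I_1}$ all three are defined, since $\mathbf{x}\in S\cap U_1\cap U_2$ and $P_{I_1}(S\cap U_1\cap U_2)$ is an open neighborhood of $\mathbf{x}_{I_1}$ in $P_{I_1}\ell^2$. On that neighborhood we have the identity $\Phi_{31}=\Phi_{32}\circ\Phi_{21}$, because $P_{I_2}^{-1}P_{I_2}$ is the identity on $S\cap U_2$.

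The first step is to check the memberships required by Proposition \ref{20241011prop1}.
\begin{itemize}
\item Lemma \ref{20250105lem1}, applied with the triple $(S\cap U_1,I_1,f_1)$ and $I_2\sim I_1$, gives $D\Phi_{21}(\mathbf{x}_{I_1})\in F(P_{I_1}\ell^2,P_{I_2}\ell^2)$.
\item The same lemma, applied with the triple $(S\cap U_2,I_2,f_2)$ and $I_3\sim I_2$, gives $D\Phi_{32}(\mathbf{x}_{I_2})\in F(P_{I_2}\ell^2,P_{I_3}\ell^2)$.
\item By Lemma \ref{20241011lem2} we have $I_3\sim I_1$. Lemma \ref{20250105lem1} with the first triple then gives $D\Phi_{31}(\mathbf{x}_{I_1})\in F(P_{I_1}\ell^2,P_{I_3}\ell^2)$.
\end{itemize}
The proof of that lemma shows each $\Phi$ is an identity on the common coordinates plus finitely many $C^1$ coordinate functions, so each is Fr\'echet differentiable.

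The second step is the chain rule. Since $\Phi_{31}=\Phi_{32}\circ\Phi_{21}$ near $\mathbf{x}_{I_1}$ and $\Phi_{21}(\mathbf{x}_{I_1})=\mathbf{x}_{I_2}$, we get
\[
D\Phi_{31}(\mathbf{x}_{I_1})=D\Phi_{32}(\mathbf{x}_{I_2})\,D\Phi_{21}(\mathbf{x}_{I_1}).
\]
In particular the composition $T_2T_1$, with $T_1=D\Phi_{21}(\mathbf{x}_{I_1})$ and $T_2=D\Phi_{32}(\mathbf{x}_{I_2})$, lies in $F(P_{I_1}\ell^2,P_{I_3}\ell^2)$. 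Proposition \ref{20241011prop1} then yields \eqref{20250105for1} directly.

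The main obstacle is justifying the chain rule, since Definition \ref{20241121def1000} only gives $f_1,f_2$ as $C^1$ along the relevant coordinate subspaces. I would first verify that $\Phi_{21}$ is genuinely Fr\'echet differentiable at $\mathbf{x}_{I_1}$ as a map of the open set $P_{I_1}(S\cap U_1\cap U_2)\subset P_{I_1}\ell^2$ into $P_{I_2}\ell^2$. Its explicit form, $\mathbf{y}_{I_1\setminus K_1}+\sum_{j\in K_2}f_{1,j}(\mathbf{y}_{I_1})\mathbf{e}_j$, makes this follow from $f_1\in C^1_{P_{I_1}\ell^2}$. The same argument applies to $\Phi_{32}$ on a neighborhood of $\mathbf{x}_{I_2}$. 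Continuity of $\Phi_{21}$ ensures it maps a small neighborhood of $\mathbf{x}_{I_1}$ into the domain of $\Phi_{32}$, and then the standard Banach-space chain rule applies.
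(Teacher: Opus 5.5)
Your proposal is correct and follows the paper's proof: the factorization $P_{I_3}P_{I_1}^{-1}=(P_{I_3}P_{I_2}^{-1})(P_{I_2}P_{I_1}^{-1})$, then the chain rule, then Lemma~\ref{20250105lem1} for membership in the classes $F(\cdot,\cdot)$, and finally Proposition~\ref{20241011prop1}. You also use Lemma~\ref{20241011lem2} to obtain $I_1\sim I_3$, and hence $T_2T_1\in F(P_{I_1}\ell^2,P_{I_3}\ell^2)$; this makes explicit a hypothesis of Proposition~\ref{20241011prop1} that the paper uses only implicitly.
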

\begin{proof}
Clearly, $P_{I_3}P_{I_1}^{-1}=(P_{I_3}P_{I_2}^{-1})(P_{I_2}P_{I_1}^{-1})$. Hence by the chain rule, we have
$$
 D(P_{I_3}P_{I_1}^{-1})(\textbf{x}_{I_1})
= D(P_{I_3}P_{I_2}^{-1})(\textbf{x}_{I_2}) \circ  D(P_{I_2}P_{I_1}^{-1})(\textbf{x}_{I_1}) .
$$
Combining Proposition \ref{20241011prop1} and Lemma \ref{20250105lem1}, we obtain \eqref{20250105for1}.
\end{proof}

Since we shall construct at the first step a family of compatible local measures according to the given local coordinate triples, the choose of local coordinate triples for $S$ is of crucial importance. The following example indicates that if two local coordinate triples for a surface are chosen to be ``too different" from each other, then the corresponding local measures may be mutually singular and hence they are not compatible.

\begin{example}\label{20250204exa1}
Let $I=\{2n-1:n\in\mathbb{N}\},\,J=\{2n:n\in\mathbb{N}\}$ and
$$
S=\left\{\sum_{j=1}^{\infty}\left(x_j\mathbf{e}_{2j-1}+x_j\mathbf{e}_{2j}\right):\;x_j\in\mathbb{R}\hbox{ for each }j\in\mathbb{N}\hbox{ and }\sum_{j=1}^{\infty}x_j^2<\infty\right\}.
$$
Let
\begin{eqnarray*}
U_I&\triangleq &\left\{\sum\limits_{j=1}^{\infty} x_j\mathbf{e}_{2j-1}:\;x_j\in\mathbb{R}\hbox{ for each }j\in\mathbb{N}\hbox{ and }\sum\limits_{j=1}^{\infty}x_j^2<\infty\right\},\\ U_J&\triangleq &\left\{\sum\limits_{j=1}^{\infty} x_j\mathbf{e}_{2j}:\;x_j\in\mathbb{R}\hbox{ for each }j\in\mathbb{N}\hbox{ and }\sum\limits_{j=1}^{\infty}x_j^2<\infty\right\},
\end{eqnarray*}
and
\begin{eqnarray*}
f_I(\mathbf{x}_{I})&\triangleq & \sum\limits_{j=1}^{\infty} x_j\mathbf{e}_{2j},\quad \forall\, \mathbf{x}_{I}=\sum\limits_{j=1}^{\infty} x_j\mathbf{e}_{2j-1}\in U_I,\\
f_J(\mathbf{x}_{J})&\triangleq & \sum\limits_{j=1}^{\infty} x_j\mathbf{e}_{2j-1},\quad \forall\, \mathbf{x}_{J}=\sum\limits_{j=1}^{\infty} x_j\mathbf{e}_{2j}\in U_J.
\end{eqnarray*}
Then $(S\cap \ell^2,I,f_I)$ and $(S\cap \ell^2,J,f_J)$ are two local coordinate triples of $S$.  Denote by $\mu_{I}$ and $\mu_{J}$ respectively the restriction of the product measure $\prod\limits_{i=1}^{\infty}\frac{e^{-\frac{x_i^2}{4a_i^2}}}{\sqrt{4\pi a_i^2}}$ on $P_{I}S$ and $P_{J}S$, where $\{a_k\}_{k=1}^{\infty}$ was give in \eqref{20250130for1}. If we identify $S$ with $P_{I}S$ and $P_{J}S$, and $a_{2j-1}=2a_{2j}$ for all $j\in\mathbb{N}$, then by the same arguments as in the proof of Proposition \ref{230421prop1}, one can see that $f\cdot\mu_{I}$ and $g\cdot\mu_{J}$ are mutually singular for any two non-negative, Borel measurable functions $f$ and $g$ on $S$.
\end{example}

Note that ${I}\nsim {J}$ in Example \ref{20250204exa1}. In order to avoid the above phenomenon, we add the condition that the indexes in any two local coordinate triples are equivalent in the sense of Definition
\ref{def3.1}. We shall explain more why we choose equivalent indexes in a more general context in our forthcoming work \cite{WYZ-c}.

The following quantity will play a key role in the construction of a family of compatible local measures on the $C^1$-differentiable surface $S$ with codimension $\Gamma_{\mathbb{N}\setminus I_0}$ (Recall Definition \ref{20241121def1}).
\begin{definition}\label{20241012def1}
For each $\mathbf{x}\in S$ and local coordinate triple $(S\cap U, I,f)$ of $S$ satisfying $\mathbf{x}\in S\cap U$,
write
\begin{equation}\label{20260225e1}
n_{I}(\mathbf{x})\triangleq \sqrt{ 1+\sum_{K_1\subset I, \,K_2 \subset \mathbb{N}\setminus I, \,\card K_1=\card K_2\,\in\mathbb{N}} \left|\det\left( D_{x_i} f_j(\mathbf{x}_{I})\right)_{i\in K_1,\,j\in K_2}\right|^2},
\end{equation}
where $\mathbf{x}_{I}=P_{I}\mathbf{x}$ and $f_j(\mathbf{x}_{I})\triangleq \langle f(\mathbf{x}_{I}),\mathbf{e}_j\rangle$ (for $j\in \mathbb{N}\setminus I$). Here and henceforth, unless otherwise stated, the elements of the sets $K_1$ and $K_2$ are arranged in ascending order.
\end{definition}

\begin{remark}\label{20250208rem1}
Let us, for the time being, return to the case of finite dimensions. Suppose that, for example, $S$ is a surface in $\mathbb{R}^3$ with codimension 1, i.e., for any $\mathbf{x}_0=(x_1^0,x_2^0,x_3^0)\in S$, there exists an open neighborhood $U$ of $\mathbf{x}_0$ in $\mathbb{R}^3$ and $g\in C^1_{\mathbb{R}^3}(U)$ such that $D_{x_1}g\neq 0$ on $U$ and $S\cap U=\{\mathbf{x}\in U:g(\mathbf{x})=0\}$. Then, by the Implicit Function Theorem, there exists an open neighborhood $U_{\{2,3\}}\subset \mathbb{R}^2$ and $f\in C^1_{\mathbb{R}^2}(U_{\{2,3\}})$ such that $f(x_2^0,x_3^0)=x_1^0$ and $S\cap U=\{(f(x_2,x_3),x_2,x_3):(x_2,x_3)\in U_{\{2,3\}}\}$. By $g(f(x_2,x_3),x_2,x_3)=0$, for any $(x_2,x_3)\in U_{\{2,3\}}$, it follows that
\begin{eqnarray}\label{20250208for1}
D_{x_1}g(f(x_2,x_3),x_2,x_3)\cdot D_{x_2}f(x_2,x_3)+D_{x_2}g(f(x_2,x_3),x_2,x_3)=0,
\end{eqnarray}
and
\begin{eqnarray}\label{20250208for2}
D_{x_1}g(f(x_2,x_3),x_2,x_3)\cdot D_{x_3}f(x_2,x_3)+D_{x_3}g(f(x_2,x_3),x_2,x_3)=0.
\end{eqnarray}
Then let
\begin{eqnarray*}
M_1&\triangleq&\span\{Dg(f(x_2,x_3),x_2,x_3)\}\\
&=&\{t\cdot(D_{x_1}g(f(x_2,x_3),x_2,x_3),D_{x_2}g(f(x_2,x_3),x_2,x_3),D_{x_3}g(f(x_2,x_3),x_2,x_3):t\in\mathbb{R}\},
\end{eqnarray*}
and
\begin{eqnarray*}
M_2&\triangleq& \{ (Df(x_2,x_3)(y_2,y_3),y_2,y_3):(y_2,y_3)\in\mathbb{R}^2\}\\
&=& \{ (D_{x_2}f(x_2,x_3)y_2+D_{x_3}f(x_2,x_3)y_3,y_2,y_3):(y_2,y_3)\in\mathbb{R}^2\}.
\end{eqnarray*}
Then \eqref{20250208for1} together with \eqref{20250208for2} imply that $M_1$ and $M_2$ are orthogonal to each other. Similar to the arguments before Corollary \ref{20240705cor1}, one can see that $M_1$ and $M_2$ are independent of $f$ and $g$. Actually, $M_1$ and $M_2$ can be viewed as the space spanned by the ``normal vectors" and  the ``tangent place" of $S$ at $\mathbf{x}=(f(x_2,x_3),x_2,x_3))$, respectively.

Furthermore, write
$\frac{Dg(\mathbf{x})}{||Dg(\mathbf{x})||_{\mathbb{R}^3}}=(\theta_1(\mathbf{x}),\theta_2(\mathbf{x}),\theta_3(\mathbf{x}))$.
By \eqref{20250208for1}--\eqref{20250208for2},
$$
|\theta_1(f(x_2,x_3),x_2,x_3))|=\frac{1}{\sqrt{1+|D_{x_2}f(x_2,x_3)|^2+|D_{x_3}f(x_2,x_3)|^2}},
$$
the surface area of $S\cap U$ equals
\begin{eqnarray*}
\int_{ U_{\{2,3\}}}\frac{1}{|\theta_1(f(x_2,x_3),x_2,x_3))|}\,\mathrm{d}x_2\mathrm{d}x_3,
\end{eqnarray*}
and hence the quantity $n_{I}(\mathbf{x})$ given in Definition \ref{20241012def1} is an infinite-dimensional version of the following quantity
\begin{eqnarray}\label{20250208for3}
n_{\{2,3\}}(\mathbf{x})=\frac{1}{|\theta_1(\mathbf{x})|}.
\end{eqnarray}
\end{remark}

Several examples are in order.

\begin{example}\label{20250205exa1}
Suppose that $S$ is a $C^1$-differentiable surface of $\ell^2$ with codimension $\Gamma_{\{i_0\}}$ for some $i_0\in\mathbb{N}$ (In this case, we choose $I_0=\mathbb{N}\setminus\{i_0\}$). Then, $S$ is the usual $C^1$-differentiable surface of $\ell^2$ with codimension 1. If $(S\cap U, \mathbb{N}\setminus\{i\},f)$ (for some $i\in\mathbb{N}$) is a local coordinate triple of $S$ (Recall Definition \ref{20241121def1}), then
\begin{eqnarray}\label{20250205for2}
n_{\mathbb{N}\setminus\{i\}}(\mathbf{x})
=\sqrt{ 1+\sum_{k\in \mathbb{N}\setminus\{i\}} \left| D_{x_k} f_{i}(\mathbf{x}_{\mathbb{N}\setminus\{i\}})\right|^2}
=\sqrt{ 1+ \left|\left| Df_{i}(\mathbf{x}_{\mathbb{N}\setminus\{i\}})\right|\right|^2},
\end{eqnarray}
where $\mathbf{x}\in S\cap U$, $\mathbf{x}_{\mathbb{N}\setminus\{i\}}=P_{\mathbb{N}\setminus\{i\}}\mathbf{x}$, $f_{i}(\mathbf{x}_{\mathbb{N}\setminus\{i\}})= \langle f(\mathbf{x}_{\mathbb{N}\setminus\{i\}}),\mathbf{e}_{i}\rangle$, $||\cdot||$ means the corresponding operator norm and hence the above function is a locally continuous function. Furthermore, if for any $\mathbf{x}_0\in S$, there exists an open neighborhood $U_{\mathbb{N}\setminus\{i\}}$ of $P_{\mathbb{N}\setminus\{i\}}\mathbf{x}_0$ in $P_{\mathbb{N}\setminus\{i\}}\ell^2$ and $g\in C^1_{ \ell^2}(U )$ such that $D_{x_{i}}g\neq 0$ on $U$ and $S\cap U=\{\mathbf{x}\in U:g(\mathbf{x})=0\}$, then similar to the arguments in Remark \ref{20250208rem1}, for $\frac{D g(\mathbf{x})}{||D g(\mathbf{x})||_{\ell^2}}\triangleq (\theta_j(\mathbf{x}))_{j=1}^{\infty}$  for $\mathbf{x}\in S\cap U$, it follows that
$$
n_{\mathbb{N}\setminus\{i\}}(\mathbf{x})=\frac{1}{|\theta_{i}(\mathbf{x})|},
$$
which enjoys the same form as its three-dimensional counterpart at \eqref{20250208for3}.
\end{example}
\begin{example}\label{20250110exa1}
Suppose that $S$ is a $C^1$-differentiable surface of $\ell^2$ with codimension $\Gamma_{\{1,2,\cdots,n\}}$ for some $n\in\mathbb{N}$  (In this case, we choose $I_0=\mathbb{N}\setminus\{1,2,\cdots,n\}$). Then $S$ is the usual $C^1$-differentiable surface of $\ell^2$ with codimension $n$.  If $(S\cap U, \mathbb{N}\setminus\{1,2,\cdots,n\},f)$ is a local coordinate triple of $S$ and $f\in C_{P_{I_0}\ell^2}^1(U_{I_0};P_{\mathbb{N}\setminus I_0}\ell^2)$ be the corresponding local graph map, then
\begin{equation}\label{20250111for1}
\begin{array}{ll}\displaystyle
n_{\mathbb{N}\setminus\{1,2,\cdots,n\}}(\mathbf{x})\\[3mm]
\displaystyle=\sqrt{ 1+\sum_{k=1}^{n}\sum_{I\subset\{1,2,\cdots,n\}, \card I=k}\sum_{J \subset\mathbb{N}\setminus\{1,2,\cdots,n\}, \card J=k} \left|\det \left( D_{x_j} f_i(\mathbf{x}_{\mathbb{N}\setminus\{1,2,\cdots,n\}})\right)_{i\in I,j\in J}\right|^2}\\[7mm]
\displaystyle\leqslant\sqrt{ 1+\sum_{k=1}^{n}\sum_{I\subset \{1,2,\cdots,n\}, \card I=k}\left(\prod_{i\in I}\left|\left| Df_i(\mathbf{x}_{\mathbb{N}\setminus\{1,2,\cdots,n\}})\right|\right|^2\right)},
\end{array}
\end{equation}
where $\mathbf{x}\in S\cap U$, $\mathbf{x}_{\mathbb{N}\setminus\{1,2,\cdots,n\}}=P_{\mathbb{N}\setminus\{1,2,\cdots,n\}}\mathbf{x}$, $f_{i}(\mathbf{x}_{\mathbb{N}\setminus\{1,2,\cdots,n\}})= (f(\mathbf{x}_{\mathbb{N}\setminus\{1,2,\cdots,n\}}),\mathbf{e}_{i})$, $||\cdot||$ means the corresponding operator norm and the last function in above inequality is a locally continuous function. For the reader's convenience, we give the proof of \eqref{20250111for1} below. Let us recall that
\begin{itemize}
\item[(1)]The Cauchy-Binet formula (\cite{Bin, Cau}): Suppose that $m\in\mathbb{N},\,m>n$, $A$ is an $n\times m$ matrix and $A^\top$ is the transpose of $A$. Then,
$$
\det (AA^{\top})=\sum_{1\leqslant j_1<\cdots<j_n\leqslant m}\left(\det A_{j_1,\cdots,j_n}\right)^2,
$$
where $A_{j_1,\cdots,j_n}$ denotes the $n\times n$ matrix consisting of the columns $j_1,\cdots,j_n$ of $A$;

\item[(2)]The Hadamard inequality (\cite{Had}): Suppose that $A=(a_{i,j})_{n\times n}$ is an $n\times n$ positive matrix (e.g., \cite[p. 4]{Bha} for the definition). Then,
$$
\det A\leqslant  \prod_{i=1}^{n}a_{i,i}.
$$
\end{itemize}
Now for $k\in\{1,2,\cdots,n\}$ and $I=\{i_1,i_2,\cdots,i_k\}\subset\{1,2,\cdots,n\}$ with $i_1<i_2<\cdots<i_k$, we see that
\begin{eqnarray*}
&&\sum_{J \subset\mathbb{N}\setminus\{1,2,\cdots,n\}, \card J=k} \left|\det \left( D_{x_j} f_i(\mathbf{x}_{\mathbb{N}\setminus\{1,2,\cdots,n\}})\right)_{i\in I,j\in J}\right|^2\\
&&=\lim_{m\to\infty}\sum_{J=\{j_1,\cdots,j_k\},\,n<j_1<\cdots<j_k\leqslant n+m} \left|\det \left( D_{x_j} f_i(\mathbf{x}_{\mathbb{N}\setminus\{1,2,\cdots,n\}})\right)_{i\in I,j\in J}\right|^2.
\end{eqnarray*}
Let $A_m=(a_{r,s})_{k\times m}$, where $a_{r,s}=D_{x_{n+s}} f_{i_r}(\mathbf{x}_{\mathbb{N}\setminus\{1,2,\cdots,n\}})$ for any $r=1,\cdots, k$ and $s=1,\cdots,m$. Then, for $m>k$, by the Cauchy-Binet formula, it follows that
\begin{eqnarray*}
\det (A_mA_m^\top)
&=&\sum_{J=\{j_1,\cdots,j_k\},\, n<j_1<\cdots<j_k\leqslant n+m} \left|\det\left( D_{x_j} f_i(\mathbf{x}_{\mathbb{N}\setminus\{1,2,\cdots,n\}})\right)_{i\in I,j\in J}\right|^2.
\end{eqnarray*}
Note that the $A_mA_m^\top$ is a positive matrix with the $(r,r)$-th ($r=1,2,\cdots,k$) element given by
\begin{eqnarray*}
\sum_{s=1}^{m} \left| D_{x_{n+s}} f_{i_r}(\mathbf{x}_{\mathbb{N}\setminus\{1,2,\cdots,n\}})\right|^2=\sum_{n<j\leqslant n+m} \left| D_{x_j} f_{i_r}(\mathbf{x}_{\mathbb{N}\setminus\{1,2,\cdots,n\}}) \right|^2.
\end{eqnarray*}
Hence, the Hadamard inequality implies that
\begin{eqnarray*}
&&\sum_{J=\{j_1,\cdots,j_k\},\,n<j_1<\cdots<j_k\leqslant n+m} \left|\det \left( D_{x_j} f_i(\mathbf{x}_{\mathbb{N}\setminus\{1,2,\cdots,n\}})\right)_{i\in I,j\in J}\right|^2\\
&&\leqslant \prod_{i\in I}\left(\sum_{n<j\leqslant n+m} \left| D_{x_j} f_i(\mathbf{x}_{\mathbb{N}\setminus\{1,2,\cdots,n\}}) \right|^2\right)\\
&&\leqslant \prod_{i\in I}\left(\sum_{n<j<\infty} \left| D_{x_j} f_i(\mathbf{x}_{\mathbb{N}\setminus\{1,2,\cdots,n\}}) \right|^2\right)=\prod_{i\in I}\left(  \left|\left| D f_i(\mathbf{x}_{\mathbb{N}\setminus\{1,2,\cdots,n\}})\right| \right|^2\right).
\end{eqnarray*}
Letting $m\to\infty$ in the above, we arrive at
\begin{eqnarray*}
\sum_{J \subset\mathbb{N}\setminus\{1,2,\cdots,n\}, \card J=k} \left|\det \left( D_{x_j} f_i(\mathbf{x}_{\mathbb{N}\setminus\{1,2,\cdots,n\}})\right)_{i\in I,j\in J}\right|^2
&\leqslant&\prod_{i\in I}\left(  \left|\left| D f_i(\mathbf{x}_{\mathbb{N}\setminus\{1,2,\cdots,n\}})\right| \right|^2\right).
\end{eqnarray*}
\end{example}

The following example provides a surface with infinite codimension£¬ for which the quantity \eqref{20260225e1} in Definition \ref{20241012def1} is a positive constant.
\begin{example}\label{20250205exa2}
Let $I_0=\{2n-1:n\in\mathbb{N}\}$ and $J_0=\mathbb{N}\setminus I_0$. Choose
$\mathbf{x}^0_{J_0}=\sum\limits_{j\in J_0}x_j^0\mathbf{e}_j$ and $\Delta\mathbf{x}^0_{J_0}=\sum\limits_{j\in J_0}(\Delta x_j^0)\mathbf{e}_j\in P_{J_0}\ell^2$ with $\Delta x_j^0\neq 0$ for all $j\in J_0$ such that
\begin{eqnarray*}
\sum_{j\in J_0}\left|\ln \frac{1}{\sqrt{2\pi} a_j}-\frac{(x_j^0)^2}{2a_j^2}\right|<\infty,\qquad\sum_{j\in J_0}\frac{|\Delta x_j^0|^2}{a_j^4}<\infty.
\end{eqnarray*}
Let
\begin{eqnarray*}
S=\left\{\mathbf{x}_{I_0}+\mathbf{x}^0_{J_0}+x_1\cdot\Delta\mathbf{x}^0_{J_0}:\mathbf{x}_{I_0}=\sum\limits_{i\in I_0}x_i\mathbf{e}_i\in P_{I_0}\ell^2\right\},\qquad U_{I_0}=P_{I_0}\ell^2,
\end{eqnarray*}
and $f(\mathbf{x}_{I_0})=\mathbf{x}^0_{J_0}+x_1\cdot\Delta\mathbf{x}^0_{J_0}$ for every $\mathbf{x}_{I_0}=\sum\limits_{i\in I_0}x_i\mathbf{e}_i\in U_{I_0}$.
Then $S$ is a surface of $\ell^2$ with codimension $\Gamma_{J_0}$ and $(S,I_0,f)$ is a local coordinate triple of $S$. It is easy to see that
\begin{eqnarray}\label{20250205for3}
n_{I_0}(\mathbf{x})=\sqrt{ 1+\sum_{j\in J_0} \left|\Delta x_j^0\right|^2}=\sqrt{ 1+ ||\Delta\mathbf{x}^0_{J_0}||^2},
\quad\forall \; \mathbf{x}=\mathbf{x}_{I_0}+\mathbf{x}^0_{J_0}+x_1\cdot\Delta\mathbf{x}^0_{J_0}\in S.
\end{eqnarray}
\end{example}

\begin{lemma}\label{20241016rem1}
The $n_{I }(\mathbf{x})$ in Definition \ref{20241012def1} can be rewritten as follows:
\begin{eqnarray}\label{20250205for6}
n_{I }(\mathbf{x})\triangleq \sqrt{ \sum_{I'\in \Gamma_{I_0}} \left|\det (D (P_{I'}P_{I }^{-1})(\mathbf{x}_{I }))\right|^2}.
\end{eqnarray}
\end{lemma}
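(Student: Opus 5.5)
The plan is to match the two sums term by term. By Lemma \ref{20241011lem2} we have $I\sim I_0$, hence $\Gamma_{I_0}=\Gamma_I$. By Definition \ref{def3.1}, every $I'\in\Gamma_I$ has the unique form $I'=(I\setminus K_1)\sqcup K_2$ with $K_1\subset I$, $K_2\subset\mathbb N\setminus I$ and $\card K_1=\card K_2=k\in\mathbb N_0$; uniqueness holds because $K_1=I\setminus I'$ and $K_2=I'\setminus I$ (Proposition \ref{20250808prop1}). The case $k=0$ is exactly $I'=I$. There $P_IP_I^{-1}$ is the identity, so its determinant is $1$ by Definition \ref{20241113def1} (since $t=0$), and this gives the summand $1$ in \eqref{20260225e1}. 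It therefore suffices to show that for $k\geqslant1$,
\[
\bigl|\det D(P_{I'}P_I^{-1})(\mathbf{x}_I)\bigr|=\bigl|\det\bigl(D_{x_i}f_j(\mathbf{x}_I)\bigr)_{i\in K_1,\,j\in K_2}\bigr|.
\]

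Next, compute $T\triangleq D(P_{I'}P_I^{-1})(\mathbf{x}_I)$ explicitly, as in the proof of Lemma \ref{20250105lem1}. Since $P_{I'}P_I^{-1}\mathbf{y}_I=\mathbf{y}_{I\setminus K_1}+\sum_{j\in K_2}f_j(\mathbf{y}_I)\mathbf{e}_j$, we get
\[
\langle T\mathbf{e}_i,\mathbf{e}_j\rangle=\delta_{ij}\ \ \text{for } j\in I\cap I'=I\setminus K_1,\qquad \langle T\mathbf{e}_i,\mathbf{e}_j\rangle=D_{x_i}f_j(\mathbf{x}_I)\ \ \text{for } j\in K_2 .
\]
To evaluate $\det T$, use Remark \ref{20250505rem1}. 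The index sets are $\{i_1<\dots<i_t\}=K_1\sqcup\hat I$ and $\{j_1<\dots<j_t\}=K_2\sqcup\hat I$, where $\hat I\subset I\cap I'$. For a column $\mathbf{e}_i$ with $i\in\hat I$, the entries in rows $j\in\hat I$ are $\delta_{ij}$, and the entries in rows $j\in K_2$ are $D_{x_i}f_j$. For a column $\mathbf{e}_i$ with $i\in K_1$, the entries in rows $j\in\hat I$ vanish, and the entries in rows $j\in K_2$ are $D_{x_i}f_j$.

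So, after permuting the rows and columns so that those indexed by $\hat I$ come first, the matrix is block triangular:
\[
\begin{pmatrix} \mathrm{Id}_{\hat I} & 0\\ * & \bigl(D_{x_i}f_j\bigr)_{j\in K_2,\,i\in K_1}\end{pmatrix}.
\]
Its determinant therefore equals $\pm\det\bigl(D_{x_i}f_j(\mathbf{x}_I)\bigr)_{i\in K_1,j\in K_2}$. The sign comes from reordering rows and columns, and transposition does not change the determinant. Taking absolute values removes the sign, which establishes the termwise identity. Summing over $I'\in\Gamma_{I_0}$, via the bijection $I'\leftrightarrow(K_1,K_2)$, then yields \eqref{20250205for6}.

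The main obstacle is bookkeeping rather than analysis. One must make sure the $\hat I$ part contributes exactly an identity block and does not mix in nontrivially. This is handled by the zero block: columns in $K_1$ have zero entries in rows of $I\cap I'$, which is the condition $P_{I\cap I'}TP_{I\setminus(I\cap I')}=0$ from Lemma \ref{20250105lem1}. A secondary point is the bijection between $\Gamma_{I_0}$ and pairs $(K_1,K_2)$, including the correspondence of the empty pair with the constant $1$.
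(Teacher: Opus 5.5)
Your proposal is correct and follows essentially the paper's proof: the bijection $I'\leftrightarrow(K_1,K_2)$ with $K_1=I\setminus I'$ and $K_2=I'\setminus I$, and the $I'=I$ term contributing the constant $1$. The matrix of Definition~\ref{20241113def1} is then reordered into block-triangular form with an identity block on $\hat I$, a zero block, and the $K_1\times K_2$ Jacobian block, so the absolute values of the determinants agree. The only differences are minor: you read off the zero and identity entries directly from the explicit formula for $P_{I'}P_I^{-1}$ rather than from its membership in $F(P_I\ell^2,P_{I'}\ell^2)$, and you make explicit the identification $\Gamma_{I_0}=\Gamma_I$, which the paper uses tacitly.
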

\begin{proof}
For each $ K_1\subset I $ and $K_2 \subset \mathbb{N}\setminus I$ with $s\triangleq\card K_1=\card K_2\in\mathbb{N}$, let $I'=(I \setminus K_1)\cup K_2$. Then $I'\in \Gamma_{I_0}$
and hence by  Lemma \ref{20250105lem1}, $ D (P_{I'}P_{I }^{-1})(\textbf{x}_{I })\in F(P_{I}\ell^2,P_{I'}\ell^2)$. Similar to Remark \ref{20250505rem1}, put
\begin{eqnarray*}
\tilde I\triangleq \{i\in I\cap I': i\leqslant \max((I\setminus I')\sqcup(I'\setminus I))\}.
\end{eqnarray*}
Write $t\triangleq \card((I \setminus I')\sqcup \tilde I)\in\mathbb{N}$. Choose $i_1,\ldots,i_{t},j_1,\ldots,j_{t}\in\mathbb{N}$
such that $i_1<\cdots<i_{t}$, $j_1<\cdots<j_{t}$ and
\begin{eqnarray}\label{20260702for6}
(I \setminus I')\sqcup \tilde I=\{i_1,\ldots,i_{t}\},\qquad (I'\setminus I )\sqcup \tilde I=\{j_1,\ldots,j_{t}\}.
\end{eqnarray}
Further, choose $i_1',\ldots,i_{s}',j_1',\ldots,j_{s}',k_1,\ldots,k_{t-s}\in\mathbb{N}$
such that $i_1'<\cdots<i_{s}'$, $j_1'<\cdots<j_{s}'$, $k_1<\cdots<k_{t-s}$, and
\begin{eqnarray}\label{20260702for7}
\tilde I=\{k_1,\ldots,k_{t-s}\},\qquad I \setminus I'=\{i_1',\ldots,i_{s}'\},\qquad  I'\setminus I =\{j_1',\ldots,j_{s}'\}.
\end{eqnarray}
By $ D (P_{I'}P_{I }^{-1})(\textbf{x}_{I })\in F(P_{I}\ell^2,P_{I'}\ell^2)$, for any $s_2,t_2\in \tilde I$, one has
\begin{eqnarray}\label{20260702for1}
\langle D (P_{I'}P_{I }^{-1})(\textbf{x}_{I })\textbf{e}_{s_2},\textbf{e}_{t_2}\rangle&=&\langle D (P_{I'}P_{I }^{-1})(\textbf{x}_{I })P_{I\cap I'}\textbf{e}_{s_2},P_{I\cap I'}\textbf{e}_{t_2}\rangle\\
&=&\langle P_{I\cap I'}D (P_{I'}P_{I }^{-1})(\textbf{x}_{I })P_{I\cap I'}\textbf{e}_{s_2},\textbf{e}_{t_2}\rangle
=\langle \textbf{e}_{s_2},\textbf{e}_{t_2}\rangle=\delta_{s_2 t_2},\nonumber
\end{eqnarray}
and for any $s_2 \in \tilde I$ and $t_2\in I \setminus I'$, one has
\begin{eqnarray}\label{20260702for2}
\langle D (P_{I'}P_{I }^{-1})(\textbf{x}_{I })\textbf{e}_{t_2},\textbf{e}_{s_2}\rangle&=&\langle D (P_{I'}P_{I }^{-1})(\textbf{x}_{I })P_{{I }\setminus (I \cap I' )}\textbf{e}_{t_2},P_{I\cap I'}\textbf{e}_{s_2}\rangle\\
&=&\langle P_{I\cap I'}D (P_{I'}P_{I }^{-1})(\textbf{x}_{I })P_{{I }\setminus (I \cap I' )}\textbf{e}_{t_2},\textbf{e}_{s_2}\rangle=\langle \textbf{0},\textbf{e}_{s_2}\rangle=0.\nonumber
\end{eqnarray}
By Definition \ref{20241113def1},  $\det (D (P_{I'}P_{I }^{-1})(\textbf{x}_{I }))$ equals the following determinant
\begin{eqnarray}\label{20260702for4}
 \left|\begin{array}{cccc}
\langle D (P_{I'}P_{I }^{-1})(\textbf{x}_{I })\textbf{e}_{i_1},\textbf{e}_{j_1}\rangle&\langle D (P_{I'}P_{I }^{-1})(\textbf{x}_{I })\textbf{e}_{i_2},\textbf{e}_{j_1}\rangle&\cdots&\langle D \langle P_{I'}P_{I }^{-1})(\textbf{x}_{I })\textbf{e}_{i_{t}},\textbf{e}_{j_1}\rangle\\
\langle D (P_{I'}P_{I }^{-1})(\textbf{x}_{I })\textbf{e}_{i_1},\textbf{e}_{j_2}\rangle&\langle D (P_{I'}P_{I }^{-1})(\textbf{x}_{I })\textbf{e}_{i_2},\textbf{e}_{j_2}\rangle&\cdots&\langle D (P_{I'}P_{I }^{-1})(\textbf{x}_{I })\textbf{e}_{i_{t}},\textbf{e}_{j_2}\rangle\\
\vdots&\vdots&\ddots&\vdots\\
\langle D (P_{I'}P_{I }^{-1})(\textbf{x}_{I })\textbf{e}_{i_1},\textbf{e}_{j_{t}}\rangle&\langle D (P_{I'}P_{I }^{-1})(\textbf{x}_{I })\textbf{e}_{i_2},\textbf{e}_{j_{t}}\rangle &\cdots& \langle D (P_{I'}P_{I }^{-1})(\textbf{x}_{I })\textbf{e}_{i_{t}},\textbf{e}_{j_{t}}\rangle
\end{array}\right|.
\end{eqnarray}
Write $A\triangleq \left(\langle D (P_{I'}P_{I }^{-1})(\textbf{x}_{I })\textbf{e}_{i_l},\textbf{e}_{j_k}\rangle\right)_{k,l=1}^{t}$ which is a $t\times t$ matrix. Combining \eqref{20260702for6}, \eqref{20260702for7}, \eqref{20260702for1}, and \eqref{20260702for2}, we see that, after finitely many row and column interchanges on the determinant \eqref{20260702for4}, we obtain the following block upper triangular matrix
\begin{eqnarray*}
 B=\left(\begin{array}{cc}
B_{11}&B_{12}\\
\textbf{0}&B_{22}
\end{array}\right),
\end{eqnarray*}
where
\begin{eqnarray*}
 B_{11}&=&\left(\begin{array}{cccc}
\langle D (P_{I'}P_{I }^{-1})(\textbf{x}_{I })\textbf{e}_{i_1'},\textbf{e}_{j_1'}\rangle&\langle D (P_{I'}P_{I }^{-1})(\textbf{x}_{I })\textbf{e}_{i_2'},\textbf{e}_{j_1'}\rangle&\cdots&\langle D (P_{I'}P_{I }^{-1})(\textbf{x}_{I })\textbf{e}_{i_{s}'},\textbf{e}_{j_1'}\rangle\\
\langle D (P_{I'}P_{I }^{-1})(\textbf{x}_{I })\textbf{e}_{i_1'},\textbf{e}_{j_2'}\rangle&\langle D (P_{I'}P_{I }^{-1})(\textbf{x}_{I })\textbf{e}_{i_2'},\textbf{e}_{j_2'}\rangle&\cdots&\langle D (P_{I'}P_{I }^{-1})(\textbf{x}_{I })\textbf{e}_{i_{s}'},\textbf{e}_{j_2}'\rangle\\
\vdots&\vdots&\ddots&\vdots\\
\langle D (P_{I'}P_{I }^{-1})(\textbf{x}_{I })\textbf{e}_{i_1'},\textbf{e}_{j_{s}'}\rangle&\langle D (P_{I'}P_{I }^{-1})(\textbf{x}_{I })\textbf{e}_{i_2'},\textbf{e}_{j_{s}'}\rangle &\cdots& \langle D (P_{I'}P_{I }^{-1})(\textbf{x}_{I })\textbf{e}_{i_{s}'},\textbf{e}_{j_{s}'}\rangle
\end{array}\right),\\
B_{22}&=&\left(\begin{array}{cccc}
1&0&\cdots&0\\
0&1&\cdots&0\\
\vdots&\vdots&\ddots&\vdots\\
0&0&\cdots& 1
\end{array}\right)_{(t-s)\times(t-s)}.
\end{eqnarray*}
Since row or column interchanges of the determinant affect only its sign (not its absolute value), we have
$$
|\det (D (P_{I'}P_{I }^{-1})(\textbf{x}_{I }))|=|\det B|=|\det (B_{11})|=\left|\det \left( D_{x_i} f_j(\textbf{x}_{I })\right)_{i\in K_1,j\in K_2}\right| .
$$
Conversely, for $I'\in \Gamma_{I_0}$ with ${I }\neq I'$, by the definition of $\Gamma_{I_0}$ in \eqref{20241231def1}, there exists
$K_1\subset {I },K_2 \subset \mathbb{N}\setminus {I }$ such that $\card K_1=\card K_2\in\mathbb{N}$ and $I'=({I }\setminus K_1)\cup K_2$. Hence
 \begin{equation}\label{20260720for1}
\left|\det \left( D_{x_i} f_j(\textbf{x}_{I })\right)_{i\in K_1,j\in K_2}\right|=\left|\det (D (P_{I'}P_{I }^{-1})(\textbf{x}_{I }))\right|.
\end{equation}
For $ I  =I'$, it is easy to see that $\left|\det (D (P_{I'}P_{I }^{-1})(\textbf{x}_{I }))\right|^2=1.$
Thus we arrive at \eqref{20250205for6}. This completes the proof of Lemma \ref{20241016rem1}.
\end{proof}

\begin{lemma}\label{20241012lem3}
If in Definition \ref{20241012def1} one chooses another local coordinate triple $(S\cap U', I',f')$ of $S$ such that $\mathbf{x}\in S\cap U'$, then
\begin{eqnarray*}
n_{I}(\mathbf{x})=\left|\det (D(P_{I'}P_{I}^{-1})(\mathbf{x}_{I}))\right|\cdot n_{I'}(\mathbf{x}),\qquad
n_{I'}(\mathbf{x})=\left|\det (D(P_{I}P_{I'}^{-1})(\mathbf{x}_{I'}))\right|\cdot n_{I}(\mathbf{x}),
\end{eqnarray*}
where $\mathbf{x}_{I}=P_{I}\mathbf{x}$ and $\mathbf{x}_{I'}=P_{I'}\mathbf{x}$.
\end{lemma}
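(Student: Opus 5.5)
The plan is to combine the representation of Lemma \ref{20241016rem1} with the multiplicativity of Corollary \ref{20241015cor1}. By Lemma \ref{20241016rem1},
\[
n_{I}(\mathbf{x})^2=\sum_{I''\in \Gamma_{I_0}} \left|\det (D (P_{I''}P_{I}^{-1})(\mathbf{x}_{I}))\right|^2 .
\]
For each $I''\in\Gamma_{I_0}$, Lemma \ref{20241011lem2} gives $I'\sim I''$, since both are equivalent to $I_0$. We then apply Corollary \ref{20241015cor1} with $(I_1,I_2,I_3)=(I,I',I'')$, using the two local coordinate triples $(S\cap U,I,f)$ and $(S\cap U',I',f')$ at $\mathbf{x}$. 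This gives
\[
\det\left(D(P_{I''}P_{I}^{-1})(\mathbf{x}_{I})\right)=\det \left(D(P_{I''}P_{I'}^{-1})(\mathbf{x}_{I'})\right)\cdot \det \left(D(P_{I'}P_{I}^{-1})(\mathbf{x}_{I})\right).
\]
Before doing so, we check the hypothesis of Proposition \ref{20241011prop1} that the composite lies in $F(P_{I}\ell^2,P_{I''}\ell^2)$. This follows from Lemma \ref{20250105lem1} applied directly to the pair $I\sim I''$, because the composite $P_{I''}P_{I'}^{-1}\circ P_{I'}P_{I}^{-1}$ equals $P_{I''}P_I^{-1}$ near $\mathbf{x}_I$.

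We will have to restrict to a neighborhood so that this composition makes sense. Let $V=U\cap U'$. By the homeomorphism property in Definition \ref{20241121def1000}, $P_I(S\cap V)$ is an open neighborhood of $\mathbf{x}_I$ in $P_I\ell^2$. On this set, $P_I^{-1}$ (meaning $\mathbf{y}_I\mapsto \mathbf{y}_I+f(\mathbf{y}_I)$) maps into $S\cap U'$, where $P_{I'}P_{I}^{-1}$ and $P_{I'}^{-1}$ compose correctly. The chain rule is justified because all maps involved are of the form identity plus $C^1$ graph components, as in the proof of Lemma \ref{20250105lem1}. This step is already implicit in Corollary \ref{20241015cor1}, so we only note it.

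Next we square and sum over $I''\in\Gamma_{I_0}$. The factor $\left|\det (D(P_{I'}P_{I}^{-1})(\mathbf{x}_{I}))\right|^2$ is independent of $I''$ and comes out of the sum. What remains is $\sum_{I''}\left|\det (D(P_{I''}P_{I'}^{-1})(\mathbf{x}_{I'}))\right|^2$, which by Lemma \ref{20241016rem1} applied to the triple $(S\cap U',I',f')$ equals $n_{I'}(\mathbf{x})^2$. Note that $\Gamma_{I_0}$ is the same index set for both triples, since $I,I'\in\Gamma_{I_0}$. Since all terms are nonnegative, the interchange is legitimate even for the countably infinite sum. Taking square roots gives the first identity. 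The second follows by symmetry, exchanging the roles of the two triples.

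The main obstacle is verifying the hypotheses of Proposition \ref{20241011prop1} cleanly, namely that all three derivatives lie in the appropriate classes $F(\cdot,\cdot)$. I expect this to be settled by three applications of Lemma \ref{20250105lem1}, at $\mathbf{x}$ with the triples for $I$ and for $I'$ respectively. A byproduct of the two identities is that $\left|\det (D(P_{I'}P_{I}^{-1})(\mathbf{x}_{I}))\right|\cdot\left|\det (D(P_{I}P_{I'}^{-1})(\mathbf{x}_{I'}))\right|=1$, consistent with Corollary \ref{20241015cor1} for $I_3=I_1$. No separate argument for this is needed.
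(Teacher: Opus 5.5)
Your argument is correct and matches the paper's proof of the first identity: expand $n_I(\mathbf{x})^2$ via Lemma~\ref{20241016rem1}, factor each term with Corollary~\ref{20241015cor1}, and pull out the common factor $\left|\det(D(P_{I'}P_I^{-1})(\mathbf{x}_I))\right|^2$. For the second identity, the paper instead proves the reciprocity $\det(D(P_{I'}P_I^{-1})(\mathbf{x}_I))\cdot\det(D(P_IP_{I'}^{-1})(\mathbf{x}_{I'}))=1$ via Proposition~\ref{20241011prop1}, records it as \eqref{20241013for1} for later use in the orientation arguments, and divides; your symmetry argument (reapplying the first identity with the two triples swapped) is equally valid and slightly shorter.
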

\begin{proof}
For each $J\in \Gamma_{I_0}$, by Corollary \ref{20241015cor1},
\begin{eqnarray}\label{20250205for4}
\det (D (P_{J}P_{I}^{-1})(\textbf{x}_{I}))
&=&\det (D (P_{J}P_{I'}^{-1})(\textbf{x}_{I'}))\cdot\det (D(P_{I'}P_{I}^{-1})(\textbf{x}_{I})).
\end{eqnarray}
By Lemma \ref{20241016rem1}, it follows that
\begin{eqnarray}
n_{I}(\textbf{x})&=&\left|\det (D(P_{I'}P_{I}^{-1})(\textbf{x}_{I}))\right|\cdot\sqrt{ \sum_{J\in \Gamma_{I_0}} \left|\det (D (P_{J}P_{I'}^{-1})(\textbf{x}_{I'}))\right|^2}\nonumber\\
&=&\left|\det (D(P_{I'}P_{I}^{-1})(\textbf{x}_{I}))\right|\cdot n_{I'}(\textbf{x}).\label{20241012for5}
\end{eqnarray}
Note that $(P_{I'}P_{I}^{-1})(P_{I}P_{I'}^{-1})=\text{id}_{P_{I'}\ell^2}$ and hence
$D (P_{I'}P_{I}^{-1})(\textbf{x}_{I})\circ D(P_{I}P_{I'}^{-1})(\textbf{x}_{I'})=\text{id}_{P_{I'}\ell^2}$. Combining Proposition \ref{20241011prop1}, we have
\begin{eqnarray}\label{20241013for1}
 \det (D(P_{I'}P_{I}^{-1})(\textbf{x}_{I})) \cdot  \det (D(P_{I}P_{I'}^{-1})(\textbf{x}_{I'}))=1.
\end{eqnarray}
By \eqref{20241012for5}, we have
\begin{eqnarray*}
n_{I'}(\textbf{x})=\left|\det (D(P_{I}P_{I'}^{-1})(\textbf{x}_{I'}))\right|\cdot n_{I}(\textbf{x}).
\end{eqnarray*}
This completes the proof of Lemma \ref{20241012lem3}.
\end{proof}

Now, we suppose that $(S\cap U_1, I_1, f_1)$ and $(S\cap U_2, I_2, f_2)$ are two local coordinate triples of $S$ such that $W\triangleq S\cap U_1\cap U_2\neq \emptyset$. Then there exists two nonempty sets $K_1\subset I_1$ and $K_2\subset \mathbb{N}\setminus I_1$ such that $\card K_1=\card K_2\in\mathbb{N}$ (the case of $\card K_1=\card K_2=0$ is trivial) and $I_2=(I_1\setminus K_1)\cup K_2$.
Furthermore, one can find open sets $U_{I_k}$ of $P_{I_k}\ell^2$ ($k=1,2$) such that $f_k\in C_{P_{I_k}\ell^2}^1(U_{I_k};P_{\mathbb{N}\setminus I_k}\ell^2)$ and
$$
S\cap U_k=\left\{\textbf{x}_{I_k}+f_k(\textbf{x}_{I_k}): \textbf{x}_{I_k}\in U_{I_k}\right\}.
$$
Clearly, $T\triangleq P_{I_1}P_{I_2}^{-1}$ is a homeomorphism from $P_{I_2}W$ onto $P_{I_1}W$.
For any $\textbf{x}_{I_2}\in P_{I_2}W$, it holds that
\begin{eqnarray}
T\textbf{x}_{I_2}
&=&P_{I_1}P_{I_2}^{-1}\textbf{x}_{I_2}=P_{I_1}(\textbf{x}_{I_2}+f_2(\textbf{x}_{I_2}))\label{220916e1}\\
&=&\textbf{x}_{I_1\setminus K_1}+\sum_{i\in K_1}f_{2,i}(\textbf{x}_{I_2}) \textbf{e}_i=\textbf{x}_{I_1\setminus K_1}+\sum_{i\in K_1}f_{2,i}(\textbf{x}_{I_1\setminus K_1}+\textbf{x}_{K_2}) \textbf{e}_i,\nonumber
\end{eqnarray}
where $f_{2,i}(\textbf{x}_{I_2})=(f_2(\textbf{x}_{I_2}),\textbf{e}_i)$ for each $i\in K_1$. Also, for any $\textbf{x}_{I_1}\in P_{I_1}W$, it holds that
\begin{eqnarray}
T^{-1}\textbf{x}_{I_1}
&=&P_{I_2}P_{I_1}^{-1}\textbf{x}_{I_1}=P_{I_2}(\textbf{x}_{I_1}+f_1(\textbf{x}_{I_1}))\label{220916e2}\\
&=&\textbf{x}_{I_1\setminus K_1}+\sum_{j\in K_2}f_{1,j}(\textbf{x}_{I_1}) \textbf{e}_j=\textbf{x}_{I_1\setminus K_1}+\sum_{j\in K_2}f_{1,j}(\textbf{x}_{I_1\setminus K_1}+\textbf{x}_{K_1}) \textbf{e}_j,\nonumber
\end{eqnarray}
where $f_{1,j}(\textbf{x}_{I_1})=(f_1(\textbf{x}_{I_1}),\textbf{e}_j)$ for each $j\in K_2$.

Since $f_1$ and $f_2$ are Fr\'{e}chet differentiable, it holds that
\begin{eqnarray*}
&&(DT(\textbf{x}_{I_2}))\textbf{y}_{I_2}=\textbf{y}_{I_1\setminus K_1}+\sum_{i\in K_1}\left(\sum_{j\in I_2}(D_{x_j}f_{2,i}(\textbf{x}_{I_2} ))\cdot y_j \right)\textbf{e}_i\in P_{I_1}\ell^2,\quad \textbf{y}_{I_2}=\sum_{i\in I_2}y_i \textbf{e}_i\in P_{I_2}\ell^2,\\
&&(DT^{-1}(\textbf{x}_{I_1}))\textbf{y}_{I_1}=\textbf{y}_{I_1\setminus K_1}+\sum_{i\in K_2}\left(\sum_{j\in I_1}(D_{x_j}f_{1,i}(\textbf{x}_{I_1} ))\cdot y_j \right)\textbf{e}_i\in P_{I_2}\ell^2,\quad \textbf{y}_{I_1}=\sum_{i\in I_1}y_i \textbf{e}_i\in P_{I_1}\ell^2,
\end{eqnarray*}
where $\textbf{y}_{I_1\setminus K_1}=\sum\limits_{i\in I_1\setminus K_1}y_i \textbf{e}_i$.

Since $TT^{-1}\textbf{y}_{I_1}=\textbf{y}_{I_1},\,\forall\, \textbf{y}_{I_1}\in P_{I_1}\ell^2$ and $T^{-1}T\textbf{y}_{I_2}=\textbf{y}_{I_2},\,\forall\, \textbf{y}_{I_2}\in P_{I_2}\ell^2$. By the chain rule, we have
\begin{eqnarray*}
&&(DT(\textbf{x}_{I_2}))(DT^{-1}(\textbf{x}_{I_1}))\textbf{y}_{I_1}=\textbf{y}_{I_1},\quad\forall\, \textbf{y}_{I_1}\in P_{I_1}\ell^2,\\
&&(DT^{-1}(\textbf{x}_{I_1}))(DT(\textbf{x}_{I_2}))\textbf{y}_{I_2}=\textbf{y}_{I_2},\quad\forall\, \textbf{y}_{I_2}\in P_{I_2}\ell^2,
\end{eqnarray*}
where $T\textbf{x}_{I_2}=\textbf{x}_{I_1}\in P_{I_1}W$.
In particular, for $\textbf{y}_{K_1}=\sum\limits_{i\in K_1}y_i \textbf{e}_i\in P_{K_1}\ell^2$, it holds that
\begin{eqnarray}
(DT(\textbf{x}_{I_2}))(DT^{-1}(\textbf{x}_{I_1}))\textbf{y}_{K_1}
&=&(DT(\textbf{x}_{I_2}))\left(\sum_{i\in K_2}\left(\sum_{j\in K_1}(D_{x_j}f_{1,i}(\textbf{x}_{I_1} ))\cdot y_j \right)\textbf{e}_i\right)\nonumber\\
&=&\sum_{k\in K_1}\left( \sum_{i\in K_2} \sum_{j\in K_1}(D_{x_i}f_{2,k}(\textbf{x}_{I_2} ))\cdot(D_{x_j}f_{1,i}(\textbf{x}_{I_1} ))\cdot y_j    \right)\textbf{e}_k\label{20241011for1}\\
&=&\sum_{k\in K_1}  y_k\textbf{e}_k.\nonumber
\end{eqnarray}

Write $|K_1|=|K_2|\triangleq r\in\mathbb{N}$ and choose $i_1,\cdots,i_r,j_1,\cdots,j_r\in \mathbb{N}$ such that $i_1<i_2<\cdots<i_r$, $j_1<j_2<\cdots<j_r$, $K_1=\{i_1,i_2,\cdots,i_r\}$ and $K_2=\{j_1,j_2,\cdots,j_r\}$. Set
\begin{equation}\label{260119e1}
J_{I_1,I_2}(\textbf{x}_{I_2})\triangleq\left(\begin{array}{cccc}
D_{x_{j_1}}f_{2,i_1}(\textbf{x}_{I_2} )&D_{x_{j_2}}f_{2,i_1}(\textbf{x}_{I_2} )&\cdots&D_{x_{j_r}}f_{2,i_1}(\textbf{x}_{I_2} )\\
D_{x_{j_1}}f_{2,i_2}(\textbf{x}_{I_2} )&D_{x_{j_2}} f_{2,i_2}(\textbf{x}_{I_2} )&\cdots&D_{x_{j_{r}}} f_{2,i_2}(\textbf{x}_{I_2} )\\
\vdots&\vdots&\ddots&\vdots\\
D_{x_{j_1}}f_{2,i_r}(\textbf{x}_{I_2} )&D_{x_{j_2}} f_{2,i_r}(\textbf{x}_{I_2} ) &\cdots& D_{x_{j_r}} f_{2,i_r}(\textbf{x}_{I_2} )
\end{array}\right)_{r\times r}.
\end{equation}
Hence,
\begin{eqnarray*}
J_{I_2,I_1} (\textbf{x}_{I_1})\triangleq
 \left(\begin{array}{cccc}
D_{x_{i_1}}f_{1,j_1}(\textbf{x}_{I_1} )&D_{x_{i_2}}f_{1,j_1}(\textbf{x}_{I_1} )&\cdots&D_{x_{i_r}}f_{1,j_1}(\textbf{x}_{I_1} )\\
D_{x_{i_1}}f_{1,j_2}(\textbf{x}_{I_1} )&D_{x_{i_2}} f_{1,j_2}(\textbf{x}_{I_1} )&\cdots&D_{x_{i_{r}}} f_{1,j_2}(\textbf{x}_{I_1} )\\
\vdots&\vdots&\ddots&\vdots\\
D_{x_{i_1}}f_{1,j_r}(\textbf{x}_{I_1} )&D_{x_{i_2}} f_{1,j_r}(\textbf{x}_{I_1} ) &\cdots& D_{x_{i_r}} f_{1,j_r}(\textbf{x}_{I_1} )
\end{array}\right)_{r\times r}.
\end{eqnarray*}
By \eqref{20260720for1}, we have
\begin{eqnarray}\label{20260720for2}
\left|\det(J_{I_2,I_1} (\textbf{x}_{I_1}))\right|=\left|\det (D (P_{I_2}P_{I_1 }^{-1})(\textbf{x}_{I_1 }))\right|,
\quad \left|\det(J_{I_1,I_2}(\textbf{x}_{I_2}))\right|=\left|\det (D (P_{I_1}P_{I_2 }^{-1})(\textbf{x}_{I_2 }))\right|.
\end{eqnarray}
Then \eqref{20241011for1} implies that
\begin{eqnarray}\label{20241011for2}
J_{I_1,I_2}(\textbf{x}_{I_2})J_{I_2,I_1}(\textbf{x}_{I_1})=
 \left(\begin{array}{cccc}
1&0&\cdots&0\\
0&1&\cdots&0\\
\vdots&\vdots&\ddots&\vdots\\
0&0&\cdots& 1
\end{array}\right)_{r\times r}.
\end{eqnarray}

Similar to \eqref{220817e1}, we denote by $\mu_{I_0}$ the restriction of the product measure $\prod\limits_{i\in I_0}\frac{e^{-\frac{x_i^2}{2a_i^2}}}{\sqrt{2\pi a_i^2}}dx_i$ on $P_{I_0}\ell^2$. Clearly, $\mu_{\mathbb{N}}$ is precisely the measure $P_t$ defined at \eqref{220817e1} for $t=1$.

\begin{lemma}\label{20241011lem1}
It holds that
\begin{eqnarray}\label{220820e1}
\mathrm{d}\mu_{I_1}(T\mathbf{x}_{I_2})= \left(\prod\limits_{i\in K_1}\frac{e^{-\frac{\left(f_{2,i}(\mathbf{x}_{I_2})\right)^2}{2a_i^2}}}{\sqrt{2\pi a_i^2}}\right)\cdot\left|\det J_{I_1,I_2}(\mathbf{x}_{I_2})\right|\left(\prod_{j\in K_2}\,\mathrm{d} x_{j}\right)\times\mathrm{d}\mu_{I_1\setminus K_1}(\mathbf{x}_{I_1\setminus K_1}),
\end{eqnarray}
on $\mathscr{B}\big(P_{I_2} W \big)$.
\end{lemma}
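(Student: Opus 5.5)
The identity \eqref{220820e1} is a finite-dimensional change of variables in the $r=\card K_1=\card K_2$ coordinates that differ between $I_1$ and $I_2$, carried along with the common infinite-dimensional factor. We identify $P_{I_1}\ell^2\cong P_{K_1}\ell^2\times P_{I_1\setminus K_1}\ell^2$ and $P_{I_2}\ell^2\cong P_{K_2}\ell^2\times P_{I_1\setminus K_1}\ell^2$. Then $\mu_{I_1}=\gamma_{K_1}\times\mu_{I_1\setminus K_1}$, where $\gamma_{K_1}$ is the finite-dimensional Gaussian on $\mathbb{R}^{K_1}$ with density $\prod_{i\in K_1}(2\pi a_i^2)^{-1/2}e^{-x_i^2/(2a_i^2)}$. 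This product decomposition is verified exactly as in the proof of \eqref{230702e3}, via Lemma \ref{230407lem1} and the $\pi$-$\lambda$ theorem. By \eqref{220916e1}, $T$ preserves the $I_1\setminus K_1$ component and acts fibrewise:
\[
\mathbf{x}_{K_2}\mapsto \Phi_{\mathbf{z}}(\mathbf{x}_{K_2})\triangleq\sum_{i\in K_1}f_{2,i}(\mathbf{z}+\mathbf{x}_{K_2})\mathbf{e}_i,\qquad \mathbf{z}=\mathbf{x}_{I_1\setminus K_1}.
\]
Here the left side of \eqref{220820e1} is read as the pullback measure $B\mapsto\mu_{I_1}(TB)$ on $\mathscr{B}(P_{I_2}W)$.

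First we establish the fibre structure. For fixed $\mathbf{z}$, let $O_{\mathbf{z}}=\{\mathbf{x}_{K_2}:\mathbf{z}+\mathbf{x}_{K_2}\in P_{I_2}W\}$, an open subset of $\mathbb{R}^{r}$. The map $\Phi_{\mathbf{z}}$ is $C^1$ on $O_{\mathbf{z}}$, since $f_2$ is $C^1$ along $P_{I_2}\ell^2$, and its Jacobian matrix is exactly $J_{I_1,I_2}(\mathbf{z}+\mathbf{x}_{K_2})$ from \eqref{260119e1}. By \eqref{20241011for2} this matrix is invertible. Moreover $\Phi_{\mathbf{z}}$ is injective, because $T$ is a homeomorphism preserving $\mathbf{z}$, and its image is the fibre of $P_{I_1}W$ over $\mathbf{z}$ by \eqref{220916e2}. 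The classical change of variables formula in $\mathbb{R}^r$ then gives, for every Borel $B\subset P_{I_2}W$ and writing $B_{\mathbf{z}}$ for its $\mathbf{z}$-section,
\[
\gamma_{K_1}\big((TB)_{\mathbf{z}}\big)=\int_{B_{\mathbf{z}}}\prod_{i\in K_1}\frac{e^{-f_{2,i}(\mathbf{z}+\mathbf{x}_{K_2})^2/(2a_i^2)}}{\sqrt{2\pi a_i^2}}\,\bigl|\det J_{I_1,I_2}(\mathbf{z}+\mathbf{x}_{K_2})\bigr|\prod_{j\in K_2}\mathrm{d}x_j .
\]

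Second, we integrate the fibre identity over $\mathbf{z}$ with respect to $\mu_{I_1\setminus K_1}$ and apply Fubini/Tonelli on both sides. On the left we recover $\mu_{I_1}(TB)$, since $(TB)_{\mathbf{z}}$ is the $\mathbf{z}$-section of $TB$. On the right we obtain the measure described in \eqref{220820e1} evaluated at $B$.

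The main obstacle is measurability, which Tonelli needs. We require that $TB$ be Borel in $P_{I_1}\ell^2$, which holds because $T$ is a homeomorphism between the Borel sets $P_{I_2}W$ and $P_{I_1}W$. We also require that the integrand on the right be jointly Borel in $(\mathbf{z},\mathbf{x}_{K_2})$. For this, $f_{2,i}$ is continuous, and each entry $D_{x_j}f_{2,i}$ is the limit of difference quotients of continuous functions, hence Borel. A further point is that $P_{I_2}W$ is open, being the homeomorphic image of the relatively open set $S\cap U_1\cap U_2$ under $P_{I_2}|_{S\cap U_2}$, so the sections $O_{\mathbf{z}}$ are open and the finite-dimensional theorem applies.

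As a consistency check, one can first verify the formula on product rectangles and then extend by the $\pi$-$\lambda$ theorem. However, the section-wise argument above handles arbitrary Borel $B$ directly.
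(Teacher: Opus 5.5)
Your proposal is correct and follows essentially the same route as the paper: decompose $\mu_{I_1}=\mu_{K_1}\times\mu_{I_1\setminus K_1}$, note that $T$ fixes the $I_1\setminus K_1$ component, apply the finite-dimensional change of variables on each fibre with Jacobian $J_{I_1,I_2}$, and integrate using Fubini--Tonelli. The only difference is that the paper verifies the identity on product sets $A$ of open sets and leaves the extension to all of $\mathscr{B}(P_{I_2}W)$ implicit, whereas your section-wise argument treats an arbitrary Borel $B$ directly and so does not need that extension step.
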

\begin{proof}
It suffices to prove the two measures on both sides of \eqref{220820e1} coincide on subsets of $P_{I_2} W $ in the following form:
\begin{eqnarray*}
A=\{  \textbf{x}_{I_1\setminus K_1}+\textbf{x}_{K_2}:\;\textbf{x}_{I_1\setminus K_1}\in U_{I_1\setminus K_1},\,\textbf{x}_{K_2}\in U_{K_2}\}\subset  P_{I_2} W,
\end{eqnarray*}
where $U_{I_1\setminus K_1}$ is an open subset of $P_{I_1\setminus K_1}\ell^2$ and $U_{K_2}$ is an open subset of $P_{K_2}\ell^2$. By (\ref{220916e1}), we obtain
$$TA=\left\{\textbf{x}_{I_1\setminus K_1}+\sum_{i\in K_1}f_{2,i}(\textbf{x}_{I_1\setminus K_1}+\textbf{x}_{K_2}) \textbf{e}_i:\;\textbf{x}_{I_1\setminus K_1}\in U_{I_1\setminus K_1},\,\textbf{x}_{K_2}\in U_{K_2}\right\}.
$$
Then,
\begin{eqnarray*}
&&\int_A \mathrm{d}\mu_{I_1}(T\textbf{x}_{I_2})= \mu_{I_1}(TA)=\int_{TA} \mathrm{d}\mu_{I_1}(\textbf{x}_{I_1})=\int_{TA}\mathrm{d}\mu_{K_1}(\textbf{x}_{K_1})\mathrm{d}\mu_{I_1\setminus K_1}(\textbf{x}_{I_1\setminus K_1}) \\
&&=\int_{TA}\left(\prod\limits_{i\in K_1}\frac{e^{-\frac{x_i^2}{2a_i^2}}}{\sqrt{2\pi a_i^2}}\cdot\mathrm{d} x_{i}\right)\mathrm{d}\mu_{I_1\setminus K_1}(\textbf{x}_{I_1\setminus K_1}) \\
&&=\int_{U_{I_1\setminus K_1}}\int_{(TA)_{\textbf{x}_{I_1\setminus K_1}}}\left(\prod\limits_{i\in K_1}\frac{e^{-\frac{x_i^2}{2a_i^2}}}{\sqrt{2\pi a_i^2}}\cdot\mathrm{d} x_{i}\right)\mathrm{d}\mu_{I_1\setminus K_1}(\textbf{x}_{I_1\setminus K_1}) \\
&&=\int_{U_{I_1\setminus K_1}}\int_{ U_{K_2}}\left(\prod\limits_{i\in K_1}\frac{e^{-\frac{\left(f_{2,i}(\textbf{x}_{I_2})\right)^2}{2a_i^2}}}{\sqrt{2\pi a_i^2}}\right)\cdot \left|\det J_{I_1,I_2}(\textbf{x}_{I_2})\right|\cdot \left(\prod_{j\in K_2}\,\mathrm{d} x_{j}\right)\mathrm{d}\mu_{I_1\setminus K_1}(\textbf{x}_{I_1\setminus K_1}) \\
&&=\int_{A}\left(\prod\limits_{i\in K_1}\frac{e^{-\frac{\left(f_{2,i}(\textbf{x}_{I_2})\right)^2}{2a_i^2}}}{\sqrt{2\pi a_i^2}}\right)\cdot \left|\det J_{I_1,I_2}(\textbf{x}_{I_2})\right|\cdot \left(\prod_{j\in K_2}\,\mathrm{d} x_{j}\right)\times\mathrm{d}\mu_{I_1\setminus K_1}(\textbf{x}_{I_1\setminus K_1}),
\end{eqnarray*}
where $(TA)_{\textbf{x}_{I_1\setminus K_1}}\triangleq \{\textbf{x}_{ K_1}\in P_{K_1}\ell^2:\textbf{x}_{I_1\setminus K_1}+\textbf{x}_{K_1}\in TA\}=\left\{\sum\limits_{i\in K_1}f_{2,i}(\textbf{x}_{I_1\setminus K_1}+\textbf{x}_{K_2}) \textbf{e}_i:\;\textbf{x}_{K_2}\in U_{K_2}\right\}$, the sixth equality follows from the finite dimensional change of variable formula for the following transformation
\begin{eqnarray*}
\textbf{x}_{K_1}=\sum_{i\in K_1}f_{2,i}(\textbf{x}_{I_1\setminus K_1}+\textbf{x}_{K_2}) \textbf{e}_i,\quad \forall\;\textbf{x}_{K_2}=\sum_{j\in K_2}x_j\textbf{e}_j\in U_{K_2},
\end{eqnarray*}
whose Jacobian matrix is given by \eqref{260119e1}, and therefore
$$
\int_{(TA)_{\textbf{x}_{I_1\setminus K_1}}}\left(\prod\limits_{i\in K_1}\frac{e^{-\frac{x_i^2}{2a_i^2}}}{\sqrt{2\pi a_i^2}}\cdot\mathrm{d} x_{i}\right)=
\int_{ U_{K_2}}\left(\prod\limits_{i\in K_1}\frac{e^{-\frac{\left(f_{2,i}(\textbf{x}_{I_2})\right)^2}{2a_i^2}}}{\sqrt{2\pi a_i^2}}\right)\cdot \left|\det J_{I_1,I_2}(\textbf{x}_{I_2})\right|\cdot \left(\prod_{j\in K_2}\,\mathrm{d} x_{j}\right).
$$
This completes the proof of Lemma \ref{20241011lem1}.
\end{proof}

Let \(I\) be a nonempty subset of \(\mathbb{N}\), and define
\begin{equation}\label{20250119def1}
C_I
\triangleq
\left\{
\mathbf{x}_I=\sum_{i\in I}x_i\mathbf{e}_i\in P_I\ell^2:
\sum_{i\in I}
\left|
\ln\frac{1}{\sqrt{2\pi a_i^2}}
-\frac{x_i^2}{2a_i^2}
\right|
<\infty
\right\}.
\end{equation}
Observe that, for every
\(\mathbf{x}_I=\sum_{i\in I}x_i\mathbf{e}_i\in C_I\), the infinite
product
\[
\prod_{i\in I}
\frac{1}{\sqrt{2\pi a_i^2}}
\exp\left(-\frac{x_i^2}{2a_i^2}\right)
\]
converges to a finite positive number. Moreover, \(C_I\) is a Borel
subset of \(P_I\ell^2\).

Define \(F_I:P_I\ell^2\to(0,\infty]\) by
\begin{equation}\label{20250206def1}
F_I(\mathbf{x}_I)
\triangleq
\begin{cases}
\displaystyle
\prod_{i\in I}
\frac{1}{\sqrt{2\pi a_i^2}}
\exp\left(-\frac{x_i^2}{2a_i^2}\right),
&
\mathbf{x}_I=\displaystyle\sum_{i\in I}x_i\mathbf{e}_i\in C_I,
\\[1.2em]
+\infty,
&
\mathbf{x}_I\in P_I\ell^2\setminus C_I.
\end{cases}
\end{equation}
Then \(F_I\) is a nonnegative extended-real-valued Borel measurable function on
\(P_I\ell^2\).

\begin{definition}\label{20241012def2}
Let \(S\) be a \(C^1\)-differentiable surface of \(\ell^2\) with
codimension \(\Gamma_{\mathbb{N}\setminus I}\), and let
\((S\cap U,I_1,f)\) be a local coordinate triple of \(S\).
Let
$
f\in
C_{P_{I_1}\ell^2}^{1}
\bigl(
U_{I_1};
P_{\mathbb{N}\setminus I_1}\ell^2
\bigr)
$
be the corresponding local graph map.
For every \(E\in\mathscr{B}(S\cap U)\), define
\begin{eqnarray*}
\mu(E,S\cap U,I_1,f)
\triangleq
\int_{P_{I_1}E}
& n_{I_1}
\bigl(
\mathbf{x}_{I_1}+f(\mathbf{x}_{I_1})
\bigr)
F_{\mathbb{N}\setminus I_1}
\bigl(
f(\mathbf{x}_{I_1})
\bigr)
\,\mathrm{d}\mu_{I_1}(\mathbf{x}_{I_1}).
\end{eqnarray*}
\end{definition}

\begin{remark}
The motivation for the preceding definition is essentially the same as
that in the finite-dimensional setting; see
Remark~\ref{20250208rem1}. The main difference is the presence of the
additional factor
\[
F_{\mathbb{N}\setminus I_1}
\bigl(f(\mathbf{x}_{I_1})\bigr).
\]
This factor is required because the ambient measure chosen on
\(\ell^2\) is anisotropic and has a nonconstant density in the
coordinate directions. When \(S\cap U\) is represented as a graph over
\(P_{I_1}\ell^2\), the factor
\(F_{\mathbb{N}\setminus I_1}(f(\mathbf{x}_{I_1}))\) accounts for the
contribution of the complementary coordinates.

By contrast, the measure considered in
Remark~\ref{20250208rem1} is the three-dimensional Lebesgue measure,
which is invariant under translations and rotations and has constant
density. Consequently, apart from the usual geometric Jacobian factor,
no additional density factor is needed when the surface is represented
over a lower-dimensional coordinate plane.
\end{remark}
\begin{lemma}\label{20241013lem1}
Retain the notation and assumptions of
Definition~\ref{20241012def2}. Let
$
(S\cap U_1,I_2,f')
$
be another local coordinate triple of \(S\) such that
$
S\cap U\cap U_1\neq\varnothing.
$
Let
$
f'
\in
C_{P_{I_2}\ell^2}^{1}
\bigl(
U_{I_2};
P_{\mathbb{N}\setminus I_2}\ell^2
\bigr)
$
be the corresponding local graph map.
For every \(E\in\mathscr{B}(S\cap U_1)\), define
\begin{eqnarray*}
\mu(E,S\cap U_1,I_2,f')
\triangleq
\int_{P_{I_2}E}
& n_{I_2}
\bigl(
\mathbf{x}_{I_2}+f'(\mathbf{x}_{I_2})
\bigr)
F_{\mathbb{N}\setminus I_2}
\bigl(
f'(\mathbf{x}_{I_2})
\bigr)
\,\mathrm{d}\mu_{I_2}(\mathbf{x}_{I_2}).
\end{eqnarray*}
Then
\[
\mu(E,S\cap U_1,I_2,f')
=
\mu(E,S\cap U,I_1,f)
\]
for every
$
E\in\mathscr{B}(S\cap U\cap U_1).
$
\end{lemma}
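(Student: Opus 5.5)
The plan is to reduce the identity to a pointwise comparison of densities, using the change of variables $T=P_{I_1}P_{I_2}^{-1}$ from Lemma~\ref{20241011lem1}. Put $W\triangleq S\cap U\cap U_1$. By Definition~\ref{20241121def1000} and Lemma~\ref{20241011lem2}, $I_1\sim I_2$, so we may write $I_2=(I_1\setminus K_1)\sqcup K_2$ with $K_1\subset I_1$, $K_2\subset\mathbb{N}\setminus I_1$ and $\card K_1=\card K_2=r$.

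If $r=0$, then $I_1=I_2$ and $P_{I_1}|_W=P_{I_2}|_W$. Since both $f$ and $f'$ equal $P_{\mathbb{N}\setminus I_1}\circ(P_{I_1}|_W)^{-1}$ on $P_{I_1}W$, the two integrands coincide and nothing remains to prove.

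Assume now $r\geq1$ and fix $E\in\mathscr{B}(W)$. Since $T$ is a homeomorphism from $P_{I_2}W$ onto $P_{I_1}W$, the set $P_{I_1}E=T(P_{I_2}E)$ is Borel. Hence
\[
\mu(E,S\cap U,I_1,f)=\int_{P_{I_2}E} n_{I_1}(\mathbf{x})\,F_{\mathbb{N}\setminus I_1}\bigl(f(T\mathbf{x}_{I_2})\bigr)\,\mathrm{d}\mu_{I_1}(T\mathbf{x}_{I_2}),
\]
where $\mathbf{x}=\mathbf{x}_{I_2}+f'(\mathbf{x}_{I_2})=T\mathbf{x}_{I_2}+f(T\mathbf{x}_{I_2})$ is the same point of $S$. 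Substituting \eqref{220820e1} turns this into an integral against $\bigl(\prod_{j\in K_2}\mathrm{d}x_j\bigr)\times\mathrm{d}\mu_{I_1\setminus K_1}$. On the other side, by the product structure,
\[
\mathrm{d}\mu_{I_2}=\Bigl(\prod_{j\in K_2}\tfrac{1}{\sqrt{2\pi a_j^2}}e^{-x_j^2/(2a_j^2)}\,\mathrm{d}x_j\Bigr)\times\mathrm{d}\mu_{I_1\setminus K_1}.
\]
It therefore suffices to prove the pointwise identity, for $\mathbf{x}_{I_2}\in P_{I_2}W$,
\[
n_{I_1}(\mathbf{x})F_{\mathbb{N}\setminus I_1}(f(T\mathbf{x}_{I_2}))\prod_{i\in K_1}\tfrac{e^{-x_i^2/(2a_i^2)}}{\sqrt{2\pi a_i^2}}\,|\det J_{I_1,I_2}(\mathbf{x}_{I_2})|
= n_{I_2}(\mathbf{x})F_{\mathbb{N}\setminus I_2}(f'(\mathbf{x}_{I_2}))\prod_{j\in K_2}\tfrac{e^{-x_j^2/(2a_j^2)}}{\sqrt{2\pi a_j^2}}.
\]
Here $x_i=\langle\mathbf{x},\mathbf{e}_i\rangle$; in particular $x_i=f'_{i}(\mathbf{x}_{I_2})$ for $i\in K_1$.

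The identity splits into a geometric factor and a density factor. For the geometric factor, Lemma~\ref{20241012lem3} gives $n_{I_2}(\mathbf{x})=|\det D(P_{I_1}P_{I_2}^{-1})(\mathbf{x}_{I_2})|\,n_{I_1}(\mathbf{x})$, and \eqref{20260720for2} identifies the latter determinant with $|\det J_{I_1,I_2}(\mathbf{x}_{I_2})|$. So the Jacobian factors cancel exactly. For the density factor, observe that $\mathbb{N}\setminus I_1=L\sqcup K_2$ and $\mathbb{N}\setminus I_2=L\sqcup K_1$ with $L\triangleq\mathbb{N}\setminus(I_1\cup K_2)$. Since $f(T\mathbf{x}_{I_2})=P_{\mathbb{N}\setminus I_1}\mathbf{x}$ and $f'(\mathbf{x}_{I_2})=P_{\mathbb{N}\setminus I_2}\mathbf{x}$, both sides reduce to the same formal product $\prod_{k\in L\sqcup K_1\sqcup K_2}\frac{1}{\sqrt{2\pi a_k^2}}e^{-x_k^2/(2a_k^2)}$. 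Concretely, $F_{\mathbb{N}\setminus I_1}$ supplies the $K_2$ factors and the $\mu_{I_1}$-density supplies the $K_1$ factors, and vice versa on the other side.

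The step needing the most care is the extended-value bookkeeping in \eqref{20250206def1}. I would check that membership in $C_{\mathbb{N}\setminus I_1}$ and in $C_{\mathbb{N}\setminus I_2}$ are equivalent, since the defining series in \eqref{20250119def1} differ only in finitely many terms. Then on $C$ both sides are the same convergent positive product, split off as finitely many factors. Off $C$ both sides are $+\infty$ multiplied by strictly positive finite factors: the Gaussian factors are positive, and $n\geq1$, $\det J\neq0$ by \eqref{20241011for2}. With the convention $\infty\cdot c=\infty$ for $c>0$, the integrands agree everywhere, and integrating over $P_{I_2}E$ gives the claimed equality $\mu(E,S\cap U_1,I_2,f')=\mu(E,S\cap U,I_1,f)$.
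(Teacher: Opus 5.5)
Your proposal is correct and is essentially the paper's proof. Both change variables through the transition map using Lemma~\ref{20241011lem1}, cancel the Jacobian via Lemma~\ref{20241012lem3} and \eqref{20260720for2}, and split off the finitely many Gaussian factors indexed by $K_1$ and $K_2$. The difference is only in direction. You transport the $I_1$-integral to $P_{I_2}W$ with $T=P_{I_1}P_{I_2}^{-1}$, so Lemma~\ref{20241011lem1} applies verbatim and the reciprocity \eqref{20241013for1} is not needed; the paper instead pulls the $I_2$-integral back with $T=P_{I_2}P_{I_1}^{-1}$. You also explicitly check the $F=+\infty$ convention off $C_{\mathbb{N}\setminus I_k}$, which the paper leaves implicit.
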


\begin{proof}
Let $W\triangleq S\cap U\cap U_1$. There exists $K_1\subset I_1$ and $K_2\subset \mathbb{N}\setminus I_1$ such that $|K_1|=|K_2|\in\mathbb{N}_0$ and $I_2=(I_1\setminus K_1)\cup K_2$.
Clearly, $T\triangleq P_{I_2}P_{I_1}^{-1}$ is a homeomorphism from $P_{I_1}W$ onto $P_{I_2}W$. Note that  for any Borel subset $E$ of $P_{I_2}W$, Borel measure $\mu$ on $P_{I_2}W$ and non-negative Borel measurable function $h$ on $P_{I_2}W$, it holds that (e.g., \cite[Theorem 2.27, p. 35]{LZ2021})
\begin{eqnarray}\label{20250205for1}
\int_E h \,\mathrm{d}\mu =\int_{T^{-1}E} h(T\omega)\,\mathrm{d}\mu(T\omega).
\end{eqnarray}
By \eqref{20250205for1}, we have
\begin{eqnarray*}
&&\int_{P_{I_2}E}n_{I_2}(\textbf{x}_{I_2}+f'(\textbf{x}_{I_2}))\cdot F_{\mathbb{N}\setminus I_2}(f'(\textbf{x}_{I_2}))\,\mathrm{d}\mu_{I_2}(\textbf{x}_{I_2})\\
&=&\int_{P_{I_1}E}n_{I_2}(P_{I_2}^{-1}(T\textbf{x}_{I_1}))\cdot F_{\mathbb{N}\setminus I_2}(f'(T\textbf{x}_{I_1}))\,\mathrm{d}\mu_{I_2}(T\textbf{x}_{I_1})\\
&=&\int_{P_{I_1}E}n_{I_2}( \textbf{x}_{I_1}+ f(\textbf{x}_{I_1}))\cdot F_{\mathbb{N}\setminus I_2}(f'(T\textbf{x}_{I_1}))\,\mathrm{d}\mu_{I_2}(T\textbf{x}_{I_1}).
\end{eqnarray*}
By Lemma \ref{20241012lem3}, $n_{I_2}( \textbf{x}_{I_1}+ f(\textbf{x}_{I_1}))=\left|\det (D(P_{I_1}P_{I_2}^{-1})(T\textbf{x}_{I_1}))\right|\cdot n_{I_1}(\textbf{x}_{I_1}+ f(\textbf{x}_{I_1}))$.
Then similar to \eqref{220916e1} and \eqref{220916e2}, for any $\textbf{x}_{I_1}\in P_{I_1}W$ and $\textbf{x}_{I_2}\in P_{I_2}W$, it holds that
\begin{eqnarray*}
T\textbf{x}_{I_1}
=\textbf{x}_{I_2\setminus K_2}+\sum_{i\in K_2}f_i(\textbf{x}_{I_2\setminus K_2}+\textbf{x}_{K_1}) \textbf{e}_i, \quad
T^{-1}\textbf{x}_{I_2}
=\textbf{x}_{I_2\setminus K_2}+\sum_{j\in K_1}f'_j(\textbf{x}_{I_2\setminus K_2}+\textbf{x}_{K_2}) \textbf{e}_j,
\end{eqnarray*}
where $f_i(\textbf{x}_{I_1})=\langle f(\textbf{x}_{I_1}),\textbf{e}_i\rangle$ for every $i\in K_2$, and $f'_j(\textbf{x}_{I_2})=\langle f'(\textbf{x}_{I_2}),\textbf{e}_j\rangle$ for every $j\in K_1$. Thus
\begin{eqnarray*}
F_{\mathbb{N}\setminus I_2}(f'(T\textbf{x}_{I_1}))&=&F_{\mathbb{N}\setminus I_2}\left(f'\left(P_{I_2}(\textbf{x}_{I_1}+f(\textbf{x}_{I_1}))\right) \right)\\
&=&\left(\prod_{i\in K_1}\frac{e^{-\frac{x_i^2}{2a_i^2}}}{\sqrt{2\pi a_i^2}}\right)\cdot  F_{(\mathbb{N}\setminus I_2)\setminus K_1}\left(P_{(\mathbb{N}\setminus I_2)\setminus K_1}f'\left(P_{I_2}(\textbf{x}_{I_1}+f(\textbf{x}_{I_1}))\right) \right)
\end{eqnarray*}
By Lemma \ref{20241011lem1},
\begin{eqnarray*}
\mathrm{d}\mu_{I_2}(T\textbf{x}_{I_1})= \left(\prod\limits_{i\in K_2}\frac{e^{-\frac{\left(f_i(\textbf{x}_{I_1})\right)^2}{2a_i^2}}}{\sqrt{2\pi a_i^2}}\right) \cdot\left|\det J_{I_2,I_1}(\textbf{x}_{I_1})\right|\left(\prod_{j\in K_1}\,\mathrm{d} x_{j}\right)\times\mathrm{d}\mu_{I_2\setminus K_2}(\textbf{x}_{I_2\setminus K_2}).
\end{eqnarray*}
Therefore,
\begin{eqnarray*}
&&\int_{P_{I_2}E}n_{I_2}(\textbf{x}_{I_2}+f'(\textbf{x}_{I_2}))\cdot F_{\mathbb{N}\setminus I_2}(f'(\textbf{x}_{I_2}))\,\mathrm{d}\mu_{I_2}(\textbf{x}_{I_2})\\
&=&\int_{P_{I_1}E}n_{I_2}( \textbf{x}_{I_1}+ f(\textbf{x}_{I_1}))\cdot F_{\mathbb{N}\setminus I_2}(f'(T\textbf{x}_{I_1}))\,\mathrm{d}\mu_{I_2}(T\textbf{x}_{I_1})\\
&=&\int_{P_{I_1}E} \left|\det (D(P_{I_1}P_{I_2}^{-1})(T\textbf{x}_{I_1}))\right|\cdot   n_{I_1}(\textbf{x}_{I_1}+ f(\textbf{x}_{I_1}))\cdot \left(\prod_{j\in K_1}\frac{e^{-\frac{x_j^2}{2a_j^2}}}{\sqrt{2\pi a_j^2}}\right)\\
&&\times F_{(\mathbb{N}\setminus I_2)\setminus K_1}\left(P_{(\mathbb{N}\setminus I_2)\setminus K_1}f'\left(P_{I_2}(\textbf{x}_{I_1}+f(\textbf{x}_{I_1}))\right) \right)\cdot\left(\prod\limits_{i\in K_2}\frac{e^{-\frac{\left(f_i(\textbf{x}_{I_1})\right)^2}{2a_i^2}}}{\sqrt{2\pi a_i^2}}\right)\cdot\left|\det J_{I_2,I_1}(\textbf{x}_{I_1})\right|\\
&&\left(\prod_{j\in K_1}\,\mathrm{d} x_{j}\right)\times\mathrm{d}\mu_{I_2\setminus K_2}(\textbf{x}_{I_2\setminus K_2})\\
&=&\int_{P_{I_1}E}  n_{I_1}(\textbf{x}_{I_1}+ f(\textbf{x}_{I_1}))\cdot  F_{\mathbb{N}\setminus I_1}(f(\textbf{x}_{I_1}))\mathrm{d}\mu_{I_1}(\textbf{x}_{I_1}),
\end{eqnarray*}
where the last two equalities follow from Lemma \ref{20241012lem3}, \eqref{20241013for1} and \eqref{20260720for2}, and the facts that
\begin{eqnarray*}
F_{\mathbb{N}\setminus I_1}(f(\textbf{x}_{I_1}))=F_{(\mathbb{N}\setminus I_2)\setminus K_1}\left(P_{(\mathbb{N}\setminus I_2)\setminus K_1}f'\left(P_{I_2}(\textbf{x}_{I_1}+f(\textbf{x}_{I_1}))\right) \right)\cdot\left(\prod\limits_{i\in K_2}\frac{e^{-\frac{\left(f_i(\textbf{x}_{I_1})\right)^2}{2a_i^2}}}{\sqrt{2\pi a_i^2}}\right),
\end{eqnarray*}
and $\mathrm{d}\mu_{I_1}(\textbf{x}_{I_1})=\left(\prod\limits_{j\in K_1}\frac{e^{-\frac{x_j^2}{2a_j^2}}}{\sqrt{2\pi a_j^2}}\right)\left(\prod\limits_{j\in K_1}\,\mathrm{d} x_{j}\right)\times\mathrm{d}\mu_{I_2\setminus K_2}(\textbf{x}_{I_2\setminus K_2})$. This completes the proof of Lemma \ref{20241013lem1}.
\end{proof}

Lemma~\ref{20241013lem1} shows that the quantity
\[
\mu(E,S\cap U,I,f)
\]
defined in Definition~\ref{20241012def2} is independent of the choice
of the local coordinate triple.
We therefore write
\[
\mu(E,S\cap U)
\]
instead of \(\mu(E,S\cap U,I,f)\).

For each local coordinate neighborhood \(S\cap U\), the set function
\[
\mu(\,\cdot\,,S\cap U):
\mathscr{B}(S\cap U)\longrightarrow[0,+\infty]
\]
is a Borel measure. We call it the
local surface measure on \(S\) associated with \(S\cap U\).

Moreover, if
$
S\cap U\cap U_1\neq\varnothing,
$
then Lemma~\ref{20241013lem1} implies that
\[
\mu(E,S\cap U)
=
\mu(E,S\cap U_1)
\]
for every
$
E\in\mathscr{B}(S\cap U\cap U_1).
$
Thus, the local measures agree on the overlaps of their domains.

\begin{proposition}[Existence of the global surface measure]
\label{20241013prop1}
Let \(S\) be a \(C^1\)-differentiable surface of \(\ell^2\) with
codimension \(\Gamma_{\mathbb{N}\setminus I_0}\). Then there exists a
unique Borel measure \(\mu_S\) on \(S\) such that, for every local
coordinate triple
$
(S\cap U,I,f)
$
of \(S\),
\[
\mu_S(E)
=
\mu(E,S\cap U)
\]
for every \(E\in\mathscr{B}(S\cap U)\). Equivalently,
\[
\left.\mu_S\right|_{S\cap U}
=
\mu(\,\cdot\,,S\cap U).
\]
We call \(\mu_S\) the surface measure on \(S\).
\end{proposition}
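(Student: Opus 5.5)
The plan is the standard gluing argument for a family of compatible local measures, using the Lindel\"of property of $S$ as a subspace of the separable metric space $\ell^2$. Consider the open cover $\{S\cap U_\alpha\}_{\alpha\in A}$ of $S$ given by the fixed family $\mathcal A$. Since $S$ is separable metric, it is Lindel\"of (\cite[Theorem 15, p. 49]{Kel}), so we can extract a countable subcover $\{S\cap U_{\alpha_k}\}_{k\in\mathbb N}$. We then disjointify it by setting
\[
W_1\triangleq S\cap U_{\alpha_1},\qquad W_k\triangleq (S\cap U_{\alpha_k})\setminus\bigcup_{j<k}(S\cap U_{\alpha_j}),\quad k\geqslant 2.
\]
Each $W_k$ is a Borel subset of $S$ contained in $S\cap U_{\alpha_k}$, the $W_k$ are pairwise disjoint, and $S=\bigsqcup_k W_k$. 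With this decomposition we define
\[
\mu_S(E)\triangleq\sum_{k=1}^{\infty}\mu\bigl(E\cap W_k,S\cap U_{\alpha_k}\bigr),\qquad E\in\mathscr B(S).
\]
Each summand is a measure in $E$, because $\mu(\cdot,S\cap U_{\alpha_k})$ is a Borel measure on $\mathscr B(S\cap U_{\alpha_k})$ and $E\cap W_k\in\mathscr B(S\cap U_{\alpha_k})$. Countable additivity of $\mu_S$ then follows by interchanging two sums of nonnegative terms.

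The key step is the restriction property. Let $(S\cap U,I,f)$ be any local coordinate triple of $S$, which need not belong to $\mathcal A$; the proposition quantifies over all such triples. Take $E\in\mathscr B(S\cap U)$. Then $E\cap W_k\in\mathscr B(S\cap U\cap U_{\alpha_k})$. Lemma \ref{20241013lem1}, applied to the two triples $(S\cap U,I,f)$ and $(S\cap U_{\alpha_k},I_{\alpha_k},f_{\alpha_k})$, gives
\[
\mu(E\cap W_k,S\cap U_{\alpha_k})=\mu(E\cap W_k,S\cap U).
\]
For this step we need both index sets $I$ and $I_{\alpha_k}$ to lie in $\Gamma_{I_0}$, so that $I\sim I_{\alpha_k}$; this holds by Definition \ref{20241121def1000} together with Lemma \ref{20241011lem2}. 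When $E\cap W_k=\varnothing$ both sides vanish, so the case of an empty overlap causes no trouble. Summing over $k$ and using countable additivity of $\mu(\cdot,S\cap U)$ on the partition $\{E\cap W_k\}_k$ of $E$, we obtain $\mu_S(E)=\mu(E,S\cap U)$.

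Uniqueness also comes from the partition. Suppose $\nu$ is another Borel measure on $S$ with $\nu|_{S\cap U}=\mu(\cdot,S\cap U)$ for every triple. For any $E\in\mathscr B(S)$, countable additivity gives
\[
\nu(E)=\sum_k\nu(E\cap W_k)=\sum_k\mu(E\cap W_k,S\cap U_{\alpha_k})=\mu_S(E).
\]
This argument also shows that $\mu_S$ does not depend on the choice of countable subcover, since any such choice yields a measure with the same restriction property.

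The main obstacle is not the gluing itself, which is routine once Lemma \ref{20241013lem1} is available, but making sure its hypotheses hold in the generality stated. The lemma must apply to an arbitrary local coordinate triple compatible with the codimension class, not only to members of $\mathcal A$, and the local measures can take the value $+\infty$ (note that $F_{\mathbb N\setminus I}$ may equal $+\infty$). The latter is why we avoid any $\sigma$-finiteness or Carath\'eodory extension argument and define $\mu_S$ directly as a countable sum over a Borel partition. That definition works verbatim for $[0,+\infty]$-valued measures.
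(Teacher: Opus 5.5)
Your proposal is correct and follows essentially the same route as the paper: a countable subcover via the Lindel\"of property, disjointification into a Borel partition, $\mu_S$ defined as the sum of the local measures on the pieces, the restriction property from the overlap-compatibility Lemma~\ref{20241013lem1}, and uniqueness by summing over the same partition. Your extra remarks are accurate refinements of points the paper leaves implicit: $I\sim I_{\alpha_k}$ follows from both sets lying in $\Gamma_{I_0}$ together with transitivity, and the direct countable-sum definition handles $[0,+\infty]$-valued local measures without any extension argument.
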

\begin{proof}
Since \(\ell^2\) is second countable, its subspace \(S\) is also
second countable and hence Lindel\"{o}f. Therefore, by \cite[Theorem 15, p. 49]{Kel}, from the open cover of
\(S\) by local coordinate neighborhoods, we may choose a countable
subcover
\[
S=\bigcup_{n=1}^{\infty}W_n,
\qquad
W_n\triangleq S\cap U_{i_n},
\]
where
$
(S\cap U_{i_n},I_{i_n},f_{i_n})
$
is a local coordinate triple of \(S\) for every \(n\in\mathbb{N}\).

Define
\[
Y_1\triangleq W_1,\qquad
Y_n
\triangleq
W_n\setminus\bigcup_{k=1}^{n-1}W_k,\quad n\geqslant 2.
\]
Then \(\{Y_n\}_{n=1}^{\infty}\) is a pairwise disjoint Borel
partition of \(S\), and
$
Y_n\subset W_n
$
for every \(n\in\mathbb{N}\).

For \(E\in\mathscr{B}(S)\), define
\[
\mu_S(E)
\triangleq
\sum_{n=1}^{\infty}
\mu(E\cap Y_n,W_n).
\]
For each \(n\), the set function
\[
E\longmapsto \mu(E\cap Y_n,W_n)
\]
is a Borel measure on \(S\). Consequently, \(\mu_S\), being a
countable sum of Borel measures, is itself a Borel measure on \(S\).
More explicitly, if \(\{E_m\}_{m=1}^{\infty}\subset\mathscr{B}(S)\)
is pairwise disjoint, then
\begin{eqnarray*}
\mu_S\left(\bigcup_{m=1}^{\infty}E_m\right)
=
\sum_{n=1}^{\infty}
\mu\left(
\bigcup_{m=1}^{\infty}(E_m\cap Y_n),
W_n
\right)=
\sum_{n=1}^{\infty}
\sum_{m=1}^{\infty}
\mu(E_m\cap Y_n,W_n) =
\sum_{m=1}^{\infty}\mu_S(E_m).
\end{eqnarray*}

It remains to verify that \(\mu_S\) agrees with every local surface
measure. Let \(W=S\cap U\) be a local coordinate neighborhood and let
\(E\in\mathscr{B}(W)\). Since \(W\) is open in \(S\), we may regard
\(E\) as an element of \(\mathscr{B}(S)\). For every \(n\),
\[
E\cap Y_n\subset W\cap W_n.
\]
Hence, by Lemma~\ref{20241013lem1}, the local measures agree on the
overlap, and therefore
\[
\mu(E\cap Y_n,W_n)
=
\mu(E\cap Y_n,W).
\]
Since the sets \(\{E\cap Y_n\}_{n=1}^{\infty}\) form a pairwise disjoint Borel partition
of \(E\), it follows that
\begin{eqnarray*}
\mu_S(E)=
\sum_{n=1}^{\infty}\mu(E\cap Y_n,W_n)=
\sum_{n=1}^{\infty}\mu(E\cap Y_n,W)=
\mu(E,W)=\mu(E,S\cap U).
\end{eqnarray*}
Thus \(\mu_S\) has the required local restriction property. If $\mu_S'$ is another Borel measure on $S$ that satisfies the properties in Proposition \ref{20241013prop1}, then for every \(E\in\mathscr{B}(S )\),
\begin{eqnarray*}
\mu_S(E)=
\sum_{n=1}^{\infty}\mu(E\cap Y_n,W_n)=
\sum_{n=1}^{\infty}\mu_S'(E\cap Y_n)=
\mu_S'(E),
\end{eqnarray*}
where the second equality follows from $\mu_S' |_{W_n}=\mu(\,\cdot\,,W_n)$, $E\cap Y_n\subset W_n$ and $\mu(E\cap Y_n,W_n)=\mu_S'(E\cap Y_n)$ for every $n\in\mathbb{N}$. Therefore, $\mu_S=\mu_S'$. This completes the proof of Proposition \ref{20241013prop1}.
\end{proof}


\begin{definition}\label{20250112def1}
Let \(S\) be a \(C^1\)-differentiable surface of \(\ell^2\) with
codimension \(\Gamma_{\mathbb{N}\setminus I_0}\), and let
$
(S\cap U,I,f)
$
be a local coordinate triple of \(S\). Let
$
f\in
C_{P_{I}\ell^2}^{1}
\bigl(
U_{I};
P_{\mathbb{N}\setminus I}\ell^2
\bigr)
$
be the corresponding local graph map.
We call \((S\cap U,I,f)\) a
bounded local coordinate triple of \(S\) if
\begin{equation}\label{20241013cond1}
\sup_{\mathbf{x}_{I}\in U_{I}}
n_{I}
\bigl(
\mathbf{x}_{I}+f(\mathbf{x}_{I})
\bigr)
F_{\mathbb{N}\setminus I}
\bigl(
f(\mathbf{x}_{I})
\bigr)
<\infty.
\end{equation}
Moreover, we say that \(S\) is a \(\sigma\)-finite surface if its surface
measure \(\mu_S\) is \(\sigma\)-finite.
\end{definition}

\begin{remark}
Combining \eqref{20250111for1} with the preceding definition, we see
that if \(\mathbb{N}\setminus I\) is finite, then every point of \(S\)
admits, after possibly shrinking the coordinate neighborhood, a
bounded local coordinate triple. Indeed, in this case the factor
\[
F_{\mathbb{N}\setminus I}
\]
is a finite-dimensional Gaussian density and is therefore bounded,
while \eqref{20250111for1} ensures that the geometric factor
\(n_I\) is locally bounded. Consequently,
\eqref{20241013cond1} holds on a sufficiently small local coordinate
neighborhood.

Since \(S\) is second countable, it can be covered by countably many
such bounded local coordinate neighborhoods. Hence every
\(C^1\)-differentiable surface in \(\ell^2\) of finite codimension is
a \(\sigma\)-finite surface.

The same argument applies in finite-dimensional ambient spaces. In
particular, after restricting to a sufficiently small coordinate
neighborhood, the usual geometric Jacobian factor is bounded, and no
additional nonconstant density factor is present when the ambient
measure is Lebesgue measure. The case of surfaces in
\(\mathbb{R}^3\) is discussed in
Remark~\ref{20250208rem1}.
\end{remark}


\begin{example}
Let \(S\) be a \(C^1\)-differentiable surface of \(\ell^2\) with
codimension \(\Gamma_{\{i\}}\) for some \(i\in\mathbb{N}\).
Thus, \(S\) is a \(C^1\)-differentiable hypersurface in \(\ell^2\).
Let
$
\bigl(
S\cap U,\,
\mathbb{N}\setminus\{i_0\},\,
P_{\mathbb{N}\setminus\{i_0\}}
\bigr)
$
be a local coordinate triple of \(S\), and let
$
f\in
C_{P_{\mathbb{N}\setminus\{i_0\}}\ell^2}^{1}
\left(
U_{\mathbb{N}\setminus\{i_0\}};
P_{\{i_0\}}\ell^2
\right)
$
be the corresponding local graph map. Write
\[
f(\mathbf{x}_{\mathbb{N}\setminus\{i_0\}})
=
f_{i_0}(\mathbf{x}_{\mathbb{N}\setminus\{i_0\}})
\mathbf{e}_{i_0}.
\]
By \eqref{20250205for2} in
Example~\ref{20250205exa1}, we have
\begin{eqnarray*}
&\sup\limits_{\mathbf{x}_{\mathbb{N}\setminus\{i_0\}}
\in U_{\mathbb{N}\setminus\{i_0\}}}
n_{\mathbb{N}\setminus\{i_0\}}
\left(
\mathbf{x}_{\mathbb{N}\setminus\{i_0\}}
+
f(\mathbf{x}_{\mathbb{N}\setminus\{i_0\}})
\right)
\cdot
F_{\{i_0\}}
\left(
f(\mathbf{x}_{\mathbb{N}\setminus\{i_0\}})
\right)
\\
&=
\sup\limits_{\mathbf{x}_{\mathbb{N}\setminus\{i_0\}}
\in U_{\mathbb{N}\setminus\{i_0\}}}
\sqrt{
1+
\left\|
Df_{i_0}
(\mathbf{x}_{\mathbb{N}\setminus\{i_0\}})
\right\|^2
}
\,
\frac{
\exp\left(
-\dfrac{
f_{i_0}(\mathbf{x}_{\mathbb{N}\setminus\{i_0\}})^2
}{
2a_{i_0}^2
}
\right)
}{
\sqrt{2\pi a_{i_0}^2}
}.
\end{eqnarray*}
The function
\[
\mathbf{x}_{\mathbb{N}\setminus\{i_0\}}
\longmapsto
\sqrt{
1+
\left\|
Df_{i_0}
(\mathbf{x}_{\mathbb{N}\setminus\{i_0\}})
\right\|^2
}
\,
\frac{
\exp\left(
-\dfrac{
f_{i_0}(\mathbf{x}_{\mathbb{N}\setminus\{i_0\}})^2
}{
2a_{i_0}^2
}
\right)
}{
\sqrt{2\pi a_{i_0}^2}
}
\]
is continuous on
\(U_{\mathbb{N}\setminus\{i_0\}}\), since
\(f_{i_0}\) is continuously Fr\'{e}chet differentiable. It is therefore
locally bounded. Consequently, after possibly shrinking the local
coordinate neighborhood, the above supremum is finite. Hence every
point of \(S\) admits a bounded local coordinate triple.
\end{example}

\begin{example}
Let \(S\) be a \(C^1\)-differentiable surface of \(\ell^2\) with codimension
\(\Gamma_{\{1,2,\ldots,n\}}\) for some \(n\in\mathbb{N}\). Then \(S\) is a
\(C^1\)-differentiable surface in \(\ell^2\) of codimension \(n\) in the usual
sense. We use the same notation as in Example \ref{20250110exa1}. For every \(\mathbf{x}_{I_0}\in U_{I_0}\), inequality
\eqref{20250111for1} in Example \ref{20250110exa1} yields
\begin{eqnarray*}
&&n_{I_0}\bigl(\mathbf{x}_{I_0}
+f(\mathbf{x}_{I_0})\bigr)
F_{I_0}\bigl(f(\mathbf{x}_{I_0})\bigr)\\
&\leqslant &
\left(
1+
\sum_{k=1}^{n}
\sum_{
\begin{gathered} K\subset\{1,2,\ldots,n\}, \\\card(K)=k  \end{gathered}
}
\prod_{i\in K}
\bigl\|Df_i(\mathbf{x}_{I_0})\bigr\|^2
\right)^{1/2}
\prod_{i=1}^{n}
\frac{
\exp \!\left(
-\dfrac{f_i(\mathbf{x}_{I_0})^2}{2a_i^2}
\right)
}{
\sqrt{2\pi a_i^2}
}.
\end{eqnarray*}

Since \(f\) is of class \(C^1\), the expression on the right-hand side is
continuous on \(U_{I_0}\), and hence it is locally bounded. Therefore, for any
fixed \(\mathbf{x}_{I_0}^{ 0}\in U_{I_0}\), after replacing \(U_{I_0}\), if
necessary, by a sufficiently small neighborhood of
\(\mathbf{x}_{I_0}^{ 0}\), we obtain
\[
\sup_{\mathbf{x}_{I_0}\in U_{I_0}}
n_{I_0}\bigl(\mathbf{x}_{I_0}
+f(\mathbf{x}_{I_0})\bigr)
F_{I_0}\bigl(f(\mathbf{x}_{I_0})\bigr)
<\infty.
\]
\end{example}

\begin{example}
We adopt the notation introduced in Example \ref{20250205exa2}.

For every \(\mathbf{x}_{I_0}\in P_{I_0}\ell^2\), we have
\begin{eqnarray*}
F_J\bigl(f(\mathbf{x}_{I_0})\bigr)
&=&
\prod_{j\in J_0}
\frac{1}{\sqrt{2\pi a_j^2}}
\exp\left(
-\frac{(x_j^0+x_1\Delta x_j^0)^2}{2a_j^2}
\right)\\
&=&
\exp\left\{
\sum_{j\in J_0}
\left(
\ln\frac{1}{\sqrt{2\pi}a_j}
-\frac{(x_j^0)^2}{2a_j^2}
\right)
-x_1\sum_{j\in J_0}
\frac{x_j^0\Delta x_j^0}{a_j^2}
-\frac{x_1^2}{2}
\sum_{j\in J_0}
\frac{(\Delta x_j^0)^2}{a_j^2}
\right\}.
\end{eqnarray*}
Combining this identity with \eqref{20250205for3} in
Example \ref{20250205exa2}, we obtain
\[
\sup_{\mathbf{x}_{I_0}\in U_{I_0}}
n_{I_0}\bigl(\mathbf{x}_{I_0}+f(\mathbf{x}_{I_0})\bigr)
F_{J_0}\bigl(f(\mathbf{x}_{I_0})\bigr)
<\infty
\]
for every bounded open subset \(U_{I_0}\) of \(P_{I_0}\ell^2\).
\end{example}

In order to obtain a global version of Gauss--Green-type Theorem, first, we need the existence of a kind of ``continuous unit normal vector field" on $S$. Suppose that $S$ is a $C^1$-differentiable surface of $\ell^2$ with codimension $\Gamma_{\mathbb{N}\setminus I}$. For $\textbf{x}\in S$, if $(S\cap U_1, P_{I_1})$ and $(S\cap U_2, P_{I_2})$ are two local coordinate triples of $S$ such that $\textbf{x}\in S\cap U_1\cap U_2$. Recall \eqref{20241013for1}, we have
$$
\det \left(D(P_{I_1}P_{I_2}^{-1})(\textbf{x}_{I_2})\right)\cdot \det \left(D(P_{I_2}P_{I_1}^{-1})(\textbf{x}_{I_1})\right)=1,
$$
where $\textbf{x}_{I_1}=P_{I_1}\textbf{x}$ and $\textbf{x}_{I_2}=P_{I_2}\textbf{x}$. Thus $\det \left(D(P_{I_1}P_{I_2}^{-1})(\textbf{x}_{I_2})\right)$ and $\det \left(D(P_{I_2}P_{I_1}^{-1})(\textbf{x}_{I_1})\right)$ are both positive numbers or negative numbers. Motivated by this fact and the equivalent definition of orientable finite dimensional manifold as in \cite[(3.18), p. 89]{CCL}, we have the following definition.
\section{A Local Version of the Gauss--Green-Type Theorem}

To establish a global version of the Gauss--Green-type theorem, we first need an appropriate analogue of a continuous unit normal vector field on the surface.

Let \(S\) be a \(C^1\)-differentiable surface in \(\ell^2\) of codimension
\(\Gamma_{\mathbb{N}\setminus I}\). Fix \(\mathbf{x}\in S\), and suppose that
\((S\cap U_1,I_1,f_1)\) and \((S\cap U_2,I_2,f_2)\) are two local coordinate triples of \(S\) such that
\[
\mathbf{x}\in S\cap U_1\cap U_2.
\]
By \eqref{20241013for1}, we have
\[
\det\left(
D\bigl(P_{I_1}  P_{I_2}^{-1}\bigr)
(\mathbf{x}_{I_2})
\right)\cdot
\det\left(
D\bigl(P_{I_2}  P_{I_1}^{-1}\bigr)
(\mathbf{x}_{I_1})
\right)
=1,
\]
where
$
\mathbf{x}_{I_1}\triangleq P_{I_1}\mathbf{x}
$ and $
\mathbf{x}_{I_2}\triangleq P_{I_2}\mathbf{x}.
$
Consequently,
\[
\det\left(
D\bigl(P_{I_1}  P_{I_2}^{-1}\bigr)
(\mathbf{x}_{I_2})
\right)
\quad\text{and}\quad
\det\left(
D\bigl(P_{I_2}  P_{I_1}^{-1}\bigr)
(\mathbf{x}_{I_1})
\right)
\]
have the same sign: they are either both positive or both negative.

Motivated by this observation and by the equivalent characterization of orientability for finite-dimensional manifolds given in
\cite[(3.18), p.~89]{CCL}, we introduce the following definition.
\begin{definition}\label{20241015def1}
Let $\mathcal {A}$ be a $C^1$-differentiable structure on a surface $S$  of $\ell^2$ with codimension
\(\Gamma_{\mathbb{N}\setminus I_0}\). We say that \(S\) is oriented if,
for every \(\mathbf{x}\in S\) and every pair of local coordinate triples
$
(S\cap U_1, I_1,f_1)$ and $(S\cap U_2, I_2,f_2)
$
in $\mathcal {A}$ satisfying
$
\mathbf{x}\in S\cap U_1\cap U_2,
$
the transition map satisfies
\begin{equation}\label{20241015for3}
\det\left(
D\bigl(P_{I_2}  P_{I_1}^{-1}\bigr)
(\mathbf{x}_{I_1})
\right)>0,
\end{equation}
where
$
\mathbf{x}_{I_1}\triangleq P_{I_1}\mathbf{x},
$
and
\begin{equation}\label{20250205for5}
n_{I_1}(\mathbf{x})
=
\left(
\sum_{I'\in\Gamma_I}
\left|
\det\left(
D\bigl(P_{I'} P_{I_1}^{-1}\bigr)
(\mathbf{x}_{I_1})
\right)
\right|^2
\right)^{1/2}
<\infty.
\end{equation}
\end{definition}

We shall also need the following notation in the definition of a region with boundary.

\begin{definition}\label{20250111def1}
Let \(S_{\mathbb{N}}\) denote the collection of all nonempty finite or
countably infinite sequences of distinct positive integers. More precisely,
an element \(\mathbf{I}\in S_{\mathbb{N}}\) is of the form
\[
\mathbf{I}=(i_k)_{k\in\Delta},
\]
where the index set $\Delta$ is one of the following:
$$
\{1,2,\ldots,n\},\qquad  \{0,1,2,\ldots,n\}(n\in\mathbb{N}),\qquad
\mathbb{N},\qquad \mathbb{N}_0,
$$
and the integers \(i_k\) are pairwise distinct.

For each \(\mathbf{I}=(i_k)_{k\in\Delta}\in S_{\mathbb{N}}\), there exists a unique
increasing rearrangement
\[
\overline{\mathbf{I}}=(i_k')_{k\in\Delta}\in S_{\mathbb{N}}
\]
such that
\[
\{i_k:k\in\Delta\}=\{i_k':k\in\Delta\}
\]
and
$
i_r'<i_s',
$
for all $s,t\in \Delta$ satisfying $s<t$.

Let
\[
|\mathbf{I}|\triangleq\{i_k:k\in\Delta\}
\]
denote the underlying set of \(\mathbf{I}\). A bijection
\[
\sigma:|\mathbf{I}|\longrightarrow |\mathbf{I}|
\]
is called a permutation of \(\mathbf{I}\), in analogy with its
finite-dimensional counterpart; see \cite[p.~8]{Lan1}. We define
\[
\sigma(\mathbf{I})\triangleq(\sigma(i_k))_{k\in\Delta}.
\]

A permutation \(\sigma\) of \(\mathbf{I}\) is called a transposition if there
exist two distinct elements \(p,q\in |\mathbf{I}|\) such that
\[
\sigma(p)=q,\qquad \sigma(q)=p,
\]
and
\[
\sigma(r)=r
\qquad
\text{for every }r\in |\mathbf{I}|\setminus\{p,q\}.
\]
This terminology is again consistent with the finite-dimensional case;
see \cite[p.~13]{Lan1}.

Let \(S_{\mathbb{N}}^{F}\) denote the collection of all
\(\mathbf{I}\in S_{\mathbb{N}}\) for which there exist finitely many transpositions
\(\sigma_1,\ldots,\sigma_m\) such that
\[
(\sigma_m\circ\cdots\circ\sigma_1)(\mathbf{I})=\overline{\mathbf{I}}.
\]
For \(\mathbf{I}\in S_{\mathbb{N}}^{F}\), define
\[
s(\mathbf{I})\triangleq (-1)^m,
\]
where \(\sigma_1,\ldots,\sigma_m\) are any transpositions satisfying the
preceding identity. The value \(s(\mathbf{I})\) is called the sign of \(\mathbf{I}\).
\end{definition}

\begin{remark}
For \(\mathbf{I}\in S_{\mathbb{N}}^{F}\), the definition of $s(\mathbf{I})$ is independent of the chosen transpositions. In the present setting, this fact can be reduced to the standard result for finite permutations. Although \(\mathbf{I}\) may be an infinite sequence, the permutation under consideration is a composition of only finitely many transpositions and therefore has finite support. By restricting it to a finite set containing its support, we may apply the uniqueness of the parity of a permutation in a finite symmetric group.
\end{remark}

Now we construct a unit normal vector field on an oriented surface \(S\), which is an essential ingredient in the Gauss--Green-type theorems.

\begin{definition}\label{20260801def1}
Let \(S\) be an oriented \(C^1\)-differentiable surface of \(\ell^2\) with codimension
\(\Gamma_{\mathbb{N}\setminus I_0}\). For each \(\mathbf{x}\in S\), choose a local coordinate triple
$
(S\cap U, I,f)
$ such that \(\mathbf{x}\in S\cap U\).
For each \(\mathbf{I}\in S_{\mathbb{N}}^F\) satisfying
$
|\mathbf{I}|\in\Gamma_{I_0},
$
define
\[
n_{\mathbf{I}}^S(\mathbf{x})
\triangleq
\frac{
s(\mathbf{I})\cdot
\det\left(
D\bigl(P_{|\mathbf{I}|}  P_{I}^{-1}\bigr)
(P_{I}\mathbf{x})
\right)
}{
n_{I}(\mathbf{x})
}.
\]
In addition, set
$
n_{\emptyset}^S(\mathbf{x})
\triangleq 0.
$ We then define
\[
\mathbf{n}^S(\mathbf{x})
\triangleq
\left(
\frac{
 \det\left(
D\bigl(P_{I_1}  P_{I}^{-1}\bigr)
(P_{I}\mathbf{x})
\right)
}{
n_{I}(\mathbf{x})
}
\right)_{I_1\in\Gamma_{I_0}}.
\]
The mapping
$
\mathbf{n}^S:S\longrightarrow \ell^2(\Gamma_{I_0})
$
is called the unit normal vector field on \(S\).
\end{definition}

First, by Definition \ref{20241012def1} and \eqref{20250205for5}, we have
\[
1\leqslant n_{I}(\mathbf{x})<\infty.
\]
Hence, the quotient appearing in the above definition is well defined as an expression. It remains to show that its value is independent of the choice of local coordinates.

Suppose that \((S\cap U_1, I_1,f_1)\) is another local coordinate triple such that \(\mathbf{x}\in S\cap U_1\cap U\). Combining \eqref{20250205for4} with Lemma \ref{20241012lem3}, we obtain
\begin{eqnarray*}
\frac{\det\!\left(D(P_{|\mathbf{I}|}P_{I}^{-1})(P_{I}\mathbf{x})\right)}
{n_{I}(\mathbf{x})}
&=
\frac{
\det\!\left(D(P_{|\mathbf{I}|}P_{I_1}^{-1})(P_{I_1}\mathbf{x})\right)\cdot
\det\!\left(D(P_{I_1}P_{I}^{-1})(P_{I}\mathbf{x})\right)}
{
n_{I_1}(\mathbf{x})
\left|
\det\!\left(D(P_{I_1}P_{I}^{-1})(P_{I}\mathbf{x})\right)
\right|
} \
&=
\frac{\det\!\left(D(P_{|\mathbf{I}|}P_{I_1}^{-1})(P_{I_1}\mathbf{x})\right)}
{n_{I_1}(\mathbf{x})},
\end{eqnarray*}
where the second equality follows from \eqref{20241015for3}. Therefore, the definition is independent of the choice of local coordinate triple.

Notice that
\[
\det\!\left(D(P_{|\mathbf{I}|}P_{I}^{-1})(P_{I}\mathbf{x})\right)
\]
is locally continuous. On the other hand, we can only conclude that \(n_{I}(\mathbf{x})\) is Borel measurable, whereas its finite-dimensional counterpart is a finite sum and is therefore locally continuous. Consequently, \(n_{\mathbf{I}}^{S}\) is Borel measurable.

By \eqref{20250205for6}, we have
\[
\mathbf{n}^{S}(\mathbf{x})\in L^{2}(\Gamma_{I_0})
\qquad\text{and}\qquad
\left\|\mathbf{n}^{S}(\mathbf{x})\right\|_{L^{2}(\Gamma_{I_0})}=1.
\]
Thus, \(\mathbf{n}^{S}\) is a mapping from \(S\) into the unit sphere of \(L^{2}(\Gamma_{I_0})\), and may be regarded as a Borel measurable unit normal vector field on \(S\).

\begin{definition}\label{20241009for1}
Let \(S\) be a \(\sigma\)-finite oriented \(C^{1}\)-differentiable surface of \(\ell^{2}\) with codimension \(\Gamma_{\mathbb{N}\setminus I_0}\). We also assume that every local coordinate triple of \(S\) is a bounded local coordinate triple.  A \textbf{region with boundary} in \(S\) is a closed subset \(D\) of \(\ell^{2}\), contained in \(S\), whose points are of the following two types:
\begin{itemize}
\item[\((1)\)] A point \(\mathbf{x}\in D\) is called an \textbf{interior point} of \(D\) if there exists a local coordinate triple
$
(S\cap U, I, f)
$
of \(S\) such that
$
\mathbf{x}\in S\cap U\subset D.
$

\item[\((2)\)] A point \(\mathbf{x}\in D\) is called a \textbf{boundary point} if there exist
\[
\mathbf{K}=(k_{0},k_{1},\ldots)\in S_{\mathbb{N}}^{F},
\quad
 i_{0} \in \mathbb{N}_0,\quad k_{i_0}\in |\mathbf{K}|,\quad \mathbf{K}\setminus\{k_{i_0}\}\triangleq (k_{0},k_{1},\ldots,k_{i_0-1},k_{i_0+1},\ldots)\in  S_{\mathbb{N}}^{F},
\]
an open neighborhood \(U\) of \(\mathbf{x}\) in \(\ell^{2}\), an open neighborhood \(U_{|\mathbf{K}|}\) of \(P_{|\mathbf{K}|}\mathbf{x}\) in \(P_{|\mathbf{K}|}\ell^2\), an open neighborhood
$
V_{|\mathbf{K}|\setminus\{k_{i_{0}}\}}
$
of \(P_{|\mathbf{K}|\setminus\{k_{i_{0}}\}}\mathbf{x}\) in \(P_{|\mathbf{K}|\setminus{\{k_{i_{0}}\}}}\ell^2\), and mappings
\[
\Phi\in C^{1}_{P_{|\mathbf{K}|\setminus\{k_{i_{0}}\}}\ell^2}
\left(
V_{|\mathbf{K}|\setminus\{k_{i_{0}}\}};
P_{\mathbb{N}\setminus|\mathbf{K}|}\ell^2
\right)
\]
and
$
h
\in
C^{1}_{P_{|\mathbf{K}|\setminus\{k_{i_{0}}\}}\ell^2}
\left(
V_{|\mathbf{K}|\setminus\{k_{i_{0}}\}};
\mathbb{R}
\right),
$
such that
$
s(\mathbf{K})=1,$ $
|\mathbf{K}|\in\Gamma_{I_0},
$ $P_{|\mathbf{K}|\setminus{\{k_{i_{0}}\}}}U_{|\mathbf{K}|}= V_{|\mathbf{K}|\setminus\{k_{i_{0}}\}}$,
and
\[
\mathbf{x}
=
P_{|\mathbf{K}|\setminus\{k_{i_{0}}\}}\mathbf{x}
+
h\!\left(
P_{|\mathbf{K}|\setminus\{k_{i_{0}}\}}\mathbf{x}
\right)\mathbf{e}_{k_{i_{0}}}
+
\Phi\!\left(
P_{|\mathbf{K}|\setminus\{k_{i_{0}}\}}\mathbf{x}
\right).
\]
Define
\[
f(P_{|\mathbf{K}|\setminus\{k_{i_{0}}\}}\mathbf{x})
\triangleq
h\!\left(
P_{|\mathbf{K}|\setminus\{k_{i_{0}}\}}\mathbf{x}
\right)\mathbf{e}_{k_{i_{0}}}
+
\Phi\!\left(
P_{|\mathbf{K}|\setminus\{k_{i_{0}}\}}\mathbf{x}
\right).
\]
Then $f\in C^{1}_{P_{|\mathbf{K}|\setminus\{k_{i_{0}}\}}\ell^2}
\left(
V_{|\mathbf{K}|\setminus\{k_{i_{0}}\}};
P_{(\mathbb{N}\setminus|\mathbf{K}|)\cup\{k_{i_{0}}\}}\ell^2
\right)$.
Moreover,
\begin{equation}\label{20241015for1}
D\cap U
=\left\{\mathbf{y}_{|\mathbf{K}|}+\Phi\!\left(
P_{|\mathbf{K}|\setminus\{k_{i_{0}}\}}
\mathbf{y}_{|\mathbf{K}|}
\right):
\mathbf{y}_{|\mathbf{K}|}\in U_{|\mathbf{K}|},
\quad
\rho(\mathbf{y}_{|\mathbf{K}|})\leqslant 0
\right\},
\end{equation}
where
$
\rho(\mathbf{y}_{|\mathbf{K}|})
\triangleq
(-1)^{i_{0}-1}
\left(\left\langle\mathbf{y}_{|\mathbf{K}|},\mathbf{e}_{k_{i_{0}}}\right\rangle-h\!\left(P_{|\mathbf{K}|\setminus\{k_{i_{0}}\}}
\mathbf{y}_{|\mathbf{K}|}\right)\right),
$
for every \(\mathbf{y}_{|\mathbf{K}|}\in U_{|\mathbf{K}|}\).
\end{itemize}

We denote the sets of interior and boundary points of \(D\) by \(D^{\circ}\) and \(\partial D\), respectively. We further require that
\begin{eqnarray*}
(\partial D)\cap U
&=&\left\{\mathbf{y}_{|\mathbf{K}|\setminus\{k_{i_{0}}\}}
+
h\!\left(\mathbf{y}_{|\mathbf{K}|\setminus\{k_{i_{0}}\}}\right)\mathbf{e}_{k_{i_{0}}}
+
\Phi\!\left(\mathbf{y}_{|\mathbf{K}|\setminus\{k_{i_{0}}\}}\right):
\mathbf{y}_{|\mathbf{K}|\setminus\{k_{i_{0}}\}}
\in V_{|\mathbf{K}|\setminus\{k_{i_{0}}\}}
\right\}\\
&=&\left\{\mathbf{y}_{|\mathbf{K}|}+\Phi\!\left(
P_{|\mathbf{K}|\setminus\{k_{i_{0}}\}}
\mathbf{y}_{|\mathbf{K}|}
\right):
\mathbf{y}_{|\mathbf{K}|}\in U_{|\mathbf{K}|},
\quad
\rho(\mathbf{y}_{|\mathbf{K}|})= 0
\right\};\\
D^{\circ}\cap U
&=&\left\{\mathbf{y}_{|\mathbf{K}|}+
\Phi\!\left(P_{|\mathbf{K}|\setminus\{k_{i_{0}}\}}\mathbf{y}_{|\mathbf{K}|}\right):
\mathbf{y}_{|\mathbf{K}|}\in U_{|\mathbf{K}|},
\quad
\rho(\mathbf{y}_{|\mathbf{K}|})<0
\right\};
\end{eqnarray*}
for every $\mathbf{y}=\mathbf{y}_{|\mathbf{K}|\setminus\{k_{i_{0}}\}}
+
h\!\left(\mathbf{y}_{|\mathbf{K}|\setminus\{k_{i_{0}}\}}\right)\mathbf{e}_{k_{i_{0}}}
+
\Phi\!\left(\mathbf{y}_{|\mathbf{K}|\setminus\{k_{i_{0}}\}}\right)\in(\partial D)\cap U$, there exists $\{y_{k_{i_0}}^n\}_{n=1}^{\infty}\in\mathbb{R}$ such that
\begin{eqnarray}\label{20260723for1}
\left\{\mathbf{y}^n_{|\mathbf{K}|}\triangleq y_{k_{i_0}}^n\mathbf{e}_{k_{i_0}}+P_{|\mathbf{K}|\setminus\{k_{i_0}\}}\mathbf{y}\right\}_{n=1}^{\infty}\subset U_{|\mathbf{K}|},\quad \left\{\mathbf{y}_n \triangleq P_{|\mathbf{K}|}^{-1}\mathbf{y}^n_{|\mathbf{K}|}\right\}_{n=1}^{\infty}\subset D^{\circ}\cap U,\quad\lim\limits_{n\to\infty}\mathbf{y}_n=\mathbf{y}.
\end{eqnarray}
Finally, the restriction
\[
\left.
P_{|\mathbf{K}|\setminus\{k_{i_{0}}\}}
\right|_{(\partial D)\cap U}
\]
is required to be a homeomorphism from \((\partial D)\cap U\) onto
$
V_{|\mathbf{K}|\setminus\{k_{i_{0}}\}}.
$
\end{definition}

In the setting of Definition~\ref{20241009for1}, define
\[
\begin{aligned}
\mathcal{A}_0
\triangleq
\bigl\{
&(S\cap U, I,f) :
(S\cap U,I,f) \text{ is a local coordinate triple of } S,\
& S\cap U\subset D
\bigr\}.
\end{aligned}
\]
Moreover, let \(\mathcal{A}_1\) be the collection of all triples of the form
\[
\bigl((\partial D)\cap U, \mathbf{K}\setminus\{k_{i_0}\} , f\bigr)
\]
satisfying the assumptions of part~\((2)\), and define
\[
|\mathcal{A}_1|\triangleq \{\bigl((\partial D)\cap U, |\mathbf{K}|\setminus\{k_{i_0}\} , f\bigr):\bigl((\partial D)\cap U, \mathbf{K}\setminus\{k_{i_0}\} , f\bigr)\in \mathcal{A}_1\}.
\]

First, it is straightforward to verify that \(\mathcal{A}_0\) defines a \(C^1\)-differentiable structure on \(D^\circ\). With respect to this structure, \(D^\circ\) is a \(\sigma\)-finite oriented \(C^1\)-differentiable surface of \(\ell^2\) with codimension
\(\Gamma_{\mathbb N\setminus I}\).

Second, the coordinate neighborhoods appearing in \(\mathcal{A}_1\) cover \(\partial D\). We shall prove that \(|\mathcal{A}_1|\) defines a \(C^1\)-differentiable structure on \(\partial D\), and that the additional sign information encoded in \(\mathcal{A}_1\) induces an orientation on this structure.

\begin{proposition}\label{20250109prop1}
Adopt the notation and assumptions of Definition~\ref{20241009for1}. Then \(|\mathcal A_1|\) defines a \(C^1\)-differentiable structure on \(\partial D\). With respect to this structure, \(\partial D\) is a \(C^1\)-differentiable surface of \(\ell^2\) with codimension \(\Gamma_{(\mathbb{N}\setminus I_0)\cup\{i\}}\) for some \(i\in  I_0\).

Moreover, for any two triples
$$((	\partial D)\cap U,\mathbf{K}\setminus\{k_{i_0}\},f), ((\partial D)\cap U_1,\mathbf{K}'\setminus\{k_{i_1}'\},g)\in \mathcal A_1$$
 such that $(\partial D)\cap U\cap U_1\neq \emptyset$. Then the transition map
$P_{|\mathbf{K}'|\setminus\{k_{i_1}'\}}P_{|\mathbf{K}|\setminus\{k_{i_0}\}}^{-1}$ is Fr\'{e}chet differentiable on its domain, and
\begin{eqnarray}\label{20250109for3}
s(\mathbf{K}\setminus\{k_{i_0}\})\cdot s(\mathbf{K}'\setminus\{k_{i_1}'\})\cdot \det \left(D( P_{|\mathbf{K}'|\setminus\{k_{i_1}'\}}P_{|\mathbf{K}|\setminus\{k_{i_0}\}}^{-1})(P_{|\mathbf{K}|\setminus\{k_{i_0}\}}\mathbf{x} )\right)>0,
\end{eqnarray}
for every $\mathbf{x} \in (	\partial D)\cap U\cap U_1$.
\end{proposition}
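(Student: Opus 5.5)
The proof splits into a combinatorial part (the index sets of the boundary charts form one equivalence class) and an orientation part (reduce the boundary transition map to a transition map of $S$ by a block-triangular computation).

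\textbf{Step 1: the boundary charts form a single class.} For a triple $((\partial D)\cap U,\mathbf{K}\setminus\{k_{i_0}\},f)\in\mathcal A_1$ put $J\triangleq|\mathbf{K}|\setminus\{k_{i_0}\}$. Definition~\ref{20241009for1} directly supplies the three ingredients of Definition~\ref{20241121def1000} for such a chart:
\[
(\partial D)\cap U=\{\mathbf{y}_J+f(\mathbf{y}_J):\mathbf{y}_J\in V_J\},\qquad f\in C^1_{P_J\ell^2}\bigl(V_J;P_{\mathbb N\setminus J}\ell^2\bigr),
\]
and $P_J|_{(\partial D)\cap U}$ is a homeomorphism onto $V_J$. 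So it remains to show that all these $J$ lie in one class $\Gamma_{I_0\setminus\{i\}}$.

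Take two charts, with $J=|\mathbf{K}|\setminus\{k\}$ and $J'=|\mathbf{K}'|\setminus\{k'\}$. Since $|\mathbf{K}|,|\mathbf{K}'|\in\Gamma_{I_0}$, Lemma~\ref{20241011lem2} gives $|\mathbf{K}|\sim|\mathbf{K}'|$. Choose a finite set $A$ containing $|\mathbf{K}|\triangle|\mathbf{K}'|$ and the two points $k,k'$.
\begin{itemize}
\item Outside $A$ the sets $|\mathbf{K}|$ and $|\mathbf{K}'|$ coincide. Hence, by Proposition~\ref{20250808prop1}, $\card(|\mathbf{K}|\cap A)=\card(|\mathbf{K}'|\cap A)$.
\item Removing $k$ and $k'$ lowers both counts by one, so $\card(J\cap A)=\card(J'\cap A)$, while $J$ and $J'$ still agree outside $A$.
\item Therefore $\card(J\setminus J')=\card(J'\setminus J)<\infty$, i.e.\ $J\sim J'$.
\end{itemize}
The same counting shows that $J\sim I_0\setminus\{i\}$ for any fixed $i\in I_0$. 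Hence every such $J$ lies in $\Gamma_{I_0\setminus\{i\}}$, which is exactly the codimension class $\Gamma_{(\mathbb N\setminus I_0)\cup\{i\}}$. Transition maps are then $C^1$ by Lemma~\ref{20250105lem1}, and in particular Fr\'echet differentiable.

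\textbf{Step 2: extend each boundary chart to a chart of $S$ and split the derivative.} For \eqref{20250109for3}, attach to each boundary chart the chart of $S$ given by
\[
U_{|\mathbf{K}|}\ni\mathbf{y}_{|\mathbf{K}|}\longmapsto\mathbf{y}_{|\mathbf{K}|}+\Phi(P_J\mathbf{y}_{|\mathbf{K}|}),
\]
using that $D$ is locally a subset of $S$ with nonempty interior on the side $\rho<0$. Consider the transition map $T\triangleq P_{|\mathbf{K}'|}P_{|\mathbf{K}|}^{-1}$ at a point $\mathbf{x}\in\partial D$.
\begin{itemize}
\item $T$ maps the boundary $\{\rho=0\}$ into $\{\rho'=0\}$. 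Hence $\rho'\circ T$ vanishes identically on the boundary, so $D(\rho'\circ T)$ annihilates the tangential directions $P_J\ell^2$.
\item In the splitting $P_{|\mathbf{K}|}\ell^2=P_J\ell^2\oplus\mathbb R\mathbf{e}_{k}$ (target split likewise), $DT(\mathbf{x})$ is therefore block triangular. One diagonal block is the boundary transition $D(P_{J'}P_J^{-1})$. The other is the scalar
\[
\lambda\triangleq\partial_{y_k}(\rho'\circ T)\big/\partial_{y_k}\rho .
\]
\item Rearranging rows and columns in the ordering of Definition~\ref{20241113def1}, moving $k_{i_0}$ and $k'_{i_1}$ into their positions in $\mathbf{K}$ and $\mathbf{K}'$, yields
\[
\det DT=\pm\,\lambda\,\det D(P_{J'}P_J^{-1}),
\]
where the sign is expressible via $s(\mathbf{K})$, $s(\mathbf{K}\setminus\{k_{i_0}\})$, $s(\mathbf{K}')$, $s(\mathbf{K}'\setminus\{k'_{i_1}\})$ and the factors $(-1)^{i_0-1}$, $(-1)^{i_1-1}$ built into $\rho,\rho'$.
\end{itemize}

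\textbf{Step 3: signs of the two factors.}
\begin{itemize}
\item Since $\det DT\neq0$ by \eqref{20241013for1}, we get $\lambda\neq0$.
\item The approximating sequence \eqref{20260723for1} consists of interior points, so $\rho<0$ along it. These points are mapped by $T$ to interior points, where $\rho'<0$. Hence $\rho'\circ T$ and $\rho$ have the same sign on the interior side, which forces $\lambda>0$.
\item Positivity of $\det DT$ should come from the orientation of $S$ (\eqref{20241015for3}) together with $s(\mathbf{K})=s(\mathbf{K}')=1$. Concretely, compare each extended chart with a chart of $\mathcal A$ through Corollary~\ref{20241015cor1}.
\end{itemize}
Combining $\lambda>0$, $\det DT>0$ and $s(\mathbf{K})=s(\mathbf{K}')=1$, the residual sign is exactly $s(\mathbf{K}\setminus\{k_{i_0}\})\,s(\mathbf{K}'\setminus\{k'_{i_1}\})$ (the $(-1)^{i_0-1}$ factors cancel against the cost of moving $k_{i_0}$ to the front). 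This is \eqref{20250109for3}.

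\textbf{Main obstacle.} I expect two points to be hardest. The first is the sign bookkeeping: tracking the transpositions relating the ascending orders used in $\det$ with the orders of $\mathbf{K}$ and $\mathbf{K}\setminus\{k_{i_0}\}$. The second is justifying that the extended charts are positively oriented with respect to $\mathcal A$, which is where $s(\mathbf{K})=1$ must enter.
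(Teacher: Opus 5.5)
Your architecture is the same as the paper's. Your $\lambda$ is the paper's $(-1)^{i_0+i_1}G$, since $\Psi=(-1)^{i_1-1}\rho'\circ T$. Its sign comes from the interior sequence \eqref{20260723for1} exactly as in \eqref{20260724for6}. Your Step~1 counting is correct, and it supplies the structural part of the statement that the paper's proof leaves implicit.

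\textbf{The gap: positivity of $\det DT$.} The third bullet of Step~3 fails: $\det DT>0$ cannot be derived from the orientation of $\mathcal A$ together with $s(\mathbf K)=s(\mathbf K')=1$.
\begin{itemize}
\item The determinant of Definition~\ref{20241113def1} is taken in ascending order. So $\det DT$ depends only on the sets $|\mathbf K|,|\mathbf K'|$, while $s(\mathbf K)$ only records how the sequence is ordered.
\item A graph chart is determined by its index set, so there is no freedom to reverse a chart's orientation. Corollary~\ref{20241015cor1} therefore only factors $\det DT$ through an $\mathcal A$-chart; it does not fix the signs of the factors.
\end{itemize}
Here is a counterexample. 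Take $S=\{\mathbf x\in\ell^2:x_3=-x_2\}$, $\mathcal A=\{(S,\mathbb N\setminus\{3\},f)\}$ with $f(\mathbf x_{\mathbb N\setminus\{3\}})=-x_2\mathbf e_3$, and $D=S\cap\{x_1\leqslant0\}$. Use $U=\ell^2$, $h=0$, $i_0=i_1=1$, $\mathbf K=(4,1,2,5,6,\ldots)$, $\mathbf K'=(4,1,3,5,6,\ldots)$ and the obvious $\Phi$'s. Both triples satisfy every requirement of Definition~\ref{20241009for1}(2), including $s(\mathbf K)=s(\mathbf K')=1$. Yet $\det DT=-1$, $s(\mathbf K\setminus\{1\})=s(\mathbf K'\setminus\{1\})=-1$ and $\det D(P_{J'}P_J^{-1})=-1$, so the left side of \eqref{20250109for3} equals $-1$.

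So positivity of $\det DT$ on $D\cap U\cap U_1$ must be taken as a hypothesis: the boundary charts have to be orientation-compatible with $S$. The paper does not derive it either; it invokes it ``by the assumptions'' at the start of its proof, and you must do the same. The hypothesis $s(\mathbf K)=s(\mathbf K')=1$ enters only through the bookkeeping identity
\[
s(\mathbf K)s(\mathbf K')\det DT=\lambda\,s(\mathbf K\setminus\{k_{i_0}\})\,s(\mathbf K'\setminus\{k'_{i_1}\})\det D\bigl(P_{J'}P_J^{-1}\bigr).
\]
Its left side is the determinant of the finite block of $DT$ with rows ordered as in $\mathbf K'$ and columns as in $\mathbf K$ (cf.\ \eqref{20250108for1} and \eqref{20250109for4}).

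\textbf{A fixable inaccuracy in Step~2.} In the coordinate splitting $P_{|\mathbf K|}\ell^2=P_J\ell^2\oplus\mathbb R\mathbf e_{k_{i_0}}$, the matrix of $DT$ is not block triangular.
\begin{itemize}
\item Since $\rho'\circ T$ vanishes on the graph $\{y_{k_{i_0}}=h(P_J\mathbf y)\}$, its derivative kills $\mathbf v+Dh(\mathbf v)\mathbf e_{k_{i_0}}$, not $\mathbf v\in P_J\ell^2$.
\item The upper-left block is $P_{J'}DT|_{P_J\ell^2}$, not $D(P_{J'}P_J^{-1})$.
\end{itemize}
Use instead the sheared source basis $\mathbf e_j+D_{x_j}h\,\mathbf e_{k_{i_0}}$ ($j\in J$), and the target coordinate $z_{k'_{i_1}}-h'(P_{J'}\mathbf z)$, where $h'$ is the function $h$ of the second triple. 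Both changes are unipotent, so they leave determinants unchanged. These are exactly the paper's column operations leading to \eqref{20250108for2} and row operations leading to \eqref{20250109for2}, after which row $i_1$ becomes $(0,\ldots,G,\ldots,0)$.

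Finally, once $\det DT>0$ is assumed, you do not need \eqref{20241013for1} to get $\lambda\neq0$. That citation would otherwise require a limiting argument, because your extended parametrization need not lie in $S$ on the side $\rho>0$, so it is not a local coordinate triple of $S$ there.
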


\begin{proof}
Let $F\left(\textbf{x}_{|\mathbf{K}|\setminus\{k_{i_0}\}}\right)\triangleq \textbf{x}_{|\mathbf{K}|\setminus\{k_{i_0}\}}+h(\textbf{x}_{|\mathbf{K}|\setminus\{k_{i_0}\}})\textbf{e}_{k_{i_0}}$, for every $\textbf{x}_{|\mathbf{K}|\setminus\{k_{i_0}\}}\in V_{|\mathbf{K}|\setminus\{k_{i_0}\}}$. Since $h$ is Fr\'{e}chet differentiable, so is $F$. Note that
$$
P_{|\mathbf{K}'| \setminus\{k_{i_1}'\}}P_{|\mathbf{K}|\setminus\{k_{i_0}\}}^{-1}
=P_{|\mathbf{K}'|\setminus\{k_{i_1}'\}}\circ(P_{|\mathbf{K}'|}P_{|\mathbf{K}|}^{-1})\circ F,
$$
which implies that it is a Fr\'{e}chet differentiable mapping. By the assumptions, there exists $n\in\mathbb{N}$ such that
$$
n>\max\{i_0,i_1\},\quad \mathbf{K}=(k_0,k_1,\ldots,k_n,k_{n+1},\cdots),\quad \mathbf{K}'=(k'_0,k'_1,\ldots,k_n',k_{n+1}',\cdots),
$$
$k_i=k_i'$, for every $i\geqslant n+1$, $k_{n+1}<k_{n+2}<\cdots<k_j<\cdots$, $k_{n+1}>\max\{k_0,k_1,\ldots,k_n,k'_0,k'_1,\ldots,k_n'\}$ and $\det\left(D(P_{|\mathbf{K}'|}P_{|\mathbf{K}|}^{-1})(P_{|\mathbf{K}|}\textbf{x})\right)>0$, for every $\textbf{x}\in  D\cap U\cap U_1.$ This implies that the following defines a strictly positive function on $P_{|\mathbf{K}|\setminus\{k_{i_0}\}}((\partial D)\cap U\cap U_1)$
\begin{eqnarray}\label{20250108for1}
s(\mathbf{K}) s(\mathbf{K}')
\begin{vmatrix}D_{x_{k_{0}}}\left( P_{\{k_0'\}}P_{|\mathbf{K}|}^{-1} \right)\circ F &\cdots&D_{x_{k_{i}}}\left( P_{\{k_0'\}}P_{|\mathbf{K}|}^{-1} \right)\circ F  &\cdots&D_{x_{k_{n}}}\left( P_{\{k_0'\}}P_{|\mathbf{K}|}^{-1} \right)\circ F  \\ \vdots&\ddots&\vdots&\ddots&\vdots\\
D_{x_{k_{0}}}\left( P_{\{k_j'\}}P_{|\mathbf{K}|}^{-1} \right)\circ F  &\cdots&D_{x_{k_{i}}}\left( P_{\{k_j'\}}P_{|\mathbf{K}|}^{-1} \right) \circ F &\cdots&D_{x_{k_{n}}}\left(P_{\{k_j'\}} P_{|\mathbf{K}|}^{-1} \right)\circ F  \\ \vdots&\ddots&\vdots&\ddots&\vdots\\
D_{x_{k_{0}}}\left( P_{\{k_n'\}}P_{|\mathbf{K}|}^{-1} \right)\circ F  &\cdots&D_{x_{k_{i}}}\left( P_{\{k_n'\}}P_{|\mathbf{K}|}^{-1} \right) \circ F &\cdots&D_{x_{k_{n}}}\left( P_{\{k_n'\}}P_{|\mathbf{K}|}^{-1} \right)\circ F  \end{vmatrix}  ,
\end{eqnarray}
where
$$
D_{x_{k_{i}}}\left( P_{\{k_j'\}}P_{|\mathbf{K}|}^{-1} \right)\triangleq  D_{x_{k_{i}}}\left( \left\langle P_{\{k_j'\}}P_{|\mathbf{K}|}^{-1}\cdot,\mathbf{e}_{k_j'}\right\rangle \right).
$$
For each
$$i\in\{0,\ldots,n\}\setminus\{i_0\}.$$
Multiply the $i_0$-th column of the matrix in \eqref{20250108for1} by $D_{x_{k_i}}(P_{\{k_{i_0}\}}P^{-1}_{|\mathbf{K}|\setminus\{k_{i_0}\}})$ and add the resulting column to the $i$-th column.
 For every $j\in\{0,\ldots, n\}$, we have
$$
(P_{\{k_j'\}}P^{-1}_{|\mathbf{K}|\setminus\{k_{i_0}\}})=(P_{\{k_j'\}}P^{-1}_{|\mathbf{K}|})\circ F.
$$
By the chain rule,
\begin{eqnarray*}
D_{x_{k_i}}(P_{\{k_j'\}}P^{-1}_{|\mathbf{K}|\setminus\{k_{i_0}\}})=D_{x_{k_i}}(P_{\{k_j'\}}P^{-1}_{|\mathbf{K}|})\circ F
+\left(D_{x_{k_i}}(P_{\{k_{i_0}\}}P^{-1}_{|\mathbf{K}|\setminus\{k_{i_0}\}})\right)
\cdot \left(D_{x_{k_{i_0}}}(P_{\{k_j'\}}P^{-1}_{|\mathbf{K}|})\circ F\right),
\end{eqnarray*}
and hence we obtain
\begin{eqnarray}\label{20250108for2}
s(\mathbf{K}) s(\mathbf{K}')
\begin{vmatrix}D_{x_{k_{0}}}\left( P_{\{k_0'\}}P^{-1}_{|\mathbf{K}|\setminus\{k_{i_0}\}}\right) &\cdots&D_{x_{k_{i_0}}}\left( P_{\{k_0'\}}P_{|\mathbf{K}|}^{-1} \right)\circ F  &\cdots&D_{x_{k_{n}}}\left( P_{\{k_0'\}}P^{-1}_{|\mathbf{K}|\setminus\{k_{i_0}\}}\right) \\ \vdots&\ddots&\vdots&\ddots&\vdots\\
D_{x_{k_{0}}}\left( P_{\{k_{i_1}'\}}P^{-1}_{|\mathbf{K}|\setminus\{k_{i_0}\}}\right)  &\cdots&D_{x_{k_{i_0}}}\left( P_{\{k_{i_1}'\}}P_{|\mathbf{K}|}^{-1} \right) \circ F &\cdots&D_{x_{k_{n}}}\left(P_{\{k_{i_1}'\}} P^{-1}_{|\mathbf{K}|\setminus\{k_{i_0}\}}\right) \\ \vdots&\ddots&\vdots&\ddots&\vdots\\
D_{x_{k_{0}}}\left( P_{\{k_n'\}}P^{-1}_{|\mathbf{K}|\setminus\{k_{i_0}\}}\right)  &\cdots&D_{x_{k_{i_0}}}\left( P_{\{k_n'\}}P_{|\mathbf{K}|}^{-1} \right) \circ F &\cdots&D_{x_{k_{n}}}\left( P_{\{k_n'\}}P^{-1}_{|\mathbf{K}|\setminus\{k_{i_0}\}}\right) \end{vmatrix} ,
\end{eqnarray}
is a strictly positive function on $P_{|\mathbf{K}|\setminus\{k_{i_0}\}}( (\partial D)\cap U\cap U_1)$.

Note that for every $\textbf{x}_{|\mathbf{K}|\setminus\{k_{i_0}\}}\in P_{|\mathbf{K}|\setminus\{k_{i_0}\}}( (\partial D)\cap U\cap U_1)$, it holds that
\begin{eqnarray*}
(P_{|\mathbf{K}'|}P_{|\mathbf{K}|\setminus\{k_{i_0}\}}^{-1})\textbf{x}_{|\mathbf{K}|\setminus\{k_{i_0}\}}
=(P_{|\mathbf{K}'|}P_{|\mathbf{K}'|\setminus\{k_{i_1}'\}}^{-1})(P_{|\mathbf{K}'|\setminus\{k_{i_1}'\}}P_{|\mathbf{K}|\setminus\{k_{i_0}\}}^{-1})\textbf{x}_{|\mathbf{K}|\setminus\{k_{i_0}\}},
\end{eqnarray*}
which implies that
\begin{eqnarray}\label{20250108for3}
D(P_{|\mathbf{K}'|}P_{|\mathbf{K}|\setminus\{k_{i_0}\}}^{-1})
=D(P_{|\mathbf{K}'|}P_{|\mathbf{K}'|\setminus\{k_{i_1}'\}}^{-1})(P_{|\mathbf{K}'|\setminus\{k_{i_1}'\}}P_{|\mathbf{K}|\setminus\{k_{i_0}\}}^{-1})\circ D(P_{|\mathbf{K}'|\setminus\{k_{i_1}'\}}P_{|\mathbf{K}|\setminus\{k_{i_0}\}}^{-1}),
\end{eqnarray}
on $P_{|\mathbf{K}|\setminus\{k_{i_0}\}}( (\partial D)\cap U\cap U_1)$. Observe that for every $i\in\{0,\ldots,n\}\setminus\{i_0\}$, \eqref{20250108for3} implies that
\begin{eqnarray}
&&D_{x_{k_{i}}}\left( P_{\{k_{i_1}'\}}P^{-1}_{|\mathbf{K}|\setminus\{k_{i_0}\}}\right)\nonumber\\
&=&\left\langle D(P_{|\mathbf{K}'|}P_{|\mathbf{K}|\setminus\{k_{i_0}\}}^{-1})\textbf{e}_{k_i},\textbf{e}_{k_{i_1}'}\right\rangle\nonumber\\
&=&\left\langle D(P_{|\mathbf{K}'|}P_{|\mathbf{K}'|\setminus\{k_{i_1}'\}}^{-1})\circ D(P_{|\mathbf{K}'|\setminus\{k_{i_1}'\}}P_{|\mathbf{K}|\setminus\{k_{i_0}\}}^{-1})\textbf{e}_{k_i},\textbf{e}_{k_{i_1}'}\right\rangle\nonumber\\
&=&\left\langle D(P_{|\mathbf{K}'|}P_{|\mathbf{K}'|\setminus\{k_{i_1}'\}}^{-1})(P_{|\mathbf{K}'|\setminus\{k_{i_1}'\}}P_{|\mathbf{K}|\setminus\{k_{i_0}\}}^{-1}) \sum_{k'\in |\mathbf{K}'|\setminus\{k_{i_1}'\} }D_{x_{k_i}}(P_{\{k'\}}P_{|\mathbf{K}|\setminus\{k_{i_0}\}}^{-1})\textbf{e}_{k'},\textbf{e}_{k_{i_1}'}\right\rangle\nonumber\\
&=& \sum_{k'\in |\mathbf{K}'|\setminus\{k_{i_1}'\} }D_{x_{k_i}}(P_{\{k'\}}P_{|\mathbf{K}|\setminus\{k_{i_0}\}}^{-1})\cdot\left\langle D(P_{|\mathbf{K}'|}P_{|\mathbf{K}'|\setminus\{k_{i_1}'\}}^{-1})(P_{|\mathbf{K}'|\setminus\{k_{i_1}'\}}P_{|\mathbf{K}|\setminus\{k_{i_0}\}}^{-1})\textbf{e}_{k'},\textbf{e}_{k_{i_1}'}\right\rangle\nonumber\\
&=& \sum_{k'\in |\mathbf{K}'|\setminus\{k_{i_1}'\} }D_{x_{k_i}}(P_{\{k'\}}P_{|\mathbf{K}|\setminus\{k_{i_0}\}}^{-1})\cdot D_{x_{k'}}(P_{\{k_{i_1}'\}}P_{|\mathbf{K}'|\setminus\{k_{i_1}'\}}^{-1})(P_{|\mathbf{K}'|\setminus\{k_{i_1}'\}}P_{|\mathbf{K}|\setminus\{k_{i_0}\}}^{-1}) \nonumber\\
&=& \sum_{j\in \{0,1,\ldots,n\}\setminus\{ i_1 \} }D_{x_{k_i}}(P_{\{k'_j\}}P_{|\mathbf{K}|\setminus\{k_{i_0}\}}^{-1})\cdot D_{x_{k'_j}}(P_{\{k_{i_1}'\}}P_{|\mathbf{K}'|\setminus\{k_{i_1}'\}}^{-1})(P_{|\mathbf{K}'|\setminus\{k_{i_1}'\}}P_{|\mathbf{K}|\setminus\{k_{i_0}\}}^{-1}),\label{20250109for1}
\end{eqnarray}
where the last equality follows from the fact that
$$D_{x_{k_i}}(P_{\{k'\}}P_{|\mathbf{K}|\setminus\{k_{i_0}\}}^{-1})=0$$
 for every $k'\in  |\mathbf{K}'|\setminus\{k_0',\ldots,k_{n}'\}.$ For each
 $$j\in\{0,\ldots,n\}\setminus\{i_1\},$$
 multiply the $j$-th row by $-D_{x_{k_j'}}(P_{\{k_{i_1 }'\}}P^{-1}_{|\mathbf{K}'|\setminus\{k_{i_1}'\}})(P_{|\mathbf{K}'|\setminus\{k_{i_1}'\}}P_{|\mathbf{K}|\setminus\{k_{i_0}\}}^{-1})$ and add the resulting row to the $i_1$-th row, then we have
\begin{eqnarray}\label{20250109for2}
s(\mathbf{K})s(\mathbf{K}')
\begin{vmatrix}D_{x_{k_{0}}}\left( P_{\{k_0'\}}P^{-1}_{|\mathbf{K}|\setminus\{k_{i_0}\}}\right) &\cdots&D_{x_{k_{i_0}}}\left( P_{\{k_0'\}}P_{|\mathbf{K}|}^{-1} \right)\circ F  &\cdots&D_{x_{k_{n}}}\left( P_{\{k_0'\}}P^{-1}_{|\mathbf{K}|\setminus\{k_{i_0}\}}\right) \\ \vdots&\ddots&\vdots&\ddots&\vdots\\
0&\cdots&G &\cdots&0\\ \vdots&\ddots&\vdots&\ddots&\vdots\\
D_{x_{k_{0}}}\left( P_{\{k_n'\}}P^{-1}_{|\mathbf{K}|\setminus\{k_{i_0}\}}\right)  &\cdots&D_{x_{k_{i_0}}}\left( P_{\{k_n'\}}P_{|\mathbf{K}|}^{-1} \right) \circ F &\cdots&D_{x_{k_{n}}}\left( P_{\{k_n'\}}P^{-1}_{|\mathbf{K}|\setminus\{k_{i_0}\}}\right) \end{vmatrix}
\end{eqnarray}
is a strictly positive function on $P_{|\mathbf{K}|\setminus\{k_{i_0}\}}((\partial D)\cap U\cap U_1)$, where for every $\mathbf{x}_{|\mathbf{K}|\setminus\{k_{i_0}\}}\in P_{|\mathbf{K}|\setminus\{k_{i_0}\}}((\partial D)$ $\cap U\cap U_1)$,
\begin{eqnarray*}
G(\mathbf{x}_{|\mathbf{K}|\setminus\{k_{i_0}\}} )&\triangleq&D_{x_{k_{i_0}}}\left( P_{\{k_{i_1}'\}}P_{|\mathbf{K}|}^{-1} \right) ( F(\mathbf{x}_{|\mathbf{K}|\setminus\{k_{i_0}\}} ))\\
&&-\sum\limits_{j\in\{0,\ldots,n\}\setminus\{i_1\}}\left(D_{x_{k_j'}}(P_{\{k_{i_1}'\}}P^{-1}_{|\mathbf{K}'|\setminus\{k_{i_1}'\}})
(P_{|\mathbf{K}'|\setminus\{k_{i_1}'\}}P_{|\mathbf{K}|\setminus\{k_{i_0}\}}^{-1}\mathbf{x}_{|\mathbf{K}|\setminus\{k_{i_0}\}} )\right)\\
&&\qquad \qquad\qquad\qquad\qquad\qquad\qquad\times D_{x_{k_{i_0}}}\left( P_{\{k_{j}'\}}P_{|\mathbf{K}|}^{-1} \right) (F(\mathbf{x}_{|\mathbf{K}|\setminus\{k_{i_0}\}} )),
\end{eqnarray*}
and the elements of the $i_1$-th row of the matrix in \eqref{20250109for2} are zero except the $i_0$-th entry. By Remark \ref{20250505rem2},
$$
s(\mathbf{K}\setminus\{k_{i_0}\})\cdot s(\mathbf{K}'\setminus\{k_{i_1}'\}) \cdot\det \left(D( P_{|\mathbf{K}'|\setminus\{k_{i_1}'\}}P_{|\mathbf{K}|\setminus\{k_{i_0}\}}^{-1})(\mathbf{x}_{|\mathbf{K}|\setminus\{k_{i_0}\}}  )\right)
$$
is equal to the \(n\times n\) determinant obtained by deleting the \(i_1\)-th row and the \(i_0\)-th column from the determinant on the right-hand side of \eqref{20250109for2}. Expanding that determinant along its \(i_1\)-th row, we obtain
\begin{eqnarray}\label{20250109for4}
&&(-1)^{i_0+i_1}\cdot G(\mathbf{x}_{|\mathbf{K}|\setminus\{k_{i_0}\}} ) \cdot s(\mathbf{K})\cdot s(\mathbf{K}') \nonumber\\
 &&\qquad\times s(\mathbf{K}\setminus\{k_{i_0}\})\cdot s(\mathbf{K}'\setminus\{k_{i_1}'\}) \cdot\det \left(D( P_{|\mathbf{K}'|\setminus\{k_{i_1}'\}}P_{|\mathbf{K}|\setminus\{k_{i_0}\}}^{-1})(\mathbf{x}_{|\mathbf{K}|\setminus\{k_{i_0}\}}  )\right) >0,
\end{eqnarray}
for every  $\mathbf{x}_{|\mathbf{K}|\setminus\{k_{i_0}\}} \in P_{|\mathbf{K}|\setminus\{k_{i_0}\}}((\partial D)\cap U\cap U_1)$. Since the product in \eqref{20250109for4} is strictly positive,
\begin{eqnarray*}
(-1)^{i_0+i_1}\cdot G(\mathbf{x}_{|\mathbf{K}|\setminus\{k_{i_0}\}} ) \cdot s(\mathbf{K})\cdot s(\mathbf{K}')
\end{eqnarray*}
and
\begin{eqnarray*}
s(\mathbf{K}\setminus\{k_{i_0}\})\cdot s(\mathbf{K}'\setminus\{k_{i_1}'\})\cdot \det \left(D( P_{|\mathbf{K}'|\setminus\{k_{i_1}'\}}P_{|\mathbf{K}|\setminus\{k_{i_0}\}}^{-1})(\mathbf{x}_{|\mathbf{K}|\setminus\{k_{i_0}\}}  )\right)
\end{eqnarray*}
have the same sign. Consequently, \eqref{20250109for3} is equivalent to
$$
(-1)^{i_0+i_1}\cdot G(P_{|\mathbf{K}|\setminus\{k_{i_0}\}}\mathbf{x} ) \cdot s(\mathbf{K})\cdot s(\mathbf{K}')>0,\,\textbf{x} \in (	\partial D)\cap U\cap U_1.
$$
 Recall that, by Definition \ref{20241009for1}, $s(\mathbf{K})=s(\mathbf{K}')=1$ and hence $s(\mathbf{K})\cdot s(\mathbf{K}')=1$, which implies that we only need to prove that
 $$(-1)^{i_0+i_1}\cdot G(\mathbf{x}_{|\mathbf{K}|\setminus\{k_{i_0}\}} ) >0,$$
  where $\mathbf{x}_{|\mathbf{K}|\setminus\{k_{i_0}\}} \triangleq P_{|\mathbf{K}|\setminus\{k_{i_0}\}}\mathbf{x}$ for every $\textbf{x} \in (	\partial D)\cap U\cap U_1.$

Fixed $\textbf{x}\in (\partial D)\cap U\cap U_1$. Define
$$
\Psi(\mathbf{y}_{|\mathbf{K}|})\triangleq \left\langle P_{\{k_{i_1}'\}}P_{|\mathbf{K}|}^{-1}\mathbf{y}_{|\mathbf{K}|}
-    (P_{\{k_{i_1}'\}}P_{|\mathbf{K}'|\setminus \{k_{i_1}'\}}^{-1}  )\circ(P_{|\mathbf{K}'|\setminus \{k_{i_1}'\}}P_{|\mathbf{K}|}^{-1})\mathbf{y}_{|\mathbf{K}|},\mathbf{e}_{k_{i_1}'}\right\rangle,\qquad
\mathbf{y}_{|\mathbf{K}|}\in V_{|\mathbf{K}|},
$$
where $V_{|\mathbf{K}|}$ is an open neighborhood of $P_{|\mathbf{K}|}\mathbf{x}$ such that $V_{|\mathbf{K}|}\subset U_{|\mathbf{K}|}$ and the above composition is well defined.

First, observe that
\begin{eqnarray}
G(\mathbf{x}_{|\mathbf{K}|\setminus\{k_{i_0}\}} )
&=&D_{x_{k_{i_0}}}\left( P_{\{k_{i_1}'\}}P_{|\mathbf{K}|}^{-1} \right) ( F(\mathbf{x}_{|\mathbf{K}|\setminus\{k_{i_0}\}} ))\nonumber\\
&-&\sum\limits_{j\in\{0,\ldots,n\}\setminus\{i_1\}}\left(D_{x_{k_j'}}(P_{\{k_{i_1}'\}}P^{-1}_{|\mathbf{K}'|\setminus\{k_{i_1}'\}})
(P_{|\mathbf{K}'|\setminus\{k_{i_1}'\}}P_{|\mathbf{K}|\setminus\{k_{i_0}\}}^{-1}\mathbf{x}_{|\mathbf{K}|\setminus\{k_{i_0}\}} )\right)\nonumber\\
&&\qquad\qquad\qquad \qquad\qquad\qquad\qquad \times D_{x_{k_{i_0}}}\left( P_{\{k_{j}'\}}P_{|\mathbf{K}|}^{-1} \right) ( F(\mathbf{x}_{|\mathbf{K}|\setminus\{k_{i_0}\}} ))\nonumber\\
&=&D_{x_{k_{i_0}}}\left( P_{\{k_{i_1}'\}}P_{|\mathbf{K}|}^{-1} \right) ( F(\mathbf{x}_{|\mathbf{K}|\setminus\{k_{i_0}\}} ))\nonumber\\
&-&\sum_{k'\in |\mathbf{K}'|\setminus\{k_{i_1}'\} }\left(D_{x_{k'}}(P_{\{k_{i_1}'\}}P^{-1}_{|\mathbf{K}'|\setminus\{k_{i_1}'\}})(P_{|\mathbf{K}'|\setminus\{k_{i_1}'\}}P_{|\mathbf{K}|\setminus\{k_{i_0}\}}^{-1}\mathbf{x}_{|\mathbf{K}|\setminus\{k_{i_0}\}} )\right)\nonumber\\
&&\qquad\qquad\qquad\qquad\qquad\qquad\qquad \times D_{x_{k_{i_0}}}\left( P_{\{k'\}}P_{|\mathbf{K}|}^{-1} \right) ( F(\mathbf{x}_{|\mathbf{K}|\setminus\{k_{i_0}\}} ))\nonumber\\
&=&D_{x_{k_{i_0}}} \Psi ( F(\mathbf{x}_{|\mathbf{K}|\setminus\{k_{i_0}\}} )).\label{20260724for1}
\end{eqnarray}
In passing from the first equality to the second, the sum is extended from
\[
\{k_j':j\in\{0,\ldots,n\}\setminus\{i_1\}\}
\]
to the whole set \( |\mathbf{K}'|\setminus\{k_{i_1}'\} \). This extension does not change the value of the sum, because
\[
D_{x_{k_{i_0}}}
\left(
P_{\{k'\}}P_{|\mathbf{K}|}^{-1}
\right)=0
\]
for every
$
k'\in |\mathbf{K}'|\setminus\{k_0',\ldots,k_n'\}.
$
The last equality follows from the chain rule.

By the assumption preceding \eqref{20260723for1}, there exists $\{x_{k_{i_0}}^m\}_{m=1}^{\infty}\in\mathbb{R}$ such that
$$
\left\{\textbf{x}^m_{|\mathbf{K}|}\triangleq x_{k_{i_0}}^m\textbf{e}_{k_{i_0}}+P_{|\mathbf{K}|\setminus\{k_{i_0}\}}\textbf{x}\right\}_{m=1}^{\infty}\subset U_{|\mathbf{K}|},\quad \left\{\textbf{x}_m \triangleq P_{|\mathbf{K}|}^{-1}\textbf{x}^m_{|\mathbf{K}|}\right\}_{m=1}^{\infty}\subset D^{\circ}\cap U\cap U_1,\quad\lim\limits_{m\to\infty} \textbf{x}_m=\textbf{x}.
$$
Therefore,
\begin{equation}\label{20260724for2}
\lim_{m\to\infty}x_{k_{i_0}}^m=\left\langle P_{\{k_{i_0}\}}\textbf{x},\textbf{e}_{k_{i_0}}\right\rangle,\quad (-1)^{i_0-1}\left(x_{k_{i_0}}^m-\left\langle P_{\{k_{i_0}\}}\textbf{x},\textbf{e}_{k_{i_0}}\right\rangle\right)<0
,\quad (-1)^{i_1-1}\Psi(\textbf{x}^m_{|\mathbf{K}|}) <0.
\end{equation}
Since \(\mathbf x\in(\partial D)\cap U\cap U_1\),
\[
\bigl(
P_{\{k'_{i_1}\}}
P_{|\mathbf{K}'|\setminus\{k'_{i_1}\}}^{-1}
\bigr)
\bigl(
P_{|\mathbf{K}'|\setminus\{k'_{i_1}\}}\mathbf x
\bigr)
=P_{\{k'_{i_1}\}}\mathbf x,
\]
and therefore
\begin{equation}\label{20260724for5}
\Psi(P_{|\mathbf{K}|}\mathbf x)=0.
\end{equation}
Since $\lim\limits_{m\to\infty}\mathbf x^m_{|\mathbf{K}|}=P_{|\mathbf{K}|}\mathbf x$,
\[
\mathbf x^m_{|\mathbf{K}|}
=P_{|\mathbf{K}|}\mathbf x
+
\left(
x_{k_{i_0}}^m-
\left\langle
P_{\{k_{i_0}\}}\mathbf x,
\mathbf e_{k_{i_0}}
\right\rangle
\right)\mathbf e_{k_{i_0}}\in V_{|\mathbf{K}|},
\]
for sufficiently large $m$. It follows from \eqref{20260724for1} that
\begin{eqnarray}
(-1)^{i_0+i_1}
G\bigl(
P_{|\mathbf{K}|\setminus\{k_{i_0}\}}\mathbf x
\bigr)
&=&
\lim_{m\to\infty}
\frac{
(-1)^{i_1-1}
\left(
\Psi(\mathbf x^m_{|\mathbf{K}|})
-
\Psi(P_{|\mathbf{K}|}\mathbf x)
\right)
}{
(-1)^{i_0-1}
\left(
x_{k_{i_0}}^m
-\left\langle
P_{\{k_{i_0}\}}\mathbf x,
\mathbf e_{k_{i_0}}
\right\rangle
\right)
}\nonumber\\
&=&
\lim_{m\to\infty}
\frac{
(-1)^{i_1-1}\Psi(\mathbf x^m_{|\mathbf{K}|})
}{
(-1)^{i_0-1}
\left(
x_{k_{i_0}}^m
-\left\langle
P_{\{k_{i_0}\}}\mathbf x,
\mathbf e_{k_{i_0}}
\right\rangle
\right)
},\label{20260724for6}
\end{eqnarray}
where the last equality follows from \eqref{20260724for5}.

Combining \eqref{20260724for2} and \eqref{20260724for6}, we obtain
$$(-1)^{i_0+i_1}\cdot G(P_{|\mathbf{K}|\setminus\{k_{i_0}\}}\textbf{x}) \geqslant 0.$$
 It follows from \eqref{20250109for4} that
$$(-1)^{i_0+i_1}\cdot G(P_{|\mathbf{K}|\setminus\{k_{i_0}\}}\textbf{x}) \neq 0.$$
 Consequently,
$$(-1)^{i_0+i_1}\cdot G(P_{|\mathbf{K}|\setminus\{k_{i_0}\}}\textbf{x})> 0.$$
This completes the proof of Proposition \ref{20250109prop1}.
\end{proof}

\begin{definition}\label{20241013def2}
The orientation on \(\partial D\) characterized by
\eqref{20250109for3} is called the \textbf{orientation on \(\partial D\)
induced by the orientation of \(S\)}.

By the constructions and results developed from
Definition~\ref{20241113def1} through
Proposition~\ref{20241013prop1}, one can define a
\(\sigma\)-finite Borel measure \(\mu_{D^\circ}\) on \(D^\circ\) and a
Borel measure \(\mu_{\partial D}\) on \(\partial D\).
At present, it is not known whether \(\mu_{\partial D}\) is
\(\sigma\)-finite.
\end{definition}

\begin{definition}\label{20260803def1}
Adopt the notation and assumptions of
Definition~\ref{20241009for1}. For each
\(\mathbf{x}\in\partial D\), choose a triple
$
\bigl((\partial D)\cap U,
\mathbf{K}\setminus{\{k_{i_0}\}}, f\bigr)\in\mathcal A_1
$
such that \(\mathbf{x}\in(\partial D)\cap U\).
For each \(\mathbf{I}\in S_{\mathbb N}^F\) satisfying
$
|\mathbf{I}|\in\Gamma_{|\mathbf{K}|\setminus \{k_{i_0}\}},
$
define
\[
n_{\mathbf{I}}^{\partial D}(\mathbf{x})
\triangleq
\frac{
s(\mathbf{I})\cdot s(\mathbf{K}\setminus\{k_{i_0}\})\cdot
\det\left(
D\bigl(P_{|\mathbf{I}|}  P_{|\mathbf{K}|\setminus\{k_{i_0}\}}^{-1}\bigr)
(P_{|\mathbf{K}|\setminus\{k_{i_0}\}}\mathbf{x})
\right)
}{
n_{|\mathbf{K}|\setminus\{k_{i_0}\}}(\mathbf{x})
}.
\]
In addition, set
$
n_{\emptyset}^{\partial D}(\mathbf{x})
\triangleq 0.
$
\end{definition}

It remains to verify that the quotient in the preceding definition is
well defined, namely, that it is independent of the choice of the triple
in \(\mathcal A_1\).

Let
$
\bigl((\partial D)\cap U_1 ,
\mathbf{K}'\setminus\{k_{i_1}'\} ,g\bigr)\in\mathcal A_1
$
be another triple such that
$
\mathbf{x}\in(\partial D)\cap U\cap U_1.
$
Set
\[
T
\triangleq
P_{|\mathbf{K}'|\setminus\{k_{i_1}'\}}
\circ
P_{|\mathbf{K}|\setminus\{k_{i_0}\}}^{-1}.
\]
By the chain rule,
\[
\begin{aligned}
&D\!\left(
P_{|\mathbf{I}|}
P_{|\mathbf{K}|\setminus\{k_{i_0}\}}^{-1}
\right)
\left(
P_{|\mathbf{K}|\setminus\{k_{i_0}\}}\mathbf{x}
\right)
=
D\!\left(
P_{|\mathbf{I}|}
P_{|\mathbf{K}'|\setminus\{k_{i_1}'\}}^{-1}
\right)
\left(
P_{|\mathbf{K}'|\setminus\{k_{i_1}'\}}\mathbf{x}
\right)
\circ
DT\!\left(
P_{|\mathbf{K}|\setminus\{k_{i_0}\}}\mathbf{x}
\right).
\end{aligned}
\]
Consequently, using \eqref{20250205for4},
Lemma~\ref{20241012lem3}, and \eqref{20250109for3}, we obtain
\[
\begin{aligned}
&\frac{
s\bigl(\mathbf{K}\setminus\{k_{i_0}\}\bigr)
\det\!\left(
D\!\left(
P_{|\mathbf{I}|}
\circ
P_{|\mathbf{K}|\setminus\{k_{i_0}\}}^{-1}
\right)
\left(
P_{|\mathbf{K}|\setminus\{k_{i_0}\}}\mathbf{x}
\right)
\right)
}{
n_{|\mathbf{K}|\setminus\{k_{i_0}\}}(\mathbf{x})
}\\
&\quad=
\frac{
s\bigl(\mathbf{K}\setminus\{k_{i_0}\}\bigr)
\det\!\left(
D\!\left(
P_{|\mathbf{I}|}
\circ
P_{|\mathbf{K}'|\setminus\{k_{i_1}'\}}^{-1}
\right)
\left(
P_{|\mathbf{K}'|\setminus\{k_{i_1}'\}}\mathbf{x}
\right)
\right)
}{
n_{|\mathbf{K}'|\setminus\{k_{i_1}'\}}(\mathbf{x})
}
\times
\frac{
\det\!\left(
DT\!\left(
P_{|\mathbf{K}|\setminus\{k_{i_0}\}}\mathbf{x}
\right)
\right)
}{
\left|
\det\!\left(
DT\!\left(
P_{|\mathbf{K}|\setminus\{k_{i_0}\}}\mathbf{x}
\right)
\right)
\right|
}\\
&\quad=
\frac{
s\bigl(\mathbf{K}'\setminus\{k_{i_1}'\}\bigr)
\det\!\left(
D\!\left(
P_{|\mathbf{I}|}
\circ
P_{|\mathbf{K}'|\setminus\{k_{i_1}'\}}^{-1}
\right)
\left(
P_{|\mathbf{K}'|\setminus\{k_{i_1}'\}}\mathbf{x}
\right)
\right)
}{
n_{|\mathbf{K}'|\setminus\{k_{i_1}'\}}(\mathbf{x})
}.
\end{aligned}
\]
Indeed, \eqref{20250109for3} yields
\[
s\bigl(\mathbf{K}\setminus\{k_{i_0}\}\bigr)
\frac{
\det\!\left(
DT\!\left(
P_{|\mathbf{K}|\setminus\{k_{i_0}\}}\mathbf{x}
\right)
\right)
}{
\left|
\det\!\left(
DT\!\left(
P_{|\mathbf{K}|\setminus\{k_{i_0}\}}\mathbf{x}
\right)
\right)
\right|
}
=s\bigl(\mathbf{K}'\setminus\{k_{i_1}'\}\bigr).
\]
Multiplying the resulting identity by \(s(\mathbf{I})\) shows that the value of
\(n_{\mathbf{I}}^{\partial D}(\mathbf{x})\) is independent of the chosen triple
in \(\mathcal A_1\). Hence \(n_{\mathbf{I}}^{\partial D}(\mathbf{x})\) is well
defined.

\begin{proposition}\label{20241013thm1}
We adopt the notation and assumptions of Definition~\ref{20241009for1}. Let $\{X_n\}_{n=1}^{\infty}$ be the sequence of functions on $\ell^2$ defined in \eqref{230409eq1}, and suppose that $\partial D$ is a $\sigma$-finite surface. Then the following assertions hold:
\begin{itemize}
\item[$(1)$] On $D^{\circ}$,
\[
\lim_{n\to\infty}X_n=1
\quad\text{and}\quad
\lim_{n\to\infty}
\sum_{i=1}^{\infty}
a_i^2
\left|
\frac{\partial X_n}{\partial x_i}
\right|^2
=0
\]
$\mu_{D^{\circ}}$-almost everywhere.

\item[$(2)$] On $\partial D$,
\[
\lim_{n\to\infty}X_n=1
\quad\text{and}\quad
\lim_{n\to\infty}
\sum_{i=1}^{\infty}
a_i^2
\left|
\frac{\partial X_n}{\partial x_i}
\right|^2
=0
\]
$\mu_{\partial D}$-almost everywhere.
\end{itemize}
\end{proposition}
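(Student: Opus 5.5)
By \eqref{230409eq1}, $X_n\equiv 1$ and $\partial X_n/\partial x_i\equiv 0$ for every $i$ on $K_n$, and $K_n\uparrow K=\bigcup_n K_n$. Hence both limits hold at every point of $K$: if $\mathbf{x}\in K_m$, then $X_n(\mathbf{x})=1$ and the gradient sum vanishes for all $n\geqslant m$. It therefore suffices to prove
\[
\mu_{D^\circ}(D^\circ\setminus K)=0,\qquad \mu_{\partial D}(\partial D\setminus K)=0 .
\]
Here $K=\{\mathbf{x}\in\ell^2:\sum_i x_i^2/c_i<\infty\}$ is Borel by \eqref{230708e1}. Since both surface measures are built from countably many local coordinate pieces (proof of Proposition~\ref{20241013prop1}, with $\sigma$-finiteness from the hypotheses), it is enough to show that each local measure $\mu(\cdot,S\cap U,I,f)$ gives zero mass to the complement of $K$.

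Fix a local triple $(S\cap U,I,f)$, either of $D^\circ$ or from $|\mathcal A_1|$. By Definition~\ref{20241012def2}, its local measure has density $n_I(\mathbf{x}_I+f(\mathbf{x}_I))\,F_{\mathbb{N}\setminus I}(f(\mathbf{x}_I))$ with respect to $\mu_I$ on $U_I$. We split $U_I$ into two parts.

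\emph{Where $f(\mathbf{x}_I)\notin C_{\mathbb{N}\setminus I}$.} Here $F_{\mathbb{N}\setminus I}=+\infty$. The bounded-triple hypothesis \eqref{20241013cond1}, which is imposed on $S$ in Definition~\ref{20241009for1}, forces this set to be empty. For $\partial D$, I will check that the boundary triples inherit the same bound, or else argue directly that the density is finite $\mu_{\partial D}$-almost everywhere by $\sigma$-finiteness. A set of infinite density carrying positive $\mu_I$-mass would be a set of infinite measure with no finite-measure subsets of positive measure, which contradicts $\sigma$-finiteness. So up to a null set, $f(\mathbf{x}_I)\in C_{\mathbb{N}\setminus I}$. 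Lemma~\ref{20250130lem1} applied with index set $\mathbb{N}\setminus I$ then gives $\sum_{j\notin I} f_j(\mathbf{x}_I)^2/c_j<\infty$.

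\emph{The $I$-coordinates.} $\mu_I$ is a Gaussian product measure on $P_I\ell^2$, and $\int\sum_{i\in I}x_i^2/c_i\,d\mu_I=\sum_{i\in I}a_i^2/c_i<\infty$ by \eqref{20240920for1}, exactly as in Proposition~\ref{basic propeties of Knm}. Hence $\sum_{i\in I}x_i^2/c_i<\infty$ for $\mu_I$-almost every $\mathbf{x}_I$. Absolute continuity of the local measure with respect to $\mu_I$ transfers this null set.

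Combining the two parts, $\mathbf{x}=\mathbf{x}_I+f(\mathbf{x}_I)\in K$ for local-measure-almost every point, which proves (1). For (2), the same argument applies verbatim to the triples in $|\mathcal A_1|$, with $I=|\mathbf{K}|\setminus\{k_{i_0}\}$ and the measure $\mu_{\partial D}$ from Definition~\ref{20241013def2}.

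The main obstacle is the first part on $\partial D$: showing that the density $F_{\mathbb{N}\setminus I}\circ f$ is finite almost everywhere when boundedness of the boundary triples is not directly assumed. This is where the hypothesis that $\partial D$ is $\sigma$-finite must be used. I would argue that on the set where the density is infinite, the measure takes only the values $0$ or $\infty$, so $\sigma$-finiteness forces that set to be null.
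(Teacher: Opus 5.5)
Your argument is correct, and its skeleton matches the paper's. You reduce to $K=\bigcup_n K_n$ via \eqref{230409eq1} and work chart by chart over a countable subcover; that subcover comes from the Lindel\"of property, not from $\sigma$-finiteness. You control the graph coordinates by Gaussian integrability of $\sum_{i\in I}x_i^2/c_i$, and the complementary coordinates by finiteness of $F_{\mathbb{N}\setminus I}$ together with Lemma~\ref{20250130lem1}.

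The difference is in how the two halves are justified. The paper uses the uniform constant $C_1$ from \eqref{20241013cond1} for both. It bounds $\int\sum_{i\in I}x_i^2/c_i\,\mathrm{d}\mu_{D^\circ}\leqslant C_1\sum_i a_i^2/c_i$, and reads off $F_{\mathbb{N}\setminus I}(f)<\infty$ from $C_1<\infty$. For the first half you use only absolute continuity of the local measure with respect to $\mu_I$, which needs no bound on the density.

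This matters for part $(2)$, which the paper dismisses as ``analogous.'' The triples in $\mathcal A_1$ are not assumed to satisfy \eqref{20241013cond1}, so $C_1$ is not available on boundary charts. Your $\sigma$-finiteness argument supplies the missing step: where the density is $+\infty$, every Borel subset has measure $0$ or $\infty$, so that set is $\mu_{\partial D}$-null. It also gives a genuine use of the hypothesis that $\partial D$ is $\sigma$-finite.

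Your first option, that ``the boundary triples inherit the same bound,'' is neither needed nor obviously true. What you need, and what holds, is that $F_{\mathbb{N}\setminus J}$ with $J=|\mathbf{K}|\setminus\{k_{i_0}\}$ is finite at every point $\mathbf{x}\in\partial D$. Each such point lies in some bounded triple $(S\cap U',I',f')$ of $S$, so $P_{\mathbb{N}\setminus I'}\mathbf{x}\in C_{\mathbb{N}\setminus I'}$. Since $I'\sim|\mathbf{K}|$, the set $I'\triangle J$ is finite, so the series defining $C_{\mathbb{N}\setminus J}$ differs from that of $C_{\mathbb{N}\setminus I'}$ by finitely many finite terms. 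This route makes the infinite-density set empty and avoids $\sigma$-finiteness altogether; your route is equally valid.
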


\begin{proof}
Recall the sequence of positive numbers $\{c_i\}_{i=1}^{\infty}$ introduced in \eqref{20240920for1}, which satisfies $\lim\limits_{i\to \infty}c_i=0$ and $\sum\limits_{i=1}^{\infty}\frac{a_i^2}{c_i}<\infty$.

We first prove $(1)$. Assume that condition \eqref{20241013cond1} in Definition \ref{20250112def1} holds. Let $(D^{\circ}\cap U,I,f)$ be a bounded local coordinate triple of $D^{\circ}$, and set
\begin{eqnarray}\label{20250206for1}
C_1\triangleq \sup_{\textbf{x}_{I}\in U_{I}} n_{I}(\textbf{x}_{I}+f(\textbf{x}_{I}))\cdot F_{\mathbb{N}\setminus I}(f(\textbf{x}_{I}))<\infty.
\end{eqnarray}
Then, by the local coordinate representation and the definition of $\mu_{D^\circ}$, we have
\begin{eqnarray}
\int_{D^{\circ}\cap U}\left(\sum_{i\in I}\frac{x_i^2}{c_i}\right)\,\mathrm{d}\mu_{D^{\circ}}(\textbf{x})&= &\int_{P_{I}(D^{\circ}\cap U)}  \left(\sum_{i\in I}\frac{x_i^2}{c_i}\right)\cdot n_{I}(\textbf{x}_{I_1}+f(\textbf{x}_{I}))\cdot F_{\mathbb{N}\setminus I}(f(\textbf{x}_{I_1}))\,\mathrm{d}\mu_{I}(\textbf{x}_{I})\nonumber\\
&\leqslant &C_1\cdot \int_{P_{I}(D^{\circ}\cap U)} \left(\sum_{i\in I}\frac{x_i^2}{c_i}\right)\,\mathrm{d}\mu_{I}(\textbf{x}_{I})\nonumber\\
&\leqslant &C_1\cdot \int_{P_{I}\ell^2} \left(\sum_{i\in I}\frac{x_i^2}{c_i}\right)\,\mathrm{d}\mu_{I}(\textbf{x}_{I_1})\label{230413eq2}\\
&=&C_1\cdot  \left(\sum_{i\in I}\frac{a_i^2}{c_i}\right)\nonumber\\
&\leqslant&C_1\cdot   \left(\sum_{i=1}^{\infty}\frac{a_i^2}{c_i}\right)<\infty.\nonumber
\end{eqnarray}
Here the equality in the fourth line follows from the Monotone Convergence Theorem and the definition of the Gaussian product measure $\mu_{I}$.

Since the integrand is nonnegative and measurable, \eqref{230413eq2} implies that
\[
\sum_{i\in I}\frac{x_i^2}{c_i}<\infty
\]
for $\mu_{D^\circ}$-almost every $\mathbf{x}=(x_i)_{i\in\mathbb N}\in D^\circ\cap U$. Hence there exists a Borel set $E_U\subset D^\circ\cap U$ such that
\[
\mu_{D^\circ}\bigl((D^\circ\cap U)\setminus E_U\bigr)=0
,\qquad \text{and
}\qquad
\sum_{i\in I}\frac{x_i^2}{c_i}<\infty ,
\qquad \mathbf{x}=(x_i)_{i\in\mathbb N}\in E_U .
\]
Moreover, for every $\mathbf{x}=(x_i)_{i\in\mathbb N}\in E_U$, we may write
$
\mathbf{x}=\mathbf{x}_{I}+f(\mathbf{x}_{I}).
$
By \eqref{20250206for1} and the definition of
$F_{\mathbb{N}\setminus I}$ in \eqref{20250206def1}, we obtain
\[
\sum_{i\in\mathbb{N}\setminus I}
\left|
\ln\frac{1}{\sqrt{2\pi}a_i}
-\frac{x_i^2}{2a_i^2}
\right|
<\infty .
\]
Combining this with Lemma \ref{20250130lem1}, we further obtain
\[
\sum_{i\in\mathbb{N}\setminus I}\frac{x_i^2}{c_i}<\infty .
\]
Consequently,
\[
\sum_{i=1}^{\infty}\frac{x_i^2}{c_i}<\infty ,
\qquad \mathbf{x}=(x_i)_{i\in\mathbb N}\in E_U .
\]
Thus $E_U\subset K$, where $K$ is the set defined in \eqref{230708e1}.

Now let
$
\{(D^\circ\cap U_j,I_j,f_j)\}_{j=1}^{\infty}
$
be a countable family of bounded local coordinate triples covering $D^\circ$. Applying the preceding argument to each $D^\circ\cap U_j$, we can choose a Borel set
$
E_{U_j}\subset D^\circ\cap U_j
$
such that
\[
\mu_{D^\circ}\bigl((D^\circ\cap U_j)\setminus E_{U_j}\bigr)=0
\qquad\text{and}\qquad
E_{U_j}\subset K .
\]
Set
$
E\triangleq \bigcup\limits_{j=1}^{\infty}E_{U_j}.
$
Then $E$ is a Borel subset of $D^\circ$, and since the sets $D^\circ\cap U_j$ cover $D^\circ$, we have
\[
D^\circ\setminus E
\subset
\bigcup_{j=1}^{\infty}\bigl((D^\circ\cap U_j)\setminus E_{U_j}\bigr).
\]
Therefore,
\[
\mu_{D^\circ}(D^\circ\setminus E)
\leqslant
\sum_{j=1}^{\infty}
\mu_{D^\circ}\bigl((D^\circ\cap U_j)\setminus E_{U_j}\bigr)
=0 .
\]
Moreover, since each $E_{U_j}$ is contained in $K$, we have $E\subset K$.

Hence, for every $\mathbf{x}\in E\subset K$, \eqref{230409eq1} yields
\[
\lim\limits_{m\to\infty}X_m(\mathbf{x})=1,
\qquad\text{and}\qquad
\lim_{m\to\infty}
\sum_{i=1}^{\infty}
a_i^2
\left|
\frac{\partial X_m(\mathbf{x})}{\partial x_i}
\right|^2
=0 .
\]
Since $\mu_{D^\circ}(D^\circ\setminus E)=0$, it follows that
\[
X_m\to 1
,\qquad \text{and}\qquad
\sum_{i=1}^{\infty}
a_i^2
\left|
\frac{\partial X_m}{\partial x_i}
\right|^2
\to 0
\]
almost everywhere on $D^\circ$ with respect to $\mu_{D^\circ}$. This proves (1).

The proof of $(2)$ is analogous. Therefore, the proof of Proposition \ref{20241013thm1} is complete.
\end{proof}

Let \(I\) be a nonempty subset of \(\mathbb N\), and recall the definition of \(C_I\) in \eqref{20250119def1}. For
\[
\mathbf{x}_I=\sum_{i\in I}x_i\mathbf e_i\in P_I\ell^2,
\]
we have \(\mathbf{x}_I\in C_I\) if and only if
\[
\sum_{i\in I}
\left|
\ln\frac{1}{\sqrt{2\pi}a_i}
-\frac{x_i^2}{2a_i^2}
\right|<\infty.
\]

Define
\begin{equation}\label{20241012for4}
G_I(\mathbf{x}_I)
\triangleq
\sum_{i\in I}
\left(
\ln\frac{1}{\sqrt{2\pi}a_i}
-\frac{x_i^2}{2a_i^2}
\right),
\qquad
\mathbf{x}_I=\sum_{i\in I}x_i\mathbf e_i\in C_I.
\end{equation}
By the definition of \(F_I\) in \eqref{20250206def1}, we have
\[
F_I(\mathbf{x}_I)=\exp\bigl(G_I(\mathbf{x}_I)\bigr),
\qquad
\mathbf{x}_I\in C_I.
\]
For each \(\mathbf{x}_I=\sum_{i\in I}x_i\mathbf e_i\in C_I\), define the set of admissible increments by
\[
C_I(\mathbf{x}_I)
\triangleq
\left\{
\Delta\mathbf{x}_I\in P_I\ell^2:
\mathbf{x}_I+\Delta\mathbf{x}_I\in C_I
\right\}.
\]
Equivalently, if
\[
\Delta\mathbf{x}_I
=\sum_{i\in I}\Delta x_i\mathbf e_i \in P_I\ell^2,
\]
then \(\Delta\mathbf{x}_I\in C_I(\mathbf{x}_I)\) if and only if
\[
\sum_{i\in I}
\left|
\ln\frac{1}{\sqrt{2\pi}a_i}
-\frac{(x_i+\Delta x_i)^2}{2a_i^2}
\right|<\infty.
\]
Let
$
M_I\triangleq\sup\limits_{i\in I}a_i^2<\infty.
$
If \(\Delta\mathbf{x}_I=\sum\limits_{i\in I}\Delta x_i\mathbf e_i\in P_I\ell^2\) satisfies
\[
\sum_{i\in I}\frac{|\Delta x_i|^2}{a_i^4}<\infty,
\]
then
\[
\sum_{i\in I}\frac{|x_i\Delta x_i|}{a_i^2}
\leqslant
\left(\sum_{i\in I}x_i^2\right)^{1/2}
\left(\sum_{i\in I}\frac{|\Delta x_i|^2}{a_i^4}\right)^{1/2}
<\infty
\]
and
\begin{equation}
\sum_{i\in I}\frac{|\Delta x_i|^2}{2a_i^2}
\leqslant
\frac{M_I}{2}
\sum_{i\in I}\frac{|\Delta x_i|^2}{a_i^4}
<\infty.\label{20260725for1}
\end{equation}
Consequently,
\[
\begin{aligned}
\sum_{i\in I}
\left|
\ln\frac{1}{\sqrt{2\pi}a_i}
-\frac{(x_i+\Delta x_i)^2}{2a_i^2}
\right| \
\leqslant
\sum_{i\in I}
\left|
\ln\frac{1}{\sqrt{2\pi}a_i}
-\frac{x_i^2}{2a_i^2}
\right|
+
\sum_{i\in I}\frac{|x_i\Delta x_i|}{a_i^2}
+
 \sum_{i\in I}\frac{|\Delta x_i|^2}{2a_i^2}
<\infty.
\end{aligned}
\]
Thus \(\Delta\mathbf{x}_I\in C_I(\mathbf{x}_I)\). Moreover,
\begin{equation}\label{20241007for1}
G_I(\mathbf{x}_I+\Delta\mathbf{x}_I)
-G_I(\mathbf{x}_I)
=-\sum_{i\in I}\frac{x_i\Delta x_i}{a_i^2}
- \sum_{i\in I}\frac{(\Delta x_i)^2}{2a_i^2}.
\end{equation}

Define
\begin{eqnarray}
H_I^1
&\triangleq
\left\{
\mathbf y_I=\sum\limits_{i\in I}y_i\mathbf e_i\in P_I\ell^2:
\sum\limits_{i\in I}\frac{y_i^2}{a_i^2}<\infty
\right\},\qquad \nonumber\
H_I^2
&\triangleq
\left\{
\mathbf y_I=\sum_{i\in I}y_i\mathbf e_i\in P_I\ell^2:
\sum_{i\in I}\frac{y_i^2}{a_i^4}<\infty
\right\},\label{20241020for1}
\end{eqnarray}
and equip \(H_I^2\) with the norm
\[
\|\mathbf y_I\|_{H_I^2}
\triangleq
\left(
\sum_{i\in I}\frac{y_i^2}{a_i^4}
\right)^{1/2},\qquad
\mathbf y_I=\sum_{i\in I}y_i\mathbf e_i\in H_I^2.
\]

For a fixed \(\mathbf{x}_I=\sum\limits_{i\in I}x_i\mathbf e_i\in C_I\), define
\[
T_{\mathbf{x}_I}(\mathbf y_I)
\triangleq
-\sum_{i\in I}\frac{x_i y_i}{a_i^2},
\qquad
\mathbf y_I=\sum_{i\in I}y_i\mathbf e_i\in H_I^2.
\]
By the Cauchy--Schwarz inequality,
$
\left|T_{\mathbf{x}_I}(\mathbf y_I)\right|
\leqslant
\|\mathbf{x}_I\| \cdot
\|\mathbf y_I\|_{H_I^2}.
$
Hence
$
T_{\mathbf{x}_I}\in(H_I^2)^*,
$
which denotes the real dual of $H_I^2$.
Furthermore, \(H_I^2\subset C_I(\mathbf{x}_I)\), and it follows from
\eqref{20260725for1} and \eqref{20241007for1} that, for every
\(\Delta\mathbf{x}_I=\sum\limits_{i\in I}\Delta x_i\mathbf e_i\in H_I^2\),
\[
\begin{aligned}
\left|
G_I(\mathbf{x}_I+\Delta\mathbf{x}_I)
-G_I(\mathbf{x}_I)
-T_{\mathbf{x}_I}(\Delta\mathbf{x}_I)
\right|
=
\frac{1}{2}\sum_{i\in I}\frac{(\Delta x_i)^2}{a_i^2}
\leqslant
\frac{M_I}{2}
\|\Delta\mathbf{x}_I\|_{H_I^2}^2.
\end{aligned}
\]
Therefore,
\[
\frac{
\left|
G_I(\mathbf{x}_I+\Delta\mathbf{x}_I)
-G_I(\mathbf{x}_I)
-T_{\mathbf{x}_I}(\Delta\mathbf{x}_I)
\right|
}{
\|\Delta\mathbf{x}_I\|_{H_I^2}
}
\leqslant
\frac{M_I}{2}
\|\Delta\mathbf{x}_I\|_{H_I^2}
\longrightarrow 0
\]
as
\(\|\Delta\mathbf{x}_I\|_{H_I^2}\to0\).

Thus, in the sense of Definition \ref{20241115def1} and Definition
\ref{20250115def1}, \(G_I\) is Fr\'{e}chet differentiable along the $H_I^2$-direction at $\mathbf{x}_I$, with derivative
\[
D_{H_I^2}G_I(\mathbf{x}_I)
=T_{\mathbf{x}_I}.
\]
Finally, since \(F_I=\exp\circ G_I\), the chain rule yields that \(F_I\)
is Fr\'{e}chet differentiable along the $H_I^2$-direction at $\mathbf{x}_I$, and
\[
\begin{aligned}
D_{H_I^2}F_I(\mathbf{x}_I)(\mathbf y_I)
=F_I(\mathbf{x}_I)\cdot
D_{H_I^2}G_I(\mathbf{x}_I)(\mathbf y_I)
=F_I(\mathbf{x}_I)\cdot T_{\mathbf{x}_I}(\mathbf y_I),
\qquad
\mathbf y_I=\sum_{i\in I}y_i\mathbf e_i\in H_I^2.
\end{aligned}
\]

By the chain rule, we obtain the following result, which will be used in the sequel.

\begin{lemma}\label{20250113lem1}
Let \(O\) be a nonempty open subset of \(\mathbb{R}\), and let
\(g:O\rightarrow \ell^2\) satisfy \(g(O)\subset C_I\).
Assume that \(g\) is \(H_I^2\)-Fr\'{e}chet differentiable.
Then \(F_I\circ g\) is Fr\'{e}chet differentiable from $O$ into $\mathbb{R}$.
\end{lemma}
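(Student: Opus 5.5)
The plan is a chain-rule argument at a fixed point. Fix $s_0\in O$ and put $\mathbf{x}_I\triangleq g(s_0)\in C_I$. By Definition \ref{20250115def1}, since $g$ is $H_I^2$-Fr\'echet differentiable at $s_0$, there is an open interval $V\subset O$ containing $s_0$ with the following properties:
\[
g(s)-g(s_0)\in H_I^2\quad (s\in V),\qquad g(s)-g(s_0)=(s-s_0)\,\mathbf{v}+r(s),\qquad \|r(s)\|_{H_I^2}=o(|s-s_0|),
\]
where $\mathbf{v}\in H_I^2$ is the derivative. In particular $\|g(s)-g(s_0)\|_{H_I^2}=O(|s-s_0|)$.

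The key point is that the increment $\Delta\mathbf{x}_I(s)\triangleq g(s)-g(s_0)$ lies in $H_I^2\subset C_I(\mathbf{x}_I)$. This is exactly the regime covered by the computation preceding the lemma. Hence $F_I(g(s))=\exp(G_I(\mathbf{x}_I+\Delta\mathbf{x}_I(s)))$ with $G_I$ finite. Moreover
\[
\bigl|G_I(\mathbf{x}_I+\Delta\mathbf{x}_I)-G_I(\mathbf{x}_I)-T_{\mathbf{x}_I}(\Delta\mathbf{x}_I)\bigr|\le \tfrac{M_I}{2}\|\Delta\mathbf{x}_I\|_{H_I^2}^2 .
\]
This gives $G_I(g(s))-G_I(g(s_0))=T_{\mathbf{x}_I}(\Delta\mathbf{x}_I(s))+O(|s-s_0|^2)$. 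Linearity and boundedness of $T_{\mathbf{x}_I}$ on $H_I^2$ (norm at most $\|\mathbf{x}_I\|$) then yield
\[
G_I(g(s))-G_I(g(s_0))=(s-s_0)\,T_{\mathbf{x}_I}(\mathbf{v})+o(|s-s_0|).
\]
Thus $s\mapsto G_I(g(s))$ is differentiable at $s_0$ with derivative $T_{\mathbf{x}_I}(\mathbf{v})$.

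Finally, compose with the scalar function $\exp$, which is differentiable on $\mathbb{R}$. The ordinary chain rule then gives that $F_I\circ g=\exp\circ(G_I\circ g)$ is differentiable at $s_0$ with derivative $F_I(g(s_0))\,T_{g(s_0)}(\mathbf{v})$. Since $s_0$ is arbitrary, $F_I\circ g$ is Fr\'echet differentiable from $O$ into $\mathbb{R}$; for a function of one real variable this is the same as ordinary differentiability.

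The only delicate point, and the main obstacle, is being careful that the chain rule is applied along the $H_I^2$-direction. $G_I$ is not differentiable in the $\ell^2$ sense, and it is not even finite off $C_I$. So one must use both facts supplied by $H_I^2$-differentiability: the increments stay in the affine slice $\mathbf{x}_I+H_I^2\subset C_I$, and they are small in the stronger $H_I^2$-norm. One cannot invoke a black-box chain rule for $D_{H_I^2}G_I$. The quadratic remainder estimate above, which is uniform in $H_I^2$-norm, is what makes the composition work.
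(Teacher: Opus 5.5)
Your proof is correct and takes essentially the paper's route. The paper simply invokes the chain rule with the $H_I^2$-directional derivative $D_{H_I^2}F_I(\mathbf{x}_I)=F_I(\mathbf{x}_I)\,T_{\mathbf{x}_I}$ computed just before the lemma, and your argument verifies that chain rule explicitly through the quadratic remainder bound. Your closing caveat is overstated: the ordinary pointwise Fr\'echet chain rule does apply directly once you compose $s\mapsto g(s)-g(s_0)\in H_I^2$ with $\Delta\mapsto G_I(\mathbf{x}_I+\Delta)$ on $H_I^2$.
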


In order for a Gauss--Green type theorem to hold, we also require the following
additional regularity condition on the local coordinate triples of the surface.

\begin{definition}\label{20250115def2}
Suppose that $S$ is a $\sigma$-finite oriented $C^1$-differentiable surface
of $\ell^2$ with codimension $\Gamma_{\mathbb{N}\setminus I_0}$.
We call $S$ a
\textbf{smooth surface}, if there exists a family $\mathcal{A}$ of
bounded local coordinate triples of $S$ covering $S$ such that: for every
$(S\cap U, I,f)\in\mathcal{A}$,
\[
S\cap U=
\left\{
\mathbf{x}_{I}+f(\mathbf{x}_{I}):
\mathbf{x}_{I}=\sum_{i\in I}x_i\mathbf e_i\in U_{I}
\right\},
\]
where $U_{I}$ is an open subset of $P_{I}\ell^2$,
$
f\in C^1_{P_{I}\ell^2}(U_{I};P_{\mathbb{N}\setminus I}\ell^2),
$ and $
f(U_{I})\subset C_{\mathbb{N}\setminus I};
$
For every $j\in I$ and
$
\mathbf{x}_{I}^0\in U_{I},
$
define the one-dimensional slice function
\[
f_j(t)\triangleq
f(\mathbf{x}_{I}^0+t\mathbf e_j),
\qquad
t\in U_{\mathbf{x}_{I}^0,j},
\]
where
$
U_{\mathbf{x}_{I}^0,j}
\triangleq
\{t\in\mathbb R:
\mathbf{x}_{I}^0+t\mathbf e_j\in U_{I}\}
$ (Obviously, $0\in U_{\mathbf{x}_{I}^0,j}$);
For every $j\in I$, the function $f_j$ is
$H^2_{\mathbb{N}\setminus I}$-Fr\'{e}chet differentiable at $0$, and
\begin{equation}\label{20260729for1}
\langle f(\cdot),\mathbf{e}_k\rangle \in C_\mathcal{F}^2(U_{I}),\qquad k\in\mathbb{N}\setminus I.
\end{equation}

\end{definition}

\begin{remark}
If $J\subset \mathbb{N}$ is finite, then $P_J\ell^2$, $H^1_J$, and $H^2_J$ are Hilbert spaces with equivalent norms. Hence, if $\mathbb{N}\setminus I_0$ is finite, the weighted differentiability conditions appearing in Definition \ref{20250115def2} are automatically equivalent to the usual $C^1$-differentiability conditions. Consequently, the notion of a smooth surface is unnecessary in this case. 
\end{remark}

\begin{lemma}\label{20260728lem1}
We adopt the notation and assumptions of Definition~\ref{20250115def2}. Let $F$ be a real-valued function defined on $U$ and let $j\in I$. If $F$ is Fr\'{e}chet differentiable along the $H_{\mathbb{N}}^2$-direction at every point of $U$, then
\begin{eqnarray}\label{20250115for1}
D_{x_j}(F\circ P_{I}^{-1})= \sum_{k\in\mathbb{N}}(D_{x_k}F)\circ P_{I}^{-1} \cdot D_{x_j}(P_{\{k\}}P_{I}^{-1} ),\qquad \text{on}\quad U_{I},
\end{eqnarray}
where
$
 D_{x_j}(P_{\{k\}}P_{I}^{-1} )\triangleq  D_{x_j}\left(\left\langle P_{\{k\}}P_{I}^{-1}\cdot,\mathbf{e}_{k}\right\rangle \right).
$ Moreover, the series in \eqref{20250115for1} absolutely converge point-wise on $U_{I}$.
\end{lemma}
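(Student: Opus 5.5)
The plan is to reduce \eqref{20250115for1} to a one-variable chain rule along the curve traced by the $j$-th coordinate line, lifted to the surface. Fix $\mathbf{x}_I^0\in U_I$ and $j\in I$, and set
\[
g(t)\triangleq P_I^{-1}(\mathbf{x}_I^0+t\mathbf e_j)=\mathbf{x}_I^0+t\mathbf e_j+f(\mathbf{x}_I^0+t\mathbf e_j),\qquad t\in U_{\mathbf{x}_I^0,j}.
\]
Then $(F\circ P_I^{-1})(\mathbf{x}_I^0+t\mathbf e_j)=F(g(t))$, so the left-hand side of \eqref{20250115for1} at $\mathbf{x}_I^0$ is $\frac{\mathrm d}{\mathrm dt}F(g(t))|_{t=0}$.

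\textbf{Step 1 (the curve has increments in $H^2_{\mathbb N}$).} By Definition~\ref{20250115def2}, the slice $f_j$ is $H^2_{\mathbb N\setminus I}$-Fr\'echet differentiable at $0$ in the sense of Definition~\ref{20250115def1}. Hence, for small $t$, $f_j(t)-f_j(0)\in H^2_{\mathbb N\setminus I}$, and there is $\mathbf w\in H^2_{\mathbb N\setminus I}$ with
\[
\|f_j(t)-f_j(0)-t\mathbf w\|_{H^2}=o(|t|).
\]
Since $t\mathbf e_j\in H^2_{\mathbb N}$, it follows that $g(t)-g(0)\in H^2_{\mathbb N}$. Moreover, $g$ is $H^2_{\mathbb N}$-Fr\'echet differentiable at $0$ with derivative $\mathbf v\triangleq \mathbf e_j+\mathbf w\in H^2_{\mathbb N}$, and $\|g(t)-g(0)\|_{H^2}=O(|t|)\to0$.

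The coordinates of $\mathbf v$ are as follows. We have $v_j=1$ and $v_k=0$ for $k\in I\setminus\{j\}$. For $k\in\mathbb N\setminus I$, $v_k=w_k=D_{x_j}\langle f(\cdot),\mathbf e_k\rangle(\mathbf{x}_I^0)$, because taking the $k$-th coordinate is continuous on $H^2$. In every case $v_k=D_{x_j}(P_{\{k\}}P_I^{-1})(\mathbf{x}_I^0)$.

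\textbf{Step 2 (chain rule).} Let $L\triangleq D_{H^2_{\mathbb N}}F(g(0))\in(H^2_{\mathbb N})^*$. Write $F(g(t))=F\bigl(g(0)+(g(t)-g(0))\bigr)$, with increments lying in $H^2_{\mathbb N}$ and tending to $\mathbf 0$ there. Combining the $H^2$-Fr\'echet expansion of $F$ at $g(0)$ with Step~1 gives
\[
F(g(t))-F(g(0))=L(g(t)-g(0))+o(\|g(t)-g(0)\|_{H^2})=tL(\mathbf v)+o(|t|).
\]
Hence $D_{x_j}(F\circ P_I^{-1})(\mathbf{x}_I^0)=L(\mathbf v)$. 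Applying the same expansion along the direction $s\mathbf e_k\in H^2_{\mathbb N}$ shows that the partial derivative exists and equals $D_{x_k}F(g(0))=L(\mathbf e_k)$.

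\textbf{Step 3 (expand $L(\mathbf v)$ as an absolutely convergent series).} This is the main point: it is where the weighted space $H^2$ rather than $\ell^2$ is essential. Because $\sum_k v_k^2/a_k^4<\infty$, the partial sums $\sum_{k\le N}v_k\mathbf e_k$ converge to $\mathbf v$ in the $H^2_{\mathbb N}$-norm. Continuity of $L$ on $H^2_{\mathbb N}$ then gives
\[
L(\mathbf v)=\sum_k v_kL(\mathbf e_k)=\sum_k (D_{x_k}F)(P_I^{-1}\mathbf{x}_I^0)\,D_{x_j}(P_{\{k\}}P_I^{-1})(\mathbf{x}_I^0),
\]
which is \eqref{20250115for1}.

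For absolute convergence, use the Riesz representation in the Hilbert space $H^2_{\mathbb N}$: $L(\mathbf y)=\sum_k u_ky_k/a_k^4$ for some $\mathbf u$ with $\sum_k u_k^2/a_k^4<\infty$. Then $L(\mathbf e_k)=u_k/a_k^4$, and by Cauchy--Schwarz
\[
\sum_k |v_k|\,|L(\mathbf e_k)|=\sum_k\frac{|v_k u_k|}{a_k^4}\le\|\mathbf v\|_{H^2}\|\mathbf u\|_{H^2}<\infty.
\]
The only delicate point is making sure the increments of $g$ genuinely lie in $H^2_{\mathbb N}$ and converge there, so that the $H^2$-directional Fr\'echet derivative of $F$ can be applied. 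This is exactly what the $H^2_{\mathbb N\setminus I}$-differentiability of $f_j$ in Definition~\ref{20250115def2} supplies.
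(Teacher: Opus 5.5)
Your proposal is correct and follows essentially the same route as the paper. Both arguments lift the $j$-th coordinate line to the curve $P_I^{-1}(\mathbf{x}_I^0+t\mathbf e_j)$, use the $H^2_{\mathbb N\setminus I}$-differentiability of the slice to make this curve $H^2_{\mathbb N}$-Fr\'echet differentiable with derivative $\mathbf e_j+\mathbf w$, apply the $H^2_{\mathbb N}$-directional expansion of $F$, and then read off the coordinates. The only cosmetic difference is in the last step: the paper works directly with the Riesz representative $\mathbf z\in H^2_{\mathbb N}$, so both the identity and absolute convergence come from $\langle \mathbf y,\mathbf z\rangle_{H^2_{\mathbb N}}=\sum_k y_kz_k/a_k^4$ and Cauchy--Schwarz, whereas you first obtain the series from continuity of $L$ on $H^2_{\mathbb N}$-convergent partial sums and use Riesz only for absolute convergence.
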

\begin{proof}
Suppose that $(S\cap U, I,f)$ is a local coordinate triple of $S$ given in Definition \ref{20250115def2}. Fix
$
\mathbf{x}_{I}^0\in U_{I},
$ and define
$$
\varphi_j(t)\triangleq P_{I}^{-1}(\mathbf{x}_{I}^0+t\mathbf{e}_j)=\mathbf{x}_{I}^0+t\mathbf{e}_j+f_j(t),\qquad t\in U_{\mathbf{x}_{I}^0,j}.
$$
By Definition \ref{20250115def2}, there exists $\mathbf{y}_{\mathbb{N}\setminus I }=\sum\limits_{k\in \mathbb{N}\setminus I}y_k\mathbf{e}_k\in H^2_{\mathbb{N}\setminus I}$ such that $f_j(t)-f_j(0)\in H^2_{\mathbb{N}\setminus I}$ for sufficiently small $t$, and
\begin{eqnarray}\label{20260728for2}
\lim_{t\to 0}\frac{\|f_j(t)-f_j(0)-t\mathbf{y}_{\mathbb{N}\setminus I }\|_{H^2_{\mathbb{N}\setminus I}}}{|t|}= 0 .
\end{eqnarray}
Let $\mathbf{y}\triangleq \mathbf{e}_j+\mathbf{y}_{\mathbb{N}\setminus I }\in H^2_{\mathbb{N} }$. Thus $D\varphi_j(0)=\mathbf{y}\in H^2_{\mathbb{N} }$,
$$
\varphi_j(t)-\varphi_j(0) =t\mathbf{e}_j+f_j(t)-f_j(0)\in t\mathbf{e}_j+H^2_{\mathbb{N}\setminus I}\subset  H^2_{\mathbb{N} }\,\,\text{for sufficiently small }t,
$$
and
\begin{eqnarray}\label{20260728for3}
\frac{\|\varphi_j(t)-\varphi_j(0)- t\mathbf{y} \|_{H^2_{\mathbb{N} }}}{|t|}
=\frac{\| f_j(t)-f_j(0)- t\mathbf{y}_{\mathbb{N}\setminus I }\|_{H^2_{\mathbb{N} }}}{|t|}
=\frac{\| f_j(t)-f_j(0)- t\mathbf{y}_{\mathbb{N}\setminus I }\|_{H^2_{\mathbb{N}\setminus I }}}{|t|}\to 0,
\end{eqnarray}
as $t\to 0.$ Hence $\varphi_j$ is
$H^2_{\mathbb{N}}$-Fr\'{e}chet differentiable at $0$.

Since $F$ is Fr\'{e}chet differentiable along the $H_{\mathbb{N}}^2$-direction at $P_{I}^{-1} \mathbf{x}_{I}^0= \mathbf{x}_{I}^0+ f_j(0)$, there exists
$\mathbf{z} =\sum\limits_{k\in \mathbb{N} }z_k\mathbf{e}_k\in H^2_{\mathbb{N} }$ such that
\begin{eqnarray}\label{20260728for1}
 \frac{|F(\mathbf{x}_{I}^0+\Delta\mathbf{x}_{I}+ f_j(0))-F(\mathbf{x}_{I}^0+  f_j(0))-\langle \Delta\mathbf{x}_{I},\mathbf{z}\rangle_{H^2_{\mathbb{N} }}|}{\|\Delta\mathbf{x}_{I}\|_{H^2_{\mathbb{N}}}}\to 0,
\end{eqnarray}
as $\Delta\mathbf{x}_{I}\in H^2_{\mathbb{N}}$ and $\|\Delta\mathbf{x}_{I}\|_{H^2_{\mathbb{N}}}\to 0$.

Hence,
\begin{eqnarray*}
(F\circ P_{I}^{-1})(\mathbf{x}_{I}^0+t\mathbf{e}_j)- (F\circ P_{I}^{-1})(\mathbf{x}_{I}^0)
 &=&F(\varphi_j(t))-F(\varphi_j(0))\\
 &=&\langle \varphi_j(t) - \varphi_j(0),\mathbf{z}\rangle_{H^2_{\mathbb{N} }}+o(\|\varphi_j(t) - \varphi_j(0)\|_{H_{\mathbb{N}}^2})\\
  &=&t \langle  \mathbf{y},\mathbf{z}\rangle_{H^2_{\mathbb{N} }}+o( |t|)+o(\|\varphi_j(t) - \varphi_j(0)\|_{H_{\mathbb{N}}^2}).
\end{eqnarray*}
By \eqref{20260728for3},
$$
\|\varphi_j(t) - \varphi_j(0)\|_{H_{\mathbb{N}}^2}=O( |t|)
$$
and therefore
$$o(\|\varphi_j(t) - \varphi_j(0)\|_{H_{\mathbb{N}}^2})=o( |t|).$$
Consequently,
\begin{eqnarray*}
(F\circ P_{I}^{-1})(\mathbf{x}_{I}^0+t\mathbf{e}_j)- (F\circ P_{I}^{-1})(\mathbf{x}_{I}^0)
=t \langle  \mathbf{y},\mathbf{z}\rangle_{H^2_{\mathbb{N} }}+o( |t|),
\end{eqnarray*}
which implies that
\begin{eqnarray*}
D_{x_j}(F\circ P_{I}^{-1})(\mathbf{x}_{I}^0)= \langle  \mathbf{y},\mathbf{z}\rangle_{H^2_{\mathbb{N} }}
=\frac{z_j}{a_j^4}+\sum\limits_{k\in \mathbb{N}\setminus I}\frac{y_kz_k}{a_k^4}.
\end{eqnarray*}
By \eqref{20260728for2} and \eqref{20260728for1}, we obtain
\begin{eqnarray*}
 (D_{x_k}F)\circ P_{I}^{-1}(\mathbf{x}_{I}^0)&=& \frac{z_k}{a_k^4},\qquad k\in\mathbb{N},\\
 D_{x_j}(P_{\{k\}}P_{I}^{-1} ) &=&\delta_{k,j},\qquad  k\in I,\quad\text{and}\quad  D_{x_j}(P_{\{k\}}P_{I}^{-1} ) =y_k,\qquad  k\in \mathbb{N}\setminus I.
\end{eqnarray*}
This proves \eqref{20250115for1}. Moreover, by the Cauchy--Schwarz inequality,
\begin{eqnarray*}
\frac{|z_j|}{a_j^4}+\sum\limits_{k\in \mathbb{N}\setminus I}\frac{|y_kz_k|}{a_k^4}
&\leqslant& \|\mathbf{y}\|_{H^2_{\mathbb{N} }}\cdot\|\mathbf{z}\|_{H^2_{\mathbb{N} }}<\infty,
\end{eqnarray*}
which proves the last conclusion of Lemma \ref{20260728lem1}. The proof of Lemma \ref{20260728lem1} is complete.
\end{proof}
We adopt the notation and assumptions of Definition~\ref{20250115def2}. Applying \eqref{20250115for1} to the function $F=F_{\mathbb{N}\setminus I}\circ P_{\mathbb{N}\setminus I}$, and by the argument before Lemma \ref{20250113lem1}, we obtain
\begin{eqnarray}\label{20250115for2}
 D_{x_{j}}\left( F_{\mathbb{N}\setminus I}\circ(P_{\mathbb{N}\setminus I}P_{I}^{-1} ) \right) = -F_{\mathbb{N}\setminus I}\circ(P_{\mathbb{N}\setminus I }P_{I}^{-1} ) \sum\limits_{k\in \mathbb{N}\setminus I} \frac{\langle P_{\{k\}}P_{I }^{-1}\cdot,\mathbf{e}_k\rangle}{a_k^2}  D_{x_j}(P_{\{k\}}P_{I}^{-1})\quad\text{on}\quad U_{I}.
\end{eqnarray}

\begin{lemma}\label{20250113lem2}
Let $\{X_n\}_{n=1}^{\infty}$ be the sequence of functions on $\ell^2$ defined in \eqref{230409eq1}. Then $\{X_n\}_{n=1}^{\infty}\subset C_{H}^{\infty}(\ell^2)$. Moreover, for every $n\in\mathbb{N}$, $X_n$ is Fr\'{e}chet differentiable along the $H_{\mathbb{N}}^2$-direction at every point of $\ell^2$.
\end{lemma}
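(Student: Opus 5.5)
The plan is to reduce everything to the corresponding properties of the Gaussian convolutions $g_m=P_1\chi_{K_m}$, and then pass through the smooth outer function $H$. Recall that $X_n=H\circ g_{n+N_1}$, where $H\in C^\infty(\mathbb{R};[0,1])$ and $g_m(\mathbf{x})=(P_1\chi_{K_m})(\mathbf{x})$ with $\chi_{K_m}\in B^b_{\ell^2}(\ell^2)$. By Proposition \ref{partial derivative of Ptf}(5), $g_m\in C_H^\infty(\ell^2)$, where $H$ is the Cameron--Martin space \eqref{20260725for2}. Since $H$ is smooth on $\mathbb{R}$, the chain rule for Fr\'echet derivatives along the $H$-direction (Definition \ref{20241115def1}) gives $H\circ g_m\in C_H^\infty(\ell^2)$. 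Concretely, $D_H(H\circ g_m)(\mathbf{x})=H'(g_m(\mathbf{x}))\,D_Hg_m(\mathbf{x})$, and higher derivatives follow by Fa\`a di Bruno. The only point to check is that the map $\mathbf{x}_0+\mathbf{h}\mapsto g_m(\mathbf{x}_0+\mathbf{h})$ is $C^\infty$ on $H$, which is exactly the content of Proposition \ref{partial derivative of Ptf}(5). This proves the first claim.

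For the second claim, I will compare $H$ with $H_{\mathbb{N}}^2$. If $\mathbf{y}\in H_{\mathbb{N}}^2$, then
\[
\sum_i \frac{y_i^2}{a_i^2}\le \Bigl(\sup_i a_i^2\Bigr)\sum_i \frac{y_i^2}{a_i^4}.
\]
Hence $H_{\mathbb{N}}^2\subset H=H^1_{\mathbb{N}}$, and the inclusion is continuous, with $\|\mathbf{y}\|_{H}\le M^{1/2}\|\mathbf{y}\|_{H^2_{\mathbb{N}}}$ where $M=\sup_i a_i^2$. Now fix $\mathbf{x}_0\in\ell^2$. Fr\'echet differentiability along $H$ at $\mathbf{x}_0$ gives a bounded linear functional $L=D_HX_n(\mathbf{x}_0)$ on $H$ such that
\[
|X_n(\mathbf{x}_0+\mathbf{h})-X_n(\mathbf{x}_0)-L\mathbf{h}|=o(\|\mathbf{h}\|_H).
\]
Restrict this to $\mathbf{h}\in H^2_{\mathbb{N}}$. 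By the norm comparison, $o(\|\mathbf{h}\|_H)=o(\|\mathbf{h}\|_{H^2_{\mathbb{N}}})$, and $L|_{H^2_{\mathbb{N}}}$ is bounded on $H^2_{\mathbb{N}}$. Also, an open neighbourhood of $\mathbf{0}$ in $H$ pulls back to an open neighbourhood in $H^2_{\mathbb{N}}$. Therefore $X_n$ is Fr\'echet differentiable along the $H^2_{\mathbb{N}}$-direction at $\mathbf{x}_0$, with derivative $L|_{H^2_{\mathbb{N}}}$.

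The main obstacle is making sure Proposition \ref{partial derivative of Ptf}(5) really gives Fr\'echet (not just G\^ateaux) smoothness in the $H$-direction at every point, with the convention of Definition \ref{20241115def1}. That proposition is quoted from Gross \cite[Proposition 9]{Gro67}, whose statement is for $H$-differentiability of $P_tf$ for bounded measurable $f$. If needed, I would verify first-order Fr\'echet differentiability directly, using the Cameron--Martin formula. For $\mathbf{h}\in H$, the density of $P_1(\cdot-\mathbf{h})$ relative to $P_1$ is
\[
\exp\bigl(\langle\mathbf{h},\mathbf{y}\rangle_{H}-\tfrac12\|\mathbf{h}\|_H^2\bigr),
\]
where $\langle\mathbf{h},\mathbf{y}\rangle_H=\sum_i h_iy_i/a_i^2$ is defined a.e.; this extends \eqref{230702e2}. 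Then $g_m(\mathbf{x}_0+\mathbf{h})=\int\chi_{K_m}(\mathbf{x}_0-\mathbf{y})\,e^{\langle\mathbf{h},\mathbf{y}\rangle_H-\|\mathbf{h}\|_H^2/2}\,dP_1(\mathbf{y})$. I would then Taylor-expand the exponential and bound the remainder in $L^2(P_1)$ by $O(\|\mathbf{h}\|_H^2)$, uniformly in the direction of $\mathbf{h}$, using the boundedness of $\chi_{K_m}$. This remainder estimate is the step that needs care; everything after it is the chain rule.
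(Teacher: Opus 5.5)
Your proposal is correct and follows essentially the same route as the paper. Membership in $C_H^\infty(\ell^2)$ comes from Proposition~\ref{partial derivative of Ptf}(5), where you spell out the chain-rule step through the smooth cutoff that the paper leaves implicit, and Fr\'echet differentiability along $H^2_{\mathbb N}$ follows by restricting the $H$-derivative and using $\|\mathbf h\|_{H}\leqslant(\sup_i a_i)\|\mathbf h\|_{H^2_{\mathbb N}}$; the Cameron--Martin fallback you sketch is unnecessary, since the paper takes the Fr\'echet $H$-smoothness of $P_t f$ directly from Gross's result.
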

\begin{proof}
First, observe that $H_{\mathbb{N}}^1=H$, where $H$ is defined at \eqref{20260725for2} and therefore, by Proposition \ref{partial derivative of Ptf}, we obtain $\{X_n\}_{n=1}^{\infty}\subset C_{H_{\mathbb{N}}^1}^{\infty}(\ell^2)$.

It remains to prove the differentiability assertion. Fix \(n\in\mathbb{N}\) and \(\mathbf{x}\in\ell^2\). By Proposition \ref{partial derivative of Ptf}, \(X_n\) is Fr\'{e}chet differentiable along the \(H_{\mathbb{N}}^1\)-direction at \(\mathbf{x}\). Hence, there exists a bounded linear functional
\[
D_{H_{\mathbb{N}}^1}X_n(\mathbf{x})
\in
\bigl(H_{\mathbb{N}}^1\bigr)^*
\]
such that
\[
X_n(\mathbf{x}+\mathbf{h})-X_n(\mathbf{x})
-D_{H_{\mathbb{N}}^1}X_n(\mathbf{x})(\mathbf{h})
=o\bigl(\|\mathbf{h}\|_{H_{\mathbb{N}}^1}\bigr)
\]
as \(\|\mathbf{h}\|_{H_{\mathbb{N}}^1}\to0\).

Since
\[
H_{\mathbb{N}}^2\subset H_{\mathbb{N}}^1
\quad\text{and}\quad
\|\mathbf{h}\|_{H_{\mathbb{N}}^1}
\leqslant
\left(\sup_{i\in\mathbb{N}}a_i\right)\|\mathbf{h}\|_{H_{\mathbb{N}}^2}
\qquad
\text{for all }\mathbf{h}\in H_{\mathbb{N}}^2,
\]
the restriction
\[
D_{H_{\mathbb{N}}^2}X_n(\mathbf{x})
\triangleq
D_{H_{\mathbb{N}}^1}X_n(\mathbf{x})
\big|_{H_{\mathbb{N}}^2}
\]
is a bounded linear functional on \(H_{\mathbb{N}}^2\). Moreover, for
\(\mathbf{h}\in H_{\mathbb{N}}^2\setminus\{\mathbf{0}\}\),
\[
\frac{
\left|
X_n(\mathbf{x}+\mathbf{h})-X_n(\mathbf{x})
-D_{H_{\mathbb{N}}^2}X_n(\mathbf{x})(\mathbf{h})
\right|
}{
\|\mathbf{h}\|_{H_{\mathbb{N}}^2}
}
\leqslant
\left(\sup_{i\in\mathbb{N}}a_i\right)\frac{
\left|
X_n(\mathbf{x}+\mathbf{h})-X_n(\mathbf{x})
-D_{H_{\mathbb{N}}^1}X_n(\mathbf{x})(\mathbf{h})
\right|
}{
\|\mathbf{h}\|_{H_{\mathbb{N}}^1}
}.
\]
The right-hand side tends to zero as
\(\|\mathbf{h}\|_{H_{\mathbb{N}}^2}\to0\). Thus, \(X_n\) is Fr\'{e}chet differentiable along the \(H_{\mathbb{N}}^2\)-direction at \(\mathbf{x}\). Since \(n\) and \(\mathbf{x}\) were arbitrary, the proof of Lemma \ref{20250113lem2} is complete.
\end{proof}

To establish a global version of the Gauss--Green-type theorem, we first prove its local counterpart. We recall the following notation. Let $U$ be a nonempty open subset of $\ell^2$. For $f\in C_F^1(U)$, define
\[
\delta_i f(\mathbf{x})
\triangleq
D_{x_i}f(\mathbf{x})
-\frac{x_i}{a_i^2}f(\mathbf{x}),
\qquad
\mathbf{x}=\sum_{k=1}^{\infty}x_k\mathbf e_k\in U,\quad i\in\mathbb N,
\]
which is the Gaussian derivative operator in the $i$-th coordinate direction. Moreover, for
\[\mathbf{I}=\left( i_{1},i_{2},\cdots \right)\in S_{\mathbb N}^{F},\]
and $i\in\mathbb{N}$, define
$$
(i,\mathbf{I})  \triangleq
\begin{cases}
\left(i, i_{1},i_{2},\cdots \right), &\text{ if }i\in\mathbb{N}\setminus |\mathbf{I}|, \\
 \emptyset,&\text{ if }i\in  |\mathbf{I}|.
\end{cases}
$$
\begin{proposition}[Local Gauss--Green-type theorem]
\label{20241102prop1}
Assume the hypotheses and notation of Definition~\ref{20241009for1}. Assume further that \(S\) is a smooth surface. Let \(\mathbf{x}_0\in D\).

\medskip
\noindent
\textbf{(I) Boundary-point case.}
Suppose that \(\mathbf{x}_0\in\partial D\). Then there exist
$$
\mathbf{K}=\left(k_0, k_{1},k_{2},\cdots  \right)\in S_{\mathbb N}^{F},
\qquad
 i_0 \in \mathbb{N}_0,\quad k_{i_0}\in |\mathbf{K}|,
$$
an open neighborhood \(U\) of \(\mathbf{x}_0\) in \(\ell^2\), an open neighborhood
$
U_{|\mathbf{K}|}\subset P_{|\mathbf{K}|}\ell^2
$
of \(P_{|\mathbf{K}|}\mathbf{x}_0\), an open neighborhood
$
V_{|\mathbf{K}|\setminus\{k_{i_0}\}}
\subset P_{|\mathbf{K}|\setminus\{k_{i_0}\}}\ell^2
$
of \(P_{|\mathbf{K}|\setminus\{k_{i_0}\}}\mathbf{x}_0\), and mappings
$
\Phi\in
C^1_{P_{|\mathbf{K}|\setminus\{k_{i_0}\}}\ell^2}
\left(
V_{|\mathbf{K}|\setminus\{k_{i_0}\}};
P_{\mathbb N\setminus|\mathbf{K}|}\ell^2
\right)
$
and
$
h\in
C^1_{P_{|\mathbf{K}|\setminus\{k_{i_0}\}}\ell^2}
\left(
V_{|\mathbf{K}|\setminus\{k_{i_0}\}};
\mathbb R
\right)
$
such that
$
s(\mathbf{K})=1,
|\mathbf{K}|\in\Gamma_{I_0},
$
and
$
P_{|\mathbf{K}|\setminus\{k_{i_0}\}}
 U_{|\mathbf{K}|}
=
V_{|\mathbf{K}|\setminus\{k_{i_0}\}}.
$
Moreover,
\[
\mathbf{x}_0
=P_{|\mathbf{K}|\setminus\{k_{i_0}\}}\mathbf{x}_0
+
h\left(
P_{|\mathbf{K}|\setminus\{k_{i_0}\}}\mathbf{x}_0
\right)\mathbf e_{k_{i_0}}
+
\Phi\left(
P_{|\mathbf{K}|\setminus\{k_{i_0}\}}\mathbf{x}_0
\right),
\]
and
\begin{eqnarray}
(\partial D)\cap U
&=&\Bigl\{
\mathbf y
+h(\mathbf y)\mathbf e_{k_{i_0}}
+\Phi(\mathbf y):
\mathbf y\in
V_{|\mathbf{K}|\setminus\{k_{i_0}\}}
\Bigr\}\\
&=&\Bigl\{
\mathbf z
+\Phi\left(
P_{|\mathbf{K}|\setminus\{k_{i_0}\}}\mathbf z
\right):
\mathbf z\in U_{|\mathbf{K}|},
\ \rho(\mathbf z)=0
\Bigr\},
\label{20241015for1ooo}\\
D^\circ\cap U
&=&\Bigl\{
\mathbf z
+\Phi\left(
P_{|\mathbf{K}|\setminus\{k_{i_0}\}}\mathbf z
\right):
\mathbf z\in U_{|\mathbf{K}|},
\ \rho(\mathbf z)<0
\Bigr\},
\end{eqnarray}
where
$
\rho(\mathbf z)
\triangleq
(-1)^{i_0-1}
\left(
\langle\mathbf z,\mathbf e_{k_{i_0}}\rangle
-h\left(
P_{|\mathbf{K}|\setminus\{k_{i_0}\}}\mathbf z
\right)
\right),
\,
\mathbf z\in U_{|\mathbf{K}|}.
$

Let \(\mathbf{I}\in S_{\mathbb N}^{F}\) satisfy
$
|\mathbf{I}|
\in
\Gamma_{|\mathbf{K}|\setminus\{k_{i_0}\}},
$
and let \(g\in B_{\ell^2}(U)\). Assume that:

\begin{enumerate}
\item there exists a nonempty closed ball \(B_0\subset U\) such that
$
g\equiv 0
\,\text{on }U\setminus B_0;
$

\item \(g\) is Fr\'{e}chet differentiable along the
\(H_{\mathbb N}^{2}\)-direction at every point of $U$, in the sense of Definition~\ref{20241115def1};

\item \(g\in C_{\mathcal F}^1(U)\), and the closure of \(g^{-1}(\mathbb{R}\setminus\{0\})\) in \(\ell^2\) is compact;

\item
\begin{eqnarray}
&&\sum_{i=1}^{\infty}
\left(
|D_{x_i}g|
+
\left|\frac{x_i}{a_i^2}g\right|
\right)
\left|
  n_{(i,\mathbf{I})}^{D^\circ}
\right|
\in
L^1(D^\circ\cap U,\mu_{D^\circ}),
\label{20250120for1}\\
&&g|_{(\partial D)\cap U}
\in
L^1((\partial D)\cap U,\mu_{\partial D}),
\qquad
g|_{D^\circ\cap U}
\in
L^1(D^\circ\cap U,\mu_{D^\circ}),
\label{20250120for2}\\
&&\left.
\left(
|D_{x_i}g|
+
\left|\frac{x_i}{a_i^2}g\right|
\right)
\right|_{D^\circ\cap U}
\in
L^1(D^\circ\cap U,\mu_{D^\circ}),
\qquad i\in\mathbb N.
\label{20250120for3}
\end{eqnarray}
\end{enumerate}

Then
\[
-\sum_{i=1}^{\infty}
\int_{D^\circ\cap U}
\delta_i g(\mathbf x)\,
  n_{(i,\mathbf{I})}^{D^\circ}(\mathbf x)
\,\mathrm d\mu_{D^\circ}(\mathbf x)
=\int_{(\partial D)\cap U}
g(\mathbf x)\,
  n_{\mathbf{I}}^{\partial D}(\mathbf x)
\,\mathrm d\mu_{\partial D}(\mathbf x).
\]

\medskip
\noindent
\textbf{(II) Interior-point case.}
Suppose that \(\mathbf{x}_0\in D^\circ\). Then there exists a local coordinate triple
$
(S\cap U, I,f)
$
of \(S\) such that
$
\mathbf{x}_0\in S\cap U\subset D^\circ.
$
Assume that \(\mathbf{I}\in S_{\mathbb N}^{F}\) satisfies $|(i,\mathbf{I})|\in \Gamma_{I_0}$ for every $i\in \mathbb{N}\setminus |\mathbf{I}|$.
Let \(u \in B_{\ell^2}(U)\). Assume that:

\begin{enumerate}
\item there exists a nonempty closed ball \(B_1\subset U\) such that
$
u\equiv0
\,\text{on }U\setminus B_1;
$

\item \(u\) is Fr\'{e}chet differentiable along the
\(H_{\mathbb N}^{2}\)-direction at every point of $U$;

\item \(u\in C_{\mathcal F}^1(U)\), and the closure of \(u^{-1}(\mathbb{R}\setminus\{0\})\) in \(\ell^2\) is compact;

\item
$
\sum\limits_{i=1}^{\infty}
\left(
|D_{x_i}u|
+
\left|\frac{x_i}{a_i^2}u\right|
\right)
\left|
  n_{(i,\mathbf{I})}^{D^\circ}
\right|
\in
L^1(D^\circ\cap U,\mu_{D^\circ}),\qquad
$ and $\qquad
u|_{D^\circ\cap U}
\in
L^1(D^\circ\cap U,\mu_{D^\circ});
$

\item
 $
\left.
\left(
|D_{x_i}u|
+
\left|\frac{x_i}{a_i^2}u\right|
\right)
\right|_{D^\circ\cap U}
\in
L^1(D^\circ\cap U,\mu_{D^\circ})
$
 for every \(i\in\mathbb N\).

\end{enumerate}

Then
\[
-\sum_{i=1}^{\infty}
\int_{U\cap D^\circ}
\delta_i u(\mathbf x)\,
  n_{(i,\mathbf{I})}^{D^\circ}(\mathbf x)
\,\mathrm d\mu_{D^\circ}(\mathbf x)
=0.
\]
\end{proposition}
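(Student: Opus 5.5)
The plan is to pull everything back to the flat coordinate space \(P_{|\mathbf{K}|}\ell^2\) (resp. \(P_I\ell^2\)), where \(\mu_{D^\circ}\) has the explicit density
\[
n_{|\mathbf{K}|}\bigl(\mathbf{z}+\Phi(P_{|\mathbf{K}|\setminus\{k_{i_0}\}}\mathbf{z})\bigr)\,F_{\mathbb N\setminus|\mathbf{K}|}\bigl(\Phi(P_{|\mathbf{K}|\setminus\{k_{i_0}\}}\mathbf{z})\bigr)
\]
against \(\mu_{|\mathbf{K}|}\). The factors \(n_{|\mathbf{K}|}\) in this density cancel against the denominators of \(n^{D^\circ}_{(i,\mathbf{I})}\) from Definition~\ref{20260801def1}. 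The same cancellation occurs for \(\mu_{\partial D}\), with \(n_{|\mathbf{K}|\setminus\{k_{i_0}\}}\) and \(n^{\partial D}_{\mathbf{I}}\).

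After these cancellations the left side becomes
\[
-\sum_i\int s((i,\mathbf{I}))\,\det\bigl(D(P_{|(i,\mathbf{I})|}P_{|\mathbf{K}|}^{-1})\bigr)\,\delta_i g\;F\,d\mu_{|\mathbf{K}|}.
\]
The right side becomes an integral over \(V_{|\mathbf{K}|\setminus\{k_{i_0}\}}\) of \(g\,s(\mathbf{I})s(\mathbf{K}\setminus\{k_{i_0}\})\det(\cdots)\,F\).

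The first key step is algebraic. Expand \(\det D(P_{|(i,\mathbf{I})|}P_{|\mathbf{K}|}^{-1})\) along the row indexed by \(i\). Using Remark~\ref{20250505rem2}, all these determinants reduce to a common finite block of indices outside of which the matrices are the identity. Combining this with the chain rule of Lemma~\ref{20260728lem1} applied to \(g\), and with \eqref{20250115for2} for the derivative of \(F_{\mathbb N\setminus|\mathbf{K}|}\circ\Phi\), should identify
\[
\sum_i s((i,\mathbf{I}))\det(\cdots)\,\delta_i g\cdot F
\]
with a finite alternating sum of coordinate divergences
\[
\sum_{m} \pm\, D_{x_{k_m}}\Bigl(g\circ P_{|\mathbf{K}|}^{-1}\cdot F\cdot M_m\Bigr)\cdot e^{\sum_{k_m}x_{k_m}^2/2a_{k_m}^2},
\]
where the \(M_m\) are the cofactors. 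Two facts make this work. Cofactors of Jacobian matrices are divergence-free (the Piola identity, which needs the \(C^2_{\mathcal F}\) hypothesis \eqref{20260729for1}). The Gaussian weight terms \(x_i g/a_i^2\) are exactly absorbed by differentiating the density \(\prod e^{-x_k^2/2a_k^2}\), both in the \(|\mathbf{K}|\) coordinates (through \(\mu_{|\mathbf{K}|}\)) and in the complementary coordinates (through \(F\)). Absolute convergence of the \(i\)-series, needed to interchange sums, comes from Lemma~\ref{20260728lem1} and hypotheses \eqref{20250120for1}, \eqref{20250120for3}.

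The second step is analytic: integrate each divergence term. Fix a finite block containing \(k_{i_0}\) and the indices where the matrices differ from the identity. Write \(\mu_{|\mathbf{K}|}\) as Lebesgue measure on those finitely many coordinates times a Gaussian product on the rest, and use Fubini, justified by the \(L^1\) hypotheses. For \(m\ne k_{i_0}\), the one-dimensional integral of \(D_{x_{k_m}}(\cdots)\) vanishes because \(g\) vanishes off the closed ball \(B_0\subset U\). Here \(\mathcal F\)-continuity gives absolute continuity on lines, and Borel measurability gives joint measurability.

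For the \(k_{i_0}\) direction, integrate over the half-line \(\{\rho\le0\}\). This yields the boundary value at \(x_{k_{i_0}}=h(\mathbf{y})\) with sign \((-1)^{i_0-1}\). The remaining cofactor at the boundary is exactly \(\det D(P_{|\mathbf{I}|}P^{-1}_{|\mathbf{K}|\setminus\{k_{i_0}\}})\) via the column operation used in Proposition~\ref{20250109prop1}. The factor \((-1)^{i_0}\), together with \(s(\mathbf{K})=1\), converts \(s((i,\mathbf{I}))\) into \(s(\mathbf{I})s(\mathbf{K}\setminus\{k_{i_0}\})\). The Gaussian factor \(e^{-h^2/2a^2}\) of the \(k_{i_0}\) coordinate becomes part of \(F_{(\mathbb N\setminus|\mathbf{K}|)\cup\{k_{i_0}\}}\) on \(\partial D\).

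Case (II) is identical, except that every coordinate direction gives a vanishing line integral, so the total is \(0\).

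The main obstacle I expect is the sign and cofactor bookkeeping in the first step: matching \(s((i,\mathbf{I}))\), the Laplace expansion signs and \((-1)^{i_0-1}\) exactly to \(s(\mathbf{I})s(\mathbf{K}\setminus\{k_{i_0}\})\). I would try to handle it by first reducing, via Remark~\ref{20250505rem2}, to a finite-dimensional \((n+1)\times(n+1)\) determinant identity and then verifying that identity as in the classical finite-dimensional Stokes computation. The Piola identity for infinitely many coordinate functions also needs justification, by truncating to finitely many rows and using the \(C^2_{\mathcal F}\) regularity.
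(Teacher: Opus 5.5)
Your algebraic reduction is the paper's. You pull back to $P_{|\mathbf{K}|}\ell^2$, cancel $n_{|\mathbf{K}|}$, and use Lemma~\ref{20260728lem1} and \eqref{20250115for2} to collapse $\sum_i s((i,\mathbf{I}))\det(\cdots)\,\delta_i g$ into an $(n+1)\times(n+1)$ determinant with first row $\delta_{k_j}\psi$. You expand along that row into $\sum_{m=0}^{n}(-1)^m\,\delta_{k_m}\psi\,M_m$, and kill $\sum_m(-1)^m D_{x_{k_m}}M_m$ by the Piola identity, which uses \eqref{20260729for1}.

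\textbf{The gap is in the analytic step.} You claim that for $m\neq i_0$ the integral of $D_{x_{k_m}}(\cdots M_m)$ over $\{\rho<0\}$ vanishes line by line because $g$ vanishes off $B_0$. It does not. The domain is $\{\rho<0\}=\{(-1)^{i_0-1}(x_{k_{i_0}}-h)<0\}$, and $h$ depends on $x_{k_m}$. So a line in the $x_{k_m}$-direction, with all other coordinates frozen, generally enters and leaves $\{\rho<0\}$ through $\partial D$ and picks up boundary values there. What is needed is the finite-dimensional Gauss--Green formula on each $(n+1)$-dimensional slice obtained by freezing the coordinates outside $\{k_0,\ldots,k_n\}$; this is what the paper does. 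Then every direction contributes, and the boundary integrand is $\sum_{m=0}^{n}(-1)^m\Phi^m\,D_{x_{k_m}}\rho$, with $D_{x_{k_{i_0}}}\rho=(-1)^{i_0-1}$ and $D_{x_{k_m}}\rho=(-1)^{i_0}D_{x_{k_m}}h$ for $m\neq i_0$.

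For the same reason, your claim that the single cofactor $M_{i_0}$ on $\partial D$ equals $\det D\bigl(P_{|\mathbf{I}|}P^{-1}_{|\mathbf{K}|\setminus\{k_{i_0}\}}\bigr)$, via the column operation of Proposition~\ref{20250109prop1}, is false. That operation adds $D_{x_{k_s}}h$ times the $k_{i_0}$-column to the other columns, but $M_{i_0}$ is exactly the minor with the $k_{i_0}$-column deleted. The operation can only be performed on the full $(n+1)\times(n+1)$ determinant whose first row is $(D_{x_{k_0}}\rho,\ldots,D_{x_{k_n}}\rho)$. That determinant is the Laplace expansion of the boundary terms you discarded. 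It reduces to $(-1)^{i_0}s(\mathbf{I})\,s(\mathbf{K}\setminus\{k_{i_0}\})\det D\bigl(P_{|\mathbf{I}|}P^{-1}_{|\mathbf{K}|\setminus\{k_{i_0}\}}\bigr)$ after treating $k_{i_0}\notin\{i_1,\ldots,i_n\}$ and $k_{i_0}\in\{i_1,\ldots,i_n\}$ separately.

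A concrete check shows the failure. Take $S=\ell^2$, $\mathbf{K}=(1,2,3,\ldots)$, $i_0=0$, $h=h(x_2)$ and $\mathbf{I}=(1,3,4,\ldots)$. The block is $\{1,2\}$, and the row below $\delta_{k_j}\psi$ is $(1,0)$. So $M_0=0$ and your scheme returns $0$. Both sides of the identity, however, equal $\int g\,h'(x_2)\,F_{\{1\}}(h(x_2))\,\mathrm d\mu_{\mathbb N\setminus\{1\}}$, which is nonzero in general.

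Your Case (II), where there is no boundary, is fine.
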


\begin{proof}
By the assumptions, there exists $n\in\mathbb{N}$ such that
$$
\mathbf{I} =\left( i_{1},i_{2},\ldots ,i_{n},\ldots \right),\qquad \mathbf{K}=\left(k_0, k_{1},k_{2},\ldots ,k_{n},\ldots \right),
$$
$i_{s}=k_{s},$ for every $ s\geqslant n+1;$ $i_{n+r}<i_{n+r+1}$ and $k_{n+r}<k_{n+r+1}$, for every $r\in\mathbb{N}$; $i_0<n$, and
$$i_{n+1}>\max\{ i_{1},i_{2},\ldots ,i_{n}, k_{0},k_{1},\ldots ,k_{n}\}.$$
Since $s(\mathbf{K})=1$, we obtain
\begin{eqnarray}\label{20250121for7}
\begin{aligned}
&\sum\limits_{i=1}^{\infty} \int_{U\cap D^{\circ}}  \delta_{i} g\left( \textbf{x} \right) \cdot n_{\left( i,\mathbf{I} \right)}^{D^{\circ}}\left( \textbf{x} \right) \mathrm{d} \mu_{D^{\circ}} \left( \textbf{x} \right)\\
&=\int_{U\cap D^{\circ}} \sum\limits_{i=1}^{\infty} \left( D_{x_i}g\left( \textbf{x} \right) -\frac{x_i}{a_i^2}\cdot g\left( \textbf{x} \right) \right) \cdot n_{\left( i,\mathbf{I} \right)}^{D^{\circ}}\left( \textbf{x} \right) \mathrm{d} \mu_{D^{\circ}} \left( \textbf{x} \right)\\
& =\int_{\{\rho(\textbf{x}_{|\mathbf{K}|} )<0\}} \sum\limits_{i=1}^{\infty} D_{x_i}g\left( P_{|\mathbf{K}|}^{-1}\textbf{x}_{|\mathbf{K}|} \right)\cdot  s\left( \left( i,\mathbf{I}\right) \right)\cdot s(\mathbf{K})\\
 &\qquad\qquad\qquad\qquad\times\det \left( D \left( P_{|\left( i,\mathbf{I}\right)|} P_{|\mathbf{K}|}^{-1} \right) \right) \left( \textbf{x}_{|\mathbf{K}|}  \right)\cdot F_{\mathbb{N} \setminus |\mathbf{K}|}\circ \left( P_{\mathbb{N} \setminus |\mathbf{K}|} P_{|\mathbf{K}|}^{-1}\textbf{x}_{|\mathbf{K}|}  \right) \mathrm{d} \mu_{|\mathbf{K}|} \left( \textbf{x}_{|\mathbf{K}|}  \right)\\
&-\int_{\{\rho(\textbf{x}_{|\mathbf{K}|} )<0\}} \sum\limits_{i=1}^{\infty} \frac{x_i}{a_i^2}\cdot g\left( P_{|\mathbf{K}|}^{-1}\textbf{x}_{|\mathbf{K}|}  \right)\cdot s\left( \left( i,\mathbf{I}\right) \right)\cdot s(\mathbf{K})\\
 &\qquad\qquad\qquad\qquad\times \det \left( D\left( P_{|\left( i,\mathbf{I}  \right)|}P_{|\mathbf{K}|}^{-1} \right) \right) \left( \textbf{x}_{|\mathbf{K}|}  \right)\cdot F_{\mathbb{N} \setminus |\mathbf{K}|}\circ \left( P_{\mathbb{N} \setminus |\mathbf{K}|}P_{|\mathbf{K}|}^{-1}\textbf{x}_{|\mathbf{K}|}  \right) \mathrm{d} \mu_{|\mathbf{K}|} \left( \textbf{x}_{|\mathbf{K}|}  \right)\\
&= \int_{\{\rho <0\}} \sum\limits_{i=1}^{\infty} (D_{x_i}g)\circ P_{|\mathbf{K}|}^{-1} \cdot
\begin{vmatrix}D_{x_{k_{0}}}\left( P_{\{i\}}P_{|\mathbf{K}|}^{-1} \right) &D_{x_{k_{1}}}\left( P_{\{i\}}P_{|\mathbf{K}|}^{-1} \right)&\cdots&D_{x_{k_{n}}}\left( P_{\{i\}}P_{|\mathbf{K}|}^{-1} \right)\\ D_{x_{k_{0}}}\left( P_{\{i_{1}\}}P_{|\mathbf{K}|}^{-1} \right)&D_{x_{k_{1}}}\left( P_{\{i_{1}\}}P_{|\mathbf{K}|}^{-1} \right)&\cdots&D_{x_{k_{n}}}\left( P_{\{i_{1}\}}P_{|\mathbf{K}|}^{-1} \right)\\ \vdots&\vdots&\ddots&\vdots\\ D_{x_{k_{0}}}\left( P_{\{i_{n}\}}P_{|\mathbf{K}|}^{-1} \right)&D_{x_{k_{1}}}\left( P_{\{i_{n }\}}P_{|\mathbf{K}|}^{-1} \right)&\cdots&D_{x_{k_{n}}}\left( P_{\{i_{n }\}}P_{|\mathbf{K}|}^{-1} \right)\end{vmatrix}
\\
&\qquad\qquad\qquad\qquad\qquad\qquad \qquad\qquad\qquad  \qquad\qquad\qquad\qquad\qquad \times F_{\mathbb{N} \setminus |\mathbf{K}|}\circ\left( P_{\mathbb{N} \setminus |\mathbf{K}|} P_{|\mathbf{K}|}^{-1}  \right) \mathrm{d} \mu_{|\mathbf{K}|}\\
&- \int_{\{\rho <0\}} \sum\limits_{i=1}^{\infty} \frac{x_i}{a_i^2}\cdot  \left(g\circ P_{|\mathbf{K}|}^{-1}  \right) \cdot\begin{vmatrix}D_{x_{k_{0}}}\left( P_{\{i\}}P_{|\mathbf{K}|}^{-1} \right) &D_{x_{k_{1}}}\left( P_{\{i\}}P_{|\mathbf{K}|}^{-1} \right)&\cdots&D_{x_{k_{n}}}\left( P_{\{i\}}P_{|\mathbf{K}|}^{-1} \right)\\ D_{x_{k_{0}}}\left( P_{\{i_{1}\}}P_{|\mathbf{K}|}^{-1} \right)&D_{x_{k_{1}}}\left( P_{\{i_{1}\}}P_{|\mathbf{K}|}^{-1} \right)&\cdots&D_{x_{k_{n}}}\left( P_{\{i_{1}\}}P_{|\mathbf{K}|}^{-1} \right)\\ \vdots&\vdots&\ddots&\vdots\\ D_{x_{k_{0}}}\left( P_{\{i_{n}\}}P_{|\mathbf{K}|}^{-1} \right)&D_{x_{k_{1}}}\left( P_{\{i_{n }\}}P_{|\mathbf{K}|}^{-1} \right)&\cdots&D_{x_{k_{n}}}\left( P_{\{i_{n }\}}P_{|\mathbf{K}|}^{-1} \right)\end{vmatrix}
\\
&\qquad\qquad\qquad\qquad\qquad\qquad \qquad\qquad\qquad  \qquad\qquad\qquad\qquad\qquad \times F_{\mathbb{N} \setminus |\mathbf{K}|}\circ\left( P_{\mathbb{N} \setminus |\mathbf{K}|} P_{|\mathbf{K}|}^{-1}   \right) \mathrm{d} \mu_{|\mathbf{K}|}.
\end{aligned}
\end{eqnarray}
By
\eqref{20250120for1}, the series of absolute values is integrable. Hence Tonelli's theorem justifies the interchange of the infinite sum and the integral in
\eqref{20250121for7}.

Since \(g\) is Fr\'{e}chet differentiable along the
\(H_{\mathbb N}^{2}\)-direction at every point of $U$, by Lemma \ref{20260728lem1}, we have
$$ D_{x_{k_{j}}}(g\circ P_{|\mathbf{K}|}^{-1} )= \sum\limits_{i=1}^{\infty} (D_{x_i}g)\circ P_{|\mathbf{K}|}^{-1} \cdot D_{x_{k_{j}}}\left( P_{\{i\}}P_{|\mathbf{K}|}^{-1} \right),\qquad j\in\{0,1,\ldots,n\}.$$
Hence
$$
\begin{aligned}
&\sum\limits_{i=1}^{\infty} (D_{x_i}g)\circ P_{|\mathbf{K}|}^{-1} \cdot
\begin{vmatrix}D_{x_{k_{0}}}\left( P_{\{i\}}P_{|\mathbf{K}|}^{-1} \right) &D_{x_{k_{1}}}\left( P_{\{i\}}P_{|\mathbf{K}|}^{-1} \right)&\cdots&D_{x_{k_{n}}}\left( P_{\{i\}}P_{|\mathbf{K}|}^{-1} \right)\\ D_{x_{k_{0}}}\left( P_{\{i_{1}\}}P_{|\mathbf{K}|}^{-1} \right)&D_{x_{k_{1}}}\left( P_{\{i_{1}\}}P_{|\mathbf{K}|}^{-1} \right)&\cdots&D_{x_{k_{n}}}\left( P_{\{i_{1}\}}P_{|\mathbf{K}|}^{-1} \right)\\ \vdots&\vdots&\ddots&\vdots\\ D_{x_{k_{0}}}\left( P_{\{i_{n}\}}P_{|\mathbf{K}|}^{-1} \right)&D_{x_{k_{1}}}\left( P_{\{i_{n }\}}P_{|\mathbf{K}|}^{-1} \right)&\cdots&D_{x_{k_{n}}}\left( P_{\{i_{n }\}}P_{|\mathbf{K}|}^{-1} \right)\end{vmatrix}
\\
&=
\begin{vmatrix}
 D_{x_{k_{0}}}(g\circ P_{|\mathbf{K}|}^{-1}) &  D_{x_{k_{1}}}(g\circ  P_{|\mathbf{K}|}^{-1}) &\cdots& D_{x_{k_{n}}}(g\circ P_{|\mathbf{K}|}^{-1}) \\
  D_{x_{k_{0}}}\left( P_{\{i_{1}\}}P_{|\mathbf{K}|}^{-1} \right)&D_{x_{k_{1}}}\left( P_{\{i_{1}\}}P_{|\mathbf{K}|}^{-1} \right)&\cdots&D_{x_{k_{n}}}\left( P_{\{i_{1}\}}P_{|\mathbf{K}|}^{-1} \right)\\ \vdots&\vdots&\ddots&\vdots\\ D_{x_{k_{0}}}\left( P_{\{i_{n}\}}P_{|\mathbf{K}|}^{-1} \right)&D_{x_{k_{1}}}\left( P_{\{i_{n }\}}P_{|\mathbf{K}|}^{-1} \right)&\cdots&D_{x_{k_{n}}}\left( P_{\{i_{n }\}}P_{|\mathbf{K}|}^{-1} \right)
\end{vmatrix},
\end{aligned}
$$
and
$$
\begin{aligned}
&-\sum\limits_{i=1}^{\infty} \frac{x_i}{a_i^2}( g\circ P_{|\mathbf{K}|}^{-1} )   \begin{vmatrix}D_{x_{k_{0}}}\left( P_{\{i\}}P_{|\mathbf{K}|}^{-1} \right) &D_{x_{k_{1}}}\left( P_{\{i\}}P_{|\mathbf{K}|}^{-1} \right)&\cdots&D_{x_{k_{n}}}\left( P_{\{i\}}P_{|\mathbf{K}|}^{-1} \right)\\ D_{x_{k_{0}}}\left( P_{\{i_{1}\}}P_{|\mathbf{K}|}^{-1} \right)&D_{x_{k_{1}}}\left( P_{\{i_{1}\}}P_{|\mathbf{K}|}^{-1} \right)&\cdots&D_{x_{k_{n}}}\left( P_{\{i_{1}\}}P_{|\mathbf{K}|}^{-1} \right)\\ \vdots&\vdots&\ddots&\vdots\\ D_{x_{k_{0}}}\left( P_{\{i_{n}\}}P_{|\mathbf{K}|}^{-1} \right)&D_{x_{k_{1}}}\left( P_{\{i_{n }\}}P_{|\mathbf{K}|}^{-1} \right)&\cdots&D_{x_{k_{n}}}\left( P_{\{i_{n }\}}P_{|\mathbf{K}|}^{-1} \right)\end{vmatrix}\\
&=-\sum\limits_{j=0}^{n} \frac{x_{k_j}}{a_{k_j}^2} \left(g\circ P_{|\mathbf{K}|}^{-1} \right) \begin{vmatrix}D_{x_{k_{0}}}\left( P_{\{k_j\}}P_{|\mathbf{K}|}^{-1} \right) &D_{x_{k_{1}}}\left( P_{\{k_j\}}P_{|\mathbf{K}|}^{-1} \right)&\cdots&D_{x_{k_{n}}}\left( P_{\{k_j\}}P_{|\mathbf{K}|}^{-1} \right)\\ D_{x_{k_{0}}}\left( P_{\{i_{1}\}}P_{|\mathbf{K}|}^{-1} \right)&D_{x_{k_{1}}}\left( P_{\{i_{1}\}}P_{|\mathbf{K}|}^{-1} \right)&\cdots&D_{x_{k_{n}}}\left( P_{\{i_{1}\}}P_{|\mathbf{K}|}^{-1} \right)\\ \vdots&\vdots&\ddots&\vdots\\ D_{x_{k_{0}}}\left( P_{\{i_{n}\}}P_{|\mathbf{K}|}^{-1} \right)&D_{x_{k_{1}}}\left( P_{\{i_{n }\}}P_{|\mathbf{K}|}^{-1} \right)&\cdots&D_{x_{k_{n}}}\left( P_{\{i_{n }\}}P_{|\mathbf{K}|}^{-1} \right)\end{vmatrix}\\
&\quad -\sum\limits_{i\in \mathbb{N}\setminus |\mathbf{K}|} \frac{\langle P_{\{i\}}P_{|\mathbf{K}|}^{-1}\cdot,\mathbf{e}_i\rangle}{a_i^2} \left(g\circ P_{|\mathbf{K}|}^{-1} \right)\begin{vmatrix}D_{x_{k_{0}}}\left( P_{\{i\}}P_{|\mathbf{K}|}^{-1} \right) &D_{x_{k_{1}}}\left( P_{\{i\}}P_{|\mathbf{K}|}^{-1} \right)&\cdots&D_{x_{k_{n}}}\left( P_{\{i\}}P_{|\mathbf{K}|}^{-1} \right)\\ D_{x_{k_{0}}}\left( P_{\{i_{1}\}}P_{|\mathbf{K}|}^{-1} \right)&D_{x_{k_{1}}}\left( P_{\{i_{1}\}}P_{|\mathbf{K}|}^{-1} \right)&\cdots&D_{x_{k_{n}}}\left( P_{\{i_{1}\}}P_{|\mathbf{K}|}^{-1} \right)\\ \vdots&\vdots&\ddots&\vdots\\ D_{x_{k_{0}}}\left( P_{\{i_{n}\}}P_{|\mathbf{K}|}^{-1} \right)&D_{x_{k_{1}}}\left( P_{\{i_{n }\}}P_{|\mathbf{K}|}^{-1} \right)&\cdots&D_{x_{k_{n}}}\left( P_{\{i_{n }\}}P_{|\mathbf{K}|}^{-1} \right)\end{vmatrix}\\
&=-\sum\limits_{j=0}^{n} \frac{x_{k_j}}{a_{k_j}^2}\left(g\circ P_{|\mathbf{K}|}^{-1} \right) \begin{vmatrix}D_{x_{k_{0}}}\left( P_{\{k_j\}}P_{|\mathbf{K}|}^{-1} \right) &D_{x_{k_{1}}}\left( P_{\{k_j\}}P_{|\mathbf{K}|}^{-1} \right)&\cdots&D_{x_{k_{n}}}\left( P_{\{k_j\}}P_{|\mathbf{K}|}^{-1} \right)\\ D_{x_{k_{0}}}\left( P_{\{i_{1}\}}P_{|\mathbf{K}|}^{-1} \right)&D_{x_{k_{1}}}\left( P_{\{i_{1}\}}P_{|\mathbf{K}|}^{-1} \right)&\cdots&D_{x_{k_{n}}}\left( P_{\{i_{1}\}}P_{|\mathbf{K}|}^{-1} \right)\\ \vdots&\vdots&\ddots&\vdots\\ D_{x_{k_{0}}}\left( P_{\{i_{n}\}}P_{|\mathbf{K}|}^{-1} \right)&D_{x_{k_{1}}}\left( P_{\{i_{n }\}}P_{|\mathbf{K}|}^{-1} \right)&\cdots&D_{x_{k_{n}}}\left( P_{\{i_{n }\}}P_{|\mathbf{K}|}^{-1} \right)\end{vmatrix}\\
&\quad + \left( g\circ P_{|\mathbf{K}|}^{-1}  \right)\cdot\begin{vmatrix}\frac{D_{x_{k_{0}}}\left( F_{\mathbb{N}\setminus |\mathbf{K}|}\circ(P_{\mathbb{N}\setminus |\mathbf{K}|}P_{|\mathbf{K}|}^{-1}) \right)}{F_{\mathbb{N}\setminus |\mathbf{K}|}\circ(P_{\mathbb{N}\setminus |\mathbf{K}|}  P_{|\mathbf{K}|}^{-1})} &\frac{D_{x_{k_{1}}}\left( F_{\mathbb{N}\setminus |\mathbf{K}|}\circ(P_{\mathbb{N}\setminus |\mathbf{K}|}P_{|\mathbf{K}|}^{-1}) \right)}{F_{\mathbb{N}\setminus|\mathbf{K}|}\circ(P_{\mathbb{N}\setminus |\mathbf{K}|}P_{|\mathbf{K}|}^{-1})}&\cdots&\frac{D_{x_{k_{n}}}\left( F_{\mathbb{N}\setminus |\mathbf{K}|}\circ(P_{\mathbb{N}\setminus |\mathbf{K}|}P_{|\mathbf{K}|}^{-1}) \right)}{F_{\mathbb{N}\setminus |\mathbf{K}|}\circ(P_{\mathbb{N}\setminus |\mathbf{K}|}P_{|\mathbf{K}|}^{-1})}\\ D_{x_{k_{0}}}\left( P_{\{i_{1}\}}P_{|\mathbf{K}|}^{-1} \right)&D_{x_{k_{1}}}\left( P_{\{i_{1}\}}P_{|\mathbf{K}|}^{-1} \right)&\cdots&D_{x_{k_{n}}}\left( P_{\{i_{1}\}}P_{|\mathbf{K}|}^{-1} \right)\\ \vdots&\vdots&\ddots&\vdots\\ D_{x_{k_{0}}}\left( P_{\{i_{n}\}}P_{|\mathbf{K}|}^{-1} \right)&D_{x_{k_{1}}}\left( P_{\{i_{n }\}}P_{|\mathbf{K}|}^{-1} \right)&\cdots&D_{x_{k_{n}}}\left( P_{\{i_{n }\}}P_{|\mathbf{K}|}^{-1} \right)\\
\end{vmatrix},
\end{aligned}
$$
where the first equality follows from the fact that $D_{x_{k_{j}}}\left( P_{\{i\}}P_{|\mathbf{K}|}^{-1} \right)=0$ for all $j\in\{0,1,\ldots,n\}$ and $i\in |\mathbf{K}|\setminus \{k_0,k_1,\ldots,k_n\}$, the last equality follows from \eqref{20250115for2}  and by Lemma \ref{20260728lem1} all infinite sums in above procedure are absolutely convergent. Write $\psi\triangleq   \big(g\circ P_{|\mathbf{K}|}^{-1}\big) \cdot  \big(F_{\mathbb{N} \setminus |\mathbf{K}|}\circ(P_{\mathbb{N} \setminus |\mathbf{K}|}P_{|\mathbf{K}|}^{-1})\big) $. Thus we have
\begin{eqnarray}\label{20250121for6}
\begin{aligned}
&\int_{\{\rho <0\}} \sum\limits_{i=1}^{\infty} (D_{x_i}g)\circ P_{|\mathbf{K}|}^{-1} \cdot
\begin{vmatrix}D_{x_{k_{0}}}\left( P_{\{i\}}P_{|\mathbf{K}|}^{-1} \right) &D_{x_{k_{1}}}\left( P_{\{i\}}P_{|\mathbf{K}|}^{-1} \right)&\cdots&D_{x_{k_{n}}}\left( P_{\{i\}}P_{|\mathbf{K}|}^{-1} \right)\\ D_{x_{k_{0}}}\left( P_{\{i_{1}\}}P_{|\mathbf{K}|}^{-1} \right)&D_{x_{k_{1}}}\left( P_{\{i_{1}\}}P_{|\mathbf{K}|}^{-1} \right)&\cdots&D_{x_{k_{n}}}\left( P_{\{i_{1}\}}P_{|\mathbf{K}|}^{-1} \right)\\ \vdots&\vdots&\ddots&\vdots\\ D_{x_{k_{0}}}\left( P_{\{i_{n}\}}P_{|\mathbf{K}|}^{-1} \right)&D_{x_{k_{1}}}\left( P_{\{i_{n }\}}P_{|\mathbf{K}|}^{-1} \right)&\cdots&D_{x_{k_{n}}}\left( P_{\{i_{n }\}}P_{|\mathbf{K}|}^{-1} \right)\end{vmatrix}
\\
&\qquad\qquad\qquad\qquad\qquad\qquad \qquad\qquad\qquad  \qquad\qquad\qquad\qquad\qquad \times F_{\mathbb{N} \setminus |\mathbf{K}|}\circ\left( P_{\mathbb{N} \setminus |\mathbf{K}|} P_{|\mathbf{K}|}^{-1}  \right) \mathrm{d} \mu_{|\mathbf{K}|}\\
&- \int_{\{\rho <0\}} \sum\limits_{i=1}^{\infty} \frac{x_i}{a_i^2}\cdot  \left(g\circ P_{|\mathbf{K}|}^{-1}  \right) \cdot\begin{vmatrix}D_{x_{k_{0}}}\left( P_{\{i\}}P_{|\mathbf{K}|}^{-1} \right) &D_{x_{k_{1}}}\left( P_{\{i\}}P_{|\mathbf{K}|}^{-1} \right)&\cdots&D_{x_{k_{n}}}\left( P_{\{i\}}P_{|\mathbf{K}|}^{-1} \right)\\ D_{x_{k_{0}}}\left( P_{\{i_{1}\}}P_{|\mathbf{K}|}^{-1} \right)&D_{x_{k_{1}}}\left( P_{\{i_{1}\}}P_{|\mathbf{K}|}^{-1} \right)&\cdots&D_{x_{k_{n}}}\left( P_{\{i_{1}\}}P_{|\mathbf{K}|}^{-1} \right)\\ \vdots&\vdots&\ddots&\vdots\\ D_{x_{k_{0}}}\left( P_{\{i_{n}\}}P_{|\mathbf{K}|}^{-1} \right)&D_{x_{k_{1}}}\left( P_{\{i_{n }\}}P_{|\mathbf{K}|}^{-1} \right)&\cdots&D_{x_{k_{n}}}\left( P_{\{i_{n }\}}P_{|\mathbf{K}|}^{-1} \right)\end{vmatrix}
\\
&\qquad\qquad\qquad\qquad\qquad\qquad \qquad\qquad\qquad  \qquad\qquad\qquad\qquad\qquad \times F_{\mathbb{N} \setminus |\mathbf{K}|}\circ\left( P_{\mathbb{N} \setminus |\mathbf{K}|} P_{|\mathbf{K}|}^{-1}   \right) \mathrm{d} \mu_{|\mathbf{K}|},\\
&=\int_{\{\rho <0\}}\begin{vmatrix}
\delta_{k_{0}}\psi &\delta_{k_{1}}\psi&\cdots&\delta_{k_{n}}\psi\\ D_{x_{k_{0}}}\left( P_{\{i_{1}\}}P_{|\mathbf{K}|}^{-1} \right)&D_{x_{k_{1}}}\left( P_{\{i_{1}\}}P_{|\mathbf{K}|}^{-1} \right)&\cdots&D_{x_{k_{n}}}\left( P_{\{i_{1}\}}P_{|\mathbf{K}|}^{-1} \right)\\ \vdots&\vdots&\ddots&\vdots\\ D_{x_{k_{0}}}\left( P_{\{i_{n}\}}P_{|\mathbf{K}|}^{-1} \right)&D_{x_{k_{1}}}\left( P_{\{i_{n }\}}P_{|\mathbf{K}|}^{-1} \right)&\cdots&D_{x_{k_{n}}}\left( P_{\{i_{n }\}}P_{|\mathbf{K}|}^{-1} \right)\end{vmatrix}
 \mathrm{d} \mu_{|\mathbf{K}|}\\
&=\sum_{i=0}^{n}(-1)^i\cdot \int_{\{\rho <0\}}\delta_{k_{i}}\psi\cdot
\begin{vmatrix}
D_{x_{k_{0}}}\left( P_{\{i_{1}\}}P_{|\mathbf{K}|}^{-1} \right)&\cdots&\widehat{D_{x_{k_{i}}}\left( P_{\{i_{1}\}}P_{|\mathbf{K}|}^{-1} \right)}&\cdots&D_{x_{k_{n}}}\left( P_{\{i_{1}\}}P_{|\mathbf{K}|}^{-1} \right)\\ \vdots&\ddots&\vdots&\ddots&\vdots\\ D_{x_{k_{0}}}\left( P_{\{i_{n}\}}P_{|\mathbf{K}|}^{-1} \right)&\cdots&\widehat{D_{x_{k_{i}}}\left( P_{\{i_{n }\}}P_{|\mathbf{K}|}^{-1} \right)}&\cdots&D_{x_{k_{n}}}\left( P_{\{i_{n }\}}P_{|\mathbf{K}|}^{-1} \right)\end{vmatrix}
 \mathrm{d} \mu_{|\mathbf{K}|}.
\end{aligned}
\end{eqnarray}
Here, the second equality follows by expanding the third determinant in the preceding display along its first row, and the final determinant is the minor obtained by deleting the first row and the \((i+1)\)-st column of the corresponding matrix. Combining the preceding arguments with assumptions~\eqref{20250120for2} and~\eqref{20250120for3}, we conclude that all the integrals appearing in the preceding equations are finite.

Let $K_n'\triangleq\{k_{0},k_{1},\ldots,k_{n}\},\, K_n\triangleq|\mathbf{K}|\setminus K_n'$. Fix $i\in\{0,1,\ldots,n\}$, write
\begin{eqnarray*}
\phi^i \triangleq  \psi \cdot  \big(F_{K_n'}\circ P_{K_n'}\big)
\cdot\begin{vmatrix}
D_{x_{k_{0}}}\left( P_{\{i_{1}\}}P_{|\mathbf{K}|}^{-1} \right)&\cdots&\widehat{D_{x_{k_{i}}}\left( P_{\{i_{1}\}}P_{|\mathbf{K}|}^{-1} \right)}&\cdots&D_{x_{k_{n}}}\left( P_{\{i_{1}\}}P_{|\mathbf{K}|}^{-1} \right)\\ \vdots&\ddots&\vdots&\ddots&\vdots\\ D_{x_{k_{0}}}\left( P_{\{i_{n}\}}P_{|\mathbf{K}|}^{-1} \right)&\cdots&\widehat{D_{x_{k_{i}}}\left( P_{\{i_{n }\}}P_{|\mathbf{K}|}^{-1} \right)}&\cdots&D_{x_{k_{n}}}\left( P_{\{i_{n }\}}P_{|\mathbf{K}|}^{-1} \right)\end{vmatrix}.
\end{eqnarray*}
For $\textbf{x}_{K_n}\in P_{K_n} U_{|\mathbf{K}|}$, define
$$U_{\textbf{x}_{K_n}}\triangleq\{\textbf{x}_{K_n'}\in P_{K_n'} U_{|\mathbf{K}|}:\textbf{x}_{K_n'}+\textbf{x}_{K_n}\in U_{|\mathbf{K}|} \},$$
$$
\phi_{\textbf{x}_{K_n}}^i(\textbf{x}_{K_n'})\triangleq \phi^i(\textbf{x}_{K_n'}+\textbf{x}_{K_n}),\qquad \rho_{\textbf{x}_{K_n}}(\textbf{x}_{K_n'})\triangleq \rho(\textbf{x}_{K_n'}+\textbf{x}_{K_n}),\quad\, \textbf{x}_{K_n'}\in U_{\textbf{x}_{K_n}}.
$$
Then $U_{\textbf{x}_{K_n}}$ is a nonempty open set in $(n+1)$-dimensional linear space $P_{K_n'}\ell^2$.
Since $g\in C_{\mathcal F}^1(U)$, the closure of $\{\rho_{\textbf{x}_{K_n}}<0\}$ is contained in $U_{\textbf{x}_{K_n}}$, $\rho\in C^1_{P_{|\mathbf{K}| }\ell^2}
\left(
U_{|K| };
\mathbb R
\right)$ and
$|D_{x_{k_{i_0}}}\rho_{\textbf{x}_{K_n}}|=1$,
 we have $\phi_{\textbf{x}_{K_n}}^i\in C^1(U_{\textbf{x}_{K_n}})$, $\{\rho_{\textbf{x}_{K_n}}<0\}$ is an open set of $P_{K_n'}\ell^2$ with $C^1$ boundary
 $$
 \{\rho_{\textbf{x}_{K_n}}=0\}=\{\textbf{x}_{K_n'\setminus\{k_{i_0}\}} +h(\textbf{x}_{K_n'\setminus\{k_{i_0}\}}+\textbf{x}_{K_n})\mathbf{e}_{k_{i_0}}:\textbf{x}_{K_n'\setminus\{k_{i_0}\}} +\textbf{x}_{K_n}\in V_{|\mathbf{K}|\setminus\{k_{i_0}\}}\}.
 $$
Consequently, by the Gauss--Green formula in \cite[p. 711]{Eva},
\begin{eqnarray}
&&\int_{\{\rho_{\textbf{x}_{K_n}}<0\}}D_{x_{k_i}}\phi^i(\textbf{x}_{K_n'}+\textbf{x}_{K_n})\,\mathrm{d}\textbf{x}_{K_n'}\label{20250121for1}\\
&=&\int_{ \{\textbf{x}_{K_n'\setminus\{k_{i_0}\}}\in P_{K_n'\setminus\{k_{i_0}\}}\ell^2:\textbf{x}_{K_n'\setminus\{k_{i_0}\}} +\textbf{x}_{K_n}\in V_{|K|\setminus\{k_{i_0}\}}\}} \phi^i\left(\textbf{x}_{K_n'\setminus\{k_{i_0}\}}+\textbf{x}_{K_n}+h(\textbf{x}_{K_n'\setminus\{k_{i_0}\}}+\textbf{x}_{K_n})\mathbf{e}_{k_{i_0}}\right)\nonumber\\
&&\times\left(\frac{D_{x_{k_i}}\rho_{\textbf{x}_{K_n}}}{\sqrt{\sum_{j=0}^{n}|D_{x_{k_j}}\rho_{\textbf{x}_{K_n}}|^2}} \frac{\sqrt{\sum_{j=0}^{n}|D_{x_{k_j}}\rho_{\textbf{x}_{K_n}}|^2}}{|D_{x_{k_{i_0}}}\rho_{\textbf{x}_{K_n}}|}\right)\left(\textbf{x}_{K_n'\setminus\{k_{i_0}\}}+\textbf{x}_{K_n} +h(\textbf{x}_{K_n'\setminus\{k_{i_0}\}}+\textbf{x}_{K_n})\mathbf{e}_{k_{i_0}}\right)\,\mathrm{d}\textbf{x}_{K_n'\setminus\{k_{i_0}\}}\nonumber\\
&=&\int_{ \{\textbf{x}_{K_n'\setminus\{k_{i_0}\}}\in P_{K_n'\setminus\{k_{i_0}\}}\ell^2:\textbf{x}_{K_n'\setminus\{k_{i_0}\}} +\textbf{x}_{K_n}\in V_{|K|\setminus\{k_{i_0}\}}\}}  \phi^i\left(\textbf{x}_{K_n'\setminus\{k_{i_0}\}}+\textbf{x}_{K_n}+h(\textbf{x}_{K_n'\setminus\{k_{i_0}\}}+\textbf{x}_{K_n})\mathbf{e}_{k_{i_0}}\right)\nonumber\\
&&\times (D_{x_{k_i}}\rho_{\textbf{x}_{K_n}} )\left(\textbf{x}_{K_n'\setminus\{k_{i_0}\}}+\textbf{x}_{K_n}+h(\textbf{x}_{K_n'\setminus\{k_{i_0}\}}+\textbf{x}_{K_n})\mathbf{e}_{k_{i_0}}\right)\,\mathrm{d}\textbf{x}_{K_n'\setminus\{k_{i_0}\}}.\nonumber
\end{eqnarray}
where $\mathrm{d}\textbf{x}_{K_n'}$ and $\mathrm{d}\textbf{x}_{K_n'\setminus\{k_{i_0}\}}$ denote the Lebesgue measures on corresponding spaces, and the second equality follows from $|D_{x_{k_{i_0}}}\rho_{\textbf{x}_{K_n}}|=1$. Combining these observations that the closure of \(g^{-1}(\mathbb{R}\setminus\{0\})\) in \(\ell^2\) is compact, \eqref{20250120for2} and \eqref{20250120for3}, we obtain
\begin{eqnarray*}
\int_{\{\rho<0\}}|D_{x_{k_i}}\phi^i(\textbf{x}_{K_n'}+\textbf{x}_{K_n})|\,\mathrm{d}\textbf{x}_{K_n'}\mathrm{d}\mu_{K_n}(\textbf{x}_{K_n})
<\infty,
\end{eqnarray*}
and by Fubini's theorem, it holds that
\begin{eqnarray}
&&\int_{\{\rho<0\}}D_{x_{k_i}}\phi^i(\textbf{x}_{K_n'}+\textbf{x}_{K_n})\,\mathrm{d}\textbf{x}_{K_n'}\mathrm{d}\mu_{K_n}(\textbf{x}_{K_n})\nonumber\\
&=&\int_{P_{K_n}\{\rho<0\}}\left(\int_{\{\rho_{\textbf{x}_{K_n}}<0\}}D_{x_{k_i}}\phi^i(\textbf{x}_{K_n'}+\textbf{x}_{K_n})
\,\mathrm{d}\textbf{x}_{K_n'}\right)\mathrm{d}\mu_{K_n}(\textbf{x}_{K_n}).\label{20250121for2}
\end{eqnarray}
Combining \eqref{20250121for1} and \eqref{20250121for2}, we obtain
\begin{eqnarray}\label{20250121for3}
\begin{aligned}
&\sum_{i=0}^{n}(-1)^i\cdot \int_{\{\rho<0\}}\delta_{k_{i}}\psi
\\
&\quad\times\begin{vmatrix}
D_{x_{k_{0}}}\left( P_{\{i_{1}\}}P_{|\mathbf{K}|}^{-1} \right)&\cdots&\widehat{D_{x_{k_{i}}}\left( P_{\{i_{1}\}}P_{|\mathbf{K}|}^{-1} \right)}&\cdots&D_{x_{k_{n}}}\left( P_{\{i_{1}\}}P_{|\mathbf{K}|}^{-1} \right)\\ \vdots&\ddots&\vdots&\ddots&\vdots\\ D_{x_{k_{0}}}\left( P_{\{i_{n}\}}P_{|\mathbf{K}|}^{-1} \right)&\cdots&\widehat{D_{x_{k_{i}}}\left( P_{\{i_{n }\}}P_{|\mathbf{K}|}^{-1} \right)}&\cdots&D_{x_{k_{n}}}\left( P_{\{i_{n }\}}P_{|\mathbf{K}|}^{-1} \right)\end{vmatrix}
 \mathrm{d} \mu_{|\mathbf{K}|}\\
&+\sum_{i=0}^{n}(-1)^{i}\cdot \int_{\{\rho<0\}}    \psi
\\
&\quad\times \left(D_{x_{k_{i}}}\begin{vmatrix}
D_{x_{k_{0}}}\left( P_{\{i_{1}\}}P_{|\mathbf{K}|}^{-1} \right)&\cdots&\widehat{D_{x_{k_{i}}}\left( P_{\{i_{1}\}}P_{|\mathbf{K}|}^{-1} \right)}&\cdots&D_{x_{k_{n}}}\left( P_{\{i_{1}\}}P_{|\mathbf{K}|}^{-1} \right)\\ \vdots&\ddots&\vdots&\ddots&\vdots\\ D_{x_{k_{0}}}\left( P_{\{i_{n}\}}P_{|\mathbf{K}|}^{-1} \right)&\cdots&\widehat{D_{x_{k_{i}}}\left( P_{\{i_{n }\}}P_{|\mathbf{K}|}^{-1} \right)}&\cdots&D_{x_{k_{n}}}\left( P_{\{i_{n }\}}P_{|\mathbf{K}|}^{-1} \right)\end{vmatrix}\right)
 \mathrm{d} \mu_{|\mathbf{K}|}\\
&=\sum_{i=0}^{n}(-1)^i\cdot \int_{P_{K_n}\{\rho<0\}}\left(\int_{\{\rho_{\textbf{x}_{K_n}}<0\}}D_{x_{k_i}}\phi^i(\textbf{x}_{K_n'}+\textbf{x}_{K_n})
\,\mathrm{d}\textbf{x}_{K_n'}\right)\mathrm{d}\mu_{K_n}(\textbf{x}_{K_n}) \\
&=\sum_{i=0}^{n}(-1)^i\cdot \int_{V_{|\mathbf{K}|\setminus\{k_{i_0}\}}}\Phi^i\left(\textbf{x}_{|\mathbf{K}|\setminus\{k_{i_0}\}}+h(\textbf{x}_{|\mathbf{K}|\setminus\{k_{i_0}\}})\mathbf{e}_{k_{i_{0}}}\right)
\\
&\quad\times (D_{x_{k_i}}\rho )\left(\textbf{x}_{|\mathbf{K}|\setminus\{k_{i_0}\}}+h(\textbf{x}_{|\mathbf{K}|\setminus\{k_{i_0}\}})\mathbf{e}_{k_{i_0}}\right)
 \mathrm{d} \mu_{|\mathbf{K}|\setminus\{k_{i_0}\}}(\textbf{x}_{|\mathbf{K}|\setminus\{k_{i_0}\}}),
\end{aligned}
\end{eqnarray}
where
$$
\Phi^i\triangleq \frac{\phi^i}{  F_{K_n'\setminus\{k_{i_0}\}}\circ P_{K_n'\setminus\{k_{i_0}\}} },\qquad i=0,1,\ldots,n.
$$
Combining the facts that the closure of \(g^{-1}(\mathbb{R}\setminus\{0\})\) in \(\ell^2\)
 is compact and that \eqref{20250120for2} and \eqref{20250120for3} hold, we conclude that the integrals in the above equations are finite.

First, we have
\begin{eqnarray}\label{20250121for5}
\begin{aligned}
&\sum_{i=0}^{n}(-1)^{i}\cdot D_{x_{k_{i}}}\begin{vmatrix}
D_{x_{k_{0}}}\left( P_{\{i_{1}\}}P_{|\mathbf{K}|}^{-1} \right)&\cdots&\widehat{D_{x_{k_{i}}}\left( P_{\{i_{1}\}}P_{|\mathbf{K}|}^{-1} \right)}&\cdots&D_{x_{k_{n}}}\left( P_{\{i_{1}\}}P_{|\mathbf{K}|}^{-1} \right)\\ \vdots&\ddots&\vdots&\ddots&\vdots\\ D_{x_{k_{0}}}\left( P_{\{i_{n}\}}P_{|\mathbf{K}|}^{-1} \right)&\cdots&\widehat{D_{x_{k_{i}}}\left( P_{\{i_{n }\}}P_{|\mathbf{K}|}^{-1} \right)}&\cdots&D_{x_{k_{n}}}\left( P_{\{i_{n }\}}P_{|\mathbf{K}|}^{-1} \right)\end{vmatrix}\\
&=\sum_{i=0}^{n}(-1)^{i}\cdot\sum_{j=1}^{n}\begin{vmatrix}
D_{x_{k_{0}}}\left( P_{\{i_{1}\}}P_{|\mathbf{K}|}^{-1} \right)&\cdots&\widehat{D_{x_{k_{i}}}\left( P_{\{i_{1}\}}P_{|\mathbf{K}|}^{-1} \right)}&\cdots&D_{x_{k_{n}}}\left( P_{\{i_{1}\}}P_{|\mathbf{K}|}^{-1} \right)\\
\vdots&\ddots&\vdots&\ddots&\vdots\\
 D_{x_{k_{i}}}D_{x_{k_{0}}}\left( P_{\{i_{j}\}}P_{|\mathbf{K}|}^{-1} \right)&\cdots&\widehat{ D_{x_{k_{i}}}D_{x_{k_{i}}}\left( P_{\{i_{j}\}}P_{|\mathbf{K}|}^{-1} \right)}&\cdots& D_{x_{k_{i}}}D_{x_{k_{n}}}\left( P_{\{i_{j}\}}P_{|\mathbf{K}|}^{-1} \right)\\
 \vdots&\ddots&\vdots&\ddots&\vdots\\
D_{x_{k_{0}}}\left( P_{\{i_{n}\}}P_{|\mathbf{K}|}^{-1} \right)&\cdots&\widehat{D_{x_{k_{i}}}\left( P_{\{i_{n }\}}P_{|\mathbf{K}|}^{-1} \right)}&\cdots&D_{x_{k_{n}}}\left( P_{\{i_{n }\}}P_{|\mathbf{K}|}^{-1} \right)\end{vmatrix}\\
&=\sum_{j=1}^{n}\sum_{i=0}^{n}\sum_{r=0}^{i-1}(-1)^{i}\cdot(-1)^{j+r+1}\cdot  D_{x_{k_{i}}}D_{x_{k_{r}}}\left( P_{\{i_{j}\}}P_{|\mathbf{K}|}^{-1} \right)\\
&\times\begin{vmatrix}
D_{x_{k_{0}}}\left( P_{\{i_{1}\}}P_{|\mathbf{K}|}^{-1} \right)&\cdots&\widehat{D_{x_{k_{r}}}\left( P_{\{i_{1}\}}P_{|\mathbf{K}|}^{-1} \right)}&\cdots&\widehat{D_{x_{k_{i}}}\left( P_{\{i_{1}\}}P_{|\mathbf{K}|}^{-1} \right)}&\cdots&D_{x_{k_{n}}}\left( P_{\{i_{1}\}}P_{|\mathbf{K}|}^{-1} \right)\\
\vdots&\ddots&\vdots&\ddots&\vdots\\
\widehat{ D_{x_{k_{0}}}\left( P_{\{i_{j}\}}P_{|\mathbf{K}|}^{-1} \right)}&\cdots&\widehat{D_{x_{k_{r}}}\left( P_{\{i_{j}\}}P_{|\mathbf{K}|}^{-1} \right)}&\cdots&\widehat{ D_{x_{k_{i}}}\left( P_{\{i_{j}\}}P_{|\mathbf{K}|}^{-1} \right)}&\cdots& \widehat{D_{x_{k_{n}}}\left( P_{\{i_{j}\}}P_{|\mathbf{K}|}^{-1} \right)}\\
 \vdots&\ddots&\vdots&\ddots&\vdots\\
D_{x_{k_{0}}}\left( P_{\{i_{n}\}}P_{|\mathbf{K}|}^{-1} \right)&\cdots&\widehat{D_{x_{k_{r}}}\left( P_{\{i_{n}\}}P_{|\mathbf{K}|}^{-1} \right)}&\cdots&\widehat{D_{x_{k_{i}}}\left( P_{\{i_{n }\}}P_{|\mathbf{K}|}^{-1} \right)}&\cdots&D_{x_{k_{n}}}\left( P_{\{i_{n }\}}P_{|\mathbf{K}|}^{-1} \right)\end{vmatrix}\\
&-\sum_{j=1}^{n}\sum_{i=0}^{n}\sum_{r=i+1}^{n}(-1)^{i}\cdot(-1)^{j+r+1}\cdot  D_{x_{k_{i}}}D_{x_{k_{r}}}\left( P_{\{i_{j}\}}P_{|\mathbf{K}|}^{-1} \right)\\
&\times\begin{vmatrix}
D_{x_{k_{0}}}\left( P_{\{i_{1}\}}P_{|\mathbf{K}|}^{-1} \right)&\cdots&\widehat{D_{x_{k_{i}}}\left( P_{\{i_{1}\}}P_{|\mathbf{K}|}^{-1} \right)}&\cdots&\widehat{D_{x_{k_{r}}}\left( P_{\{i_{1}\}}P_{|\mathbf{K}|}^{-1} \right)}&\cdots&D_{x_{k_{n}}}\left( P_{\{i_{1}\}}P_{|\mathbf{K}|}^{-1} \right)\\
\vdots&\ddots&\vdots&\ddots&\vdots\\
\widehat{ D_{x_{k_{0}}}\left( P_{\{i_{j}\}}P_{|\mathbf{K}|}^{-1} \right)}&\cdots&\widehat{D_{x_{k_{i}}}\left( P_{\{i_{j}\}}P_{|\mathbf{K}|}^{-1} \right)}&\cdots&\widehat{ D_{x_{k_{r}}}\left( P_{\{i_{j}\}}P_{|\mathbf{K}|}^{-1} \right)}&\cdots& \widehat{D_{x_{k_{n}}}\left( P_{\{i_{j}\}}P_{|\mathbf{K}|}^{-1} \right)}\\
 \vdots&\ddots&\vdots&\ddots&\vdots\\
D_{x_{k_{0}}}\left( P_{\{i_{n}\}}P_{|\mathbf{K}|}^{-1} \right)&\cdots&\widehat{D_{x_{k_{i}}}\left( P_{\{i_{n}\}}P_{|\mathbf{K}|}^{-1} \right)}&\cdots&\widehat{D_{x_{k_{r}}}\left( P_{\{i_{n }\}}P_{|\mathbf{K}|}^{-1} \right)}&\cdots&D_{x_{k_{n}}}\left( P_{\{i_{n }\}}P_{|\mathbf{K}|}^{-1} \right)\end{vmatrix}\\
&=0.
\end{aligned}
\end{eqnarray}
Here, the second equality follows by expanding each determinant along its \(j\)-th row. For the last equality, we interchange the indices \(i\) and \(r\) in the second triple sum. The corresponding minors are identical, while the associated coefficients have opposite signs. Furthermore, \eqref{20260729for1} implies that
\[
D_{x_{k_i}}D_{x_{k_r}}
\left(P_{{i_j}}P_{|\mathbf{K}|}^{-1}\right)
=D_{x_{k_r}}D_{x_{k_i}}
\left(P_{{i_j}} P_{|\mathbf{K}|}^{-1}\right).
\]
Hence, the two triple sums cancel term by term, and the final expression is zero.

Moreover,
$$
\begin{aligned}
&\sum_{i=0}^{n}(-1)^i\cdot \int_{V_{|\mathbf{K}|\setminus\{k_{i_0}\}}}\Phi^i\left(\textbf{x}_{|\mathbf{K}|\setminus\{k_{i_0}\}}+h(\textbf{x}_{|\mathbf{K}|\setminus\{k_{i_0}\}})\mathbf{e}_{k_{i_0}}\right)
\\
&\quad\times (D_{x_{k_i}}\rho )\left(\textbf{x}_{|\mathbf{K}|\setminus\{k_{i_0}\}}+h(\textbf{x}_{|\mathbf{K}|\setminus\{k_{i_0}\}})\mathbf{e}_{k_{i_0}}\right)
 \mathrm{d} \mu_{|\mathbf{K}|\setminus\{k_{i_0}\}}(\textbf{x}_{|\mathbf{K}|\setminus\{k_{i_0}\}})\\
&= (-1)^{i_0}\cdot \int_{V_{|\mathbf{K}|\setminus\{k_{i_0}\}}}   (g\circ P_{|\mathbf{K}|\setminus\{k_{i_0}\}}^{-1})\cdot  F_{\mathbb{N} \setminus (|\mathbf{K}|\setminus\{k_{i_0}\})}\circ(P_{\mathbb{N} \setminus (|\mathbf{K}|\setminus\{k_{i_0}\})}P_{|\mathbf{K}|\setminus\{k_{i_0}\}}^{-1})\cdot (D_{x_{k_{i_0}}}\rho)\circ (P_{|\mathbf{K}|}  P_{|\mathbf{K}|\setminus\{k_{i_0}\}}^{-1})\\
&\quad\times\begin{vmatrix}
D_{x_{k_{0}}}\left( P_{\{i_{1}\}}P_{|\mathbf{K}|}^{-1} \right)&\cdots&\widehat{D_{x_{k_{i_0}}}\left( P_{\{i_{1}\}}P_{|\mathbf{K}|}^{-1} \right)}&\cdots&D_{x_{k_{n}}}\left( P_{\{i_{1}\}}P_{|\mathbf{K}|}^{-1} \right)\\ \vdots&\ddots&\vdots&\ddots&\vdots\\ D_{x_{k_{0}}}\left( P_{\{i_{n}\}}P_{|\mathbf{K}|}^{-1} \right)&\cdots&\widehat{D_{x_{k_{i_0}}}\left( P_{\{i_{n }\}}P_{|\mathbf{K}|}^{-1} \right)}&\cdots&D_{x_{k_{n}}}\left( P_{\{i_{n }\}}P_{|\mathbf{K}|}^{-1} \right)\end{vmatrix}\circ (P_{|\mathbf{K}|} P_{|\mathbf{K}|\setminus\{k_{i_0}\}}^{-1})
 \mathrm{d} \mu_{|\mathbf{K}|\setminus\{k_{i_0}\}}\\
 &\quad+\sum_{i\in \{0,1,\ldots, n\}\setminus\{i_0\}}(-1)^i\cdot \int_{V_{|\mathbf{K}|\setminus\{k_{i_0}\}}}  (g\circ P_{|\mathbf{K}|\setminus\{k_{i_0}\}}^{-1})\cdot  F_{\mathbb{N} \setminus (|\mathbf{K}|\setminus\{k_{i_0}\})}\circ(P_{\mathbb{N} \setminus (|\mathbf{K}|\setminus\{k_{i_0}\})}P_{|\mathbf{K}|\setminus\{k_{i_0}\}}^{-1})
\\
&\quad\times \left(D_{x_{k_{i}}}\rho \begin{vmatrix}
D_{x_{k_{0}}}\left( P_{\{i_{1}\}}P_{|\mathbf{K}|}^{-1} \right)&\cdots&\widehat{D_{x_{k_{i}}}\left( P_{\{i_{1}\}}P_{|\mathbf{K}|}^{-1} \right)}&\cdots&D_{x_{k_{n}}}\left( P_{\{i_{1}\}}P_{|\mathbf{K}|}^{-1} \right)\\ \vdots&\ddots&\vdots&\ddots&\vdots\\ D_{x_{k_{0}}}\left( P_{\{i_{n}\}}P_{|\mathbf{K}|}^{-1} \right)&\cdots&\widehat{D_{x_{k_{i}}}\left( P_{\{i_{n }\}}P_{|\mathbf{K}|}^{-1} \right)}&\cdots&D_{x_{k_{n}}}\left( P_{\{i_{n }\}}P_{|\mathbf{K}|}^{-1} \right)\end{vmatrix}\right)\circ (P_{|\mathbf{K}|} P_{|\mathbf{K}|\setminus\{k_{i_0}\}}^{-1})
 \mathrm{d} \mu_{|\mathbf{K}|\setminus\{k_{i_0}\}}\\
&= (-1)^{2i_0-1}\cdot \int_{V_{|\mathbf{K}|\setminus\{k_{i_0}\}}}  (g\circ P_{|\mathbf{K}|\setminus\{k_{i_0}\}}^{-1})\cdot  F_{\mathbb{N} \setminus (|\mathbf{K}|\setminus\{k_{i_0}\})}\circ(P_{\mathbb{N} \setminus (|\mathbf{K}|\setminus\{k_{i_0}\})}P_{|\mathbf{K}|\setminus\{k_{i_0}\}}^{-1})  \\
&\quad\times\begin{vmatrix}
D_{x_{k_{0}}}\left( P_{\{i_{1}\}}P_{|\mathbf{K}|}^{-1} \right)&\cdots&\widehat{D_{x_{k_{i_0}}}\left( P_{\{i_{1}\}}P_{|\mathbf{K}|}^{-1} \right)}&\cdots&D_{x_{k_{n}}}\left( P_{\{i_{1}\}}P_{|\mathbf{K}|}^{-1} \right)\\ \vdots&\ddots&\vdots&\ddots&\vdots\\ D_{x_{k_{0}}}\left( P_{\{i_{n}\}}P_{|\mathbf{K}|}^{-1} \right)&\cdots&\widehat{D_{x_{k_{i_0}}}\left( P_{\{i_{n }\}}P_{|\mathbf{K}|}^{-1} \right)}&\cdots&D_{x_{k_{n}}}\left( P_{\{i_{n }\}}P_{|\mathbf{K}|}^{-1} \right)\end{vmatrix}\circ (P_{|\mathbf{K}|} P_{|\mathbf{K}|\setminus\{k_{i_0}\}}^{-1})
 \mathrm{d} \mu_{|\mathbf{K}|\setminus\{k_{i_0}\}}\\
 &\quad+\sum_{i\in \{0,1,\ldots, n\}\setminus\{i_0\}}(-1)^{i+i_0}\cdot \int_{V_{|\mathbf{K}|\setminus\{k_{i_0}\}}}   (g\circ P_{|\mathbf{K}|\setminus\{k_{i_0}\}}^{-1})\cdot F_{\mathbb{N} \setminus (|\mathbf{K}|\setminus\{k_{i_0}\})}\circ(P_{\mathbb{N} \setminus (|\mathbf{K}|\setminus\{k_{i_0}\})}P_{|\mathbf{K}|\setminus\{k_{i_0}\}}^{-1}) \\
 &\qquad\quad\times D_{x_{k_i}}(P_{\{k_{i_0}\}}P^{-1}_{|\mathbf{K}|\setminus\{k_{i_0}\}})
\\
&\qquad\quad\times\begin{vmatrix}
D_{x_{k_{0}}}\left( P_{\{i_{1}\}}P_{|\mathbf{K}|}^{-1} \right)&\cdots&\widehat{D_{x_{k_{i}}}\left( P_{\{i_{1}\}}P_{|\mathbf{K}|}^{-1} \right)}&\cdots&D_{x_{k_{n}}}\left( P_{\{i_{1}\}}P_{|\mathbf{K}|}^{-1} \right)\\ \vdots&\ddots&\vdots&\ddots&\vdots\\ D_{x_{k_{0}}}\left( P_{\{i_{n}\}}P_{|\mathbf{K}|}^{-1} \right)&\cdots&\widehat{D_{x_{k_{i}}}\left( P_{\{i_{n }\}}P_{|\mathbf{K}|}^{-1} \right)}&\cdots&D_{x_{k_{n}}}\left( P_{\{i_{n }\}}P_{|\mathbf{K}|}^{-1} \right)\end{vmatrix}
 \circ (P_{|\mathbf{K}|} P_{|\mathbf{K}|\setminus\{k_{i_0}\}}^{-1})\mathrm{d} \mu_{|\mathbf{K}|\setminus\{k_{i_0}\}}\\
& =(-1)^{i_0-1}\int_{V_{|\mathbf{K}|\setminus\{k_{i_0}\}}}  (g\circ P_{|\mathbf{K}|\setminus\{k_{i_0}\}}^{-1})\cdot  F_{\mathbb{N} \setminus (|\mathbf{K}|\setminus\{k_{i_0}\})}\circ(P_{\mathbb{N} \setminus (|\mathbf{K}|\setminus\{k_{i_0}\})}P_{|\mathbf{K}|\setminus\{k_{i_0}\}}^{-1})\\
&\quad\times\begin{vmatrix}
-D_{x_{k_{0}}}(P_{\{k_{i_0}\}}P^{-1}_{|\mathbf{K}|\setminus\{k_{i_0}\}})&\cdots& 1 &\cdots&-D_{x_{k_{n}}}(P_{\{k_{i_0}\}}P^{-1}_{|\mathbf{K}|\setminus\{k_{i_0}\}})\\
D_{x_{k_{0}}}\left( P_{\{i_{1}\}}P_{|\mathbf{K}|}^{-1} \right)&\cdots& D_{x_{k_{i_0}}}\left( P_{\{i_{1}\}}P_{|\mathbf{K}|}^{-1} \right) &\cdots&D_{x_{k_{n}}}\left( P_{\{i_{1}\}}P_{|\mathbf{K}|}^{-1} \right)\\ \vdots&\ddots&\vdots&\ddots&\vdots\\ D_{x_{k_{0}}}\left( P_{\{i_{n}\}}P_{|\mathbf{K}|}^{-1} \right)&\cdots& D_{x_{k_{i_0}}}\left( P_{\{i_{n }\}}P_{|\mathbf{K}|}^{-1} \right)&\cdots&D_{x_{k_{n}}}\left( P_{\{i_{n }\}}P_{|\mathbf{K}|}^{-1} \right)\end{vmatrix}
 \mathrm{d} \mu_{|\mathbf{K}|\setminus\{k_{i_0}\}}.
\end{aligned}
$$
Here, the second equality follows from the identities
$
D_{x_{k_{i_0}}}\rho=(-1)^{i_0-1}
$
and
\[
(D_{x_{k_i}}\rho)
\circ
\bigl(
P_{|\mathbf{K}|}
P_{|\mathbf{K}|\setminus\{k_{i_0}\}}^{-1}
\bigr)
=(-1)^{i_0}
D_{x_{k_i}}
\bigl(
P_{\{k_{i_0}\}}
P_{|\mathbf{K}|\setminus\{k_{i_0}\}}^{-1}
\bigr),
\qquad
i\in\{0,1,\ldots,n\}\setminus\{i_0\}.
\]
The last equality is obtained by expanding the final determinant along
its first row.

For notational simplicity, in the final determinant we write
\begin{equation}\label{20260729for2}
D_{x_{k_s}}
\left(
P_{\{i_t\}} P_{|\mathbf{K}|}^{-1}
\right),
\qquad
s\in\{0,1,\ldots,n\},
\quad
t\in\{1,\ldots,n\},
\end{equation}
for the functions appearing in rows \(2\) through \(n+1\). More
precisely, each expression in \eqref{20260729for2} is to be evaluated
after composition with the local parametrization
$
P_{|\mathbf{K}|}
P_{|\mathbf{K}|\setminus\{k_{i_0}\}}^{-1},
$
that is, it stands for
\[
D_{x_{k_s}}
\left(
P_{\{i_t\}} P_{|\mathbf{K}|}^{-1}
\right)
\circ
\left(
P_{|\mathbf{K}|}
P_{|\mathbf{K}|\setminus\{k_{i_0}\}}^{-1}
\right).
\]
We shall use the abbreviated notation \eqref{20260729for2} throughout
the remainder of the proof.

We now derive an explicit formula for the final determinant obtained above, which we divide into the following two cases.

\textbf{Case 1.}  $k_{i_0}\notin\{i_1,\ldots,i_n\}$. Then
$$
\begin{aligned}
&\begin{vmatrix}
-D_{x_{k_{0}}}(P_{\{k_{i_0}\}}P^{-1}_{|\mathbf{K}|\setminus\{k_{i_0}\}})&\cdots& 1 &\cdots&-D_{x_{k_{n}}}(P_{\{k_{i_0}\}}P^{-1}_{|\mathbf{K}|\setminus\{k_{i_0}\}})\\
D_{x_{k_{0}}}\left( P_{\{i_{1}\}}P_{|\mathbf{K}|}^{-1} \right)&\cdots& D_{x_{k_{i_0}}}\left( P_{\{i_{1}\}}P_{|\mathbf{K}|}^{-1} \right) &\cdots&D_{x_{k_{n}}}\left( P_{\{i_{1}\}}P_{|\mathbf{K}|}^{-1} \right)\\ \vdots&\ddots&\vdots&\ddots&\vdots\\ D_{x_{k_{0}}}\left( P_{\{i_{n}\}}P_{|\mathbf{K}|}^{-1} \right)&\cdots& D_{x_{k_{i_0}}}\left( P_{\{i_{n }\}}P_{|\mathbf{K}|}^{-1} \right)&\cdots&D_{x_{k_{n}}}\left( P_{\{i_{n }\}}P_{|\mathbf{K}|}^{-1} \right)\end{vmatrix}\\
&=\begin{vmatrix}
-D_{x_{k_{0}}}(P_{\{k_{i_0}\}}P^{-1}_{|\mathbf{K}|\setminus\{k_{i_0}\}})&\cdots& 1 &\cdots&-D_{x_{k_{n}}}(P_{\{k_{i_0}\}}P^{-1}_{|\mathbf{K}|\setminus\{k_{i_0}\}})\\
D_{x_{k_{0}}}\left( P_{\{i_{1}\}}P_{|\mathbf{K}|\setminus\{k_{i_0}\}}^{-1} \right)&\cdots& 0 &\cdots&D_{x_{k_{n}}}\left( P_{\{i_{1}\}}P_{|\mathbf{K}|\setminus\{k_{i_0}\}}^{-1} \right)\\ \vdots&\ddots&\vdots&\ddots&\vdots\\ D_{x_{k_{0}}}\left( P_{\{i_{n}\}}P_{|\mathbf{K}|\setminus\{k_{i_0}\}}^{-1} \right)&\cdots& 0&\cdots&D_{x_{k_{n}}}\left( P_{\{i_{n }\}}P_{|\mathbf{K}|\setminus\{k_{i_0}\}}^{-1} \right)\end{vmatrix}\\
&=(-1)^{i_0}\begin{vmatrix}
D_{x_{k_{0}}}\left( P_{\{i_{1}\}}P_{|\mathbf{K}|\setminus\{k_{i_0}\}}^{-1} \right)&\cdots& \widehat{D_{x_{k_{i_0}}}\left( P_{\{i_{1 }\}}P_{|\mathbf{K}|\setminus\{k_{i_0}\}}^{-1} \right)} &\cdots&D_{x_{k_{n}}}\left( P_{\{i_{1}\}}P_{|\mathbf{K}|\setminus\{k_{i_0}\}}^{-1} \right)\\ \vdots&\ddots&\vdots&\ddots&\vdots\\ D_{x_{k_{0}}}\left( P_{\{i_{n}\}}P_{|\mathbf{K}|\setminus\{k_{i_0}\}}^{-1} \right)&\cdots& \widehat{D_{x_{k_{i_0}}}\left( P_{\{i_{n }\}}P_{|\mathbf{K}|\setminus\{k_{i_0}\}}^{-1} \right)} &\cdots&D_{x_{k_{n}}}\left( P_{\{i_{n }\}}P_{|\mathbf{K}|\setminus\{k_{i_0}\}}^{-1} \right)
\end{vmatrix}\\
&= (-1)^{i_0}\cdot  s(\mathbf{I}) \cdot s(\mathbf{K} \setminus\{k_{i_0}\})\cdot \det(D(P_{|\mathbf{I}|}P_{|\mathbf{K}|\setminus\{k_{i_0}\}}^{-1})).
\end{aligned}
$$
We now justify the first equality. Fix
\(j\in\{1,\ldots,n\}\). If \(i_j\in|\mathbf{K}|\), then the assumption
\(k_{i_0}\notin\{i_1,\ldots,i_n\}\) implies that
\(i_j\neq k_{i_0}\). Hence, for every
\(s\in\{0,1,\ldots,n\}\setminus\{i_0\}\),
\[
\begin{aligned}
&D_{x_{k_s}}
\left(P_{{i_j}} P_{|\mathbf{K}|}^{-1}\right)
\circ
\left(
P_{|\mathbf{K}|}
P^{-1}_{|\mathbf{K}|\setminus\{k_{i_0}\}}
\right)
=
\delta_{k_s,i_j}
=D_{x_{k_s}}
\left(
P_{\{i_j\}}
P^{-1}_{|\mathbf{K}|\setminus\{k_{i_0}\}}
\right),
\end{aligned}
\]
and
$
D_{x_{k_{i_0}}}
\left(P_{\{i_j\}}\circ P_{|\mathbf{K}|}^{-1}\right)=0.
$
If \(i_j\in\mathbb N\setminus|\mathbf{K}|\), the chain rule gives
\begin{eqnarray*}
 &&D_{x_{k_{s}}}\left( P_{\{i_{j}\}}  P_{|\mathbf{K}|\setminus\{k_{i_0}\}}^{-1}\right)\\
& =&D_{x_{k_{s}}}\left( P_{\{i_{j}\}}P_{|\mathbf{K}|}^{-1} \right)\circ (P_{|\mathbf{K}|} P_{|\mathbf{K}|\setminus\{k_{i_0}\}}^{-1})
+D_{x_{k_{i_0}}}\left( P_{\{i_{j}\}}P_{|\mathbf{K}|}^{-1} \right)\circ (P_{|\mathbf{K}|} P_{|\mathbf{K}|\setminus\{k_{i_0}\}}^{-1})
\cdot D_{x_{k_{s}}}\left( P_{\{k_{i_0}\}}P_{|\mathbf{K}|\setminus\{k_{i_0}\}}^{-1}\right)
\end{eqnarray*}
for every
\(s\in\{0,1,\ldots,n\}\setminus\{i_0\}\).
Accordingly, for each \(j\) such that \(i_j\notin|\mathbf{K}|\), multiply the
first row by
\[
-D_{x_{k_{i_0}}}
\left(P_{{i_j}} P_{|\mathbf{K}|}^{-1}\right)
\circ
\left(
P_{|\mathbf{K}|}
P^{-1}_{|\mathbf{K}|\setminus\{k_{i_0}\}}
\right)
\]
and add the resulting row to the \((j+1)\)-st row. This operation
replaces that row by the corresponding row in the second determinant
and makes its entry in the \((i_0+1)\)-st column equal to zero. Since
such row operations do not change the determinant, the first equality
follows.

The second equality is obtained by expanding the second determinant
along its \((i_0+1)\)-st column. The only nonzero entry in that column
is the \(1\) in the first row, whose cofactor has sign
\[
(-1)^{1+(i_0+1)}=(-1)^{i_0}.
\]
The last equality follows from the definitions of
\(s(\mathbf{I})\), \(s(\mathbf{K}\setminus\{k_{i_0}\})\), and the corresponding
coordinate Jacobian.

\textbf{Case 2.} $k_{i_0}\in\{i_1,\ldots,i_n\}$. Then there exists $k\in\{1,\ldots,n\}$ such that $k_{i_0}=i_k$, and we have
$$
\begin{aligned}
&\begin{vmatrix}
-D_{x_{k_{0}}}(P_{\{k_{i_0}\}}P^{-1}_{|\mathbf{K}|\setminus\{k_{i_0}\}})&\cdots& 1 &\cdots&-D_{x_{k_{n}}}(P_{\{k_{i_0}\}}P^{-1}_{|\mathbf{K}|\setminus\{k_{i_0}\}})\\
D_{x_{k_{0}}}\left( P_{\{i_{1}\}}P_{|\mathbf{K}|}^{-1} \right)&\cdots& D_{x_{k_{i_0}}}\left( P_{\{i_{1}\}}P_{|\mathbf{K}|}^{-1} \right) &\cdots&D_{x_{k_{n}}}\left( P_{\{i_{1}\}}P_{|\mathbf{K}|}^{-1} \right)\\ \vdots&\ddots&\vdots&\ddots&\vdots\\ D_{x_{k_{0}}}\left( P_{\{i_{n}\}}P_{|\mathbf{K}|}^{-1} \right)&\cdots& D_{x_{k_{i_0}}}\left( P_{\{i_{n }\}}P_{|\mathbf{K}|}^{-1} \right)&\cdots&D_{x_{k_{n}}}\left( P_{\{i_{n }\}}P_{|\mathbf{K}|}^{-1} \right)\end{vmatrix}\\
&=\begin{vmatrix}
-D_{x_{k_{0}}}(P_{\{k_{i_0}\}}P^{-1}_{|\mathbf{K}|\setminus\{k_{i_0}\}})&\cdots& 1 &\cdots&-D_{x_{k_{n}}}(P_{\{k_{i_0}\}}P^{-1}_{|\mathbf{K}|\setminus\{k_{i_0}\}})\\
D_{x_{k_{0}}}\left( P_{\{i_{1}\}}P_{|\mathbf{K}|}^{-1} \right)&\cdots& D_{x_{k_{i_0}}}\left( P_{\{i_{1}\}}P_{|\mathbf{K}|}^{-1} \right) &\cdots&D_{x_{k_{n}}}\left( P_{\{i_{1}\}}P_{|\mathbf{K}|}^{-1} \right)\\ \vdots&\ddots&\vdots&\ddots&\vdots\\
D_{x_{k_{0}}}\left( P_{\{i_{k}\}}P_{|\mathbf{K}|}^{-1} \right)&\cdots& D_{x_{k_{i_0}}}\left( P_{\{i_{k}\}}P_{|\mathbf{K}|}^{-1} \right) &\cdots&D_{x_{k_{n}}}\left( P_{\{i_{k}\}}P_{|\mathbf{K}|}^{-1} \right)\\ \vdots&\ddots&\vdots&\ddots&\vdots\\ D_{x_{k_{0}}}\left( P_{\{i_{n}\}}P_{|\mathbf{K}|}^{-1} \right)&\cdots& D_{x_{k_{i_0}}}\left( P_{\{i_{n }\}}P_{|\mathbf{K}|}^{-1} \right)&\cdots&D_{x_{k_{n}}}\left( P_{\{i_{n }\}}P_{|\mathbf{K}|}^{-1} \right)
\end{vmatrix}\\
&=\begin{vmatrix}
-D_{x_{k_{0}}}(P_{\{k_{i_0}\}}P^{-1}_{|\mathbf{K}|\setminus\{k_{i_0}\}})&\cdots& 1 &\cdots&-D_{x_{k_{n}}}(P_{\{k_{i_0}\}}P^{-1}_{|\mathbf{K}|\setminus\{k_{i_0}\}})\\
D_{x_{k_{0}}}\left( P_{\{i_{1}\}}P_{|\mathbf{K}|\setminus\{k_{i_0}\}}^{-1} \right)&\cdots& 0 &\cdots&D_{x_{k_{n}}}\left( P_{\{i_{1}\}}P_{|\mathbf{K}|\setminus\{k_{i_0}\}}^{-1} \right)\\ \vdots&\ddots&\vdots&\ddots&\vdots\\
0&\cdots& 1&\cdots&0\\ \vdots&\ddots&\vdots&\ddots&\vdots\\ D_{x_{k_{0}}}\left( P_{\{i_{n}\}}P_{|\mathbf{K}|\setminus\{k_{i_0}\}}^{-1} \right)&\cdots& 0&\cdots&D_{x_{k_{n}}}\left( P_{\{i_{n }\}}P_{|\mathbf{K}|\setminus\{k_{i_0}\}}^{-1} \right)
\end{vmatrix}\\
&=-\begin{vmatrix}
0&\cdots& 1 &\cdots&0\\
D_{x_{k_{0}}}\left( P_{\{i_{1}\}}P_{|\mathbf{K}|\setminus\{k_{i_0}\}}^{-1} \right)&\cdots& 0 &\cdots&D_{x_{k_{n}}}\left( P_{\{i_{1}\}}P_{|\mathbf{K}|\setminus\{k_{i_0}\}}^{-1} \right)\\ \vdots&\ddots&\vdots&\ddots&\vdots\\
-D_{x_{k_{0}}}(P_{\{k_{i_0}\}}P^{-1}_{|\mathbf{K}|\setminus\{k_{i_0}\}})&\cdots& 1&\cdots& -D_{x_{k_{n}}}(P_{\{k_{i_0}\}}P^{-1}_{|\mathbf{K}|\setminus\{k_{i_0}\}})\\ \vdots&\ddots&\vdots&\ddots&\vdots\\ D_{x_{k_{0}}}\left( P_{\{i_{n}\}}P_{|\mathbf{K}|\setminus\{k_{i_0}\}}^{-1} \right)&\cdots& 0&\cdots&D_{x_{k_{n}}}\left( P_{\{i_{n }\}}P_{|\mathbf{K}|\setminus\{k_{i_0}\}}^{-1} \right)
\end{vmatrix}\\
&=\begin{vmatrix}
0&\cdots& 1 &\cdots&0\\
D_{x_{k_{0}}}\left( P_{\{i_{1}\}}P_{|\mathbf{K}|\setminus\{k_{i_0}\}}^{-1} \right)&\cdots& 0 &\cdots&D_{x_{k_{n}}}\left( P_{\{i_{1}\}}P_{|\mathbf{K}|\setminus\{k_{i_0}\}}^{-1} \right)\\ \vdots&\ddots&\vdots&\ddots&\vdots\\
D_{x_{k_{0}}}(P_{\{k_{i_0}\}}P^{-1}_{|\mathbf{K}|\setminus\{k_{i_0}\}})&\cdots& -1&\cdots& D_{x_{k_{n}}}(P_{\{k_{i_0}\}}P^{-1}_{|\mathbf{K}|\setminus\{k_{i_0}\}})\\ \vdots&\ddots&\vdots&\ddots&\vdots\\ D_{x_{k_{0}}}\left( P_{\{i_{n}\}}P_{|\mathbf{K}|\setminus\{k_{i_0}\}}^{-1} \right)&\cdots& 0&\cdots&D_{x_{k_{n}}}\left( P_{\{i_{n }\}}P_{|\mathbf{K}|\setminus\{k_{i_0}\}}^{-1} \right)
\end{vmatrix}\\
&=(-1)^{i_0}\begin{vmatrix}
D_{x_{k_{0}}}\left( P_{\{i_{1}\}}P_{|\mathbf{K}|\setminus\{k_{i_0}\}}^{-1} \right)&\cdots& \widehat{D_{x_{k_{i_0}}}\left( P_{\{i_{1 }\}}P_{|\mathbf{K}|\setminus\{k_{i_0}\}}^{-1} \right)} &\cdots&D_{x_{k_{n}}}\left( P_{\{i_{1}\}}P_{|\mathbf{K}|\setminus\{k_{i_0}\}}^{-1} \right)\\ \vdots&\ddots&\vdots&\ddots&\vdots\\ D_{x_{k_{0}}}\left( P_{\{i_{n}\}}P_{|\mathbf{K}|\setminus\{k_{i_0}\}}^{-1} \right)&\cdots& \widehat{D_{x_{k_{i_0}}}\left( P_{\{i_{n }\}}P_{|\mathbf{K}|\setminus\{k_{i_0}\}}^{-1} \right)} &\cdots&D_{x_{k_{n}}}\left( P_{\{i_{n }\}}P_{|\mathbf{K}|\setminus\{k_{i_0}\}}^{-1} \right)
\end{vmatrix}\\
&=(-1)^{i_0}\cdot  s(\mathbf{I}) \cdot s(\mathbf{K}\setminus\{k_{i_0}\})\cdot\det(D(P_{|\mathbf{I}|}P_{|\mathbf{K}|\setminus\{k_{i_0}\}}^{-1})),
\end{aligned}
$$
where the second equality follows from the same argument as in Case 1; the third equality follows from interchanging the first row and the \(k+1\)-th row of the preceding determinant; the fourth equality follows from multiplying the $(k+1)$-th row by $-1$; and the fifth equality follows from expanding the determinant along its first row.

Consequently,
\begin{eqnarray}\label{20250121for4}
\begin{aligned}
&\sum_{i=0}^{n}(-1)^i\cdot\int_{V_{|\mathbf{K}|\setminus\{k_{i_0}\}}}\Phi^i\left(\textbf{x}_{|\mathbf{K}|\setminus\{k_{i_0}\}}+h(\textbf{x}_{|\mathbf{K}|\setminus\{k_{i_0}\}})\mathbf{e}_{k_{i_0}}\right)
\\
&\quad\times (D_{x_{k_i}}\rho )\left(\textbf{x}_{|\mathbf{K}|\setminus\{k_{i_0}\}}+h(\textbf{x}_{|\mathbf{K}|\setminus\{k_{i_0}\}})\mathbf{e}_{k_{i_0}}\right)
 \mathrm{d} \mu_{|\mathbf{K}|\setminus\{k_{i_0}\}}(\textbf{x}_{|\mathbf{K}|\setminus\{k_{i_0}\}})\\
& =(-1)^{i_0-1}\int_{V_{|\mathbf{K}|\setminus\{k_{i_0}\}}}  (g\circ P_{|\mathbf{K}|\setminus\{k_{i_0}\}}^{-1})\cdot  F_{\mathbb{N} \setminus (|\mathbf{K}|\setminus\{k_{i_0}\})}\circ(P_{\mathbb{N} \setminus (|\mathbf{K}|\setminus\{k_{i_0}\})}P_{|\mathbf{K}|\setminus\{k_{i_0}\}}^{-1})\\
&\qquad\qquad\times (-1)^{i_0}\cdot  s(\mathbf{I}) \cdot s(\mathbf{K} \setminus\{k_{i_0}\})\cdot\det(D(P_{|\mathbf{I}|}P_{|\mathbf{K}|\setminus\{k_{i_0}\}}^{-1}))
 \mathrm{d} \mu_{|\mathbf{K}|\setminus\{k_{i_0}\}}\\
& =-\int_{V_{|\mathbf{K}|\setminus\{k_{i_0}\}}}  (g\circ P_{|\mathbf{K}|\setminus\{k_{i_0}\}}^{-1})\cdot  \frac{ s(\mathbf{I}) \cdot s(\mathbf{K} \setminus\{k_{i_0}\})\cdot \det(D(P_{|\mathbf{I}|}P_{|\mathbf{K}|\setminus\{k_{i_0}\}}^{-1}))}{n_{|\mathbf{K}|\setminus\{k_{i_0}\}}\circ P_{|\mathbf{K}|\setminus\{k_{i_0}\}}^{-1}}\\
&\qquad\qquad\qquad\times   F_{\mathbb{N} \setminus (|\mathbf{K}|\setminus\{k_{i_0}\})}\circ(P_{\mathbb{N} \setminus (|\mathbf{K}|\setminus\{k_{i_0}\})}P_{|\mathbf{K}|\setminus\{k_{i_0}\}}^{-1})\cdot n_{|\mathbf{K}|\setminus\{k_{i_0}\}}\circ P_{|\mathbf{K}|\setminus\{k_{i_0}\}}^{-1}
 \mathrm{d} \mu_{|\mathbf{K}|\setminus\{k_{i_0}\}}\\
& =-\int_{ (\partial D)\cap U  }  g \cdot    n_{\mathbf{I}}^{\partial D}
 \mathrm{d} \mu_{\partial D}.
\end{aligned}
\end{eqnarray}
Here, the second equality uses
$
(-1)^{i_0-1}(-1)^{i_0}=-1,
$
while the last equality follows from the definition of
\(n_{\mathbf{I}}^{\partial D}\) and the local representation of the
measure \(\mu_{\partial D}\).

Since the closure of \(g^{-1}(\mathbb{R}\setminus\{0\})\) in \(\ell^2\) is compact, assumption~\eqref{20250120for2} implies that all the
integrals appearing in \eqref{20250121for4} and in the preceding
identities are finite.

Combining \eqref{20250121for3}, \eqref{20250121for5}, and
\eqref{20250121for4}, we obtain
\begin{eqnarray*}
\begin{aligned}
&\sum_{i=0}^{n}(-1)^i\cdot \int_{\{\rho<0\}}\delta_{k_{i}}\left( g\circ P_{|\mathbf{K}|}^{-1}  \cdot   F_{\mathbb{N} \setminus |\mathbf{K}|}\circ(P_{\mathbb{N} \setminus |\mathbf{K}|}P_{|\mathbf{K}|}^{-1}) \right)
\\
&\quad\times\begin{vmatrix}
D_{x_{k_{0}}}\left( P_{\{i_{1}\}}P_{|\mathbf{K}|}^{-1} \right)&\cdots&\widehat{D_{x_{k_{i}}}\left( P_{\{i_{1}\}}P_{|\mathbf{K}|}^{-1} \right)}&\cdots&D_{x_{k_{n}}}\left( P_{\{i_{1}\}}P_{|\mathbf{K}|}^{-1} \right)\\ \vdots&\ddots&\vdots&\ddots&\vdots\\ D_{x_{k_{0}}}\left( P_{\{i_{n}\}}P_{|\mathbf{K}|}^{-1} \right)&\cdots&\widehat{D_{x_{k_{i}}}\left( P_{\{i_{n }\}}P_{|\mathbf{K}|}^{-1} \right)}&\cdots&D_{x_{k_{n}}}\left( P_{\{i_{n }\}}P_{|\mathbf{K}|}^{-1} \right)\end{vmatrix}
 \mathrm{d} \mu_{|\mathbf{K}|}\\
&=-\int_{ (\partial D)\cap U  }  g \cdot    n_{\mathbf{I}}^{\partial D}
 \mathrm{d} \mu_{\partial D},
\end{aligned}
\end{eqnarray*}
Recalling \eqref{20250121for7} and \eqref{20250121for6}, this identity
can be rewritten as
\[
-\sum_{i=1}^{\infty}
\int_{D^\circ\cap U}
\delta_i g(\mathbf{x})\,
n_{(i,\mathbf{I})}^{D^\circ}(\mathbf{x})\,
\mathrm{d}\mu_{D^\circ}(\mathbf{x})
=\int_{(\partial D)\cap U}
g\,n_{\mathbf{I}}^{\partial D}\,
\mathrm{d}\mu_{\partial D}.
\]

Case~\((II)\) follows by the same argument as in case~\((I)\). In that
case, the required conclusion is already obtained at
\eqref{20250121for3}. This completes the proof of
Proposition~\ref{20241102prop1}.

\end{proof}

\section{Global Version of Gauss--Green-type Theorem}

We begin with the notion of a partition of unity on \(\ell^2\).

\begin{definition}\label{230415def1}
A sequence of real-valued functions
\[
\{\gamma_n\}_{n=1}^{\infty}
\subset C^{\infty}_{\ell^2}(\ell^2;[0,1])
\]
is called a \(C^{\infty}\)-partition of unity on \(\ell^2\) if the following
conditions are satisfied:
\begin{itemize}
\item[\((1)\)]
The family \(\{\gamma_n\}_{n=1}^{\infty}\) is locally finite; that is,
for every \(\mathbf{x}\in\ell^2\), there exists an open neighborhood
\(U\) of \(\mathbf{x}\) such that only finitely many of the functions
\(\gamma_n\) are nonzero on \(U\).

\item[\((2)\)]
For every \(\mathbf{x}\in\ell^2\),
\[
\sum_{n=1}^{\infty}\gamma_n(\mathbf{x})=1.
\]
\end{itemize}
\end{definition}

The construction of \(C^{\infty}\)-partitions of unity on \(\ell^2\) relies
on the following elementary separation property.

\begin{lemma}\label{230708lem1}
Let \(\mathbf{x}\in\ell^2\), and let \(r_1,r_2>0\) satisfy
\(r_1<r_2\). Then there exists a function
\[
f\in C^{\infty}_{\ell^2}(\ell^2;[0,1])
\]
such that
$
f\equiv1\,\text{on }B_{r_1}(\mathbf{x})
$
and
$
f\equiv0\,\text{on }\ell^2\setminus B_{r_2}(\mathbf{x}).
$
\end{lemma}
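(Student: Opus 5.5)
The plan is to compose a one-variable smooth cutoff with the squared norm, which is smooth on \(\ell^2\) because \(\ell^2\) is a Hilbert space. Fix \(r_1<r_2\) and choose a function \(\theta\in C^{\infty}(\mathbb{R};[0,1])\) with \(\theta(s)=1\) for \(s\leqslant r_1^2\) and \(\theta(s)=0\) for \(s\geqslant r_2^2\). Concretely, it can be built from the same bump-function device as in the proof of Theorem~\ref{230215Th1}. Let \(\eta(s)\triangleq e^{-1/s}\) for \(s>0\) and \(\eta(s)\triangleq0\) for \(s\leqslant0\). Then set
\[
\theta(s)\triangleq\frac{\eta(r_2^2-s)}{\eta(r_2^2-s)+\eta(s-r_1^2)}.
\]
The denominator never vanishes, since \(r_1^2<r_2^2\) means at least one of the two terms is positive at every \(s\). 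Hence \(\theta\) is \(C^\infty\), takes values in \([0,1]\), and has the required constant values.

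Next define
\[
f(\mathbf{y})\triangleq\theta\bigl(\|\mathbf{y}-\mathbf{x}\|^2\bigr),\qquad \mathbf{y}\in\ell^2.
\]
The map \(q(\mathbf{y})=\|\mathbf{y}-\mathbf{x}\|^2=\langle\mathbf{y}-\mathbf{x},\mathbf{y}-\mathbf{x}\rangle\) is a continuous quadratic polynomial on \(\ell^2\). Its Fréchet derivative is \(Dq(\mathbf{y})\mathbf{h}=2\langle\mathbf{y}-\mathbf{x},\mathbf{h}\rangle\). Its second derivative is the constant bilinear form \(2\langle\cdot,\cdot\rangle\), and all higher derivatives vanish. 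Therefore \(q\in C^{\infty}_{\ell^2}(\ell^2)\). By the chain rule for Fréchet derivatives, and by induction on the order, \(f=\theta\circ q\in C^{\infty}_{\ell^2}(\ell^2;[0,1])\). Each derivative of \(f\) is a finite sum of products of derivatives of \(\theta\) evaluated at \(q\) with multilinear forms built from \(Dq\) and \(D^2q\), and each such term is continuous in \(\mathbf{y}\).

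Finally, verify the support properties. If \(\mathbf{y}\in B_{r_1}(\mathbf{x})\), then \(q(\mathbf{y})<r_1^2\), so \(f(\mathbf{y})=1\). If \(\mathbf{y}\notin B_{r_2}(\mathbf{x})\), then \(q(\mathbf{y})\geqslant r_2^2\), so \(f(\mathbf{y})=0\).

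The only point that needs any care is the smoothness of the norm. The norm itself is not differentiable at the center, so one must compose \(\theta\) with the squared norm \(q\) rather than with \(\|\mathbf{y}-\mathbf{x}\|\). This choice avoids the singularity at \(\mathbf{y}=\mathbf{x}\). Apart from this, the argument is routine and uses only the Hilbert structure of \(\ell^2\), with none of the Gaussian machinery of the earlier sections.
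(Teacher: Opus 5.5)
Your proposal is correct and follows the same approach as the paper: compose a one-variable smooth cutoff equal to $1$ on $(-\infty,r_1^2]$ and $0$ on $[r_2^2,\infty)$ with the squared norm $\|\mathbf{y}-\mathbf{x}\|^2$, which is $C^\infty$ on $\ell^2$, and then apply the chain rule. Your explicit formula for the cutoff and your remark about using the squared norm rather than the norm are correct details that the paper leaves implicit.
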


\begin{proof}
Choose \(h\in C^\infty(\mathbb R;[0,1])\) such that
\[
h(t)=1\quad\text{for }t\leqslant r_1^2,
\qquad
h(t)=0\quad\text{for }t\geqslant r_2^2.
\]
The map
$
\mathbf{y}\longmapsto
\|\mathbf{y}-\mathbf{x}\|^{2}
$
belongs to \(C^{\infty}_{\ell^2}(\ell^2;\mathbb R)\). Define
\[
f(\mathbf{y})
\triangleq
h\!\left(\|\mathbf{y}-\mathbf{x}\|^{2}\right),
\qquad
\mathbf{y}\in\ell^2.
\]
By the chain rule,
$
f\in C^{\infty}_{\ell^2}(\ell^2;[0,1]).
$
Moreover, if \(\mathbf{y}\in B_{r_1}(\mathbf{x})\), then
$
\|\mathbf{y}-\mathbf{x}\|^{2}<r_1^2,
$
and hence \(f(\mathbf{y})=1\). If
\(\mathbf{y}\notin B_{r_2}(\mathbf{x})\), then
$
\|\mathbf{y}-\mathbf{x}\|^{2}\geqslant r_2^2,
$
and therefore \(f(\mathbf{y})=0\). This proves the Lemma \ref{230708lem1}.
\end{proof}

The following lemma is presumably standard, although we have been
unable to locate a precise reference.

\begin{lemma}\label{second lemma for PU l^2}
Let \(\{U_{\alpha}:\alpha\in A\}\) be an open cover of \(\ell^2\).
Then there exist four countable, locally finite open covers
\[
\{V_i^1\}_{i=1}^\infty,\qquad
\{V_i^2\}_{i=1}^\infty,\qquad
\{V_i^3\}_{i=1}^\infty,\qquad
\{V_i^4\}_{i=1}^\infty
\]
of \(\ell^2\), together with a sequence
$
\{g_i\}_{i=1}^\infty
\subset C^{\infty}_{\ell^2}(\ell^2;[0,1]),
$
such that the following properties hold:
\begin{itemize}
\item[\((1)\)]
For every \(i\in\mathbb N\),
\[
\overline{V_i^1}
\subset V_i^2
\subset \overline{V_i^2}
\subset V_i^3
\subset \overline{V_i^3}
\subset V_i^4,
\]
and
$
g_i\equiv1\,\text{on }\overline{V_i^1},
\,
\operatorname{supp}g_i\subset V_i^3.
$

\item[\((2)\)]
The cover \(\{V_i^4\}_{i=1}^\infty\) refines
\(\{U_{\alpha}:\alpha\in A\}\).
\end{itemize}
\end{lemma}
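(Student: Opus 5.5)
The plan is to use the standard paracompactness construction for separable metric spaces, with metric balls, and then build the functions $g_i$ with Lemma~\ref{230708lem1}. Since $\ell^2$ is separable, hence Lindel\"of, we first choose for each $\mathbf{x}\in\ell^2$ a radius $r_{\mathbf{x}}>0$ such that $B_{4r_{\mathbf{x}}}(\mathbf{x})\subset U_{\alpha}$ for some $\alpha\in A$. We then extract a countable subcover $\{B_{r_k}(\mathbf{x}_k)\}_{k=1}^\infty$ of $\{B_{r_{\mathbf{x}}}(\mathbf{x})\}$. This subcover need not be locally finite, so we shrink it in the usual inductive way. Fix the exhaustion
\[
E_m\triangleq \bigcup_{k\le m}B_{r_k}(\mathbf{x}_k).
\]
The point is to arrange that the $i$-th set of each cover avoids a fixed neighborhood of a set that already covers everything near any given point.

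Concretely, we index the new sets by $i=k$ and use four nested radius levels $r_k^j=jr_k$ for $j=1,\ldots,4$. We then set
\[
V_k^j\triangleq B_{jr_k/4\cdot 4/4}\ldots
\]
More cleanly, we put
\[
V_k^j\triangleq B_{\frac{j+1}{5}r'_k}(\mathbf{x}_k)\setminus \overline{E^{(j)}_{k-1}},
\]
where $E^{(j)}_{k-1}$ is a shrunken exhaustion, namely the union of smaller balls $B_{\frac{5-j}{5}\cdot\frac{r_l}{2}}(\mathbf{x}_l)$ for $l<k$. The radii are chosen so that three things hold. First, the closures nest, $\overline{V_k^j}\subset V_k^{j+1}$: the outer radius increases with $j$ while the removed set shrinks with $j$. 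Second, the sets $V_k^1$ still cover $\ell^2$: for each $\mathbf{y}$, take the least $k$ with $\mathbf{y}\in B_{r_k/2\cdot\ldots}(\mathbf{x}_k)$, using that the half-radius balls also cover after the balls are reindexed at the start. Third, local finiteness holds: a point $\mathbf{y}$ lying in the interior of some removed set $E^{(4)}_{m}$ has a neighborhood disjoint from all $V_k^4$ with $k>m$. With these properties, (2) is immediate, since $V_k^4\subset B_{r'_k}(\mathbf{x}_k)\subset U_\alpha$.

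\textbf{The main obstacle is that the sets $V_k^j$ are not balls but balls minus closed sets.} Lemma~\ref{230708lem1} therefore cannot directly give $g_k\equiv1$ on $\overline{V_k^1}$ with support in $V_k^3$. To handle this, we build $g_k$ as a product
\[
g_k=\phi_k\prod_{l<k}(1-\psi_{l}),
\]
where both factors come from Lemma~\ref{230708lem1}. The factor $\phi_k$ is $1$ on the ball of radius $\frac{2}{5}r'_k$ and vanishes outside the ball of radius $\frac{3}{5}r'_k$. Each factor $1-\psi_l$ is $1$ off the level-1 removed ball and $0$ on the level-3 removed ball. The product is a finite product of $C^\infty_{\ell^2}$ functions with values in $[0,1]$, so $g_k\in C^\infty_{\ell^2}(\ell^2;[0,1])$.

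It remains to check that $g_k\equiv1$ on $\overline{V_k^1}$ and that $\operatorname{supp} g_k\subset V_k^3$. This is a radius-bookkeeping matter, which we expect to go through once the radii at the five levels are fixed to be strictly separated. The separation is also what makes closures nest properly, because closures of balls in $\ell^2$ are the closed balls.
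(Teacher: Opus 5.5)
\textbf{The gap is in the covering and local-finiteness steps.} In your construction $E^{(j)}_{k-1}=\bigcup_{l<k}B_{\frac{5-j}{10}r_l}(\mathbf x_l)$, so the sets removed at level $4$ are built from the balls $B_{r_l/10}(\mathbf x_l)$. Your observation that $E^{(4)}_m$ is a neighbourhood missing every $V_k^4$ with $k>m$ is correct. However, it only handles points that lie in some $E^{(4)}_m$. Local finiteness therefore needs $\{B_{r_l/10}(\mathbf x_l)\}_{l\geqslant1}$ to cover $\ell^2$. Similarly, the least-index argument for $\{V_k^1\}$ needs the closed balls $\overline B_{2r_l/5}(\mathbf x_l)$ to cover.

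Nothing in your choice of a countable subcover of $\{B_{r_{\mathbf x}}(\mathbf x)\}$ gives either property, and re-indexing cannot. For instance, suppose $U_\alpha=\ell^2$ for some $\alpha$ and $r_{\mathbf x}\equiv1$. A legitimate subcover then consists of unit balls centred at a countable dense subset of $\ell^2\setminus B_{1/2}(\mathbf 0)$. Then $\mathbf 0$ lies in no $\overline B_{1/10}(\mathbf x_l)$, hence $\mathbf 0\in V_k^4$ whenever $\|\mathbf x_k\|<r'_k$. For any $r'_k\geqslant 1$ this happens for infinitely many $k$, so the family is not even point-finite.

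The repair is to apply the Lindel\"of property from the start to the innermost balls, e.g.\ to $\{B_{r_{\mathbf x}/10}(\mathbf x)\}$. You also need to fix $r'_k$, which you never define, with $r_k<r'_k\leqslant 4r_k$. The lower bound puts $\overline B_{2r_k/5}(\mathbf x_k)$ inside $B_{2r'_k/5}(\mathbf x_k)$, as the covering argument requires; the upper bound gives $V_k^4\subset U_\alpha$. This is what the paper arranges by extracting its countable subcover from $A_{\mathbf y}=\{\phi^{\mathbf y}>\frac12\}$. These are exactly the sets exhausted by its cut-away regions $\{\phi_q\geqslant\frac12+\frac2j\}$.

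\textbf{The deferred bookkeeping also fails as literally stated.} If $1-\psi_l$ vanishes only on the level-$3$ removed ball $B_{r_l/5}(\mathbf x_l)$, then $\operatorname{supp}g_k$ can contain points of the sphere $\|\mathbf y-\mathbf x_l\|=r_l/5$. Those points lie in $\overline{E^{(3)}_{k-1}}$ and hence outside $V_k^3$. Instead take $\psi_l\equiv1$ on a ball of radius strictly between $r_l/5$ and $2r_l/5$, and $\psi_l=0$ off $B_{2r_l/5}(\mathbf x_l)$. Then $g_k\equiv1$ on $\overline{V_k^1}$ and $\operatorname{supp}g_k\subset V_k^3$ do follow. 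With both repairs, your metric-ball shrinking is a valid alternative proof.

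The paper avoids all radius bookkeeping. Its $\psi_i=a_i(\phi_i)\prod_{q<i}b_i(\phi_q)$ has the same ``bump times cut-away'' structure as your $g_k$. But it defines $V_i^k=\{\psi_i>\frac{4-k}{4}\}$ as superlevel sets of this single smooth function and sets $g_i=h(\psi_i)$. The closure nesting and the support condition are then immediate from continuity, and the thresholds $\frac12+\frac2j\to\frac12$ give local finiteness.
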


\begin{proof}
For each \(\mathbf y\in\ell^2\), choose
\(\alpha(\mathbf y)\in A\) such that
\(\mathbf y\in U_{\alpha(\mathbf y)}\). Since
\(U_{\alpha(\mathbf y)}\) is open, there exist radii
$
0<r_1(\mathbf y)<r_2(\mathbf y)
$
such that
$
\overline{B}_{r_2(\mathbf y)}(\mathbf y)
\subset U_{\alpha(\mathbf y)}.
$
By Lemma~\ref{230708lem1}, there exists
$
\phi^{\mathbf y}\in C^{\infty}_{\ell^2}(\ell^2;[0,1])
$
such that
$
\phi^{\mathbf y}\equiv1
\,\text{on }\,B_{r_1(\mathbf y)}(\mathbf y)
$
and
$\phi^{\mathbf y}\equiv0
\,\text{on }\,\ell^2\setminus
B_{r_2(\mathbf y)}(\mathbf y).
$
Consequently,
$
\phi^{\mathbf y}(\mathbf y)=1,
\,
\operatorname{supp}\phi^{\mathbf y}
\subset
\overline{B}_{r_2(\mathbf y)}(\mathbf y)
\subset U_{\alpha(\mathbf y)}.
$

Define
\[
A_{\mathbf y}
\triangleq
\left\{
\mathbf x\in\ell^2:
\phi^{\mathbf y}(\mathbf x)>\frac12
\right\}.
\]
Then \(\{A_{\mathbf y}:\mathbf y\in\ell^2\}\) is an open cover of
\(\ell^2\). Since \(\ell^2\) is Lindel\"{o}f, there exists a sequence
\(\{\mathbf y_i\}_{i=1}^\infty\subset\ell^2\) such that
$
\ell^2=\bigcup_{i=1}^\infty A_{\mathbf y_i}.
$
For brevity, write
$
\phi_i\triangleq\phi^{\mathbf y_i}.
$

Choose \(f_1\in C^\infty(\mathbb R;[0,1])\) such that
\[
f_1(t)=0\quad\text{for }t\leqslant 0,
\qquad
f_1(t)=1\quad\text{for }t\geqslant\frac12.
\]
For every \(j\geqslant 2\), choose functions
\(a_j,b_j\in C^\infty(\mathbb R;[0,1])\) satisfying
$
a_j(t)=0\,\text{for }\,
t\leqslant\frac12-\frac1j,
\,
a_j(t)=1\,\text{for }t\geqslant\frac12,
$
and
$
b_j(t)=1\,\text{for }\,
t\leqslant\frac12+\frac1j,
\,
b_j(t)=0\,\text{for }
\,t\geqslant\frac12+\frac2j.
$
Define
\[
f_j(t_1,\ldots,t_j)
\triangleq
a_j(t_j)\prod_{q=1}^{j-1}b_j(t_q),
\qquad j\geqslant 2.
\]
Thus,
$
f_j(t_1,\ldots,t_j)=1
$
whenever
$t_j\geqslant \frac12
\,\text{and}\,
t_q\leqslant\frac12+\frac1j
\,\text{for every }q<j,
$
whereas
$
f_j(t_1,\ldots,t_j)=0
$
whenever
$
t_j\leqslant\frac12-\frac1j
$
or
$
t_q\geqslant\frac12+\frac2j
\,\text{for some }q<j.
$

Now define
\[
\psi_1(\mathbf x)
\triangleq f_1(\phi_1(\mathbf x))
\]
and, for \(j\geqslant 2\),
\[
\psi_j(\mathbf x)
\triangleq
f_j\bigl(\phi_1(\mathbf x),\ldots,\phi_j(\mathbf x)\bigr),
\qquad
\mathbf x\in\ell^2.
\]
Then
$
\psi_j\in C^{\infty}_{\ell^2}(\ell^2;[0,1])
\,\text{for every }j\in\mathbb N.
$

For \(i\in\mathbb N\) and \(k\in\{1,2,3,4\}\), set
\[
V_i^k
\triangleq
\left\{
\mathbf x\in\ell^2:
\psi_i(\mathbf x)>\frac{4-k}{4}
\right\}.
\]
Each \(V_i^k\) is open. Moreover, continuity of \(\psi_i\) gives
\[
\overline{V_i^1}
\subset
\left\{\psi_i\geqslant\frac34\right\}
\subset V_i^2,
\qquad
\overline{V_i^2}
\subset
\left\{\psi_i\geqslant\frac12\right\}
\subset V_i^3,
\qquad
\text{and}
\qquad
\overline{V_i^3}
\subset
\left\{\psi_i\geqslant\frac14\right\}
\subset V_i^4.
\]
Hence
$
\overline{V_i^1}
\subset V_i^2
\subset\overline{V_i^2}
\subset V_i^3
\subset\overline{V_i^3}
\subset V_i^4.
$

We next verify the covering property. Given
\(\mathbf x\in\ell^2\), let \(i(\mathbf x)\) be the smallest positive
integer \(i\) such that
\[
\phi_i(\mathbf x)>\frac12.
\]
If \(i(\mathbf x)=1\), then
\(\psi_1(\mathbf x)=1\). If \(i(\mathbf x)\geqslant 2\), then
$
\phi_q(\mathbf x)\leqslant\frac12
\quad\text{for every }q<i(\mathbf x),
$
and therefore the defining properties of \(f_{i(\mathbf x)}\) imply
that
$
\psi_{i(\mathbf x)}(\mathbf x)=1.
$
Thus,
$
\mathbf x\in V_{i(\mathbf x)}^1,
$
and hence \(\{V_i^1\}_{i=1}^\infty\) covers \(\ell^2\). Since
$
V_i^1\subset V_i^2\subset V_i^3\subset V_i^4,
$
each of the four families is an open cover of \(\ell^2\).

We now show that \(\{V_i^4\}_{i=1}^\infty\) refines the original
cover. If \(i=1\), then
\[
V_1^4=\{\psi_1>0\}\subset\{\phi_1>0\}.
\]
If \(i\geqslant 2\), then \(\psi_i(\mathbf x)>0\) implies
$
\phi_i(\mathbf x)>\frac12-\frac1i\geqslant 0.
$
In either case,
$
V_i^4
\subset\{\phi_i>0\}
\subset\operatorname{supp}\phi_i
\subset U_{\alpha(\mathbf y_i)}.
$
Therefore, \(\{V_i^4\}_{i=1}^\infty\) is a refinement of
\(\{U_{\alpha}:\alpha\in A\}\).

It remains to establish local finiteness. Fix
\(\mathbf x\in\ell^2\). Since the sets \(\{A_{\mathbf y_i}\}_{i=1}^{\infty}\) cover
\(\ell^2\), there exists \(m\in\mathbb N\) such that
$
\phi_m(\mathbf x)>\frac12.
$
By continuity, there exist an open neighborhood
\(\mathcal N_{\mathbf x}\) of \(\mathbf x\) and a number
\(a_{\mathbf x}>\frac12\) such that
$
\phi_m(\mathbf x')\geqslant a_{\mathbf x}
\,
\text{for every }\mathbf x'\in\mathcal N_{\mathbf x}.
$
Choose \(N>m\) sufficiently large that
$
\frac{2}{N}<a_{\mathbf x}-\frac12.
$
For every \(j\geqslant N\), we then have \(m<j\) and
$
\phi_m(\mathbf x')
\geqslant a_{\mathbf x}
>
\frac12+\frac2j
\,
\text{for every }\mathbf x'\in\mathcal N_{\mathbf x}.
$
The defining properties of \(f_j\) therefore imply that
$
\psi_j(\mathbf x')=0
\,
\text{for every }\mathbf x'\in\mathcal N_{\mathbf x}.
$
Consequently,
$
\mathcal N_{\mathbf x}\cap V_j^4=\varnothing
\,\text{for every }\,j\geqslant N.
$
Thus, \(\{V_i^4\}_{i=1}^\infty\) is locally finite. Since
\(V_i^k\subset V_i^4\) for \(k=1,2,3\), each of the other three
families is also locally finite.

Finally, choose \(h\in C^\infty(\mathbb R;[0,1])\) such that
$
h(t)=0\,\text{for }t\leqslant\frac12,
\,
h(t)=1\,\text{for }t\geqslant\frac34,
$
and define
\[
g_i(\mathbf x)\triangleq h(\psi_i(\mathbf x)),
\qquad
i\in\mathbb N,\quad\mathbf x\in\ell^2.
\]
Then
$
g_i\in C^{\infty}_{\ell^2}(\ell^2;[0,1]).
$
Since
$
\overline{V_i^1}
\subset\left\{\psi_i\geqslant\frac34\right\},
$
we have
$
g_i\equiv1\,\text{on }\overline{V_i^1}.
$
Furthermore,
$
\{g_i\neq0\}
\subset\left\{\psi_i>\frac12\right\}
=V_i^2,
$
so that
$
\operatorname{supp}g_i
\subset\overline{V_i^2}
\subset V_i^3.
$
This completes the proof of Lemma \ref{second lemma for PU l^2}.
\end{proof}

We are now ready to prove the existence of \(C^{\infty}\)-partitions of unity
on \(\ell^2\).

\begin{proposition}\label{prop:H-C1-partition-of-unity}
Let \(\{U_{\alpha}:\alpha\in  A\}\) be an open cover of
\(\ell^2\). Then there exists a \(C^{\infty}\)-partition of unity
$
\{\gamma_n\}_{n=1}^{\infty}
$
on \(\ell^2\) subordinate to \(\{U_{\alpha}:\alpha\in  A\}\);
that is, for every \(n\in\mathbb N\), there exists
\(\alpha(n)\in\mathscr A\) such that
$
\operatorname{supp}\gamma_n\subset U_{\alpha(n)}.
$
\end{proposition}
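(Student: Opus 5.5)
The plan is to build the partition of unity from the data of Lemma~\ref{second lemma for PU l^2} by normalization. Applying that lemma to the cover \(\{U_{\alpha}:\alpha\in A\}\), we obtain the locally finite open covers \(\{V_i^k\}_{i=1}^\infty\), \(k=1,\ldots,4\), and the functions \(g_i\in C^{\infty}_{\ell^2}(\ell^2;[0,1])\) with \(g_i\equiv1\) on \(\overline{V_i^1}\) and \(\operatorname{supp}g_i\subset V_i^3\subset V_i^4\). For each \(i\) we fix \(\alpha(i)\in A\) with \(V_i^4\subset U_{\alpha(i)}\), which is possible by part \((2)\) of the lemma. We then set
\[
G(\mathbf x)\triangleq\sum_{i=1}^{\infty}g_i(\mathbf x),\qquad
\gamma_n(\mathbf x)\triangleq\frac{g_n(\mathbf x)}{G(\mathbf x)},\qquad \mathbf x\in\ell^2 .
\]

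First, \(G\) is well defined and of class \(C^{\infty}_{\ell^2}\). By local finiteness of \(\{V_i^4\}\), every \(\mathbf x\) has a neighborhood \(\mathcal N\) that meets only finitely many \(V_i^4\). Since \(\operatorname{supp}g_i\subset V_i^4\), only finitely many \(g_i\) are nonzero on \(\mathcal N\). On \(\mathcal N\) the series is therefore a finite sum of \(C^\infty\) functions, and smoothness is a local property. Second, \(G\geqslant1\) everywhere: since \(\{V_i^1\}\) covers \(\ell^2\), every \(\mathbf x\) lies in some \(V_i^1\), where \(g_i=1\), and the remaining terms are nonnegative. Hence \(1/G\) is \(C^\infty_{\ell^2}\) by the chain rule, because \(t\mapsto1/t\) is smooth on \([1,\infty)\). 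It follows that \(\gamma_n\in C^{\infty}_{\ell^2}(\ell^2;[0,1])\), since \(0\leqslant g_n\leqslant G\).

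Next we verify the defining properties. For local finiteness, \(\gamma_n\) vanishes wherever \(g_n\) does, so the same neighborhood \(\mathcal N\) as above meets the nonzero sets of only finitely many \(\gamma_n\). The identity \(\sum_n\gamma_n(\mathbf x)=G(\mathbf x)/G(\mathbf x)=1\) holds pointwise. For subordination, \(\operatorname{supp}\gamma_n=\operatorname{supp}g_n\subset V_n^3\subset U_{\alpha(n)}\).

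The only point needing care is the closure of the support. Here \(\{\gamma_n\neq0\}=\{g_n\neq0\}\), so the closures coincide, and \(\overline{\{g_n\neq0\}}\subset\overline{V_n^2}\subset V_n^3\) by the lemma. No step should be a real obstacle: the essential difficulty, producing smooth locally finite bump functions on the non-locally-compact space \(\ell^2\), has already been handled in Lemma~\ref{second lemma for PU l^2}. The one thing to check explicitly is the local-finiteness argument giving smoothness of \(G\), so that no convergence issue arises for the infinite sum.
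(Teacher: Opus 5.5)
Your proof is correct, but it takes a different route from the paper. The paper does not normalize. It sets $\gamma_1=g_1$ and $\gamma_i=g_i\prod_{j=1}^{i-1}(1-g_j)$, so that $\sum_{i=1}^N\gamma_i=1-\prod_{i=1}^N(1-g_i)$. This product vanishes once $N\geqslant m$ with $\mathbf x\in V_m^1$, because $g_m(\mathbf x)=1$.

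In the paper's construction each $\gamma_i$ is a global polynomial in finitely many of the $g_j$. Its smoothness and its range in $[0,1]$ are therefore immediate, with no division and no infinite sum. Local finiteness of $\{V_i^3\}$ is used only to show that the family $\{\gamma_i\}$ is locally finite. This is what allows the paper's subsequent remark to transfer the argument to any regularity class that has the separation property and is closed under composition with smooth functions of finitely many variables.

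Your normalization $\gamma_n=g_n/G$ uses local finiteness earlier, to make $G$ well defined and smooth, and it needs $G$ bounded away from $0$ in order to divide. Locally, $\gamma_n=g_n/(g_{i_1}+\cdots+g_{i_m})$ is still a smooth function of finitely many $g_j$. So your argument works for any such class that is also local (membership determined on neighborhoods), which $C^\infty_{\ell^2}$ is.

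In exchange, your route is the standard one and is independent of how the $g_i$ are enumerated. It also gives $\operatorname{supp}\gamma_n=\operatorname{supp}g_n$ exactly, and it only requires $\sum_i g_i>0$ everywhere. The telescoping construction needs more: at each point some $g_m$ must take the value $1$ exactly. Under local finiteness the product $\prod_i(1-g_i(\mathbf x))$ is eventually constant, so it can only equal $0$ if it has a zero factor.
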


\begin{proof}
Let
$
\{V_i^j\}_{i=1}^{\infty},
\, j\in\{1,2,3,4\},
$
and
$
\{g_i\}_{i=1}^{\infty}
\subset C^{\infty}_{\ell^2}(\ell^2;[0,1])
$
be the open covers and functions provided by
Lemma~\ref{second lemma for PU l^2}. Define
\[
\gamma_1\triangleq g_1
\]
and, for \(i\geqslant 2\),
\[
\gamma_i
\triangleq
g_i\prod_{j=1}^{i-1}(1-g_j).
\]
Since each product is finite, we have
$
\gamma_i\in C^{\infty}_{\ell^2}(\ell^2;[0,1])
\,\text{for every }i\in\mathbb N.
$
Moreover,
\[
\operatorname{supp}\gamma_i
\subset
\operatorname{supp}g_i
\subset V_i^3
\subset V_i^4.
\]
By Lemma~\ref{second lemma for PU l^2}, the cover
\(\{V_i^4\}_{i=1}^{\infty}\) refines
\(\{U_{\alpha}:\alpha\in A\}\). Hence, for every
\(i\in\mathbb N\), there exists \(\alpha(i)\in  A\) such that
\[
\operatorname{supp}\gamma_i
\subset V_i^4
\subset U_{\alpha(i)}.
\]
Thus, \(\{\gamma_i\}_{i=1}^{\infty}\) is subordinate to the given
cover.

We next verify local finiteness. Since
\(\{V_i^3\}_{i=1}^{\infty}\) is locally finite, for every
\(\mathbf x\in\ell^2\), there exists an open neighborhood
\(\mathcal N_{\mathbf x}\) of \(\mathbf x\) that intersects only
finitely many sets \(V_i^3\). Because
\[
\{\gamma_i\neq0\}
\subset\operatorname{supp}\gamma_i
\subset V_i^3,
\]
only finitely many functions \(\gamma_i\) are nonzero on
\(\mathcal N_{\mathbf x}\). Therefore,
\(\{\gamma_i\}_{i=1}^{\infty}\) is locally finite.

It remains to prove that the functions sum to \(1\). For every
\(N\in\mathbb N\), the definition of \(\gamma_i\) gives the
telescoping identity
$
\sum_{i=1}^{N}\gamma_i
=1-\prod_{i=1}^{N}(1-g_i).
$
Fix \(\mathbf x\in\ell^2\). Since
\(\{V_i^1\}_{i=1}^{\infty}\) covers \(\ell^2\), there exists
\(m\in\mathbb N\) such that
$\mathbf x\in V_m^1.$
By Lemma~\ref{second lemma for PU l^2},
$
g_m(\mathbf x)=1.
$
Consequently, for every \(N\geqslant m\),
$
\prod_{i=1}^{N}\bigl(1-g_i(\mathbf x)\bigr)=0,
$
and hence
$
\sum_{i=1}^{N}\gamma_i(\mathbf x)=1.
$
It follows that
$
\sum_{i=1}^{\infty}\gamma_i(\mathbf x)=1.
$
Since \(\mathbf x\) was arbitrary,
\[
\sum_{i=1}^{\infty}\gamma_i\equiv1.
\]
Therefore, \(\{\gamma_i\}_{i=1}^{\infty}\) is a \(C^{\infty}\)-partition of
unity on \(\ell^2\) subordinate to
\(\{U_{\alpha}:\alpha\in A\}\).  The proof of Proposition \ref{prop:H-C1-partition-of-unity} is completed.
\end{proof}

\begin{remark}
The preceding proof extends to other regularity classes. More precisely,
the condition
\[
\{\gamma_n\}_{n=1}^{\infty}
\subset C^{\infty}_{\ell^2}(\ell^2;[0,1])
\]
in Definition~\ref{230415def1} may be replaced by any class of
real-valued functions on \(\ell^2\) satisfying the following two
properties:
\begin{itemize}
\item[\((1)\)]
the separation property stated in Lemma~\ref{230708lem1} holds within
that class;

\item[\((2)\)]
the class is closed under composition with smooth functions of finitely
many real variables.
\end{itemize}
Indeed, the proof uses the prescribed regularity only to construct the
functions \(\phi_i\), \(\psi_i\), and \(g_i\), and to form the finite
products
\[
\gamma_i
=g_i\prod_{j=1}^{i-1}(1-g_j).
\]
Consequently, under the two properties above, a partition of unity
belonging to the corresponding regularity class exists.
\end{remark}

\begin{theorem}[Global Gauss--Green-type theorem]
\label{20241015thm1}
Let \(D\) be a region with boundary in a smooth surface \(S\), and
suppose that \(\partial D\) is \(\sigma\)-finite. Let
\(\mathbf{I}\in S_{\mathbb N}^F\) satisfy the assumptions of
Proposition~\ref{20241102prop1}. Let \(O\) be an open subset of
\(\ell^2\) containing \(D\), and let \(f\in B_{\ell^2}(O)\).

Suppose that \(f\) is Fr\'{e}chet differentiable along the
\(H_{\mathbb N}^{2}\)-direction at every point of $O$, that \(f\in C_{\mathcal F}^1(O)\), and
that
\begin{eqnarray}
&&f \sqrt{ \sum\limits_{i=1}^{\infty} \frac{|n_{\left( i,\mathbf{I} \right)}^{D^{\circ}} |^2}{a_i^2}}\in L^1(D^{\circ},\mu_{D^{\circ}}),\quad f|_{ \partial D}\in L^1( \partial D ,\mu_{\partial D}),\quad f|_{D^{\circ}}\in L^1(D^{\circ} ,\mu_{ D^{\circ}}),\label{20250207for1}\\
&& \sum\limits_{i=1}^{\infty} \left( |D_{x_i}f  |+\left|\frac{x_i}{a_i^2} f \right| \right)   |n_{\left( i,\mathbf{I} \right)}^{D^{\circ}} |\in L^1(D^{\circ} ,\mu_{ D^{\circ}}),\,  \left( |D_{x_i}f  |+\left|\frac{x_i}{a_i^2}  f \right| \right)\bigg|_{D^{\circ}}\in L^1(D^{\circ} ,\mu_{ D^{\circ}}),\, \text{for every }\,i\in\mathbb{N}.\nonumber
\end{eqnarray}
Then
\begin{equation}\label{20250209for2}
-\sum_{i=1}^{\infty}
\int_{D^\circ}
\delta_i f\,
n_{(i,\mathbf{I})}^{D^\circ}\,
\mathrm d\mu_{D^\circ}
=\int_{\partial D}
f\,n_{\mathbf{I}}^{\partial D}\,
\mathrm d\mu_{\partial D}.
\end{equation}
\end{theorem}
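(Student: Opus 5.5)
The plan is to globalize Proposition~\ref{20241102prop1} in two layers. A partition of unity localizes $f$ to coordinate neighbourhoods of $D$. The cut-offs $X_m$ of Theorem~\ref{230215Th1} truncate to compact support, and Proposition~\ref{20241013thm1} removes them in the limit.

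\textbf{Step 1: localize.} Cover $O$ by the open sets $U$ of Definition~\ref{20241009for1}: one for each boundary point, one for each interior point, and one open set $O\setminus D$ on which nothing is required. We may shrink every $U$ to a ball. By Proposition~\ref{prop:H-C1-partition-of-unity}, applied to this cover of $O$ together with $\ell^2\setminus D$, we get a locally finite $C^\infty$ partition of unity $\{\gamma_k\}$ with each $\operatorname{supp}\gamma_k$ contained in one such set.

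\textbf{Step 2: truncate and apply the local theorem.} Fix $m$ and consider $g_{k,m}\triangleq \gamma_k X_m f$.
\begin{itemize}
\item Each $X_m$ has compact support in $K_{m+2N_1}$. Only finitely many $\gamma_k$ are nonzero near any point, so compactness gives finitely many $k$ with $g_{k,m}\neq0$.
\item $g_{k,m}$ is Borel and lies in $C^1_{\mathcal F}$. It is $H^2_{\mathbb N}$-Fr\'echet differentiable, by Lemma~\ref{20250113lem2} and the product rule ($\gamma_k$ is $\ell^2$-smooth, hence $H^2$-smooth).
\item The closure of $\{g_{k,m}\neq0\}$ is compact. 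The support of $g_{k,m}$ lies in a closed ball inside $U$, after shrinking by another bump of Lemma~\ref{230708lem1} if needed.
\item The product rule gives $D_{x_i}g_{k,m}=\gamma_kX_mD_{x_i}f+f\,D_{x_i}(\gamma_kX_m)$. The first term is controlled by hypothesis \eqref{20250207for1}.
\end{itemize}

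For the second term I use Cauchy--Schwarz:
\[
\sum_i |D_{x_i}X_m|\,|n^{D^\circ}_{(i,\mathbf I)}|\le \Bigl(\sum_i a_i^2|D_{x_i}X_m|^2\Bigr)^{1/2}\Bigl(\sum_i a_i^{-2}|n^{D^\circ}_{(i,\mathbf I)}|^2\Bigr)^{1/2}\le C\Bigl(\sum_i a_i^{-2}|n^{D^\circ}_{(i,\mathbf I)}|^2\Bigr)^{1/2}.
\]
This bound uses Theorem~\ref{230215Th1}(2), and the first integrability condition in \eqref{20250207for1} makes it integrable. The analogous term with $D\gamma_k$ is bounded locally, since $\gamma_k$ is $\ell^2$-$C^\infty$. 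Thus conditions \eqref{20250120for1}--\eqref{20250120for3} hold.

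Applying Proposition~\ref{20241102prop1} (case I or II as appropriate) and summing the finitely many $k$, the identity $\sum_k\gamma_k=1$ gives \eqref{20250209for2} with $f$ replaced by $X_mf$. Pieces supported in $\ell^2\setminus D$ contribute nothing.

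\textbf{Step 3: remove the cut-off.} Write $\delta_i(X_mf)=X_m\delta_if+f D_{x_i}X_m$ and let $m\to\infty$.
\begin{itemize}
\item By Proposition~\ref{20241013thm1}, $X_m\to1$ a.e.\ with respect to $\mu_{D^\circ}$ and $\mu_{\partial D}$. Also $|X_m|\le1$, and $f|_{\partial D}\in L^1$.
\item So dominated convergence handles the boundary integral.
\item It also handles $\sum_i\int X_m\,\delta_if\,n_{(i,\mathbf I)}$, using the dominant $\sum_i(|D_{x_i}f|+|x_if/a_i^2|)|n_{(i,\mathbf I)}|$. Tonelli justifies moving the sum inside.
\item The error term $\int f\sum_iD_{x_i}X_m\,n_{(i,\mathbf I)}$ is bounded, by the Cauchy--Schwarz estimate above, by $\int|f|\bigl(\sum_ia_i^2|D_{x_i}X_m|^2\bigr)^{1/2}\bigl(\sum_i a_i^{-2}|n_{(i,\mathbf I)}|^2\bigr)^{1/2}$.
\item Here $\bigl(\sum_ia_i^2|D_{x_i}X_m|^2\bigr)^{1/2}\le C$ and $\sum_i a_i^2|D_{x_i}X_m|^2\to0$ a.e.\ by Proposition~\ref{20241013thm1}. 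Dominated convergence then sends the error to $0$.
\end{itemize}

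The main obstacle is Step 2's verification of the local hypotheses for $g_{k,m}$, in particular that its support sits in a closed ball inside the chart and its closure is compact. I would first use the $\ell^2$-continuity of $\gamma_k$ together with the compactness of $K_{m+2N_1}$. I would also check that composing Fr\'echet $H^2$-differentiability with the product is legitimate. Local finiteness must be used on the compact set to keep the sum over $k$ finite.
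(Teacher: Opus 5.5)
Your proposal is correct and follows the paper's proof. You localize with the $C^\infty$ partition of unity of Proposition~\ref{prop:H-C1-partition-of-unity} and apply Proposition~\ref{20241102prop1} to finitely many pieces; you then remove the cut-offs $X_m$ via $\delta_i(X_mf)=X_m\delta_if+fD_{x_i}X_m$, using Cauchy--Schwarz with Theorem~\ref{230215Th1}(2), Proposition~\ref{20241013thm1}, and dominated convergence. The differences are organizational and in the level of detail: you apply the local theorem directly to $\gamma_kX_mf$ instead of first treating compactly supported $f$, and you verify the local hypotheses (the closed ball and the $D_{x_i}\gamma_k$ term) more explicitly than the paper does.
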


\begin{proof}
For brevity, write
\[
N_i\triangleq n_{(i,\mathbf{I})}^{D^\circ},
\qquad
N_{\partial}\triangleq n_{\mathbf{I}}^{\partial D}.
\]

We first consider the case in which
\[
K\triangleq\operatorname{supp}f
\]
is compact in \(\ell^2\). By
Proposition~\ref{20241102prop1}, for every
\(\mathbf x^0\in\ell^2\), there exists an open neighborhood
\(U_{\mathbf x^0}\) of \(\mathbf x^0\) such that the local
Gauss--Green formula holds for every admissible function supported in
\(U_{\mathbf x^0}\).

The family
$
\{U_{\mathbf x^0}:\mathbf x^0\in\ell^2\}
$
is an open cover of \(\ell^2\). By
Proposition~\ref{prop:H-C1-partition-of-unity}, there exists a
\(C^{\infty}\)-partition of unity
$
\{\gamma_k\}_{k=1}^{\infty}
$
subordinate to this cover. Thus, for each \(k\in\mathbb N\), there
exists \(\mathbf x_k\in\ell^2\) such that
\[
\operatorname{supp}\gamma_k\subset U_{\mathbf x_k}.
\]
Consequently, the local formula gives
\[
-\sum_{i=1}^{\infty}
\int_{D^\circ}
\delta_i(f\gamma_k)\,N_i\,
\mathrm d\mu_{D^\circ}
=\int_{\partial D}
f\gamma_k\,N_{\partial}\,
\mathrm d\mu_{\partial D}.
\]

Since the family \(\{\operatorname{supp}\gamma_k\}_{k=1}^{\infty}\)
is locally finite and \(K\) is compact, only finitely many of these
supports intersect \(K\). Hence there exists a finite set
\(J\subset\mathbb N\) such that
\[
f=\sum_{k\in J}f\gamma_k.
\]
Summing the preceding local identities over \(k\in J\), we obtain
\[
\begin{aligned}
-\sum_{i=1}^{\infty}
\int_{D^\circ}
\delta_i f\,N_i\,
\mathrm d\mu_{D^\circ}
&=
-\sum_{k\in J}
\sum_{i=1}^{\infty}
\int_{D^\circ}
\delta_i(f\gamma_k)\,N_i\,
\mathrm d\mu_{D^\circ}
\\
&=
\sum_{k\in J}
\int_{\partial D}
f\gamma_k\,N_{\partial}\,
\mathrm d\mu_{\partial D}
\\
&=
\int_{\partial D}
f\,N_{\partial}\,
\mathrm d\mu_{\partial D}.
\end{aligned}
\]
Thus, \eqref{20250209for2} holds whenever
\(\operatorname{supp}f\) is compact.

We now consider the general case. Let \(\{X_k\}_{k=1}^{\infty}\) be
the cutoff sequence provided by Theorem~\ref{230215Th1} and
Proposition~\ref{20241013thm1}. In particular, for every $k\in\mathbb{N}$,
$
D\cap \supp X_k
$
is a compact subset of $\ell^2$, and
$$
D\cap \supp X_k\subset D\subset O.
$$
Thus, by applying the preceding compact-support argument to \(fX_k\), we obtain
\[
-\sum_{i=1}^{\infty}
\int_{D^\circ}
\delta_i(fX_k)\,N_i\,
\mathrm d\mu_{D^\circ}
=\int_{\partial D}
fX_k\,N_{\partial}\,
\mathrm d\mu_{\partial D}.
\]
Using the product rule
\[
\delta_i(fX_k)
=X_k\delta_i f
+
fD_{x_i}X_k,
\]
we obtain
\begin{equation}\label{eq:cutoff-GG}
\begin{aligned}
&-\sum_{i=1}^{\infty}
\int_{D^\circ}
X_k\,\delta_i f\,N_i\,
\mathrm d\mu_{D^\circ}
-\sum_{i=1}^{\infty}
\int_{D^\circ}
f\,D_{x_i}X_k\,N_i\,
\mathrm d\mu_{D^\circ}
=\int_{\partial D}
fX_k\,N_{\partial}\,
\mathrm d\mu_{\partial D}.
\end{aligned}
\end{equation}

By the Cauchy--Schwarz inequality,
\[
\begin{aligned}
\sum_{i=1}^{\infty}
|f|\cdot|D_{x_i}X_k|\cdot|N_i|
&\leqslant
|f|
\left(
\sum_{i=1}^{\infty}
a_i^2|D_{x_i}X_k|^2
\right)^{1/2}
\left(
\sum_{i=1}^{\infty}
\frac{|N_i|^2}{a_i^2}
\right)^{1/2}.
\end{aligned}
\]
The properties of the cutoff sequence established in
Theorem~\ref{230215Th1} and
Proposition~\ref{20241013thm1}, together with
\eqref{20250207for1} and the dominated convergence theorem, imply that
\[
\lim_{k\to\infty}
\sum_{i=1}^{\infty}
\int_{D^\circ}
f\,D_{x_i}X_k\,N_i\,
\mathrm d\mu_{D^\circ}
=0.
\]

Moreover, since
$
|\delta_i f|
\leqslant
|D_{x_i}f|
+
\left|\frac{x_i}{a_i^2}f\right|\,\,\text{for every }i\in\mathbb{N},
$
the fourth condition in \eqref{20250207for1} yields
\[
\sum_{i=1}^{\infty}
|\delta_i f|\cdot |N_i|
\in L^1(D^\circ,\mu_{D^\circ}).
\]
Because \(0\leqslant X_k\leqslant 1\) and \(X_k\to1\) $\mu_{D^{\circ}}$-almost everywhere, another application
of the dominated convergence theorem gives
\[
\lim_{k\to\infty}
\sum_{i=1}^{\infty}
\int_{D^\circ}
X_k\,\delta_i f\,N_i\,
\mathrm d\mu_{D^\circ}
=\sum_{i=1}^{\infty}
\int_{D^\circ}
\delta_i f\,N_i\,
\mathrm d\mu_{D^\circ}.
\]

Similarly, using the integrability of \(f|_{\partial D}\), \(|N_{\partial}|\leqslant 1\),\(\,0\leqslant X_k\leqslant 1\), and \(X_k\to1\) $\mu_{\partial D}$-almost everywhere, we obtain
\[
\lim_{k\to\infty}
\int_{\partial D}
fX_k\,N_{\partial}\,
\mathrm d\mu_{\partial D}
=\int_{\partial D}
f\,N_{\partial}\,
\mathrm d\mu_{\partial D}.
\]
Letting \(k\to\infty\) in \eqref{eq:cutoff-GG} therefore yields
\[
-\sum_{i=1}^{\infty}
\int_{D^\circ}
\delta_i f\,
n_{(i,\mathbf{I})}^{D^\circ}\,
\mathrm d\mu_{D^\circ}
=\int_{\partial D}
f\,n_{\mathbf{I}}^{\partial D}\,
\mathrm d\mu_{\partial D}.
\]
This proves \eqref{20250209for2} and completes the proof of Theorem \ref{20241015thm1}.
\end{proof}

\begin{corollary}\label{20260801corollary1}
The conclusion of Theorem~\ref{20241015thm1} remains valid for complex-valued
functions satisfying the corresponding assumptions.
\end{corollary}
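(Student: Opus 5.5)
The plan is to reduce the complex-valued statement to Theorem~\ref{20241015thm1} applied separately to the real part and the imaginary part. Let \(f=u+\mathrm{i}v\) with \(u=\operatorname{Re}f\) and \(v=\operatorname{Im}f\). Here ``the corresponding assumptions'' means the hypotheses of Theorem~\ref{20241015thm1}, with \(B_{\ell^2}(O)\), \(C_{\mathcal F}^1(O)\), Fr\'{e}chet differentiability along the \(H_{\mathbb N}^2\)-direction (with values in \(\mathbb C\cong\mathbb R^2\)) and the integrability conditions \eqref{20250207for1} all read with \(|\cdot|\) the complex modulus.

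The first step is to check that \(u\) and \(v\) each satisfy the hypotheses of Theorem~\ref{20241015thm1}.

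\begin{itemize}
\item Borel measurability and \(\mathcal F\)-continuity of \(u\) and \(v\) follow because \(\operatorname{Re}\) and \(\operatorname{Im}\) are continuous real-linear maps \(\mathbb C\to\mathbb R\).
\item The partial derivatives satisfy \(D_{x_i}u=\operatorname{Re}D_{x_i}f\) and \(D_{x_i}v=\operatorname{Im}D_{x_i}f\), so \(u,v\in C_{\mathcal F}^1(O)\).
\item Fr\'{e}chet differentiability along the \(H_{\mathbb N}^2\)-direction passes to \(u\) and \(v\), since the derivative of \(f\) is a bounded real-linear map into \(\mathbb R^2\) and composing with the projections preserves the \(o(\|\mathbf h\|_{H^2_{\mathbb N}})\) estimate.
\item For the integrability conditions, the pointwise bounds \(|u|,|v|\leqslant|f|\) and \(|D_{x_i}u|,|D_{x_i}v|\leqslant|D_{x_i}f|\) show that every function in \eqref{20250207for1} written with \(u\) or \(v\) is dominated by the corresponding function written with \(f\). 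Hence all these functions lie in the required \(L^1\) spaces.
\end{itemize}

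The second step is to apply Theorem~\ref{20241015thm1} to \(u\) and to \(v\), which gives two real identities of the form \eqref{20250209for2}. Since \(\delta_i\) is real-linear, \(\delta_i f=\delta_i u+\mathrm{i}\,\delta_i v\). The normal components \(n^{D^\circ}_{(i,\mathbf I)}\) and \(n^{\partial D}_{\mathbf I}\) are real-valued. We multiply the identity for \(v\) by \(\mathrm{i}\) and add it to the identity for \(u\).

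The final step is to justify exchanging this linear combination with the infinite sum over \(i\) and with the integrals. This is the only point needing care, since the integrals of \(f\) must be well defined as absolutely convergent complex integrals. The fourth condition in \eqref{20250207for1} gives \(\sum_i|\delta_i f|\,|n^{D^\circ}_{(i,\mathbf I)}|\in L^1\), and this makes \(\sum_i\int\delta_i f\,n^{D^\circ}_{(i,\mathbf I)}\,\mathrm d\mu_{D^\circ}\) absolutely convergent. On the boundary side, \(|n^{\partial D}_{\mathbf I}|\leqslant1\) together with \(f|_{\partial D}\in L^1\) gives absolute convergence. Linearity of absolutely convergent sums and integrals then yields \eqref{20250209for2} for \(f\).
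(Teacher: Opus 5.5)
Your proposal is correct and follows the same route as the paper: the paper's proof consists only of applying Theorem~\ref{20241015thm1} separately to the real and imaginary parts. Your check that $\operatorname{Re} f$ and $\operatorname{Im} f$ inherit each hypothesis, and your linear recombination of the two resulting identities, simply fill in the details the paper leaves implicit.
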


\begin{proof}
Apply Theorem~\ref{20241015thm1} separately to the real and imaginary parts.
\end{proof}

\begin{remark}\label{20241126rem1}
Note that the corresponding unit normal vector fields appear on both sides of \eqref{20250209for2} in Theorem~\ref{20241015thm1}. Formulas of this type also arise in the finite-dimensional setting; see, for example, \cite[Theorem~16.34, p.~427]{Lee}.

It also follows from the preceding proof that, for surface with finite codimension, the assumption \(f\in C_{\mathcal F}^1(O)\) in Theorem~\ref{20241015thm1} is sufficient, and the additional assumption that \(f\) is Fr\'{e}chet differentiable along the
\(H_{\mathbb N}^{2}\)-direction at every point of $O$ is redundant. In particular, in the codimension-one case considered by Goodman in \cite[Theorem~2, p.~421]{Goo}, the regularity assumptions imposed here are weaker than the corresponding assumptions in Goodman's result.
\end{remark}

\begin{remark}
In the classical Gauss--Green theorem, the assumption that \(f\) is Borel measurable on \(U\) is redundant, since it follows automatically from the usual regularity assumptions. Whether an analogous conclusion holds in infinite-dimensional settings appears to be a substantially more difficult question. The discussion preceding Corollary~\ref{230720cor1} is intended to shed some light on this issue. One of the main difficulties is that several distinct and natural topologies arise in infinite-dimensional spaces, and the corresponding notions of continuity and measurability need not coincide.

In the main results of this paper, such as Theorem~\ref{20241015thm1}, we integrate certain functions with respect to Borel measures. Consequently, Borel measurability must be imposed in order for these integrals to be well defined. On the other hand, \(\mathcal F\)-continuity is the relevant regularity assumption in our main theorem. Corollary~\ref{230720cor1} shows that the condition
\[
f\in C_{\mathcal{F}}^1(O)
\]
does not, by itself, imply that \(f\) is Borel measurable. It remains unclear whether Borel measurability follows if one additionally assumes that \(f\) is Fr\'{e}chet differentiable along the
\(H_{\mathbb N}^{2}\)-direction at every point of $O$.

Combining this observation with Remark~\ref{20241126rem1}, we conclude that, for surfaces of $\ell^2$ with any codimension, it remains unclear whether the Borel measurability assumption in Theorem~\ref{20241015thm1} follows from the remaining regularity assumptions; therefore, this assumption cannot simply be omitted.
\end{remark}

The following example illustrates a surface satisfying the assumptions of Theorem~\ref{20241015thm1}.

\begin{example}
Let
\[
I\triangleq\{2n-1:n\in\mathbb N\},
\qquad
J\triangleq\mathbb N\setminus I.
\]
Choose
\[
\mathbf{x}_{J}^0
=\sum_{j\in J}x_j^0\mathbf e_j,
\qquad
\Delta\mathbf{x}_J^0
=\sum_{j\in J}\Delta x_j^0\mathbf e_j
\in P_J\ell^2
\]
such that
\[
\sum_{j\in J}
\left|
\ln\frac{1}{\sqrt{2\pi},a_j}
-\frac{(x_j^0)^2}{2a_j^2}
\right|
<\infty,
\qquad
\sum_{j\in J}\frac{|\Delta x_j^0|^2}{a_j^4}<\infty,
\qquad
\Delta x_j^0\neq 0
\quad\text{for all }j\in J.
\]
Define
\begin{eqnarray*}
S&\triangleq&\left\{\mathbf{x}_I+\mathbf{x}^0_J+x_1\cdot\Delta\mathbf{x}^0_J:\mathbf{x}_I=\sum\limits_{i\in I}x_i\mathbf{e}_i\in P_I\ell^2\right\},
\\
 D&\triangleq &\{\mathbf{x}\in S:\langle\mathbf{x},\mathbf{e}_1\rangle\leqslant 0\},\\
f(\mathbf{x}_I)&\triangleq&\mathbf{x}^0_J+x_1\cdot\Delta\mathbf{x}^0_J,\qquad  \mathbf{x}_I=\sum\limits_{i\in I}x_i\mathbf{e}_i\in U_{I}\triangleq P_I\ell^2.
\end{eqnarray*}
Then \(S\) is a surface in \(\ell^2\) of codimension \(\Gamma_J\), and
\(\{(S,I,f)\}\) is a family of local coordinate triples of \(S\).
By the definitions of \(n_I\) and \(F_J\), we have
\[
n_I(\mathbf{x})
=\left(
1+\sum_{j\in J}|\Delta x_j^0|^2
\right)^{1/2}
=\left(
1+\|\Delta\mathbf{x}_J^0\|^2
\right)^{1/2},
\]
for every
$
\mathbf{x}
=\mathbf{x}_I+\mathbf{x}_J^0
+x_1\Delta\mathbf{x}_J^0
\in S,
$
and
\begin{eqnarray*}
F_{J}(\mathbf{x}^0_J+x_1\cdot\Delta\mathbf{x}^0_J)
&=&\prod_{j\in J}\frac{1}{\sqrt{2\pi a_j^2}}e^{-\frac{(x_j^0+x_1\cdot (\Delta x_j^0))^2}{2a_j^2}}\\
&=&e^{\sum\limits_{j\in J}\left(\ln \frac{1}{\sqrt{2\pi} a_j}-\frac{(x_j^0)^2}{2a_j^2}\right) -x_1\cdot \left(\sum\limits_{j\in J}\frac{x_j^0\cdot (\Delta x_j^0)}{a_j^2}\right)-x_1^2\cdot \left(\sum\limits_{j\in J}\frac{(\Delta x_j^0)^2}{2a_j^2}\right)}.
\end{eqnarray*}
The assumptions above imply that
\[
\sum_{j\in J}
\frac{|x_j^0\Delta x_j^0|}{a_j^2}<\infty
\qquad\text{and}\qquad
\sum_{j\in J}
\frac{|\Delta x_j^0|^2}{a_j^2}<\infty.
\]
Consequently, for every bounded subset \(O\subset P_I\ell^2\),
\[
\sup_{\mathbf{x}_I\in O}
n_I\bigl(
\mathbf{x}_I+\mathbf{x}_J^0
+x_1\Delta\mathbf{x}_J^0
\bigr)
F_{\mathbb N\setminus I}\bigl(
\mathbf{x}_J^0+x_1\Delta\mathbf{x}_J^0
\bigr)
<\infty.
\]
Since there is only one local coordinate triple, \(S\) is an oriented \(\sigma\)-finite surface.

For \(j\in I\) and
$
\mathbf{x}_I^0
=\sum_{i\in I}x_i^0\mathbf e_i
\in U_I,
$
define
\[
f_j(t)
\triangleq
\begin{cases}
\mathbf{x}_J^0+(x_1^0+t)\Delta\mathbf{x}_J^0,
& j=1,\\
\mathbf{x}_J^0+x_1^0\Delta\mathbf{x}_J^0,
& j\neq 1,
\end{cases}
\qquad
t\in U_{\mathbf{x}_I^0,j},
\]
where
\[
U_{\mathbf{x}_I^0,j}
\triangleq
\left\{
t\in\mathbb R:
\mathbf{x}_I^0+t\mathbf e_j\in U_I
\right\}.
\]
Since \(U_I=P_I\ell^2\), we have
\(U_{\mathbf{x}_I^0,j}=\mathbb R\).
For every \(j\in I\), since $\Delta\mathbf{x}_J^0\in H_{J}^2$, the function \(f_j\) is
\(H_{J}^2\)-Fr\'{e}chet differentiable at \(0\).
Moreover,
\begin{equation}\label{20260729for1-1}
\langle f(\cdot),\mathbf e_k\rangle
\in C_{\mathcal F}^2(U_I),
\qquad
k\in\mathbb N\setminus I.
\end{equation}
Thus, \(S\) is a smooth surface.

Set
$
I_1\triangleq I\setminus\{1\}.
$
Since
\[
\partial D
=\left\{
\mathbf{x}_{I_1}+\mathbf{x}_J^0:
\mathbf{x}_{I_1}
=\sum_{i\in I_1}x_i\mathbf e_i
\in P_{I_1}\ell^2
\right\},
\]
it follows that \(\partial D\) is a \(\sigma\)-finite surface and that
\[
\mu_{\partial D}(E)
=C\mu_{I_1}(P_{I_1}E),
\qquad
E\in\mathscr B(\partial D),
\]
where
$
C
\triangleq
F_{J\cup\{1\}}(\mathbf{x}_J^0)
\in(0,\infty).
$
In particular, \(\mu_{\partial D}\) is a totally finite measure.

For every \(\mathbf{I}\in S_{\mathbb N}^F\) satisfying
\(|\mathbf{I}|\in\Gamma_{I_1}\), by \eqref{20260720for1}, and the definition of $n_{(i,\mathbf{I})}^{D^{\circ}}$ in Definition \eqref{20260801def1}, we have
$$
n_{(i,\mathbf{I})}^{D^{\circ}}(\mathbf{x})
\triangleq
\frac{
s((i,\mathbf{I}))\cdot
\det\left(
D\bigl(P_{|\mathbf{I}|}  P_{I}^{-1}\bigr)
(P_{I}\mathbf{x})
\right)
}{
n_{I}(\mathbf{x})
},
$$
$$
n_{(i,\mathbf{I})}^{D^{\circ}}(\mathbf{x})
=
\begin{cases}
\frac{s((i,\mathbf{I}))}{\left(
1+\|\Delta\mathbf{x}_J^0\|^2
\right)^{1/2}},& |(i,\mathbf{I})|=I,\\
\frac{s((i,\mathbf{I}))\cdot (\Delta x_j^0)}{\left(
1+\|\Delta\mathbf{x}_J^0\|^2
\right)^{1/2}},
& |(i,\mathbf{I})|=(I\setminus \{1\})\cup \{j\}\, \text{for some }j\in J,\\
0,&\text{otherwise}.
\end{cases}
$$
Hence,
$$
\left(
\sum_{i=1}^{\infty}
\frac{
\left|
n_{(i,\mathbf{I})}^{D^\circ}(\mathbf{x})
\right|^2
}{a_i^2}
\right)^{1/2}
=
\begin{cases}
 \left(
\frac{1}{a_1^2\left(
1+\|\Delta\mathbf{x}_J^0\|^2
\right)}+\sum\limits_{j\in J}
\frac{|\Delta x_j^0|^2}{a_j^2\left(
1+\|\Delta\mathbf{x}_J^0\|^2
\right)}
\right)^{1/2},& |\mathbf{I}|=I_1,\\
\frac{|\Delta x_j^0|}{a_j\left(
1+\|\Delta\mathbf{x}_J^0\|^2
\right)^{1/2}},
& \card(|\mathbf{I}|\bigtriangleup I_1)=2,\\
0,&\text{otherwise}.
\end{cases}
$$

\end{example}

Let
\[
\mathcal{A}
\triangleq
\left\{
(U,P_{\mathbb N}): U\text{ is an open subset of }\ell^2
\right\}.
\]
Then \(\mathcal{A}\) defines a \(C^1\)-differentiable structure on \(\ell^2\), under which \(\ell^2\) is a smooth, \(\sigma\)-finite, oriented \(C^1\)-differentiable surface of codimension \(\Gamma_{\emptyset}\).

For any nonempty open subset \(V\subset\ell^2\), define
\[
\mathcal{A}_V
\triangleq
\left
\{
(U,P_{\mathbb N}): U\text{ is an open subset of }V
\right\}.
\]
Then \(\mathcal{A}_V\) defines a \(C^1\)-differentiable structure on \(V\), under which \(V\) is a smooth, \(\sigma\)-finite, oriented \(C^1\)-differentiable surface of codimension \(\Gamma_{\emptyset}\). By the construction in Proposition~\ref{20241013prop1}, the corresponding surface measures on \(\ell^2\) and \(V\) are, respectively,
\[
\mu_{\mathbb N}
\qquad\text{and}\qquad
\left.\mu_{\mathbb N}\right|_V.
\]
When no confusion can arise, we use the same notation \(\mu_{\mathbb N}\) for both measures.

Suppose that \(\overline V\), the topological closure of \(V\) in \(\ell^2\), is a region with boundary. By Definition~\ref{20241013def2}, \(\partial V\) is then a smooth, oriented \(C^1\)-differentiable surface of codimension \(\Gamma_{\{1\}}\), and it carries a surface measure \(\mu_{\partial V}\). In fact, \(\partial V\) is a classical \(C^1\)-hypersurface in \(\ell^2\). It follows from the preceding construction that \(\partial V\) is a \(\sigma\)-finite surface and that \(\mu_{\partial V}\) is a \(\sigma\)-finite measure.

In this special setting, the additional hypotheses in Theorem~\ref{20241015thm1} concerning the region-with-boundary structure, the \(\sigma\)-finiteness of the boundary surface, and the Fr\'{e}chet differentiability of the integrand in the \(H_{\mathbb N}^2\)-directions are automatically satisfied or redundant. These hypotheses are used only in \eqref{20250115for1} and \eqref{20250115for2}. Consequently, Proposition~\ref{20241013thm1} and Theorem~\ref{20241015thm1} yield the following simplified version of the Gauss--Green formula which has the same form as  \cite[Corollary 2.1, p. 424]{Goo}.

\begin{corollary}\label{20241127cor1}
Let \(V\) be a nonempty open subset of \(\ell^2\) such that \(\overline V\) is a region with boundary. Let \(\widetilde V\) be an open subset of \(\ell^2\) satisfying
$
\overline V\subset\widetilde V,
$
and let \(f:\widetilde V\to\mathbb R\). Fix \(i\in\mathbb N\). Suppose that
$
f\in C_{\mathcal F}^1(\widetilde V)\cap B_{\ell^2}(\widetilde V),
$
that
$
\{\left.D_{x_i}f\right|_V,\,
\left.(x_i f)\right|_V,\,
\left.f\right|_V\}
\subset L^1(V,\mu_{\mathbb N}),
$
and that
$
\left.f\right|_{\partial V}
\in L^1(\partial V,\mu_{\partial V}).
$
Then
\begin{equation}\label{20260803for1}
\int_V D_{x_i}f\,\mathrm d\mu_{\mathbb N}
- \int_V\frac{x_i f}{a_i^2}\,\mathrm d\mu_{\mathbb N}
= \int_{\partial V}
f\,\nu_{ i }^{\partial V}
\,\mathrm d\mu_{\partial V},
\end{equation}
where for each $\mathbf{x}\in\partial V$, if we adopt the notation after \eqref{20241015for1},
$$
\nu_{ i }^{\partial V}(\mathbf{x})\triangleq \frac{D_{x_i}\rho( \mathbf{x})}{\sqrt{\sum_{j=1}^{\infty}|D_{x_j}\rho( \mathbf{x})|^2}}.
$$
\end{corollary}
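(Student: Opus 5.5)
I will derive Corollary~\ref{20241127cor1} as the codimension-zero case $S=\ell^2$ of Theorem~\ref{20241015thm1}. Take the single chart family $\mathcal A_{\widetilde V}$ with $I_0=\mathbb N$, and regard $D=\overline V$ as a region with boundary in this surface. Then $D^\circ=V$ and $\mu_{D^\circ}=\mu_{\mathbb N}|_V$. The graph maps are identically zero, so $n_{\mathbb N}\equiv1$ and $F_\emptyset\equiv1$. Consequently $\Gamma_{\mathbb N}=\{\mathbb N\}$ and the interior unit normal field $\mathbf n^{D^\circ}$ has the single entry $1$, attained at $\mathbf I$ with $|\mathbf I|=\mathbb N$.

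\textbf{Choice of $\mathbf I$.} Take $\mathbf I\in S_{\mathbb N}^F$ with $|\mathbf I|=\mathbb N\setminus\{i\}$, for instance $\mathbf I=(1,\dots,\widehat{i},\dots)$ in increasing order. Then $(j,\mathbf I)=\emptyset$ for $j\neq i$, so $n^{D^\circ}_{(j,\mathbf I)}=0$. For $j=i$ we get $n^{D^\circ}_{(i,\mathbf I)}=s((i,\mathbf I))=(-1)^{i-1}$, since moving $i$ to its place takes $i-1$ transpositions.

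\textbf{Reducing both sides.} The left side of \eqref{20250209for2} reduces to $-(-1)^{i-1}\int_V\delta_i f\,d\mu_{\mathbb N}$. On the right side, the boundary charts of Definition~\ref{20241009for1} in this setting are of the form $\mathbf K$ with $|\mathbf K|=\mathbb N$, $\Phi=0$, and $\partial V$ a graph $x_{k_{i_0}}=h$ over $\mathbb N\setminus\{k_{i_0}\}$. Hence $n^{\partial D}_{\mathbf I}$ equals $s(\mathbf I)s(\mathbf K\setminus\{k_{i_0}\})\det D(P_{|\mathbf I|}P^{-1}_{|\mathbf K|\setminus\{k_{i_0}\}})/n_{|\mathbf K|\setminus\{k_{i_0}\}}$.

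\textbf{Identifying the boundary normal.} The key computation is that this expression equals $(-1)^{i}\,D_{x_i}\rho/\|D\rho\|$, so that the signs cancel and the identity becomes \eqref{20260803for1}.
\begin{itemize}
\item If $i=k_{i_0}$, the determinant is $1$.
\item Otherwise it is $\pm D_{x_i}h$, a $1\times1$ minor.
\item The normalizer is $n=\sqrt{1+\sum_j|D_{x_j}h|^2}=\|D\rho\|$, because $|D_{x_{k_{i_0}}}\rho|=1$ and $D_{x_j}\rho=-(-1)^{i_0-1}D_{x_j}h$.
\end{itemize}
The main obstacle is tracking the signs $s(\mathbf K)=1$, $(-1)^{i_0-1}$ and the transposition counts. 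I would first check the case $k_{i_0}=i$ directly, then reduce the general case to the Case 1/Case 2 determinant identities already computed in the proof of Proposition~\ref{20241102prop1}.

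\textbf{Verifying the hypotheses of Theorem~\ref{20241015thm1}.} The $H^2_{\mathbb N}$-differentiability hypothesis is redundant here, as the paper notes. It is used only through \eqref{20250115for1}, which is trivial when $P_{\mathbb N}^{-1}$ is the identity, and through \eqref{20250115for2}, where $F_\emptyset\equiv1$. The integrability conditions \eqref{20250207for1} reduce to the assumed $L^1$ conditions on $f$, $D_{x_i}f$, $x_if$ and $f|_{\partial V}$, since only the index $i$ contributes. The $\sigma$-finiteness of $\partial V$ holds because it is a classical hypersurface, as remarked before the corollary. Finally, the cutoff step in the proof of Theorem~\ref{20241015thm1} needs $\sum_j|n_{(j,\mathbf I)}|^2/a_j^2=a_i^{-2}$, which is bounded, and Proposition~\ref{20241013thm1}. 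With these in place the identity follows.
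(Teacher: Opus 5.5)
Your proposal is correct and is essentially the paper's proof. It makes the same choice $|\mathbf I|=\mathbb N\setminus\{i\}$, the same reduction of the left side of \eqref{20250209for2} to $(-1)^i\int_V\delta_i f\,\mathrm d\mu_{\mathbb N}$, and the same identification $n^{\partial V}_{\mathbf I}=(-1)^i\nu_i^{\partial V}$, which the paper obtains from $\det D\bigl(P_{\mathbb N\setminus\{i\}}P^{-1}_{\mathbb N\setminus\{k_{i_0}\}}\bigr)=(-1)^{i+k_{i_0}-1}D_{x_i}h$ (or $1$ if $i=k_{i_0}$) together with $s(\mathbf K\setminus\{k_{i_0}\})=(-1)^{k_{i_0}+i_0-1}$, which is forced by $s(\mathbf K)=1$.

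One small correction to your hypothesis check, a point the paper also glosses over. The last condition in \eqref{20250207for1} requires $|D_{x_j}f|+|x_jf|/a_j^2\in L^1$ for \emph{every} $j$, and the corollary does not assume this. You should therefore add that, in the proof of Proposition~\ref{20241102prop1} with $|\mathbf K|=\mathbb N$ and $\Phi=0$, every minor in the row expansion vanishes unless its deleted column is the $x_i$-column, so only the index $i$ is actually used.
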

\begin{proof}
Choose ${\mathbf I}_i\in S_{\mathbb{N}}^{F}$ satisfying $|{\mathbf I}_i|=\mathbb{N}\setminus\{i\}$ and ${\mathbf I}_i=\overline{{\mathbf I}_i}$.

Consider $D= \overline V$, then $D^{\circ}=V$, by Proposition~\ref{20241013thm1} and Theorem~\ref{20241015thm1}, we obtain
\[
-\sum_{j=1}^{\infty}
\int_{V}
\delta_j f\,
n_{(j,\mathbf{I}_i)}^{V}\,
\mathrm d\mu_{\mathbb{N}}
=\int_{\partial V}
f\,n_{\mathbf{I}_i}^{\partial V}\,
\mathrm d\mu_{\partial V}.
\]
We adopt the notation after \eqref{20241015for1}, and by Definition \ref{20260801def1},
\begin{eqnarray*}
-\sum_{j=1}^{\infty}
\int_{V}
\delta_j f\,
n_{(j,\mathbf{I}_i)}^{V}\,
\mathrm d\mu_{\mathbb{N}}
&=&
-\int_{V}
\delta_i f\,
n_{(i,\mathbf{I}_i)}^{V}\,
\mathrm d\mu_{\mathbb{N}}\\
&=&-\int_{V}\left(D_{x_i}f
-  \frac{x_i f}{a_i^2}\right)\frac{
s((i,\mathbf{I}_i))\cdot
\det\left(
D\bigl(P_{|(i,\mathbf{I}_i)|}  P_{\mathbb{N}}^{-1}\bigr)
(P_{\mathbb{N}}\mathbf{x})
\right)
}{
n_{\mathbb{N}}(\mathbf{x})
} \,\mathrm d\mu_{\mathbb N}\\
 &=&(-1)^i\int_{V}\left(D_{x_i}f
-\frac{x_i f}{a_i^2}\right)
 \,\mathrm d\mu_{\mathbb N}.
\end{eqnarray*}
By Definition \ref{20260803def1}, for every $\mathbf{x}\in\partial V$,
\begin{eqnarray*}
 n_{{\mathbf I}_i }^{\partial V}(\mathbf{x})
 &=&  \frac{
s(\mathbf{I}_i)\cdot s(\mathbf{K}\setminus\{k_{i_0}\})\cdot
\det\left(
D\bigl(P_{|\mathbf{I}|}  P_{|\mathbf{K}|\setminus\{k_{i_0}\}}^{-1}\bigr)
(P_{|\mathbf{K}|\setminus\{k_{i_0}\}}\mathbf{x})
\right)
}{
n_{|\mathbf{K}|\setminus\{k_{i_0}\}}(\mathbf{x})
} \\
&=& \frac{
  s(\mathbf{K}\setminus\{k_{i_0}\})\cdot
\det\left(
D\bigl(P_{\mathbb{N}\setminus\{i\}}  P_{ \mathbb{N} \setminus\{k_{i_0}\}}^{-1}\bigr)
(P_{\mathbb{N}\setminus\{k_{i_0}\}}\mathbf{x})
\right)
}{
n_{|\mathbb{N}|\setminus\{k_{i_0}\}}(\mathbf{x})
} \\
&=&  \frac{
  s(\mathbf{K}\setminus\{k_{i_0}\})\cdot
\det\left(
D\bigl(P_{\mathbb{N}\setminus\{i\}}  P_{ \mathbb{N} \setminus\{k_{i_0}\}}^{-1}\bigr)
(P_{\mathbb{N}\setminus\{k_{i_0}\}}\mathbf{x})
\right)
}{
\sqrt{\sum_{j=1}^{\infty}|D_{x_j}\rho(\mathbf{x})|^2}
}.
\end{eqnarray*}
If $i=k_{i_0}$, then
$$
\det\left(
D\bigl(P_{\mathbb{N}\setminus\{i\}}  P_{ \mathbb{N} \setminus\{k_{i_0}\}}^{-1}\bigr)(P_{|\mathbf{K}|\setminus\{k_{i_0}\}}\mathbf{x})\right)
=1=(-1)^{i_0-1}D_{  x_{k_{i_0}}}\rho(\mathbf{x}) =(-1)^{i+k_{i_0}+i_0-1}D_{  x_{i}}\rho(\mathbf{x}).
$$
If $i\neq k_{i_0}$, then by Remark \ref{20250505rem2} and simple computation, we obtain
$$
\det\left(
D\bigl(P_{\mathbb{N}\setminus\{i\}}  P_{ \mathbb{N} \setminus\{k_{i_0}\}}^{-1}\bigr)(P_{|\mathbf{K}|\setminus\{k_{i_0}\}}\mathbf{x})\right)
=(-1)^{i+k_{i_0}-1}  D_{x_i}h (P_{|\mathbf{K}|\setminus\{k_{i_0}\}}\mathbf{x})
=(-1)^{i+k_{i_0}+i_0-1} D_{  x_i}\rho(\mathbf{x}) .
$$
Since $s(\mathbf{K})=1$, $|\mathbf{K}|=\mathbb{N}$, and $
s(\mathbf{K})=(-1)^{i_0}\cdot s(\mathbf{K}\setminus\{k_{i_{0}}\})\cdot (-1)^{k_{i_0} -1},
$ it follows that
$$
s(\mathbf{K}\setminus\{k_{i_{0}}\})=(-1)^{k_{i_0}+i_0-1}.
$$
Hence,
\begin{eqnarray*}
\frac{
  s(\mathbf{K}\setminus\{k_{i_0}\})\cdot
\det\left(
D\bigl(P_{\mathbb{N}\setminus\{i\}}  P_{ \mathbb{N} \setminus\{k_{i_0}\}}^{-1}\bigr)
(P_{\mathbb{N}\setminus\{k_{i_0}\}}\mathbf{x})
\right)
}{
\sqrt{\sum_{j=1}^{\infty}|D_{x_j}\rho(\mathbf{x})|^2}
}=(-1)^i\frac{
D_{x_i}\rho(\mathbf{x})}{
\sqrt{\sum_{j=1}^{\infty}|D_{x_j}\rho(\mathbf{x})|^2}
}
=(-1)^i\nu_{ i }^{\partial V}(\mathbf{x}).
\end{eqnarray*}
Combining above, we arrive at \eqref{20260803for1}. This completes the proof of Corollary \ref{20241127cor1}.
\end{proof}
\begin{remark}
The regularity assumption
\[
f\in C_{\mathcal F}^1(\widetilde V)
\]
in Corollary~\ref{20241127cor1} may be weakened to include a more general class of functions. In some applications, the open set \(V\) is given by the intersection of the interiors of finitely many regions with boundary. Gauss--Green-type formulas adapted to such domains are therefore needed. We plan to investigate these problems in future work.
\end{remark}

\section{Top Differential Forms and a Stokes-Type Identity}

In this section, we introduce an analog of top-degree forms and establish an integral Stokes-type identity on surfaces of \(\ell^2\) with arbitrary codimension.

\begin{definition}\label{20260809def1}
Let \(S\) be an oriented \(C^1\)-differentiable surface of \(\ell^2\) with codimension
\(\Gamma_{\mathbb{N}\setminus I_0}\). Define
$$
I_S\triangleq \{\mathbf{I} \in S_{\mathbb N}^F:|\mathbf{I}|\in\Gamma_{I_0},\overline{\mathbf{I}}=\mathbf{I}\},
$$
and
$$
T_S\triangleq \left\{\mathbf{f}=(f_{\mathbf{I}})_{\mathbf{I}\in I_S} :f_{\mathbf{I}}\in L^1(S,\mu_S)\,\text{ for all }\mathbf{I}\in I_S, \sum_{\mathbf{I}\in I_S}|f_{\mathbf{I}}|\in L^1(S,\mu_S) \right\}.
$$
We call each element in $T_S$ a \textbf{top differential form} on $S$. Moreover, for every $\mathbf{f}=(f_{\mathbf{I}})_{\mathbf{I}\in I_S}\in T_S$, we define
$$
\int_S \mathbf{f}\triangleq \sum_{\mathbf{I}\in I_S}\int_S f_{\mathbf{I}} n_{\mathbf{I}}^S \,\mathrm{d}\mu_S.
$$
\end{definition}
\begin{definition}
Let \(D\) be a region with boundary in a smooth surface \(S\), and
suppose that \(\partial D\) is \(\sigma\)-finite. Let \(O\) be an open subset of
\(\ell^2\) containing \(D\). Let $\widetilde{T}_{D}$ denote the collection of all $\mathbf{f}=(f_{\mathbf{I}})_{\mathbf{I}\in I_{\partial D}}$ such that
\begin{itemize}
\item[$(1)$] for every $\mathbf{I}\in I_{\partial D}$, $f_{\mathbf{I}}$ satisfies the assumption in Theorem \ref{20241015thm1} for $f$;
\item[$(2)$]  $\sum\limits_{\mathbf{I}\in I_{\partial D}}  |f_{\mathbf{I}}\mid_{\partial D}|\in L^1(\partial D,\mu_{\partial D})$ and
$\sum\limits_{i\in\mathbb{N},\mathbf{I}\in I_{\partial D}}  |\delta_i f_{\mathbf{I}}\mid_{ D^{\circ}}|\in L^1( D^{\circ},\mu_{D^{\circ}})$.
\end{itemize}
We call each element in $\widetilde{T}_{D}$ a \textbf{sub-top differential form} on $D$.
Moreover,
for every  $\mathbf{f}=(f_{\mathbf{I}})_{\mathbf{I}\in I_{\partial D}}\in \widetilde{T}_{D}$, define
$$
i^*\mathbf{f}\triangleq (f_{\mathbf{I}}\mid_{\partial D})_{\mathbf{I}\in I_{\partial D}},\qquad\text{and}\qquad
\delta \mathbf{f}\triangleq\left(-\left.\sum\limits_{\begin{gathered} i\in\mathbb{N},\mathbf{I}\in I_{\partial D},\\ \overline{(i,\mathbf{I})} =\mathbf{J} \end{gathered}}s((i,\mathbf{I}))\delta_i f_{\mathbf{I}}\right|_{D^{\circ}}\right)_{\mathbf{J}\in I_{ D^{\circ}}},
$$
which are top differential forms on $\partial D$ and $D^{\circ}$, respectively.
\end{definition}
As an immediate consequence of Theorem \ref{20241015thm1}, we obtain the following Stokes-type identity.
\begin{theorem}[Stokes-type theorem]\label{20260809thm1}
For every  $\mathbf{f}=(f_{\mathbf{I}})_{\mathbf{I}\in I_{\partial D}}\in \widetilde{T}_{D}$,
$$
\int_{D^{\circ}}\delta f=\int_{\partial D }i^* f.
$$
\end{theorem}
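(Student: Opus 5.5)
The identity will be obtained by applying Theorem~\ref{20241015thm1} to each component $f_{\mathbf{I}}$ separately, summing over $\mathbf{I}\in I_{\partial D}$, and then regrouping the left-hand side according to the sorted multi-index $\mathbf{J}=\overline{(i,\mathbf{I})}$.

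\textbf{Step 1 (componentwise Gauss--Green).} Fix $\mathbf{I}\in I_{\partial D}$. By condition $(1)$ in the definition of $\widetilde T_D$, $f_{\mathbf{I}}$ satisfies the hypotheses of Theorem~\ref{20241015thm1}. Hence
\[
-\sum_{i=1}^{\infty}\int_{D^\circ}\delta_i f_{\mathbf{I}}\,n^{D^\circ}_{(i,\mathbf{I})}\,\mathrm d\mu_{D^\circ}
=\int_{\partial D} f_{\mathbf{I}}\,n^{\partial D}_{\mathbf{I}}\,\mathrm d\mu_{\partial D}.
\]
Terms with $i\in|\mathbf{I}|$ vanish, since $n^{D^\circ}_{\emptyset}=0$.

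\textbf{Step 2 (sign bookkeeping).} For $i\notin|\mathbf{I}|$, I will check from Definition~\ref{20260801def1} that
\[
n^{D^\circ}_{(i,\mathbf{I})}=s((i,\mathbf{I}))\,s(\mathbf{J})\,n^{D^\circ}_{\mathbf{J}},\qquad \mathbf{J}=\overline{(i,\mathbf{I})}.
\]
Both sides involve the same determinant $\det D(P_{|\mathbf{J}|}P_I^{-1})$, and $s(\mathbf{J})=1$ because $\mathbf{J}$ is sorted. Thus
\[
n^{D^\circ}_{(i,\mathbf{I})}=s((i,\mathbf{I}))\,n^{D^\circ}_{\mathbf{J}}.
\]
The membership $|\mathbf{J}|\in\Gamma_{I_0}$ follows from $|\mathbf{I}|\in\Gamma_{|\mathbf{K}|\setminus\{k_{i_0}\}}$ by Lemma~\ref{20241011lem2}, so $\mathbf{J}\in I_{D^\circ}$. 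The boundary side already has the form $\int_{\partial D} f_{\mathbf{I}}\,n_{\mathbf{I}}^{\partial D}\,\mathrm d\mu_{\partial D}$, which is the $\mathbf{I}$-term of $\int_{\partial D} i^*\mathbf{f}$.

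\textbf{Step 3 (summation and rearrangement).} Summing Step~1 over $\mathbf{I}$ gives $\int_{\partial D} i^*\mathbf{f}$ on the right, which is absolutely convergent by condition $(2)$ and $|n^{\partial D}_{\mathbf{I}}|\le 1$. On the left we obtain the double series
\[
-\sum_{\mathbf{I}}\sum_i \int_{D^\circ} s((i,\mathbf{I}))\,\delta_i f_{\mathbf{I}}\,n_{\mathbf{J}}^{D^\circ}\,\mathrm d\mu_{D^\circ}.
\]
The integrand is bounded in absolute value by $\sum_{i,\mathbf{I}}|\delta_i f_{\mathbf{I}}|$, using $|n^{D^\circ}_{\mathbf{J}}|\le1$. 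This bound is integrable by condition $(2)$, so Fubini--Tonelli permits regrouping by $\mathbf{J}$. The regrouped sum is
\[
\sum_{\mathbf{J}\in I_{D^\circ}}\int_{D^\circ}(\delta\mathbf{f})_{\mathbf{J}}\,n^{D^\circ}_{\mathbf{J}}\,\mathrm d\mu_{D^\circ}=\int_{D^\circ}\delta\mathbf{f}.
\]
The same bound shows that $\delta\mathbf{f}$ and $i^*\mathbf{f}$ belong to $T_{D^\circ}$ and $T_{\partial D}$ respectively.

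\textbf{Main obstacle.} I expect the hardest part to be the sign identification in Step~2 together with justifying the rearrangement in Step~3. The first concern is that each $\mathbf{J}$ may arise from infinitely many pairs $(i,\mathbf{I})$. The second is that the integrability hypotheses must be applied uniformly across components.
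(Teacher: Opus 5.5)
Your proposal is correct and follows the paper's own proof. Both arguments use the identity $n^{D^\circ}_{(i,\mathbf{I})}=s((i,\mathbf{I}))\,n^{D^\circ}_{\overline{(i,\mathbf{I})}}$, apply Theorem~\ref{20241015thm1} to each $f_{\mathbf{I}}$, and regroup the resulting double series by $\mathbf{J}=\overline{(i,\mathbf{I})}$. You also supply two details the paper leaves implicit: you justify the regrouping via condition $(2)$ and $|n^{D^\circ}_{\mathbf{J}}|\leqslant 1$, and you check that $\mathbf{J}\in I_{D^\circ}$. That check needs slightly more than transitivity of $\sim$: you also need that adjoining one new index to each of two equivalent sets preserves equivalence.
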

\begin{proof}
For $\mathbf{I}\in I_{\partial D},\mathbf{J}\in I_{ D^{\circ}}$ and $i\in\mathbb{N}$, if $\overline{(i,\mathbf{I})} =\mathbf{J}$, then
$$
n_{(i,\mathbf{I})}^{D^{\circ}}=s((i,\mathbf{I}))n_{\mathbf J}^{D^{\circ}}.
$$
Therefore,
\begin{eqnarray*}
\int_{D^{\circ}}\delta f
&=&-\sum_{\mathbf{J}\in I_{ D^{\circ}}}\sum\limits_{\begin{gathered} i\in\mathbb{N},\mathbf{I}\in I_{\partial D},\\ \overline{(i,\mathbf{I})} =\mathbf{J} \end{gathered}}\int_{D^{\circ}}s((i,\mathbf{I}))\,\delta_i f_{\mathbf{I}}\,n_{\mathbf J}^{D^{\circ}}\,\mathrm{d}\mu_{D^{\circ}}\\
&=&-\sum_{ \mathbf{I}\in I_{\partial D}}\sum\limits_{ i\in\mathbb{N} }\int_{D^{\circ}} \delta_i f_{\mathbf{I}}\,n_{(i,\mathbf{I})}^{D^{\circ}}\,\mathrm{d}\mu_{D^{\circ}}\\
&=& \sum_{ \mathbf{I}\in I_{\partial D}}\int_{\partial D}
f_{\mathbf{I}}\,n_{\mathbf{I}}^{\partial D}\,
\mathrm d\mu_{\partial D}\\
&=& \int_{\partial D }i^* f,
\end{eqnarray*}
where the first and the last equalities are from Definition \ref{20260809def1}, and the third equality follows from Theorem \ref{20241015thm1} for each $\mathbf{I}\in I_{\partial D}$. This completes the proof of Theorem \ref{20260809thm1}.
\end{proof}

\end{document}